\documentclass[10pt,a4paper]{amsart}

\usepackage[utf8]{inputenc}
\usepackage[oldstylemath,fulloldstylenums]{kpfonts}
\usepackage{xcolor}
\definecolor{color1}{RGB}{83, 156, 37}
\usepackage[pagebackref,colorlinks,linkcolor=color1,citecolor=color1,urlcolor=color1,hypertexnames=false]{hyperref}
\usepackage{enumerate}

\usepackage{amsmath,amsthm,amssymb,amsrefs}
\usepackage{tikz-cd}

\newcommand{\comment}[1]{} 
\newcommand{\xto}{\xrightarrow}	
\def\N{\mathbb{N}}

\def\R{\mathbb{R}} 
\def\C{\mathbb{C}}

\newcommand{\defeq}{\mathrel{:=}} 

\newcommand{\exit}{\mathrm{exit}}
\newcommand{\Exit}{\mathrm{Exit}}

\DeclareMathOperator{\Psh}{Psh}
\DeclareMathOperator{\Shv}{Shv}
\newcommand{\Sp}{\mathrm{Sp}}
\newcommand{\An}{\mathrm{An}}

\newcommand{\cF}{\mathcal{F}}
\newcommand{\cG}{\mathcal{G}}
\newcommand{\cO}{\mathcal{O}}
\newcommand{\cK}{\mathcal{K}}
\newcommand{\cKO}{\mathcal{KO}}
\newcommand{\uE}{\underline{E}}
\newcommand{\preuE}{\uE^{\pre}}
\newcommand{\preutE}{\widetilde{\uE}^{\pre}}
\newcommand{\colim}{\operatornamewithlimits{colim}}
\newcommand{\uSing}{\underline{\mathrm{Sin}}\mathrm{g}}

\DeclareMathOperator{\id}{id}
\DeclareMathOperator{\E}{E}
\DeclareMathOperator{\Ind}{Ind}

\DeclareMathOperator{\NcMot}{NcMot}
\DeclareMathOperator{\pre}{pre}

\DeclareMathOperator{\Orbit}{Orb}
\newcommand{\Orb}{\Orbit_G}
\DeclareMathOperator{\Mod}{Mod}
\DeclareMathOperator{\ho}{ho}
\DeclareMathOperator{\Cat}{Cat}
\DeclareMathOperator{\dual}{dual}
\DeclareMathOperator{\Fun}{Fun}
\DeclareMathOperator{\Sing}{Sing}
\DeclareMathOperator{\Set}{Set}

\DeclareMathOperator{\CHaus}{CHaus}

\DeclareMathOperator{\Topp}{Top}
\DeclareMathOperator{\LCHaus}{LCHaus}
\DeclareMathOperator{\LCHopen}{LCHaus_{open}}
\DeclareMathOperator{\LCHp}{LCHaus_{proper}}

\newcommand{\op}{\mathrm{op}}

\DeclareMathOperator{\stable}{st}
\DeclareMathOperator{\Lan}{Lan}
\DeclareMathOperator{\Ran}{Ran}
\DeclareMathOperator{\sheafify}{sh}
\newcommand{\sh}{{\sheafify}}
\DeclareMathOperator{\Ctbl}{\Shv_c}

\DeclareMathOperator{\fib}{fib}

\DeclareMathOperator{\res}{res}
\DeclareMathOperator{\pr}{Pr}

\DeclareMathOperator{\sep}{sep}
\DeclareMathOperator{\GCsAlg}{G-C*Alg}
\DeclareMathOperator{\CsAlg}{C*Alg}
\newcommand{\prr}{\pr^R}
\newcommand{\prl}{\pr^L}
\newcommand{\prll}{\pr^{LL}}

\newcommand{\GTop}{\Topp_G}
\newcommand{\GCH}{\CHaus_G}
\newcommand{\GLCH}{\LCHaus_G}

\newcommand{\bre}{\Gamma^G_{\mathrm{Br}}}
\newcommand{\brec}{\Gamma^G_{\mathrm{Br,c}}}
\newcommand{\breg}{\underline{\Gamma}_{\mathrm{Br}}}

\newcommand{\Catdual}{\Cat_{\dual}}

\def\topdf{\texorpdfstring}

\numberwithin{equation}{section}

\theoremstyle{plain}
\newtheorem{thm}[equation]{Theorem}
\newtheorem*{thm*}{Theorem}

\newtheorem{lem}[equation]{Lemma}
\newtheorem{coro}[equation]{Corollary} 
\newtheorem{prop}[equation]{Proposition}

\newtheorem{introthm}{Theorem}

\theoremstyle{definition}

\newtheorem{defn}[equation]{Definition} 
\newtheorem*{defn*}{Definition}
\newtheorem{ex}[equation]{Example}
\newtheorem{exs}[equation]{Examples}
\newtheorem{rmk}[equation]{Remark}

\theoremstyle{remark} 
\newtheorem*{ack}{Acknowledgements}

\author{Guido Arnone}
\email{garnone@dm.uba.ar}
\address{Departamento de Matem\'atica/IMAS\\ Facultad de Ciencias Exactas y Naturales\\ Universidad de Buenos Aires\\ Ciudad Universitaria \\ (1428) Buenos Aires}

\author{Devarshi Mukherjee}
\email{devarshi.mukherjee@maths.ox.ac.uk}
\address{Mathematical Institute\\ University of Oxford\\ Woodstock Rd\\ OX26GG Oxford}

\author{Thomas Nikolaus}
\email{nikolaus@uni-muenster.de}
\address{University of M\"unster\\ Mathematics M\"unster\\ Einsteinstrasse 62\\ 48149 M\"unster}

\subjclass[2020]{55N30, 55P91, 18F25}

\title{Bredon sheaf cohomology}

\begin{document}

\begin{abstract}
For a finite group $G$, we compute the algebraic $K$-theory of the category of equivariant sheaves on a locally compact Hausdorff $G$-space, generalizing a result of Efimov, and determine the equivariant $E$-theory of the $C^*$-algebra of continuous functions. These invariants admit natural descriptions in terms of a new equivariant cohomology theory, which we call Bredon sheaf cohomology.

This theory recovers classical Bredon cohomology for $G$-CW complexes and ordinary sheaf cohomology when $G$ is trivial. We establish its basic structural properties and prove a strong uniqueness theorem: any functor from the category of locally compact Hausdorff $G$-spaces to a dualizable stable category satisfying equivariant open descent and cofiltered compact codescent is equivalent to Bredon sheaf cohomology, generalizing a result of Clausen.
	\end{abstract}
	
	\maketitle
	\setcounter{tocdepth}{1} 
	\tableofcontents

\section{Introduction}

Let $X$ be a locally compact Hausdorff space and let
$\Shv(X,\mathcal{C})$ denote the $\infty$-category of sheaves on $X$
with values in a presentable, stable $\infty$-category $\mathcal{C}$.
If $\mathcal{C}$ is dualizable (\cite{sag}*{D.7.3}), then so is $\Shv(X,\mathcal{C})$,
and its algebraic $K$-theory is therefore defined (\cite{efiloc}). 
A fundamental result of Efimov identifies this $K$-theory in purely
geometric terms.

\begin{thm*}[Efimov, \cite{efiloc}*{Theorem 0.2}]\label{thm_efimov}
There is a natural equivalence
\[
K(\Shv(X,\mathcal{C})) \simeq \Gamma_c(X, {K\mathcal{C}}),
\]
where the right-hand side denotes compactly supported sheaf cohomology
of $X$ with values in the spectrum $K\mathcal{C}$.
\end{thm*}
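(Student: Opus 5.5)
Following Efimov's strategy, we regard both sides as functors of $X$ on locally compact Hausdorff spaces and show that the left-hand side has the properties characterizing compactly supported cohomology. Concretely, set $F(X) \defeq K(\Shv(X,\mathcal{C}))$, using continuous $K$-theory of dualizable categories. We want to show:
\begin{enumerate}[(a)]
\item $F$ is covariant for open embeddings (extension by zero $j_!$) and contravariant for proper maps;
\item $F$ satisfies open--closed localization;
\item $F$ satisfies cofiltered compact codescent;
\item $F(\ast)\simeq K\mathcal{C}$.
\end{enumerate}
We then compare $F$ with $\Gamma_c(-,K\mathcal{C})$ via the unique functor with these properties.

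\textbf{Localization.} For $U\subseteq X$ open with closed complement $Z$, we have a recollement
\[
\Shv(U,\mathcal{C})\xto{j_!}\Shv(X,\mathcal{C})\xto{i^*}\Shv(Z,\mathcal{C}).
\]
The functors $j_!$ and $i^*$ are strongly continuous, since their right adjoints $j^*$ and $i_*$ preserve colimits. This is a short exact sequence in $\Catdual$. Continuous $K$-theory is localizing, so it produces a fiber sequence
\[
F(U)\to F(X)\to F(Z).
\]
This is formal once dualizability of $\Shv(X,\mathcal{C})$ (Lurie) and the strong continuity of these functors are established.

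\textbf{Codescent.} Given a cofiltered system of compact Hausdorff spaces $K=\lim K_i$, we need
\[
\Shv(K,\mathcal{C})\simeq \colim_i \Shv(K_i,\mathcal{C}),
\]
with the colimit taken in $\Catdual$ along pullbacks. This follows from continuity of sheaves on compacta: compact objects and their compact morphisms are detected at finite stages. We then need continuous $K$-theory to commute with filtered colimits in $\Catdual$, which is one of Efimov's basic results.

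\textbf{Uniqueness.} Localization and codescent, together with the value on the point, characterize $\Gamma_c(-,E)$. The argument reduces first to compact $X$ by localization (one-point compactification). Every compact Hausdorff space is a cofiltered limit of finite CW complexes, more precisely of compact subsets of $\R^n$. Localization plus homotopy invariance then reduces to the point. Homotopy invariance must itself be proved: it comes from $F([0,1])\simeq F(\ast)$, which follows from $F((0,1])=0$ via an Eilenberg swindle on the half-open interval, using localization.

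\textbf{Main obstacle.} The hardest step is making the uniqueness comparison natural. One must construct an actual map $F\to\Gamma_c(-,K\mathcal{C})$, rather than merely matching values. I would try a sheafification argument: $X\mapsto F(X)$ restricted to opens of $X$ (covariant via $j_!$) forms a cosheaf, and localization identifies it with the cosheaf $U\mapsto\Gamma_c(U,K\mathcal{C})$. The identification is induced by the unit map $K\mathcal{C}\to F(\text{point})$ and the assembly of the constant sheaf. The proof closes once the codescent-based reduction shows this map is an equivalence on $[0,1]^n$ and on their compact subsets.
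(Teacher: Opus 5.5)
Your axioms (a)--(d), the reduction to compact spaces by one-point compactification, and the reduction to compact subsets of cubes are exactly the inputs the paper uses. There, Efimov's theorem is the case $G=1$ of Theorem~\ref{thm:shvmot}, deduced from the uniqueness Theorem~\ref{thm:uniqueness}.

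\textbf{Missing comparison map.} The genuine gap is the step you flag yourself: no comparison map is ever constructed. Saying that localization ``identifies'' the cosheaf $U\mapsto F(U)$ with $U\mapsto\Gamma_c(U,K\mathcal{C})$ restates the theorem rather than proving it. A per-space construction also does not give, for free, the naturality in proper maps between compacta that you need to pass to cofiltered limits. Your sheafification instinct is right, but the paper applies it to a contravariant sheaf on the big site, not to the covariant cosheaf:
\begin{enumerate}[(1)]
\item Restrict $F$ to $\CHaus$. There it is contravariant in all maps and satisfies closed descent (Lemma~\ref{lem:oc=cl}).
\item Right Kan extend to $\LCHaus$. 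The result $\widehat F(U)=\lim_{K\subset U}F(K)$ has open descent (Corollary~\ref{cor_extension}).
\item For $G=1$ the left adjoint $t^\ast$ of \eqref{def:jstar} is $E\mapsto\Gamma(-,E)$. So the counit of $t^\ast\dashv t_\ast$ is a map $\Gamma(-,F(\ast))\to\widehat F$ that is natural by construction and an equivalence at the point.
\item It is an equivalence everywhere by Theorem~\ref{thm:res-orb-cons}. By codescent it suffices to treat cubes $[0,1]^n$. There, finite covering dimension makes equivalences of sheaves detectable on stalks, and the stalk of $\widehat F$ at $x$ is $\colim_{K\ni x}F(K)\simeq F(\{x\})$ (Lemma~\ref{lem:o+cc=>cl}, Remark~\ref{rmk:stalks}).
\item Theorem~\ref{thm:pdp} then gives $F(X)\simeq\fib(\Gamma(X^\infty,K\mathcal{C})\to K\mathcal{C})=\Gamma_c(X,K\mathcal{C})$, naturally on $\LCHaus^{pdp}$.
\end{enumerate}

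\textbf{Homotopy invariance.} This step is also unsupported as written. You do not say what the swindle is, and the evident candidates are not available in $\Catdual$. Take the shift $\tau(t)=t+1$ on $[0,\infty)\cong(0,1]$.
\begin{enumerate}[(i)]
\item The pushforward $\tau_\ast$ is not strongly continuous. Its right adjoint $\tau^!$ commutes with filtered colimits only if $j_\ast$ does for $j\colon[0,1)\hookrightarrow[0,\infty)$, and that fails at the point $1$.
\item The locally finite, strongly continuous sum $\Phi=\bigoplus_{n\ge0}(\tau^n)^\ast$ only satisfies $\Phi\simeq\id\oplus\Phi\circ\tau^\ast$. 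This kills $F([0,\infty))$ only if you already know $K(\tau^\ast)=\id$, which is homotopy invariance.
\end{enumerate}
The paper instead derives homotopy invariance from cofiltered compact codescent by Hoyois' argument (Proposition~\ref{prop:htpyinv}). The stalk argument above does not use it at all.

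Relatedly, compact subsets of $\R^n$ are not finite CW complexes, so ``localization plus homotopy invariance reduces to the point'' does not apply to them. You would need either the hypercompleteness/stalk argument or a separate approximation of compacta by compact polyhedra.
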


Efimov’s theorem provides a powerful bridge between algebraic $K$-theory
and geometric topology, and has sparked significant recent interest.
For instance, Lehner has established a generalization to stably locally compact spaces \cite{Lehner}. 
Among other applications, Efimov’s result yields categorical models for
assembly maps of the form
\[
H_*(M, K\mathbb{Z}) \longrightarrow K_*(\mathbb{Z}[\pi_1 M])
\]
for compact manifolds $M$ (and more generally compact ANRs).
For aspherical $M$, this map is conjectured to be an equivalence; this is
a special case of the Farrell--Jones conjecture and would imply the
Borel conjecture via surgery theory.

To treat assembly maps in the full generality predicted by the
Farrell--Jones conjecture, one is naturally led to seek equivariant
refinements.
Let $G$ be a finite group acting on $X$.
We define the $\infty$-category of $G$-equivariant sheaves by
\[
\Shv_G(X,\mathcal{C}) := \Shv(X,\mathcal{C})^{hG}.
\]
This category is again dualizable (see Remark \ref{rmk:shvg=shvg}), and hence its
algebraic $K$-theory is defined.
The first main result of this paper is the following equivariant
analogue of Theorem~\ref{thm_efimov}.

\begin{introthm}[Theorem \ref{thm:shvmot}]\label{thm_zwei}
There is a natural equivalence
\[
K(\Shv_G(X,\mathcal{C})) \simeq \brec(X, {K_G\mathcal{C}}),
\]
where the right-hand side denotes compactly supported \emph{Bredon
sheaf cohomology} of $X$ with values in (a version of) the $G$-equivariant algebraic $K$-theory spectrum $K_G\mathcal{C}$.
\end{introthm}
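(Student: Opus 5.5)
The plan is to reduce the statement to a uniqueness theorem for Bredon sheaf cohomology, in the spirit of Clausen's characterization of compactly supported cohomology that also underlies Efimov's argument. Consider the functor
\[
X \longmapsto K(\Shv_G(X,\mathcal{C}))
\]
on locally compact Hausdorff $G$-spaces. It is contravariant for proper maps via pullback, and covariant for open embeddings via extension by zero, which is the variance of $\Gamma_c$. The aim is to show that this functor satisfies the axioms characterizing compactly supported Bredon sheaf cohomology. These axioms are the compactly supported counterparts of the ones in the uniqueness theorem from the abstract: equivariant open descent (localization) and cofiltered compact codescent. Its values on orbits $G/H$ should then form the coefficient system $K_G\mathcal{C}$.

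\textbf{Step 1: value on orbits.} Since $G$ is finite, $\Shv_G(G/H,\mathcal{C}) \simeq \Fun(BH,\mathcal{C}) = \mathcal{C}^{hH}$. Compute this by inducing up from the $H$-fixed point and using $\Shv(G/H,\mathcal{C})\simeq \prod_{G/H}\mathcal{C}$. So $K(\Shv_G(G/H,\mathcal{C})) \simeq K(\mathcal{C}^{hH})$, functorially in $G/H$ for the Mackey-type structure coming from restriction and induction. This identifies the coefficient system with the version of $K_G\mathcal{C}$ referred to in the statement.

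\textbf{Step 2: localization for open--closed decompositions.} For a $G$-invariant open $U\subseteq X$ with closed complement $Z$, the recollement $\Shv(U)\to\Shv(X)\to\Shv(Z)$ is a short exact sequence of dualizable categories with strongly continuous functors. Taking $hG$ preserves this, because homotopy fixed points for a finite group commute with the relevant finite limits and colimits in $\mathrm{Cat}^{\mathrm{dual}}$ (Remark~\ref{rmk:shvg=shvg}). Efimov's continuous $K$-theory is localizing, so we obtain a fiber sequence $K\Shv_G(U)\to K\Shv_G(X)\to K\Shv_G(Z)$. Together with Step 1, this gives excision along $G$-invariant decompositions. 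It also lets us reduce from the general locally compact case to compact $G$-spaces via the one-point compactification $X^+$.

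\textbf{Step 3: continuity.} For a cofiltered limit $X=\lim X_i$ of compact Hausdorff $G$-spaces, we need
\[
\Shv_G(X,\mathcal{C})\simeq \colim_i \Shv_G(X_i,\mathcal{C})
\]
in $\mathrm{Cat}^{\mathrm{dual}}$. The nonequivariant statement is in Efimov's work. Homotopy fixed points commute with this filtered colimit because $BG$ is not a finite space in general, so this needs care. I would argue that $(-)^{hG}$ on dualizable categories with finite $G$ can be computed as a retract involving the norm, or use that compactly assembled colimits are computed on underlying compactly generated Ind-objects. Efimov's $K$-theory commutes with filtered colimits in $\mathrm{Cat}^{\mathrm{dual}}$, which then yields codescent.

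I expect Step 3 to be the main obstacle: the interaction of $hG$ with filtered colimits of dualizable categories. My first attempt would be to show that $\Shv_G(X,\mathcal{C})\simeq\Shv(X,\mathcal{C})\otimes_{\Sp[G]}\!-$-type descriptions make the fixed points a colimit-preserving operation on the diagram, using that $G$ finite makes $\Fun(BG,-)$ equal to modules over a compact algebra in dualizable categories. Once Steps 1–3 hold, the uniqueness theorem (stated later in the paper) identifies the functor with $\brec(X,K_G\mathcal{C})$. That theorem first reduces via compact codescent to profinite-like approximations by finite $G$-CW complexes, then uses excision to reduce to orbits, and naturality follows.
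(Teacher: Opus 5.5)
Your strategy is the paper's. You show that $X \mapsto K(\Shv_G(X,\mathcal{C}))$, as a functor on $(\GLCH^{pdp})^{\op}$, sends open--closed sequences to fibre sequences and cofiltered limits in $\GCH$ to colimits, and then you apply Theorem~\ref{thm:uniqueness}. In Step 1 the coefficient system is by definition the restriction of this functor to $\Orb^{\op}$, namely $G/H \mapsto K(\mathcal{C}^{BH})$ with restriction maps. So only the contravariant functoriality along orbit maps enters; no Mackey structure is needed.

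\textbf{The gap is Step 3.} You leave it open as the main obstacle and suggest norm or Ind-object arguments, but the obstacle is illusory. By definition, $\Shv_G(X,\mathcal{C})$ is the colimit $\colim_{BG}\Shv(X,\mathcal{C})$ computed in $\Catdual$. Colimits in $\Catdual$ are computed in $\prl_{\stable}$, i.e.\ as limits in $\Cat_\infty$ along right adjoints. The right adjoints of the action equivalences are their inverses, so this colimit agrees with $\Shv(X,\mathcal{C})^{hG}$; this is exactly Remark~\ref{rmk:shvg=shvg}. Hence, for $X=\lim_i X_i$ in $\GCH$, non-equivariant continuity (Proposition~\ref{thm:shvxfunc}) and pointwise colimits in $\Fun(BG,\Catdual)$ give
\[
\colim_i \Shv_G(X_i,\mathcal{C}) \simeq \colim_{BG}\colim_i \Shv(X_i,\mathcal{C}) \simeq \colim_{BG}\Shv(X,\mathcal{C}) = \Shv_G(X,\mathcal{C}),
\]
simply because colimits commute with colimits. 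Your Step 2 already invokes this remark to commute $(-)^{hG}$ with cofibres, so the same reasoning settles Step 3. Finiteness of $BG$, norms and Ind-objects play no role.

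\textbf{A smaller point in Step 2.} Your justification glosses over the one non-formal part. A Verdier sequence also requires the first functor to be fully faithful, and that is not a colimit-type condition. The paper handles it by expressing full faithfulness through the counit and using that adjunctions between $G$-dualizable categories promote to adjunctions in $\Catdual^G$ (\cite{hauglax}). Alternatively, you can use that $\lim_{BG}$ in $\Cat_\infty$ preserves fully faithful functors. You would then need to check that the identification of Remark~\ref{rmk:shvg=shvg} is compatible with the induced functors.

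Your sketch of how the uniqueness theorem is proved, via approximation by finite $G$-CW complexes, does not match the paper. The paper instead uses embeddings into cubes $[0,1]^S$, finite covering dimension of $[0,1]^S/G$, and stalks at orbits. Since you only use the theorem as a black box, this does not affect your argument.
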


The cohomology theory appearing on the right-hand side is new.
To the best of our knowledge, a sheaf-theoretic refinement of Bredon cohomology
has not previously been constructed.
The closest related work is due to Honkasalo \cites{honkasalo, honkasalo2},
who develops a theory for ordinary abelian coefficients.

The primary goal of this paper is to define and develop this
\emph{Bredon sheaf cohomology}, which may be viewed as a synthesis of
classical sheaf cohomology and Bredon cohomology.
It interpolates between the two theories:
for trivial $G$ it recovers ordinary sheaf cohomology, while for $G$--CW complexes it recovers classical (singular) Bredon cohomology. 

\subsection*{Bredon Sheaf Cohomology}

The input data for the theory is a coefficient system, namely a functor
\[
E \colon \Orb^{\op} \longrightarrow \Sp,
\]
where $\Orb$ denotes the orbit category of $G$, whose objects are the
transitive $G$--sets $G/H$ for subgroups $H \subseteq G$.
For example, the coefficient system relevant to
Theorem~\ref{thm_zwei} is given  by 
\[
K_G \mathcal{C} \colon G/H \longmapsto K\bigl(\Fun(BH,\mathcal{C})\bigr).
\]
This is a form of equivariant algebraic $K$-theory (see e.g. \cite{MaximeKaif} where it is the coBorel theory).

We equip the category $\GTop$ of topological spaces equipped with a $G$-action with a Grothendieck topology by declaring coverings to
consist of $G$--invariant open covers.
This endows $\GTop$ with the structure of a (large) site. The functor
\[
t \colon \Orb \longrightarrow \GTop,
\]
sending an orbit to the corresponding discrete $G$--space, is a
morphism of sites when $\Orb$ is endowed with the trivial topology.
Consequently, any coefficient system
$E \in \Fun(\Orb^{\op},\Sp)$ determines a sheaf
$t^*(E) \in \Shv(\GTop,\Sp)$. 

\begin{defn*}
Let $X$ be a $G$--space, and $E \colon \Orb^{\op} \to \Sp$ a
coefficient system.
The \emph{Bredon sheaf cohomology} of $X$ with coefficients in $E$ is
defined by
\[
\bre(X,E) := t^*(E)(X).
\]
\end{defn*}

Unwinding the definition, the functor $t^*$ is given by left Kan extension 
followed by sheafification.
Since $G$--invariant open subsets of $X$ are in natural bijection with
open subsets of the orbit space $X/G$ via the quotient map
$q \colon X \to X/G$, this yields a concrete description.
The value $\bre(X,E)$ is computed as the global sections
of a sheaf
\[
\underline{E}_X \in \Shv(X/G,\Sp),
\]
obtained by sheafifying the presheaf
\[
U \longmapsto \colim_{\,q^{-1}(U)\to Z} E(Z),
\]
where the colimit ranges over all $G$--equivariant maps from
$q^{-1}(U)$ to orbits $Z \in \Orb$.

Conceptually, the sheaf $\underline{E}_X$ reflects the orbit-type geometry
of the $G$--space $X$.
Its stalks record the values of the coefficient system on stabilizers,
and its variation is controlled by how orbit types specialize in the
quotient $X/G$. This makes 
Bredon sheaf cohomology amenable to explicit geometric computations.

\subsection*{Structural Properties and Uniqueness}

Bredon sheaf cohomology has a number of fundamental properties which we prove: 

\begin{enumerate} 
\item \emph{Normalization}: there is a natural equivalence $\bre(Z, E) \simeq E(Z)$ for $Z \in \Orb$.
\item \emph{Open descent}: $\bre(-,E)$ is a sheaf on $\GTop$. 
\item \emph{Cofiltered compact codescent}: for a cofiltered limit of compact Hausdorff $G$--spaces $X = \lim_i X_i$, the map
$\colim_i \bre(X_i,E) \longrightarrow \bre(X,E)$ is an equivalence. 
\item \emph{$G$--homotopy invariance}: every $G$--homotopy equivalence $X \to Y$ between locally compact Hausdorff spaces induces an equivalence $\bre(Y,E) \simeq \bre(X,E). $
\item \emph{Agreement with singular Bredon cohomology}: if $X$ is a sufficiently nice space, e.g. a $G$--CW complex, then Bredon sheaf cohomology agrees with singular Bredon cohomology.
\end{enumerate}

Properties (1) and (2) are true by definition.
Property (3), proven in Theorem \ref{thm:breprofin}, is the main technical result of the paper and relies on a
detailed analysis of the sheaves $\underline{E}_X$, making essential
use of the existence of slices for group actions, as guaranteed by
Abels’ theorem (\cite{abels}*{Theorem 3.3}).
Properties (4) and (5) follow from standard homotopical 
arguments; see Proposition \ref{prop:htpyinv} and Section \ref{subsec:bredoncw}.

In addition, the theory of Bredon sheaf cohomology admits an interpretation in terms of the $G$-shape.
For a general $G$-space $X$, we construct a pro-$G$-anima $\underline{\Pi}_\infty(X)$ such that $\bre(X,E)$ agrees with the singular Bredon cohomology of $\underline{\Pi}_\infty(X)$; see Section \ref{sec:shape}.

The central structural result of the paper is a strong uniqueness theorem
for Bredon sheaf cohomology on $\GLCH$, the category of locally compact
Hausdorff $G$-spaces.
It may be viewed as a $G$-equivariant refinement of a theorem of Clausen
\cite{som}*{Theorem 3.6.11}, building on ideas of Efimov and Hoyois. Let 
$\Fun^{(2),(3)}(\GLCH^{\op},\Sp)$ denote the full subcategory of functors satisfying  properties (2) and (3) above namely open descent and cofiltered compact codescent.

\begin{introthm}[Theorem \ref{thm:breeq}]\label{thm_unique}
Restriction to orbits induces an equivalence
\[
\Fun^{(2),(3)}(\GLCH^{\op},\Sp)
\;\simeq\;
\Fun(\Orb^{\op},\Sp),
\]
with inverse given by Bredon sheaf cohomology.
\end{introthm}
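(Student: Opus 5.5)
The plan is to show that the two functors, restriction $r$ along $t\colon \Orb\to\GLCH$ and $E\mapsto \bre(-,E)$, are mutually inverse. First, $\bre(-,E)$ does land in $\Fun^{(2),(3)}$: open descent holds by construction and cofiltered compact codescent is Theorem \ref{thm:breprofin}. Normalization gives $r\circ\bre\simeq\id$. It therefore suffices to prove that $r$ is conservative on $\Fun^{(2),(3)}$ and that there is a natural comparison map between $F$ and $\bre(-,rF)$. For the comparison, note that $t^*$ is left adjoint to restriction on sheaves on $\GTop$, or equivalently on $\GLCH$. Hence for $F$ with open descent the counit of the presheaf adjunction (left Kan extension $\dashv$ restriction) gives a map $\Lan_t(rF)\to F$. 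Since $F$ is a sheaf, this factors through sheafification, which yields $\bre(-,rF)\to F$. Its restriction to orbits is an equivalence. So if we set $C$ to be the cofiber, we get a functor in $\Fun^{(2),(3)}$ (this class is closed under finite colimits, since both conditions are limit/colimit conditions that commute with cofibers in $\Sp$) with $C|_{\Orb}=0$. We must show $C=0$.

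We reduce in three steps: from locally compact spaces to compact ones, then to finite $G$-sets, then to orbits.

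\textbf{Locally compact to compact.} Open descent reduces the problem to compact $X$. Indeed, every $X\in\GLCH$ is covered by $G$-invariant opens $U$ with compact closure (take $G\cdot V$ for relatively compact $V$). We use descent along hypercovers by such opens; the possible issue is non-hypercompleteness, which we circumvent by using only \v{C}ech descent for finite intersections together with the relation $F(X)=\lim F(U)$ over the poset of relatively compact invariant opens. It is then enough to show that $C$ vanishes on every compact $G$-space $K$, because for an open $U$ in a compact $\bar U$ we can write $U$ as a filtered union of compacts. To keep the argument clean, I would instead apply Clausen-style reasoning directly: any $C$ with open descent that vanishes on compacts vanishes everywhere, since the stalk-style colimits over compact neighbourhoods detect sections. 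This uses codescent for the cofiltered system of compact neighbourhoods of an invariant compact subset.

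\textbf{Compact to finite $G$-sets.} A compact Hausdorff $G$-space $K$ is a cofiltered limit of compact $G$-spaces, and, more usefully, $K$ admits a $G$-equivariant hypercover by profinite $G$-sets. I would take the equivariant version of the Gleason cover, namely $\beta(K^\delta)$ with the induced action, and then iterate on fibre products. Profinite $G$-sets are cofiltered limits of finite $G$-sets. Finite $G$-sets are disjoint unions of orbits, and by open descent for finite disjoint unions $C$ vanishes on them. By cofiltered compact codescent $C$ then vanishes on all profinite $G$-sets.

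\textbf{The main obstacle.} The difficult step is passing from profinite $G$-sets to arbitrary compact $K$. This requires proving that functors with (2) and (3) satisfy descent along such surjections of compact $G$-spaces, that is, proper descent, including hyperdescent, in the compactly generated sense. Non-equivariantly, Clausen derives this by showing that (2)+(3) forces the values on a compact $K$ to be computed by sheaf cohomology on $K$, and that sheaf cohomology on compact Hausdorff spaces satisfies proper base change and therefore proper descent. Equivariantly, I would first use (2) and (3) to show that $C(K)$ is given by global sections of a sheaf on $K/G$ whose stalks at $[x]$ are $\colim_{U\ni x}C(U)$. Using slices from Abels' theorem, each orbit $Gx$ has a cofinal system of invariant compact neighbourhoods of the form $G\times_{G_x}S$. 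Codescent then shows that this stalk is $C(G\times_{G_x}\lim S)=C(G/G_x)=0$. Thus the sheaf has vanishing stalks. Since $K/G$ is compact Hausdorff, we can avoid any hypercompleteness issue by applying codescent over the neighbourhood system and then using Clausen's non-equivariant result on $K/G$ for the sheaf $U\mapsto C(q^{-1}U)$. Applied on the quotient, that result gives global sections equal to $0$. This stalk argument via slices is the step I expect to be the main work.
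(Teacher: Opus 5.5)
Your architecture is the paper's. The counit $\bre(-,rF)\to F$ is an equivalence on orbits (Lemma \ref{lem:bredon-counit}), so everything reduces to showing that a $C\in\Fun^{(2),(3)}$ with $C|_{\Orb}=0$ vanishes, i.e.\ to the conservativity statement of Theorem \ref{thm:res-orb-cons}. Your reduction from locally compact to compact spaces is fine once phrased via the sheaf/$k$-sheaf equivalence and Lemma \ref{lem:o+cc=>cl}; the hypercover remarks there are unnecessary. Your stalk computation is also fine, though slices are not needed: an orbit $Gx$ is the cofiltered intersection of its compact invariant neighbourhoods, so (3) gives the stalk $C(Gx)=0$ at once.

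The genuine gap is your final step. From the fact that the sheaf $C_K\colon U\mapsto C(q^{-1}U)$ on $K/G$ has vanishing stalks you cannot conclude $C(K)=\Gamma(K/G,C_K)=0$. On compact Hausdorff spaces of infinite covering dimension (a Hilbert cube, say) sheaves need not be hypercomplete, so stalks need not detect zero objects; this is exactly why Theorem \ref{thm:dualstalks} assumes finite dimension. ``Codescent over the neighbourhood system'' adds no information, since every sheaf on a locally compact Hausdorff space satisfies the $k$-sheaf condition. An argument using only $C_K$, its stalks and that condition would therefore prove that stalks detect vanishing of arbitrary sheaves on arbitrary compact Hausdorff spaces. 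Clausen's theorem does not rescue this: it concerns functors on all of $\CHaus$, not a single sheaf on $K/G$. In fact the relative version over a fixed compact base $B$ is false. If $\mathcal F$ is a nonzero sheaf on $B$ with vanishing stalks, then $Y\mapsto\Gamma(Y,f^*\mathcal F)$ on compact $f\colon Y\to B$ has open descent and cofiltered compact codescent, vanishes on points of $B$, and is nonzero.

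What is missing is using (3) to move \emph{away} from $K$ to spaces where stalks do detect:
\begin{enumerate}
\item Embed $K$ equivariantly in $[0,1]^J$ for a $G$-set $J$ (Lemma \ref{lem:g-urysohn}).
\item Write $K=\lim_S p_S(K)$ over finite $G$-invariant $S\subseteq J$ (Lemma \ref{lem:compcofi}). By (3) it then suffices to show $C(L)=0$ for compact invariant $L\subseteq[0,1]^S$.
\item Since $[0,1]^S/G$ has finite covering dimension (Proposition \ref{prop:cov}), the sheaf $C_{[0,1]^S}$, whose stalks are values on orbits, vanishes by Theorem \ref{thm:dualstalks}.
\item The identification of its $k$-sheaf with $L\mapsto C(L)$ (Lemma \ref{lem:o+cc=>cl}) gives $C(L)=0$.
\end{enumerate}
This is the paper's proof of Theorem \ref{thm:res-orb-cons}. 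Your profinite/Gleason-cover step can be discarded. Your final argument never uses it, and the proper hyperdescent it would require is neither established nor needed.
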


In particular, $G$-homotopy invariance and compatibility with singular
Bredon cohomology are formal consequences of open descent and
cofiltered compact codescent alone.
This is somewhat surprising, as it excludes the existence of
non--homotopy-invariant theories satisfying these axioms.
The theorem remains valid with values in any compactly assembled
$\infty$-category in place of $\Sp$.

\subsection*{Compact Supports and Applications}

For a locally compact Hausdorff $G$-space $X$ we define compactly supported Bredon sheaf cohomology by 
\[
\brec(X,E) := \Gamma_c(X/G,\underline{E}_X).
\]
For compact $X$, this agrees with ordinary Bredon sheaf cohomology.
In general, it satisfies open codescent, cofiltered compact codescent,
open--closed excision, and proper $G$--homotopy invariance (see Section~\ref{subsec:csupp}).

We also have a variant of Theorem \ref{thm_unique} for compactly supported  Bredon sheaf cohomology (Theorem \ref{thm:uniqueness}).
This then directly implies Theorem~\ref{thm_zwei}: basic properties of $K$--theory
and sheaf categories ensure that
$K(\Shv_G(X,\mathcal{C}))$ satisfies the defining axioms and therefore
coincides with compactly supported Bredon sheaf cohomology.

The same formalism applies in greater generality.
In particular, applying the uniqueness theorem to equivariant topological
$K$-theory yields the following identification of equivariant topological $K$-theory.  This is classically defined for a Hausdorff $G$-space using equivariant vector bundles, see \cite{segal-eqk}, but is equivalent to topological $K$-theory of the $C^*$-algebraic crossed product $G \ltimes C(X)$. 
\begin{introthm}[Corollary \ref{coro:K-theory-crossed}]\label{thm_zwei'}
Let $X$ be a locally compact Hausdorff $G$-space. There is a natural equivalence
\[K^{\mathrm{top}}(G \ltimes C_0(X)) \simeq \brec(X,K^{\mathrm{top}}_G),
\]
where $K_G$ denotes the restriction of $K^{\mathrm{top}}(G \ltimes C_0(-))$
to the orbit category, which sends $G/H$ to $K^{\mathrm{top}}_G(G/H) = K^{\mathrm{top}}_H(\mathrm{pt})$. 
\end{introthm}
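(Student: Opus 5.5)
The plan is to deduce the statement from the compactly supported uniqueness theorem (Theorem \ref{thm:uniqueness}), applied to the functor
\[
F \colon X \longmapsto K^{\mathrm{top}}(G \ltimes C_0(X))
\]
on locally compact Hausdorff $G$-spaces. This functor is contravariant for proper maps and covariant for open inclusions via extension by zero. It suffices to check that $F$ has the defining properties of compactly supported Bredon sheaf cohomology: open codescent (equivalently, open--closed excision), and cofiltered compact codescent. The uniqueness theorem then identifies $F$ with $\brec(-,E)$, where $E$ is the restriction of $F$ to $\Orb$. By definition this restriction is $K^{\mathrm{top}}_G$, and by the Green--Julg theorem it is given by $G/H \mapsto K^{\mathrm{top}}(G\ltimes C(G/H)) \simeq K^{\mathrm{top}}(C^*(H)) \simeq K^{\mathrm{top}}_H(\mathrm{pt})$.

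The axioms are checked in the following order.

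\emph{Excision.} Let $U \subseteq X$ be an open $G$-invariant subset with closed complement $Z$. This gives a $G$-equivariant extension $0 \to C_0(U) \to C_0(X) \to C_0(Z) \to 0$. Since $G$ is finite, the maximal and reduced crossed products agree and $G\ltimes -$ is exact, so we obtain an extension of crossed products. Topological $K$-theory of $C^*$-algebras, viewed as a spectrum-valued functor, sends extensions to fibre sequences, which gives the excision fibre sequence $F(U) \to F(X) \to F(Z)$.

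\emph{Filtered colimits along open inclusions.} If $X = \bigcup_i U_i$ is a filtered union of invariant opens, then $C_0(X) = \colim_i C_0(U_i)$ as a $C^*$-algebraic inductive limit. Crossed product by a finite group commutes with such limits, and $K$-theory is continuous, so $F(X) \simeq \colim_i F(U_i)$.

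\emph{Cofiltered compact codescent.} For $X = \lim_i X_i$ compact, $C(X) = \colim_i C(X_i)$, again by Stone--Weierstrass. The same continuity argument then applies.

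I expect the main obstacle to be the precise form of the hypotheses in Theorem \ref{thm:uniqueness}, rather than the analysis. The theorem presumably requires $F$ to be a functor on a specific category of locally compact Hausdorff $G$-spaces, with both proper pullbacks and open extensions, or alternatively a functor on compact $G$-spaces with values in a suitable dualizable target. It must also be arranged coherently at the $\infty$-categorical level, not just objectwise. To handle this, I would construct $F$ as the composite of three functors:
\[
X \mapsto C_0(X) \in \GCsAlg, \qquad A \mapsto G\ltimes A, \qquad K^{\mathrm{top}} \colon \CsAlg \to \Sp.
\]
The last of these factors through the stable $\infty$-category $\mathrm{KK}$ or $\mathrm{E}$, and I would use exactness and continuity there to obtain the fibre sequences functorially.

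If the uniqueness theorem is formulated for compact spaces with a one-point-compactification definition of compact supports, I would instead reduce to compact $G$-spaces. There one uses the fibre sequence
\[
F(X) \simeq \fib\bigl(F(X^+) \to F(\mathrm{pt})\bigr),
\]
which comes from the split extension $C_0(X) \to C(X^+) \to \C$.
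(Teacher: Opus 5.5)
Your proposal is correct and is essentially the paper's argument: verify open--closed excision and cofiltered compact codescent for $X\mapsto K^{\mathrm{top}}(G\ltimes C_0(X))$ on $\GLCH^{pdp}$ and invoke Theorem \ref{thm:uniqueness}. The only difference in route is that the paper applies the crossed product after passing to $\E^G$ (Lemma \ref{lem:descent}), whereas you use exactness and continuity of $G\ltimes -$ directly on $G$-$C^*$-algebras, which is equally valid for finite $G$. One small correction: what the theorem requires is open--closed excision itself, not open codescent (open codescent follows from excision and cofiltered compact codescent by Lemma \ref{lem:codesc}), so your separate check of filtered open unions is superfluous but harmless.
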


This result provides a topological counterpart to
Theorem~\ref{thm_zwei}.
More precisely, under the analogy between dualizable categories
and $C^\ast$-algebras—where $G$-equi\-variant sheaf categories correspond
to crossed product $C^\ast$-algebras and algebraic $K$-theory corresponds
to topological $K$-theory—the two theorems are parallel.

Similar arguments yield further refinements, including an
identification of the noncommutative motive of
$\Shv_G(X,\mathcal{C})$ and, in the presence of a $G$--action on $X$,
an identification of the $G$--motive of $\Shv(X,\mathcal{C})$ itself as well as an identification of $C_0(X)$ as an object in the equivariant $E$-theory.
These applications are carried out in Section~\ref{sec:appl}.

\subsection*{Geometric Description and Computability} 

The intuition above can be made precise. 
The sheaf $\underline{E}_X$ admits a concrete geometric description which
both explains the formal properties of Bredon sheaf cohomology and enables
explicit calculations in practice. 

\begin{introthm}[Theorems \ref{thm:EX-constr} and \ref{thm:mayeda}]\label{prop_constructible}
Let $X$ be a Tychonoff $G$--space, and $E \colon \Orb^{\op} \to \Sp$ a
coefficient system.
\begin{enumerate}
\item The stalk of $\underline{E}_X$ at a point $x \in X/G$ corresponding
to an orbit $G/H$ is canonically equivalent to $E(G/H)$.

\item If $X$ is locally compact Hausdorff, then the sheaf $\underline{E}_X$ is constructible with respect to the
stratification of $X/G$ by orbit types.

\item If $X$ is a $G$--manifold, then
$\underline{E}_X$ is classified by the composite
\[
\Exit(X/G) \longrightarrow \Orb^{\op} \xrightarrow{E} \Sp,
\]
where $\Exit$ denotes the exit-path category and the first
functor arises as the straightening of the right fibration $\Exit(X) \to  \Exit(X/G)$, see \cite{mayeda}.
\end{enumerate}
\end{introthm}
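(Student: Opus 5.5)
We prove the three parts in order, each building on the previous one. Throughout, write $q\colon X\to X/G$ and let $\preuE$ denote the presheaf $U\mapsto \colim_{q^{-1}(U)\to Z} E(Z)$ on $X/G$, so that $\uE_X$ is its sheafification.

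\textbf{Stalks (part (1)).} Since sheafification preserves stalks, it suffices to compute the stalk of $\preuE$ at $x=q(\tilde x)$ with stabilizer $H$. That stalk is the filtered colimit over invariant neighbourhoods $q^{-1}(U)$ of $\colim_{q^{-1}(U)\to Z}E(Z)$. The key input is the existence of slices for Tychonoff $G$-spaces with $G$ finite (Abels, or more elementarily, separating the finitely many points of the orbit). This gives a cofinal system of invariant neighbourhoods of the form $G\times_H S$ with $S$ an $H$-invariant neighbourhood of $\tilde x$ small enough that its translates $gS$ for $g\notin H$ are disjoint. For such $W=G\times_H S$, the category of equivariant maps $W\to Z$ to orbits has an initial object $W\to G/H$. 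Indeed, any equivariant map $W\to Z$ sends $\tilde x$ to a point fixed by $H$, and it is constant on each connected piece only after shrinking, so we shrink $S$ further until every such map factors through $G/H$. Hence the colimit over maps $W\to Z$ is $E(G/H)$, and passing to the filtered colimit over $W$ gives $E(G/H)$. This factorization step is the delicate one, since $S$ need not be connected. I would handle it by noting that the colimit over the comma category is computed cofinally by maps with $\tilde x$-component as above, and that the restriction maps in the filtered system become eventually constant at $E(G/H)$.

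\textbf{Constructibility (part (2)).} For locally compact Hausdorff $X$, the orbit-type strata $X_{(H)}/G$ are locally closed, and there are finitely many of them. To show that $\uE_X$ restricted to each stratum is locally constant with the stalks above, I would use the slice neighbourhoods again. Over $q(G\times_H S)\cap X_{(H)}/G$, the stratum consists of the $H$-fixed points $S^H$, and on this set the comparison map $E(G/H)\to \preuE(U)$ constructed in part (1) is compatible with restrictions. This gives an equivalence of the restricted sheaf with the constant sheaf $E(G/H)$ on that neighbourhood.

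\textbf{Exit paths (part (3)).} For a smooth $G$-manifold, the orbit-type stratification of $X/G$ is conically smooth, with links given by the slice representations. By the exit-path correspondence (Lurie, Ayala--Francis--Tanaka), constructible sheaves are equivalent to functors $\Exit(X/G)\to\Sp$. The right fibration $\Exit(X)\to\Exit(X/G)$ straightens to a functor $\Exit(X/G)\to\Orb^{\op}$, which sends a point to its orbit $G/G_x$ and an exit path to the induced $G$-map $G/G_{x'}\to G/G_x$ (the stabilizer decreases along an exit path); this is the identification of Mayeda. It remains to check that the exit-path functor classifying $\uE_X$ is $E$ composed with this functor. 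On objects this is part (1). On morphisms it follows by computing the specialization map between stalks inside a slice $G\times_H V$, where $V$ is a linear representation: the cone point has stalk $E(G/H)$, and nearby points with stabilizer $K\subseteq H$ receive the map $E(G/H)\to E(G/K)$ induced by restriction along the projection of orbits. Making this natural, and not merely pointwise, is the main obstacle. I would handle it by producing a global map of presheaves on $\Exit(X/G)$ from the Kan-extended orbit data, and then applying parts (1) and (2) to conclude that this map is an equivalence.
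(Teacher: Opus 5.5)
Each of the three parts has a step that fails or is missing.

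\textbf{Parts (1) and (2).} In part (1), the claim that a small enough slice $W=G\times_H S$ makes $W\to G/H$ initial in $(\Orb)_{W/}$ is false. Neither shrinking $S$ nor your fallback of ``eventual constancy'' repairs it. Let $G=\mathbb{Z}/2$ act freely on $X=G\times C$, with $C$ a Cantor set. For every slice $W=G\times S$, equivariant maps $W\to G/e$ are arbitrary locally constant functions $S\to G$, and the non-constant ones do not factor through the projection $W\to G/e$. If $E(G/G)=0$, the value $\colim_{W\to Z}E(Z)$ is an infinite direct sum of copies of $E(G/e)$ at every stage.

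What is true is a statement about germs. The stalk is the colimit over the category of pairs $(U,f\colon U\to Z')$. Restricting $f$ to the orbit gives a functor to $(\Orb)_{Z/}$ that is a localization, hence cofinal. This holds because every map $Z\to Z'$ extends to an invariant neighbourhood, and two maps agreeing on $Z$ agree near $Z$ (Lemma~\ref{lem:orbi-loceq}). Making this cofinality precise is exactly Lemma~\ref{lem:technical}(ii), and your argument needs it.

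The same phenomenon undermines part (2). On $G\times_H S^H$, maps $G\times_H U'\to Z$ are locally constant maps $U'\to Z^H$, so $E(G/H)\to\preuE(U)$ is not an equivalence of presheaves. Your step ``this gives an equivalence with the constant sheaf'' is therefore unsupported. Deducing it from the stalks of part (1) would need hypercompleteness, which fails for general locally compact spaces. The paper does two things instead:
\begin{enumerate}
\item[(a)] It identifies the pullback of $\uE_X$ to the stratum with $\uE_{X_{(H)}}$ (Lemma~\ref{lem:pullback-EX}(i)). This is where local compactness enters; your argument never uses it.
\item[(b)] It proves $\uE_{G/H\times Y}$ is constant at the sheaf level. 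This uses the induction formula $\bre(G\times_H Y,E)\simeq\Gamma^H_{\mathrm{Br}}(Y,E|_H)$ of Proposition~\ref{lem:ind-res}, together with the fact that for a trivial action the presheaf is literally constant (Example~\ref{ex:bredon-trivial}).
\end{enumerate}

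\textbf{Part (3).} You name the obstacle but do not overcome it. Agreement on objects and on specialization maps does not determine an $\infty$-functor $\Exit(X/G)\to\Sp$, and the ``global map from the Kan-extended orbit data'' is never constructed. The missing idea is that $E\mapsto\phi(\uE_X)$ and $E\mapsto E\circ m$ both preserve colimits in $E$. So it suffices to compare them on representables $E=\hom_{\Orb}(-,Z)$, working with $\An$-coefficients and tensoring afterwards. For these, $\uE_X$ is the $\Set$-valued sheaf $U\mapsto\hom_G(q^{-1}(U),Z)$, and the comparison lives in $\Fun(\exit(X/G),\Set)$. One then only has to show that transport along a single exit path $\gamma$ is precomposition with $m(\gamma)$. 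By Yoneda this reduces to tracking $\id_O$: extend a germ along $\gamma$ and evaluate on Mayeda's lifts $\widetilde\gamma$, as in the proof of Theorem~\ref{thm:mayeda}. Your slice computation covers only short paths leaving a cone point, and it never uses the lifting property that defines $m$.
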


In particular, the last part reduces the computation of
Bredon cohomology to a limit over $\Exit(X/G)$,
and thus to calculations
on strata and their incidence data, much as in classical constructible
sheaf theory. In particular, for $G$--manifolds or spaces with finitely many orbit types,
this description reduces Bredon sheaf cohomology to explicit calculations
on strata and their incidence relations. 

\begin{ack}
We thank Ulrich Bunke, Benjamin Dünzinger, Thorger Geiß, Janou Glaeser, Achim Krause, Markus Land, Phil Pützstück, and Maxime Ramzi for many helpful discussions related to this work. We are particularly grateful to Markus Land and Maxime Ramzi for generously sharing their insights and ideas on the proof of the uniqueness result in the non-equivariant case. We also point to forthcoming work by Valerio Proietti and Makoto Yamashita, who have related results in the setting of \'etale groupoid \(C^*\)-algebras and equivariant \(\mathrm{KK}\)-theory, and thank them for sharing parts of their work.

The first named author was supported by a CONICET postdoctoral fellowship and partially supported by grants UBACyT 206BA, PICT 710 and Mathematics Münster's ``Young Research Fellows'' visitors program. He wishes to express his gratitude to the Department of Mathematics at Universität Münster for their hospitality during his visit, where part of the research for this project was carried out. 

The second named author was supported by a DFG Eigenestelle (project number 534946574) and a UK Research and Innovation Horizon Europe Guarantee MSCA Postdoctoral Fellowship. 

All authors were funded by the Deutsche Forschungsgemeinschaft (DFG, German Research Foundation) – Project-ID 427320536 – SFB 1442, as well as under Germany’s Excellence Strategy EXC2044/2–390685587, Mathematics Münster: Dynamics–Geometry–Structure.
\end{ack}

\section{Preliminaries}

	Throughout the article we fix a finite group $G$. We shall write $\GLCH$, $\GCH$ and $\Orb$ for the (1-)categories of locally compact Hausdorff, compact Hausdorff, and (discrete) transitive $G$-spaces respectively, and $\GTop$ for the category of all $G$-spaces. We shall freely use the language of $\infty$-categories as developed in \cites{htt,ha}, and refer to them simply as categories.
	
	\subsection{Presentable and dualizable categories}
	
	Recall that a category is \emph{presentable} if it is cocomplete and $\kappa$-compactly generated for some regular cardinal $\kappa$. When $\kappa = \omega$ we omit it from the notation.
	We write $\Cat_\infty$ for the category of not necessarily small categories, $\Pr$ for the subcategory spanned by presentable categories and $\prl$ (resp. $\prr$) for the subcategories of $\pr$ spanned by presentable categories together with left (resp. right) adjoints. 
	A category is \emph{compactly assembled} if it is
	a retract in $\prl$ of a compactly generated category.
	
	We put $\prll$ for the subcategory of $\prl$ spanned by \emph{strongly continuous functors}; that is, left adjoint functors whose right adjoint admits a further right adjoint. Recall also that a category is said to be \emph{stable} if it has finite limits and colimits and pullback squares coincide with pushout squares. For any subcategory $C$ of $\Cat_\infty$, we write $C_{\stable}$ for the subcategory of $C$ generated by those categories which are stable.

	\subsection{The Lurie tensor product and dualizable categories}
	
	A functor $C \times D \to E$ between presentable categories is said to be \emph{bilinear}
	if it preserves colimits in each variable separately. The \emph{(Lurie) tensor product} of $C$ and $D$, 
	introduced originally in \cite{ha}*{Section 4.8.1},
	is a presentable category $C \otimes D$ equipped with a bilinear functor $C \times D \to C \otimes D$
	such that for each $E \in \prl_{\stable}$ the map
	\[
	\Fun^L(C \otimes D, E) \to \Fun^{\mathrm{biL}}(C \times D, E) = \Fun^L(C,\Fun^L(D,E))
	\]
	is an equivalence. The bifunctor $\otimes \colon \prl \times \prl \to \prl$ is continuous in each variable and it restricts to a bifunctor $\prl_{\stable} \times \prl_{\stable} \to \prl_{\stable}$, and 
	we have $C \cong \Sp \otimes C \cong C \otimes \Sp$ for all $C \in \prl_{\stable}$. Furthermore $\otimes$ promotes to a symmetric monoidal category structure on $\prl_{\stable}$.
	A detailed treatment of the Lurie tensor product 
	can be consulted in \cite{som}*{Section 2.8}. 
	
	A stable category is \emph{dualizable} if it is a dualizable object of $\prl_{\stable}$ with respect to $\otimes$. Equivalently, a category is dualizable if it is stable and compactly assembled (\cite{som}*{Theorem 2.9.2}). We put $\Catdual$ for the subcategory of $\prll_{\stable}$ generated by dualizable categories.
	
	\begin{exs} The categories $\Sp$ of spectra, and more generally $\Mod(R)$ of modules over a given ring spectrum $R$ are dualizable. Another example is that of the derived category $D(R)$ of a ring $R$.
	\end{exs}
	
	We refer to \cite{efiloc} and \cite{som} for a comprehensive treatment of dualizable categories. 
	
	\subsection{Sheaves and $k$-sheaves}
	
	For a given $X \in \GLCH$ and subspaces $A, B \subset X$, we write $A \Subset B$ if $A=B$ or there exists a $G$-invariant open subspace $U \subset X$ such that $A \subset U  \subset B$. This yields an order relation on the poset $\mathcal P_G(X)$ of $G$-invariant subspaces of $X$. 
	We write $\cO_G(X)$ and $\cK_G(X)$ 
	for the subposets of $G$-invariant open and compact subspaces of $X$ respectively, and $\cKO_G(X)$ for their union. 	
	By setting $G = 1$ we recover the non-equivariant, classical definitions of the posets of open and compact subspaces; to refer to the latter we will simply drop the group from the notation.
	
	\begin{defn} \label{defn:shvx}
		Let $C$ be a complete category and $X$ a topological space.
		The category $\Shv(X,C)$ of $C$-\emph{valued sheaves on $X$} is the subcategory of $\Psh(X,C) = \Fun(\cO(X)^{\op},C)$ generated by functors $F$ satisfying the following conditions:
		\begin{enumerate}[(i)]
			\item $F(\emptyset) \cong \ast$;
			\item for each $U,V \subset X$ open, the square 
			\[
			\begin{tikzcd}
				F(U \cup V) \arrow{r} \arrow{d} & \arrow{d} F(U)\\
				F(V) \arrow{r} & F(U \cap V)
			\end{tikzcd}
			\]
			is a pullback;
			\item for each filtering union of open sets $U = \bigcup_{i \in I} U_i$, the canonical map
			\[
			F(U) \to \lim_{i \in I} F(U_i)
			\]
			is an equivalence.
		\end{enumerate}
		We denote the left adjoint to the inclusion $\Shv(X,C) \hookrightarrow \Psh(X,C)$ by $(-)^\sh$.
		\end{defn}
		
		\begin{defn}[\cite{htt}*{Definition 7.3.4.1}] The category $\Shv_{\mathcal K}(X,C)$ of \emph{$C$-valued $k$-sheaves on $X$} is the subcategory of $\Fun(\cK(X)^{\op},C)$ generated by functors $F$ satisfying the following conditions:
		\begin{enumerate}[(i)]
			\item $F(\emptyset) \cong \ast$;
			\item for each $K,L \subset X$ compact, the square 
			\[
			\begin{tikzcd}
				F(K \cup L) \arrow{r} \arrow{d} & \arrow{d} F(L)\\
				F(K) \arrow{r} & F(K \cap L)
			\end{tikzcd}
			\]
			is a pullback;
			\item for each compact $K \subset X$, the canonical map
			\[
			\colim_{K \Subset L}F(L) \to F(K)
			\]
			is an equivalence.
		\end{enumerate}
	\end{defn}
	
	\begin{rmk} \label{rmk:OG-OX/G}
		Since $G$ is finite, there are natural equivalences between $\cO_G(X)$, $\cK_G(X)$ and $\cKO_G(X)$ and the posets 
		$\cO(X/G)$, $\cK(X/G)$ and $\cKO(X/G)$ on the orbit space $X/G$ of $X$. From here we can make sense of sheaves defined on $\cO_G(X)^\op$ and $k$-sheaves defined on $\cK_G(X)^\op$ canonically.
	\end{rmk}

	\begin{prop}[\cite{som}*{Corollary 2.12.3}] \label{prop:shvx-dual}
		If $X$ is a locally compact Hausdorff space and $D$ is a dualizable category, then $\Shv(X,D)$ is a dualizable category.
		\qed
	\end{prop}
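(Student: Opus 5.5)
The plan is to reduce dualizability of $\Shv(X,D)$ to two facts: that $\Shv(X,\Sp)$ is itself dualizable for locally compact Hausdorff $X$, and that tensoring with $D$ preserves both dualizability and the sheaf condition.

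First, I would show $\Shv(X,\Sp)$ is compactly assembled. This is where local compactness enters. Via Lurie's comparison (\cite{htt}*{Theorem 7.3.4.9}, stable version), $\Shv(X,\Sp)$ is equivalent to $\Shv_{\mathcal K}(X,\Sp)$. I would then exhibit $\Shv_{\mathcal K}(X,\Sp)$ as a retract, in $\prl$ with strongly continuous maps, of $\Fun(\cK(X)^{\op},\Sp)$, which is compactly generated. For the projection I would use $k$-sheafification, where condition (iii) is imposed by the colimit $\colim_{K\Subset L}$. Alternatively, I would verify Lurie's criterion that the colimit functor $\Ind(\Shv(X,\Sp))\to\Shv(X,\Sp)$ admits a left adjoint $\hat y$. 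This would be built from the "way below" relation $K\Subset L$ on compacts: a section over $U$ is recovered as the filtered colimit, over compacts $K\subset U$ and opens $V$ with $K\subset V\Subset U$, of the representables. Being stable and compactly assembled, $\Shv(X,\Sp)$ is then dualizable by the equivalence quoted from \cite{som}*{Theorem 2.9.2}.

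Second, I would identify $\Shv(X,D)\simeq \Shv(X,\Sp)\otimes D$. This holds for any presentable stable $D$ and is the main obstacle. Since $\Shv(X,\Sp)$ is dualizable, $\Shv(X,\Sp)\otimes D\simeq \Fun^L(\Shv(X,\Sp)^\vee,D)$, and $-\otimes D$ preserves all limits in $\prl_{\stable}$. I would write $\Shv(X,\Sp)$ as the limit in $\prr$ cut out of $\Psh$ by the conditions (i)–(iii); equivalently, as $\Fun^{R}(\Shv(X,\An)^{\op},\Sp)$ with $\Shv(X,C)\simeq \Fun^{\lim}(\Shv(X,\An)^{\op},C)$. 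Tensoring then gives $\Fun^{\lim}(\Shv(X,\An)^{\op},D)=\Shv(X,D)$ (\cite{sag}*{Proposition 1.3.1.7}). The delicate point is that condition (iii) involves filtered limits, which do not commute with $\otimes D$ in general. Dualizability of $\Shv(X,\Sp)$ is exactly what makes $-\otimes D$ agree with $\Fun^L(\Shv(X,\Sp)^\vee,-)$ and hence commute with these limits.

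Finally, the tensor product of two dualizable objects in a symmetric monoidal category is dualizable, with dual $\Shv(X,\Sp)^\vee\otimes D^\vee$. This concludes the argument.
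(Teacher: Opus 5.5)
Your skeleton is reasonable: prove $\Shv(X,\Sp)$ dualizable, identify $\Shv(X,D)\simeq\Shv(X,\Sp)\otimes D$, and use that $\otimes$ preserves dualizable objects. Steps 2 and 3 are fine. The paper itself gives no argument here and only cites \cite{som}. In Step 2 the ``delicate point'' is a red herring. The result you quote identifies both sides with limit-preserving functors $\Shv(X,\An)^{\op}\to D$, so $\Shv(X,D)\simeq\Shv(X,\An)\otimes D$ holds for every presentable $D$, and no commutation of $-\otimes D$ with limits is needed.

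The genuine gap is Step 1. It is the entire content of the statement and the only place local compactness enters, and the mechanism you name there does not work, for two reasons.
\begin{enumerate}
\item There is no $k$-sheafification, i.e.\ no left adjoint to the inclusion $\iota\colon\Shv_{\mathcal K}(X,\Sp)\subset\Fun(\cK(X)^{\op},\Sp)$, because $k$-sheaves are not closed under limits. On $X=\{0\}\cup\{1/n : n\ge 1\}$, the skyscrapers $F_n$ (value $\mathbb S$ on compacts containing $1/n$, zero otherwise) are $k$-sheaves. Their pointwise product $P$ has $P(\{0\})=0$, while $\colim_{\{0\}\Subset L}P(L)\simeq\colim_N\prod_{n\ge N}\mathbb S\neq 0$.
\item A retract along strongly continuous maps is impossible in general. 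Strongly continuous functors preserve compact objects, so such a retract of a compactly generated category is compactly generated, whereas $\Shv(\mathbb R,\Sp)$ is not.
\end{enumerate}
Your second route only names $\hat y$, and it also misdescribes it. It is the free sheaf $\mathbb S_U$, not the sections over $U$, that should go to the formal colimit of the $\mathbb S_V$ over $V\Subset U$. Verifying that this respects the sheaf relations and is left adjoint to $\colim$ is exactly the missing work.

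Here is what does work along your first route. In the stable case, conditions (i)--(iii) are preserved by pointwise colimits, so $\iota$ is itself a morphism of $\prl$. A colimit-preserving retraction is $p=T\circ M$, defined as follows.
\begin{enumerate}
\item $M$ is the localization onto presheaves satisfying (i) and (ii).
\item $T(F)(K)=\colim_{K\Subset L}F(L)$, where $K\Subset L$ means $K\subset L^{\circ}$.
\end{enumerate}
One then checks three things about $T$.
\begin{enumerate}
\item $T$ lands in functors satisfying (iii), by interpolation: $K\Subset L$ implies $K\Subset L'\Subset L$ for some compact $L'$.
\item $T$ preserves (i) trivially.
\item $T$ preserves (ii). Given compact $K,L$ and $C$ with $K\cap L\Subset C$, separate the disjoint compacta $K\setminus C^{\circ}$ and $L\setminus C^{\circ}$. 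This gives compact $A,B$ with $K\Subset A$, $L\Subset B$ and $A\cap B\subset C$. Hence the square for $T(F)$ at $K,L$ is the filtered colimit, over such pairs $(A,B)$, of the pullback squares for $F$ at $A,B$, and filtered colimits in $\Sp$ are exact.
\end{enumerate}
As $p\iota\simeq\id$, the category $\Shv_{\mathcal K}(X,\Sp)$ is a retract in $\prl$ of a compactly generated stable category, hence dualizable. The same argument with $D$ in place of $\Sp$ exhibits $\Shv_{\mathcal K}(X,D)$ directly as a retract of $\Fun(\cK(X)^{\op},D)\simeq\Fun(\cK(X)^{\op},\Sp)\otimes D$, which makes your Step 2 unnecessary.
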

	
	\begin{thm}[\cite{htt}*{Theorem 7.3.4.9}] \label{thm:shv=kshv} Let $X$ be a locally compact Hausdorff space and $C$ a presentable category where filtered colimits are left exact. There are inverse equivalences between sheaves and $k$-sheaves
		\[
		\psi \colon \Shv(X,C) \longleftrightarrow \Shv_{\mathcal K}(X,C) \colon \phi
		\]
		with objectwise formulas given by
		\[
		\psi(\cF)(K) = \colim_{K \subset U} \cF(U), \quad 
		\phi(\mathcal G)(U) = \lim_{K \subset U} \mathcal G(K).
		\]
		\qed
	\end{thm}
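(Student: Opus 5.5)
Here is how I would prove Theorem~\ref{thm:shv=kshv} (\cite{htt}*{Theorem 7.3.4.9}), following Lurie's argument. The plan is to check that $\psi$ and $\phi$ land where claimed, and then that the unit and counit are equivalences.

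\emph{Step 1: well-definedness.} First I check that $\psi(\cF)$ is a $k$-sheaf. Condition (iii) is formal: for $K \Subset L$ the open sets containing $L$ are cofinal among those containing $K$ and meeting a fixed neighborhood, so colimits of colimits reassemble. Condition (ii) needs more. For compact $K, L$, the pairs $(U,V)$ of open neighbourhoods of $K$ and $L$ give cofinal systems of neighbourhoods $U \cup V$ of $K \cup L$ and $U \cap V$ of $K \cap L$; the latter uses that $X$ is Hausdorff, so disjoint compacta can be separated. The square for $\psi(\cF)$ is then a filtered colimit of the pullback squares for $\cF$. Since filtered colimits are left exact in $C$, it is a pullback.

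Next I check that $\phi(\mathcal G)$ is a sheaf. Condition (i) and the filtered-union condition (iii) are direct, since a compact subset of a filtered union $\bigcup U_i$ lies in some $U_i$. For condition (ii) I write compacts $K \subset U\cup V$ as $K = K_U \cup K_V$ with $K_U \subset U$ and $K_V \subset V$ compact; this is possible by local compactness and a partition argument. I then use the pullback property of $\mathcal G$ together with a cofinality argument comparing $\lim_{K\subset U\cup V}$ with a limit over such pairs.

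\emph{Step 2: $\phi\psi \simeq \id$.} The map is
\[
\cF(U)\to \lim_{K\subset U}\colim_{K\subset W}\cF(W).
\]
I expect this to be the main obstacle. I would reduce it to checking that $\psi(\cF)$ restricted to compacts in $U$ recovers $\cF$ via a covering argument. Concretely, I cover $U$ by a filtered system of relatively compact opens $W$ with $\overline W\subset U$ and use sheaf condition (iii). This reduces the claim to $\cF(W) \simeq \lim_{K\subset W}\psi(\cF)(K)$ up to interleaving: the systems $\{W\}$ and $\{\overline W\}$ are mutually cofinal, so a limit over $K$ and a colimit over $W$ can be swapped along interleaved chains $W_1\subset \overline W_1\subset W_2\subset\cdots$.

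I would follow Lurie and first prove the statement for $C=\An$, where both sides are hypercomplete-free statements about sheaves of spaces. I would then deduce it for general $C$ by tensoring, writing $\Shv(X,C)\simeq \Shv(X)\otimes C$. This is where left exactness of filtered colimits in $C$ enters, ensuring that $\Shv_{\mathcal K}(X,C)\simeq \Shv_{\mathcal K}(X)\otimes C$ as well.

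\emph{Step 3: $\psi\phi\simeq\id$.} The map here is
\[
\colim_{K\subset U}\lim_{L\subset U}\mathcal G(L)\to\mathcal G(K).
\]
It follows by the same interleaving: restrict to $U$ relatively compact, and use $k$-sheaf condition (iii) to replace the limit over $L\subset U$ by $\mathcal G(\overline{U'})$ for a slightly smaller $U'$. A cofinality argument then identifies the double colimit/limit with $\colim_{K\Subset L}\mathcal G(L)\simeq\mathcal G(K)$.

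Naturality of the unit and counit is formal, since both functors are given by Kan extensions along $\cK(X)\cup\cO(X)\supset\cO(X),\cK(X)$. So the main work is in Step 2, specifically the interchange of the limit over compacts with the colimit over neighbourhoods.
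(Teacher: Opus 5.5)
Your outline is correct in substance and is essentially the argument of \cite{htt}*{Theorem 7.3.4.9}, which is all the paper relies on here, since the statement is quoted without proof. You view $\psi$ and $\phi$ as left and right Kan extension through $\cKO(X)$ followed by restriction, check that they land in $k$-sheaves and sheaves respectively, and check the unit and counit. The real content sits in your Step 1, and you have it right. Separating the disjoint compacta $K\setminus W$ and $L\setminus W$ makes the sets $U\cap V$ cofinal among open neighbourhoods of $K\cap L$, so left exactness of filtered colimits gives the pullback axiom for $\psi(\cF)$. The decomposition $K=K_U\cup K_V$ gives Mayer--Vietoris for $\phi(\mathcal G)$; it needs only Hausdorffness, since you can remove from $K$ disjoint open neighbourhoods of $K\setminus U$ and $K\setminus V$.

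Two points need correcting.

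\emph{The reduction to $C=\An$.} Drop the reduction by tensoring. As you justify it, it is circular: nothing gives $\Shv_{\mathcal K}(X,C)\simeq\Shv_{\mathcal K}(X)\otimes C$ in advance. Axiom (iii) is a colimit condition, so $k$-sheaves are not a priori a localization cut out by limit conditions, and $\phi$ is not visibly compatible with $-\otimes C$. That identification is a consequence of the theorem, not an input. The reduction is also unnecessary. Left exactness of filtered colimits is used only in the pullback axiom for $\psi(\cF)$, which you verify directly, while Steps 2 and 3 hold for any $C$ with small limits and colimits.

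\emph{The mechanism in Step 2.} Step 2 involves no interchange of a limit with a colimit. Sequential chains $W_1\subset\overline W_1\subset W_2\subset\cdots$ would also only treat $\sigma$-compact $U$. The actual mechanism is cofinality in $\cKO(X)$, exactly as in the proof of Lemma~\ref{lem:ko}. Let $\widetilde\cF$ be the left Kan extension of $\cF$ to $\cKO(X)^{\op}$; it restricts to $\cF$ on opens and to $\psi(\cF)$ on compacts. Consider the poset of $A\in\cKO(X)$ with $\overline A$ compact and contained in $U$. Both the compacts and the opens are cofinal in it: the compacts containing $A$ have least element $\overline A$, and the opens of this poset containing $A$ form a nonempty filtered set by local compactness. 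Hence
\[
\lim_{K\subset U}\psi(\cF)(K)\simeq\lim_{W}\cF(W)\simeq\cF(U),
\]
with $W$ ranging over opens with $\overline W$ compact and contained in $U$; the last step is sheaf axiom (iii) for this filtered cover.

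Dually, the opens $U\supset K$ and the compacts $L$ with $K\Subset L$ are both coinitial among those $A\in\cKO(X)$ whose interior contains $K$. This gives $\colim_{U\supset K}\phi(\mathcal G)(U)\simeq\colim_{K\Subset L}\mathcal G(L)\simeq\mathcal G(K)$ in Step 3, and also axiom (iii) for $\psi(\cF)$ in Step 1. So neither the unit nor the counit is the main obstacle; each is a one-line cofinality statement.
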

	
	Recall that for any stable category $C$, a continuous map $f \colon X \to Y$ yields four different functors
	\begin{equation*}\label{adj:4fun}
		\begin{tikzcd}
			\Shv(Y,C) \arrow[bend left=15]{rr}{f^\ast}
			& \rotatebox{90}{$\vdash$}
			& \Shv(X,C); \arrow[bend left=15]{ll}{f_\ast}
		\end{tikzcd} 
		\begin{tikzcd}
			\Shv(X,C) \arrow[bend left=15]{rr}{f_!}
			& \rotatebox{90}{$\vdash$}
			& \Shv(Y,C) \arrow[bend left=15]{ll}{f^!}
		\end{tikzcd}
	\end{equation*}
	which, as depicted, assemble into adjunctions $f^\ast \dashv f_\ast$ and $f_! \dashv f^!$. There are concrete formulas
	\begin{align*}
		f_\ast \cF(U) &= \cF(f^{-1}(U)); \qquad
		f^\ast \mathcal H(U) = \Big(\colim_{f(U)\subset V \text{ open}} \mathcal H(V)\Big)^\sh;\\
		f_! \cF(U) &= \colim_{f^{-1}(U) \, \supset \, K \to U\text{ proper}} \fib(\cF(f^{-1}(U)) \to \cF(f^{-1}(U \setminus K))).
	\end{align*}
	If $f$ is proper, then $f_\ast = f_!$; if $f$ is an open embedding or more generally a local homeomorphism, then $f^! = f^\ast$. For the unique function $t_X \colon X \to \ast$,  the global sections and compactly-supported global section functors are defined as
	\begin{align*}
		\Gamma \defeq (t_X)_\ast &\colon \Shv(X,C) \to C, \qquad \Gamma_c = (t_X)_! \colon \Shv(X,C) \to C.
	\end{align*}
	For each $x \in X$ and associated map $i_x \colon \ast \to X$, the \emph{stalk} of a sheaf $\cF\in \Shv(X,C)$ at $x \in X$ is defined as $i_x^\ast \cF\in  C$. 
	There is also a constant sheaf functor $\underline{(-)} \defeq (t_X)^\ast \colon C \to \Shv(X,C)$. A sheaf $\cF \in \Shv(X,C)$ is \emph{constant} if it lies in the essential image of $\underline{(-)}$.

	We refer to \cite{Scholze6}*{Lecture VII} and \cite{volpe} for a treatment of $6$-functor formalisms for topological spaces.
	
	To conclude this section, recall that a sequence 
	\[
	A \xto{i} B \xto{p} C
	\]
	in $\Catdual$ is a \emph{Verdier sequence} if it is a fiber-cofiber sequence in $(\Cat_{\infty})_{\stable}$. By \cite{ramzidual}*{Proposition A.20} this is equivalent to requiring $i$ to be fully faithful
	and $p$ to be the cofibre of $i$. 
	
	\begin{prop}[\cite{som}*{}] \label{prop:Shv-cofibre} 
	Let $D$ be a dualizable category. 
	For each locally compact Hausdorff space $X$ and open subspace $U \subset X$, the inclusions $i \colon U \subset X$ and $j \colon X \setminus U \to X$ assemble into a Verdier sequence
	\[
		\Shv(U,D) \xto{i_!} \Shv(X,D) \xto{j^\ast} \Shv(X \setminus U, D).
	\]
	\qed
	\end{prop}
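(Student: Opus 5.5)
Writing $Z = X\setminus U$ for the closed complement, the plan is to verify the two conditions that, by \cite{ramzidual}*{Proposition A.20}, characterize a Verdier sequence in $\Catdual$:
\begin{enumerate}[(a)]
\item $i_!$ is fully faithful;
\item $j^\ast$ exhibits $\Shv(Z,D)$ as the cofibre of $i_!$.
\end{enumerate}
We also have to check that both functors are morphisms in $\Catdual$, i.e.\ strongly continuous. By Proposition~\ref{prop:shvx-dual} all three categories $\Shv(U,D)$, $\Shv(X,D)$, $\Shv(Z,D)$ are dualizable, since $U$ and $Z$ are again locally compact Hausdorff. For $i$ an open embedding we have $i^! = i^\ast$, and $i^\ast$ has the further right adjoint $i_\ast$; so $i_!$ is strongly continuous. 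For $j$ a closed embedding, $j$ is proper, so $j_\ast = j_!$, and the right adjoint $j_\ast$ of $j^\ast$ has the further right adjoint $j^!$; so $j^\ast$ is strongly continuous as well.

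For full faithfulness of $i_!$, I would show that the unit $\mathcal F \to i^\ast i_! \mathcal F$ is an equivalence. Take $V \subset U$ open. In the formula for $i_!\mathcal F(V)$ the relevant map is the identity of $V$, which is proper precisely on compact subsets. Unwinding the formula gives extension by zero, and its restriction to $V$ recovers $\mathcal F(V)$. To make this rigorous I would pass to $k$-sheaves via Theorem~\ref{thm:shv=kshv}. There, for compact $K \subset U$, $i_!\mathcal F$ has value $\mathcal F(K)$; for a compact $K\subset X$ meeting $Z$, it is computed by the fibre appearing in the formula.

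Next I would establish the localization (recollement) identities:
\begin{itemize}
\item $j^\ast i_! = 0$;
\item $j^\ast j_\ast \simeq \id$, i.e.\ $j_\ast$ is fully faithful;
\item for every $\mathcal G \in \Shv(X,D)$ there is a cofibre sequence $i_! i^\ast \mathcal G \to \mathcal G \to j_\ast j^\ast \mathcal G$.
\end{itemize}
The first two are stalk computations. For the second, stalks of $j^\ast j_\ast \mathcal H$ at $z\in Z$ are colimits of $\mathcal H(W\cap Z)$ over neighbourhoods $W$ of $z$, and these are cofinal among neighbourhoods of $z$ in $Z$. Stalks detect equivalences once we check that the relevant sheaves are hypercomplete, or, more robustly, once we argue with $k$-sheaves, where evaluation on $K \subset Z$ gives $\colim_{K\Subset L} \mathcal H(L\cap Z) = \mathcal H(K)$ by condition (iii). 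The cofibre sequence follows in the same way from the fibre formula for $i_!$ together with the pullback square for $Z$ covered by closed sets.

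Given these identities, the standard argument applies. $j_\ast$ is a fully faithful right adjoint whose kernel $\ker j^\ast$ is the essential image of $i_!$ (using the cofibre sequence). Hence the Verdier quotient $\Shv(X,D)/\Shv(U,D)$, computed in $\Pr^L_{\stable}$ or equivalently in $(\Cat_\infty)_{\stable}$ as a Bousfield localization, is identified with $\Shv(Z,D)$ via $j^\ast$.

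The main obstacle is the stalk/$k$-sheaf computation for general dualizable $D$. Sheaves with values in $D$ need not be hypercomplete, so stalkwise arguments are not directly available. My first approach would be to work entirely with $k$-sheaves, where the formulas are explicit and filtered colimits are exact in $D$ (as $D$ is compactly assembled). Failing that, I would reduce to $D=\Sp$ via $\Shv(X,D)\simeq \Shv(X,\Sp)\otimes D$, since tensoring with the dualizable $D$ preserves Verdier sequences in $\Catdual$.
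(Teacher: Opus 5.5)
Your proposal is correct. The paper gives no argument of its own here: the statement is quoted from \cite{som} and closed with \qed. What you reconstruct is the standard open--closed recollement that underlies that reference. You show that $i_!$ is fully faithful and strongly continuous, that $j^\ast$ is a strongly continuous Bousfield localization because $j_\ast$ is fully faithful, and that $\ker j^\ast$ is the essential image of $i_!$. Together these give a fibre--cofibre sequence in $(\Cat_\infty)_{\stable}$ with all functors in $\Catdual$, which is exactly the paper's definition of a Verdier sequence. Running the computations on $k$-sheaves via Theorem~\ref{thm:shv=kshv} rather than on stalks is the right choice, since $\Shv(X,D)$ need not be hypercomplete.

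A few small remarks:
\begin{enumerate}
\item Your sentence about properness is off. The inclusion of a subset $C\subset V$ is proper iff $C$ is closed in $V$, not compact. In the formula for $i_!\mathcal F(V)$ with $V\subset U$, the index $C=V$ is terminal, and that is why $i_!\mathcal F(V)\simeq\mathcal F(V)$.
\item The identity $j^\ast i_!\simeq 0$ needs no stalks or $k$-sheaves at all. Its right adjoint is $i^\ast j_\ast$, and for $V\subset U$ one has $j_\ast\mathcal H(V)=\mathcal H(V\cap Z)=\mathcal H(\emptyset)=0$.
\item The only step with real content is the cofibre sequence $i_!i^\ast\mathcal G\to\mathcal G\to j_\ast j^\ast\mathcal G$. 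Your description of it is vague, so here is how the $k$-sheaf computation goes on a compact $K\subset X$. For $C\subset V\cap U$ closed in $V$, Mayer--Vietoris for the cover $V=(V\cap U)\cup(V\setminus C)$ gives $\fib(\mathcal G(V)\to\mathcal G(V\setminus C))\simeq\fib(\mathcal G(V\cap U)\to\mathcal G((V\cap U)\setminus C))$. The opens $V\setminus C$ are cofinal among open neighbourhoods of $K\cap Z$, using for instance $V=W\cup(X\setminus Z)$. Hence $\psi(i_!i^\ast\mathcal G)(K)\simeq\fib\bigl(\psi\mathcal G(K)\to\psi\mathcal G(K\cap Z)\bigr)$, and the target is $\psi(j_\ast j^\ast\mathcal G)(K)$.
\end{enumerate}
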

	
	\section{Generalities on \topdf{$G$}{G}-spaces}
	
	In this section, we record
	some general results on locally compact Hausdorff $G$-spaces
	that will be of use throughout the article. The reader may 
	want to postpone this section in a first read, 
	consulting the results as they are referenced.	
	
	\subsection{Trivially proper neighbourhoods}
	
	\begin{defn}[cf. \cite{abels}*{Definition 3.4}] Let $X$ be a locally compact Hausdorff, proper $G$-space. A \emph{trivially proper neighbourhood} of $x \in X$ is a pair $(U,f)$ consisting of a $G$-invariant open subspace $U \ni x$ and a $G$-equivariant continuous map $f \colon U \to G/G_x$ mapping $x$ to $\overline 1 = G/G_x$.   
	\end{defn}
	
	\begin{rmk} A trivially proper neighbourhood of $x \in X$ can 
	equivalently be described as an open, invariant neighbourhood 
	$U \ni x$ together with a retraction $f \colon U \to G\cdot x$ of the inclusion $G\cdot x \subset U$. In particular if $(U,f)$ is a trivially proper neighbourhood
	of $x$ then so is $(V,f|_V)$ for all $G$-invariant opens $x \in V \subset U$.
	\end{rmk}
	
	\begin{rmk}
	Since $G$ is always assumed to be finite, we remark that all $G$-spaces considered are proper.
	\end{rmk}
	
	Recall that a Hausdorff topological space $X$ is said to be \emph{Tychonoff} if for every closed subspace $F \subset X$ and
	$x \not \in X$, there exists a continuous function $f \colon X \to \R$ such that $f|_F = 0$ and $f(x) = 1$. Note that locally compact Hausdorff spaces are Tychonoff.
	
	\begin{thm}[\cite{abels}*{Theorem 3.3}] \label{thm:abels}
	If $X$ is a Tychonoff $G$-space, then every $x \in X$ admits a trivially proper neighbourhood.
		\qed
	\end{thm}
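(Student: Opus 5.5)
Since $G$ is finite, the plan is to build the retraction onto the orbit $G\cdot x$ directly, by separating the points of the orbit and then averaging over $G$. Write $H = G_x$ and fix coset representatives $g_1 = e, g_2, \dots, g_n$ of $G/H$, so that the orbit consists of the distinct points $g_1x,\dots,g_nx$. It suffices to find a $G$-invariant open $U \ni x$ that decomposes as a disjoint union of opens $U = \bigsqcup_i g_i V$, where $V \ni x$ is open and $H$-invariant. Given this, the map $f \colon U \to G/H$ sending $g_iV$ to $g_iH$ is well defined, continuous (it is locally constant), and $G$-equivariant. Equivariance holds because $g g_i V = g_j h V = g_j V$ whenever $g g_i = g_j h$ with $h\in H$. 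Finally $f(x) = \overline 1$, so $(U,f)$ is a trivially proper neighbourhood.

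To produce $V$, first use that $X$ is Hausdorff (Tychonoff spaces are Hausdorff) to choose an open $W \ni x$ such that the translates $g_iW$ are pairwise disjoint. This is possible because finitely many distinct points can be separated: pick pairwise disjoint opens $W_i \ni g_i x$ and set $W = \bigcap_i g_i^{-1}W_i$. Next replace $W$ by $V = \bigcap_{h\in H} hW$. This set is open because $H$ is finite, it contains $x$ because $H$ fixes $x$, and it is $H$-invariant. Since $V \subset W$, the translates $g_iV$ remain pairwise disjoint. Moreover $U := G V = \bigcup_i g_i H V = \bigcup_i g_i V$ is open and $G$-invariant.

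The only point needing care is the disjointness $g_iV\cap g_jV=\emptyset$ for $i\ne j$, which is what makes $f$ well defined. It follows from $g_iV\subset g_iW$ together with the choice of $W$: if $y \in g_iW\cap g_jW$ then $g_i^{-1}y \in W \cap g_i^{-1}g_jW$. With $W=\bigcap_k g_k^{-1}W_k$ we get $g_i^{-1}y \in g_i^{-1}W_i$, so $y \in W_i$. We also get $g_i^{-1}y\in g_i^{-1}g_j g_j^{-1}W_j$, so $y\in W_j$. This contradicts $W_i\cap W_j=\emptyset$. Finiteness of $G$ is essential here, since the intersections used to form $W$ and $V$ must be finite to stay open. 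The Tychonoff hypothesis is needed only through the Hausdorff property; it is what Abels' theorem requires for general compact groups, but for finite $G$ nothing more is used.
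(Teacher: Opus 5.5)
Your proof is correct, and it takes a different route from the paper. The paper gives no argument of its own; it imports Abels' theorem, which is formulated for proper actions of far more general, non-discrete groups, where an orbit cannot be separated by finitely many translates and the Tychonoff hypothesis is genuinely needed.

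You use finiteness of $G$ exactly twice: to separate the finitely many points $g_1x,\dots,g_nx$, and to keep $V=\bigcap_{h\in H}hW$ open. This gives a self-contained argument that uses only the Hausdorff property. The disjointness check can be shortened to the observation $g_iW\subseteq g_ig_i^{-1}W_i=W_i$.

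What your route buys:
\begin{enumerate}
\item For finite $G$ the statement holds for every Hausdorff $G$-space. So, as far as this input is concerned, the Tychonoff assumptions in the stalk computations (Lemma~\ref{lem:technical}(ii), Proposition~\ref{prop:stalk-rep}) could be weakened to Hausdorff.
\item The same elementary reasoning also proves Lemma~\ref{lem:abels}. For any trivially proper $(U,f)$ one has $U=\bigsqcup_i f^{-1}(g_iH)=\bigsqcup_i g_i f^{-1}(\overline 1)$ as a disjoint union of open sets, which visibly identifies $U$ with $G\times_H f^{-1}(\overline 1)$.
\end{enumerate}

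What the citation buys is generality in the acting group, which this paper never uses.
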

	
	\begin{lem}[\cite{abels}*{Lemma 3.5}] \label{lem:abels}
		Let $X$ be a locally compact Hausdorff $G$-space and $x \in X$.
		If $(U,f)$ is a trivially proper neighbourhood of $x$, then $S = f^{-1}(\overline 1) \subset U$ is a $G_x$-invariant subspace and $G \times_{G_x} S \to U$, $[g,s] \mapsto g \cdot s$ is a $G$-equivariant homeomorphism.
		\qed
	\end{lem}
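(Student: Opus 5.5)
We prove Lemma~\ref{lem:abels} in three steps: establish the invariance of $S$, show the map $\varphi\colon G\times_{G_x} S \to U$, $[g,s]\mapsto g\cdot s$, is a continuous equivariant bijection, and then show its inverse is continuous. Since $G$ is finite, the quotient $G\times_{G_x} S$ is simply a finite disjoint union of copies of $S$ indexed by the cosets $G/G_x$. This makes every step elementary.

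\emph{Invariance and well-definedness.} Let $h\in G_x$ and $s\in S$. Equivariance of $f$ gives $f(hs)=h\cdot f(s)=h\cdot\overline 1=\overline h=\overline 1$, so $hs\in S$ and $S$ is $G_x$-invariant. Moreover $S$ is closed in $U$, being the preimage of a point in the discrete space $G/G_x$. It is also open, since points of $G/G_x$ are open, so $S=f^{-1}(\overline 1)$ is open in $U$. The map $G\times S\to U$, $(g,s)\mapsto gs$, is continuous and constant on $G_x$-orbits $(gh^{-1},hs)$. It therefore descends to a continuous $G$-equivariant map $\varphi$ on the quotient.

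\emph{Bijectivity.} For surjectivity, take $u\in U$ and write $f(u)=\overline g$. Then $f(g^{-1}u)=g^{-1}\overline g=\overline 1$, so $g^{-1}u\in S$ and $u=\varphi[g,g^{-1}u]$. For injectivity, suppose $gs=g's'$. Applying $f$ gives $\overline g=\overline{g'}$, so $g'=gh$ with $h\in G_x$. Then $s=hs'$, and hence $[g,s]=[gh,s']=[g',s']$.

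\emph{Continuity of the inverse.} This is the only step needing care, and finiteness makes it easy. We have the decomposition $U=\bigsqcup_{\overline g\in G/G_x} f^{-1}(\overline g)$ into open subsets, and $f^{-1}(\overline g)=gS$. On each piece $gS$, the inverse is $u\mapsto[g,g^{-1}u]$, where $g$ is a fixed coset representative. This is continuous, since multiplication by $g^{-1}$ is a homeomorphism $gS\to S$. A map that is continuous on each member of an open cover is continuous, so $\varphi^{-1}$ is continuous and $\varphi$ is an equivariant homeomorphism.

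(Alternatively, $G\times_{G_x} S\cong \bigsqcup_{G/G_x} S$, and $\varphi$ restricts to the homeomorphisms $S\to gS$, which are open and cover $U$.) Local compactness plays no real role here beyond what is already assumed in the setting.
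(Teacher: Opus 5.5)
Your proof is correct. The paper does not prove this lemma itself; it quotes Abels, whose Lemma~3.5 is stated for proper actions of a locally compact group $G$ and a $G$-map $f\colon U\to G/K$ with $K$ a compact subgroup. In that generality $G/K$ need not be discrete, so the fibre $S$ is merely closed in $U$ and $G\times_K S$ is a genuine twisted product. Showing that the continuous bijection $G\times_K S\to U$ is a homeomorphism then needs a genuinely topological argument: the inverse cannot be written down piece by piece on clopen sets.

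You instead exploit the finiteness of $G$. Discreteness of $G/G_x$ splits $U=\bigsqcup_{\overline g\in G/G_x} gS$ into clopen translates, and makes $G\times_{G_x}S$ a finite disjoint union of copies of $S$. Everything then reduces to checking bijectivity and gluing continuous maps along an open cover.

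Your route gives a self-contained argument which, as you observe, uses neither local compactness nor even the Hausdorff property. In particular it covers the Tychonoff setting of Theorem~\ref{thm:abels}, which the remark following Theorem~\ref{thm:EX-constr} tacitly relies on when it suggests extending that result beyond locally compact spaces. Abels' version buys applicability to non-discrete groups, which this paper never needs.
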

	
	\begin{lem}\label{lem:orbi-loceq}
	Let $X$ be a $G$-space. Let $U,U' \subset X$ be two $G$-invariant open neighbourhoods of a point $x \in X$, and let $Z$ be a $G$-orbit. For every pair of continuous equivariant maps $f \colon U \to Z$ and $g \colon U' \to Z$ such that $f(x) = g(x)$, there exists a $G$-invariant neighbourhood $U'' \subset U \cap U'$ of $x$ such that $f|_{U''} = g|_{U''}$.
	\end{lem}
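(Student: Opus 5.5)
The plan is to use that $Z$ is discrete and that $G$ is finite. Since $Z$ is a discrete $G$-set, the fibre $f^{-1}(f(x))$ is open in $U$. Likewise $g^{-1}(g(x))$ is open in $U'$, and $g(x)=f(x)$. Set $z \defeq f(x) = g(x)$ and put
\[
W \defeq f^{-1}(z) \cap g^{-1}(z),
\]
which is an open subset of $U \cap U'$ containing $x$. On $W$ the two maps agree, both being constant with value $z$. However, $W$ need not be $G$-invariant, so the key step is to saturate $W$ while keeping the maps equal.

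For this I take
\[
U'' \defeq G \cdot W = \bigcup_{h \in G} hW .
\]
Each $hW$ is open because $h$ acts by a homeomorphism, so $U''$ is open, and it is clearly $G$-invariant. Since $U$ and $U'$ are $G$-invariant, $hW \subset U \cap U'$ for every $h$, hence $U'' \subset U \cap U'$. To see that the maps agree on $U''$, take a point $y = hw$ with $w \in W$. By equivariance,
\[
f(y) = h f(w) = hz = h g(w) = g(y).
\]
Hence $f|_{U''} = g|_{U''}$.

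There is essentially no obstacle here. The only points to check are the openness of the fibres, which uses that orbits in $\Orb$ carry the discrete topology, and that saturating preserves openness. Finiteness of $G$ is not even needed for this argument, since any union of open sets is open. One could alternatively take $U''$ to be the equalizer set $\{y \in U\cap U' : f(y)=g(y)\}$. This set is open because $Z$ is discrete, and it is $G$-invariant by equivariance, so it gives a second route to the same conclusion.
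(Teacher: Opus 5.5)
Your proof is correct, and your alternative route, taking the equalizer $\{y \in U \cap U' : f(y) = g(y)\}$ (open because $Z$ is discrete, $G$-invariant by equivariance), is exactly the paper's one-line argument. Your primary construction gives the same set: by equivariance $hW = f^{-1}(hz) \cap g^{-1}(hz)$, and $Z = G\cdot z$, so $G \cdot W$ is precisely this equalizer.
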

	\begin{proof} Since $Z$ is discrete, the set $U'' = \{u \in U \cap U' : f(u) = g(u)\}$ is an open subspace of $U \cap U'$.
	\end{proof}
	
	\begin{lem}\label{lem:nonemp-cofi} 
	Let $X$ be a locally compact Hausdorff $G$-space, $K \subset X$ a $G$-invariant compact subspace, and $Z$ an orbit. If $f \colon X \to Z$ is an equivariant map, then there exists a $G$-invariant open subspace $U \supset K$ and a map $\widehat f \colon U \to Z$ extending $f$.
	\end{lem}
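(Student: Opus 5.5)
Read literally, the statement is trivial: $f$ is already defined on all of $X$, so one can take $U = X$ and $\widehat f = f$. The intended content is presumably that an equivariant map $f \colon K \to Z$ defined only on the invariant compact subspace $K$ extends to some $G$-invariant open neighbourhood of $K$, and I would prove that version. The idea is to use that $Z$ is a finite discrete $G$-set, so $f$ is just a $G$-equivariant partition of $K$ into finitely many clopen compact pieces. We then thicken these pieces equivariantly to pairwise disjoint open sets.

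First, for $z \in Z$ put $K_z = f^{-1}(z)$. These are finitely many pairwise disjoint compact subsets of $K$ covering it, and equivariance gives $gK_z = K_{gz}$. Since $X$ is Hausdorff, finitely many pairwise disjoint compact sets have pairwise disjoint open neighbourhoods. So we choose opens $W_z \supset K_z$ with $W_z \cap W_{z'} = \emptyset$ for $z \neq z'$.

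Second, we make these neighbourhoods equivariant. Fix $z_0 \in Z$ with stabiliser $H = G_{z_0}$, so $Z \cong G/H$, and set
\[
W \defeq \bigcap_{g \in G} g^{-1} W_{g z_0}.
\]
This is a finite intersection, so $W$ is open. Since $g^{-1}K_{gz_0} = K_{z_0}$, each term contains $K_{z_0}$, hence $W \supset K_{z_0}$. For any $g$, the term indexed by $g$ shows $gW \subset W_{gz_0}$. For $h \in H$, reindexing the intersection by $g' \mapsto g'h^{-1}$ shows $hW = W$.

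It follows that the translates $gW$ depend only on the coset $gH$. Translates for distinct cosets are disjoint, because they lie in the disjoint sets $W_{gz_0}$.

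Third, put $U = G \cdot W = \bigsqcup_{gH \in G/H} gW$. This is a $G$-invariant open set containing $\bigcup_z K_z = K$. Define $\widehat f(gw) = gz_0$ for $w \in W$. This is well defined by the disjointness and $H$-invariance just established. It is locally constant on an open partition, hence continuous. It is visibly equivariant. It extends $f$, since a point $k \in K_{gz_0}$ satisfies $g^{-1}k \in K_{z_0} \subset W$ and is therefore sent to $gz_0 = f(k)$.

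The only step requiring care is the second one: arranging $H$-invariance of $W$ together with disjointness of its translates. The intersection-over-$G$ trick handles this. Notably, Theorem \ref{thm:abels} is not needed here, since the target orbit is already provided by $f$.
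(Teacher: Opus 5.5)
Your proof is correct, and it takes a genuinely different route from the paper. You are also right that the statement should have $f \colon K \to Z$: the paper's own proof and its use in Lemma \ref{lem:technical}(i), where it is applied to a map $g' \colon K_i \to Z$, both treat $f$ as defined only on $K$.

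\textbf{The paper's argument} is local-to-global:
\begin{enumerate}
\item For each $x \in K$ it takes a trivially proper neighbourhood $U_x \to G\cdot x$ from Abels' theorem (Theorem \ref{thm:abels}) and composes with $f|_{G\cdot x}$.
\item It shrinks $U_x$ so that this local extension agrees with $f$ on $\overline{U_x}\cap K$, using local compactness to make the closures compact.
\item It extracts a finite subcover of $K$.
\item It deletes the closed invariant loci $T_{ij}$ where two local extensions disagree, and then glues.
\end{enumerate}

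\textbf{Your argument} is global and uses that $Z$ is a finite discrete $G$-set. The fibres $K_z$ are finitely many disjoint compacta, so Hausdorffness separates them by disjoint open sets. Intersecting over the finite group $G$ then makes this separation equivariant.

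\textbf{What each route buys.} Yours is shorter and strictly more general. It uses only that $X$ is Hausdorff and $G$ is finite, with neither local compactness nor a slice theorem. Applied to $K = G\cdot x$ and $f = \mathrm{id}$, it shows that every point of a Hausdorff $G$-space has a trivially proper neighbourhood. So it would also cover the Tychonoff case in Lemma \ref{lem:technical}(ii), where the paper appeals to Abels instead. The paper's argument only needs extensions near single orbits. That local character is what would adapt beyond finite groups, whereas your construction depends on $Z$ being finite and on intersecting over all of $G$.
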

	\begin{proof} For each $x \in K$, fix a trivially proper neighbourhood $f_x \colon U_x \to G \cdot x$. Since $x \in K$, we may compose with the map $f|_{G \cdot x} \colon G \cdot x \to Z$ to obtain a map $f_x \colon U_x \to Z$ that agrees with $f$ at $x$. Applying Lemma \ref{lem:orbi-loceq} to the restrictions of $f_x$ and $f$ to $U_x \cap K$ if necessary, we may without loss of generality assume that $f_x$ agrees with $f$ in $U_x \cap K$. Since $X$ is locally compact, each $U_x$ contains a compact neighborhood of $x$. Hence we may assume that $U_x$ has compact closure, that function $f_x$ is defined on $\overline{U_x}$, and furthermore that it agrees with $f$ on $\overline U_x \cap K$.
	
	We now extract from the cover above a finite cover $U_{x_1}, \dots, U_{x_n}$ of $K$. Write $U_i = U_{x_i}$ and $f_i = f_{x_i}$. For each $i \ne j$, we put 
	\[
	T_{ij} = \{z \in \overline U_i \cap \overline U_j : f_i(x) \ne f_j(x)\}.
	\]	
	Since $Z$ is discrete, this set is closed in $\overline U_i \cap \overline U_j$ and thus it is closed in $X$. It follows that the subspaces $V_i = U_i \setminus \bigcup_{1 \le j \le n, j \ne i} T_{ij}$ are open. Furthermore, since each pair of functions $f_i$, $f_j$ agree on $\overline U_i \cap \overline U_j \cap K$, we have that $K \subset V_1 \cup \cdots \cup V_n$. 
	It remains to note that, by construction, the functions $(f_i)|_{V_i}$ assemble into a well-defined function $\widehat f \colon \bigcup_{i=1}^n V_i \to Z$ extending $f$.	 
	\end{proof}

	\subsection{Stratification by orbit types}
	
	Recall that two subgroups $H, K \le G$ are \emph{conjugate} if there exists $g \in G$ such that $gHg^{-1} \subset K$. This defines an equivalence relation on the set of subgroups of $G$; we denote the conjugacy class of $H$ by $(H)$. 
	
	The set $P_G$ of conjugacy classes comes equipped with a partial ordering: we say that $(H) \le (K)$ if $H$ is \emph{subconjugate} to $K$, that is, if $H$ is conjugate to a subgroup of $K$. It is straightforward to check that this is a well-defined poset. In particular we may view $P_G$ as a space via the Alexandroff topology.	
	Given a $G$-space $X$ and $H \le G$, we put $X_{(H)} = \{x \in X : (G_x) = (H)\}$ and $X_{\le (H)} = \bigcup_{(K) \le (H)} X_{(K)}$.
	
	\begin{lem} \label{lem:stabi-abels}
	Let $X$ be a $G$-space and $x\in X$. If $f \colon U \to G\cdot x$
	is a trivially proper neighbourhood, then:
		\begin{enumerate}[(i)]
			\item if $y \in f^{-1}(x)$, then $G_y \subset G_x$;
			\item $U \subset X_{\le (G_x)}$;
			\item if we write $U^{G_x}$ for the ${G_x}$-fixed points of $U$, 
			then $X_{(G_x)} \cap f^{-1}(x) \subset U^{G_x}$.
		\end{enumerate}
	\end{lem}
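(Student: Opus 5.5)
The whole argument rests on one observation: stabilizers can only shrink along an equivariant map. If $\varphi \colon A \to B$ is $G$-equivariant and $a \in A$, then $G_a \subset G_{\varphi(a)}$, because $g\cdot a = a$ gives $g \cdot \varphi(a) = \varphi(g\cdot a) = \varphi(a)$. I will apply this to $f \colon U \to G\cdot x$ and derive (i), (ii), (iii) in that order, each from the previous one.

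For (i), take $y \in f^{-1}(x)$. Applying the observation to $f$ at $y$ gives $G_y \subset G_{f(y)} = G_x$.

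For (ii), take an arbitrary $u \in U$. Its image $f(u)$ lies in the orbit $G\cdot x$, so $f(u) = g\cdot x$ for some $g \in G$. Set $y = g^{-1}u$, which lies in $U$ since $U$ is invariant. By equivariance,
\[
f(y) = g^{-1} f(u) = x,
\]
so part (i) gives $G_y \subset G_x$. Since $G_u = gG_yg^{-1}$, the stabilizer $G_u$ is subconjugate to $G_x$. Hence $(G_u) \le (G_x)$, that is, $u \in X_{\le (G_x)}$.

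For (iii), take $y \in X_{(G_x)} \cap f^{-1}(x)$. Part (i) gives $G_y \subset G_x$, and the orbit-type condition says $G_y$ is conjugate to $G_x$. Because $G$ is finite, conjugate subgroups have the same order, so the inclusion $G_y \subset G_x$ is an equality. Therefore $G_x$ fixes $y$, i.e. $y \in U^{G_x}$.

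No step is a real obstacle. The only point needing care is in (iii), where we pass from "conjugate and contained" to equality; this uses the standing assumption that $G$ is finite, so we can compare orders.
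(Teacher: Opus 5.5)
Your proof is correct and follows essentially the same argument as the paper: stabilizers shrink along the equivariant map $f$, giving (i), and (iii) uses finiteness of $G$ to upgrade ``conjugate and contained'' to equality, exactly as you do. The only cosmetic difference is in (ii), where the paper uses $G_u \subset G_{f(u)}$ directly together with the fact that all stabilizers in the orbit $G\cdot x$ are conjugate to $G_x$, rather than translating to $y = g^{-1}u$ in the fibre; the two are equivalent.
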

	\begin{proof} 
	Since $f$ is an equivariant map, we know that $G_y \le G_{f(y)}$ for all $y \in U$. This proves (i). Further, since $G \cdot x$ is a transitive $G$-space, all its stabilizers groups are conjugate; in particular for all $y \in U$ we have that $G_{f(y)}$ is subconjugate to $G_{f(x)} = G_x$, proving (ii). 
	Finally we prove (iii). Using (i), the stabilizer of a point $y$ in $X_{(G_x)} \cap f^{-1}(x)$ is both conjugate to and contained in $G_x$. The subgroup $G_x$ is finite; hence a subgroup $H \le G_x$ is conjugate to $G_x$ if and only if $H=G_x$. Therefore $G_y = G_x$, which proves that $y$ is a fixed point for the restricted $G_x$-action on $U$.
	\end{proof}
	
	\begin{lem} \label{lem:orbitproj} For each $G$-space $X$, the map $\pi\colon X \to P_G$, $x \mapsto (G_x)$ is continuous.
	\end{lem}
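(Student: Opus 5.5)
The plan is to reduce continuity to a local statement about stabilizers: \emph{stabilizers can only shrink, up to conjugacy, near a point}. In the Alexandroff topology on $P_G$ relevant here, the open sets are the down-closed subsets, and the minimal open neighbourhood of $(H)$ is $\{(K) : (K) \le (H)\}$. This is the convention forced by Lemma \ref{lem:stabi-abels}(ii), which places a neighbourhood of $x$ inside $X_{\le (G_x)}$. So it suffices to show the following: for every $x \in X$ there is an open neighbourhood $V \ni x$ with $V \subset X_{\le (G_x)}$. Indeed, the preimage of any down-closed set $D \subset P_G$ is then the union of such neighbourhoods $V$ over the points $x$ with $(G_x) \in D$. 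Since $D$ is down-closed, each such $V$ maps into $D$.

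To produce $V$, I would first treat the case of Tychonoff $X$, the standing setting of this section, via Theorem \ref{thm:abels}. That theorem gives a trivially proper neighbourhood $f \colon U \to G\cdot x$ of $x$. Lemma \ref{lem:stabi-abels}(ii) then gives $U \subset X_{\le (G_x)}$, so we may take $V = U$.

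I would also give a direct argument that only needs $X$ Hausdorff, and in fact produces honest inclusions $G_y \subset G_x$. For each $g \in G \setminus G_x$ we have $gx \ne x$, so we can choose disjoint open sets $A_g \ni x$ and $B_g \ni gx$. Set
\[
V = \bigcap_{g \notin G_x} \bigl(A_g \cap g^{-1} B_g\bigr).
\]
This is a finite intersection because $G$ is finite, so $V$ is an open neighbourhood of $x$. Now take $y \in V$ and $g \notin G_x$. Then $y \in A_g$ and $gy \in B_g$, and these sets are disjoint, so $gy \ne y$. Hence $G_y \subset G_x$, and in particular $(G_y) \le (G_x)$.

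The only real subtlety is the point-set hypothesis. For a non-Hausdorff space, such as an indiscrete $G$-space, the statement fails: the orbit-type map need not be continuous there. So the argument must invoke Hausdorffness, or the Tychonoff hypothesis through Abels' theorem, which is the convention implicitly in force for the $G$-spaces considered in this section. The one place to be careful is to fix the orientation of the Alexandroff topology so that it matches Lemma \ref{lem:stabi-abels}(ii).
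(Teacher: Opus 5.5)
Your first route is exactly the paper's proof. The paper shows each $X_{\le (H)}$ is open by choosing a trivially proper neighbourhood $(U,f)$ of a point $x$ with $(G_x) \le (H)$, and then applying Lemma~\ref{lem:stabi-abels}(ii) to get $x \in U \subset X_{\le (G_x)} \subset X_{\le (H)}$; it uses the same down-closed convention for the Alexandroff topology that you identify.

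Your second argument is genuinely different. It bypasses Abels' slice theorem entirely and needs only that $X$ is Hausdorff. In effect it uses that the fixed set $X^g$ of each $g \in G$ is closed, so that $\{y : G_y \subset G_x\} = X \setminus \bigcup_{g \notin G_x} X^g$ is an open neighbourhood of $x$.

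The paper's route is economical in context, since trivially proper neighbourhoods are needed later anyway (e.g.\ for the local model $G \times_{G_x} S$ in Theorem~\ref{thm:EX-constr}). However, it tacitly restricts the lemma to Tychonoff spaces, which is where Theorem~\ref{thm:abels} supplies such neighbourhoods.

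Your caveat about the hypothesis is correct and worth keeping. Take $G = \mathbb{Z}/2$ acting on an indiscrete three-point space with one fixed point and one free orbit. Then $\pi^{-1}(\{(e)\})$ is the free orbit, which is not open. So the phrase ``each $G$-space'' in the statement needs a separation hypothesis, and your Hausdorff argument gives the lemma in its natural generality.
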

	\begin{proof}
		It suffices to see that for each $(H) \in C_H$ the set $X_{\le (H)}$ is open. Let $x \in X_{(K)}$ with $(K) \le (H)$ and consider a trivially proper neighbourhood $(U,f)$ of $x$. By Lemma \ref{lem:stabi-abels}, it follows that $x \in U \subset X_{\le (G_x)} = X_{\le (K)} \subset X_{\le(H)}$. Hence $X_{\le (H)}$ is open as claimed.
	\end{proof}
	
	\begin{rmk} Since the map of Lemma \ref{lem:orbitproj} is $G$-equivariant, it also descends to a continuous map $X/G \to P_G$.
	\end{rmk}
	
	\subsection{Covering dimension}
	
	Let $n \in \N$. A topological space $X$ has \emph{covering dimension $\le n$} if every open cover admits a refinement such that every intersection of $n+2$ of its members is empty. 
	This condition will be of relevance to us because it guarantees certain well-behavedness of the $\infty$-topos of sheaves on $X$, namely that it is \emph{hypercomplete} (see \cite{htt}*{Section 7.2.3}):
	
	\begin{thm}[\cite{htt}*{Corollary 7.2.1.12 and Theorem 7.2.3.6}, \cite{som}*{Lemma 3.6.13}] 
	\label{thm:dualstalks}	
	Let $X$ be a locally compact
    Hausdorff space of finite covering dimension and let $C$ be a compactly assembled category (e.g. $C = \An$ or $C$ dualizable).
	A map $\cF\to \cG$ in $\Shv(X,C)$ is an equivalence if and only if the induced maps on stalks $\cF_x \to \cG_x$ are equivalences for all $x \in X$.
	\qed
	\end{thm}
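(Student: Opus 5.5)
The plan is to prove the statement first for $C = \An$, then for compactly generated $C$, and finally for compactly assembled $C$ by a retract argument.

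\textbf{The case $C = \An$.} Since $X$ is paracompact, being locally compact Hausdorff, and has covering dimension $\le n$, \cite{htt}*{Theorem 7.2.3.6} shows that $\Shv(X,\An)$ has homotopy dimension $\le n$. By \cite{htt}*{Corollary 7.2.1.12} it is then hypercomplete. In a hypercomplete topos, a map $\cF \to \cG$ is an equivalence if and only if it is $\infty$-connective. This in turn is detected by the induced maps on $\pi_0$ and on the homotopy sheaves $\pi_k$ for every local basepoint. These are sheaves of sets or groups on $X$, and for such sheaves isomorphisms are detected on stalks at points of $X$. Stalks commute with forming homotopy sheaves, because $i_x^\ast$ preserves finite limits and colimits. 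So stalkwise equivalences are equivalences for $C = \An$.

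\textbf{The compactly generated case.} Let $C$ be compactly generated. For each compact object $c \in C$, the functor $\mathrm{Map}(c,-)$ sends $C$-valued sheaves to $\An$-valued sheaves, since it preserves limits. A map $\cF \to \cG$ is an equivalence if and only if $\mathrm{Map}(c,\cF) \to \mathrm{Map}(c,\cG)$ is an equivalence for all compact $c$. Stalks are filtered colimits, and $c$ is compact, so $\mathrm{Map}(c,\cF)_x \simeq \mathrm{Map}(c,\cF_x)$. The claim therefore reduces to the $\An$ case.

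\textbf{The compactly assembled case.} For general compactly assembled $C$, I use the adjunction $\hat y \dashv \colim$ with $\hat y \colon C \to \Ind(C)$ and $\colim \circ \hat y \simeq \id$. The difficulty is that postcomposing a sheaf with $\hat y$ need not give a sheaf, because the sheaf condition involves infinite (filtered) limits along unions. To get around this I pass to $k$-sheaves via Theorem~\ref{thm:shv=kshv}, which applies since filtered colimits are left exact in $C$ and in $\Ind(C)$. The $k$-sheaf axioms involve only finite limits and filtered colimits. Both $\hat y$ and $\colim$ preserve these: $\hat y$ is a left adjoint and is left exact, and $\colim$ is left exact for compactly assembled $C$. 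Hence postcomposition defines functors between $\Shv_{\mathcal K}(X,C)$ and $\Shv_{\mathcal K}(X,\Ind C)$ exhibiting the former as a retract of the latter.

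Stalks are compatible with this retraction. The stalk at $x$ of $\psi(\cF)$ is $\colim_{x \in K} \psi(\cF)(K)$, the colimit over compact neighbourhoods of $x$, which is cofinal with the open ones. So it is a filtered colimit, and it is preserved by $\hat y$. Thus if $\cF \to \cG$ is a stalkwise equivalence, so is $\hat y\cF \to \hat y\cG$. Since $\Ind(C)$ is compactly generated, the previous step shows this is an equivalence, and applying $\colim$ gives that $\cF \to \cG$ is an equivalence.

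I expect the main obstacle to be the $\An$ case, namely the passage from finite covering dimension to hypercompleteness. I take this from the cited results of Lurie. The only delicate point beyond it is checking that $\colim$ is left exact for compactly assembled $C$, which is standard.
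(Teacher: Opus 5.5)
Your architecture (Lurie's results for $\An$, then reducing compactly assembled coefficients to compactly generated ones) matches what the paper's citations supply; the paper itself only cites these results and gives no argument. There is, however, a genuine gap in the compactly assembled step.

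\textbf{The gap: $\hat y$ is not left exact in general.} You assert that $\hat y\colon C\to\Ind(C)$ is left exact. This holds when $C$ is stable, but it fails for unstable $C$, which the statement explicitly covers. Already for $C=\An$ it fails. Since $\hat y(c)\simeq y(c)$ for compact $c$,
\[
\hat y(*)\times_{\hat y(S^1)}\hat y(*)\simeq y(*\times_{S^1}*)\simeq y(\mathbb Z).
\]
On the other hand, $\hat y(\mathbb Z)$ is the formal filtered colimit of the finite subsets $F\subset\mathbb Z$. The comparison map $\hat y(\mathbb Z)\to y(\mathbb Z)$ is not an equivalence: an inverse would be a map $\mathbb Z\to F$ into a finite subset whose composite with $F\subset \mathbb Z$ is $\id_{\mathbb Z}$.

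Consequently $\hat y\circ\psi(\cF)$ need not satisfy the Mayer--Vietoris axiom (ii) for $k$-sheaves. A concrete instance is the constant $k$-sheaf with value $S^1$ on the circle, split into two closed arcs. So the retraction of $\Shv_{\mathcal K}(X,C)$ off $\Shv_{\mathcal K}(X,\Ind C)$ that you describe does not exist as stated. As written, your argument covers only compactly generated $C$ and stable (i.e.\ dualizable) $C$, where $\hat y$ is an exact functor of stable categories. Unstable compactly assembled $C$ such as $\Shv(Y,\An)$ are not handled. You also located the delicate point in the wrong place: left exactness of $\colim$ is automatic, because it is right adjoint to $\hat y$.

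\textbf{Repair: sheafify instead.} Do not insist that the postcomposite is already a sheaf.
\begin{enumerate}
\item Work with $D=\Ind(C^{\omega_1})$, which is compactly generated and presentable. This also avoids the problem that $\Ind(C)$ is not presentable. Use the adjunction $\hat y\dashv\colim$ with $\colim\circ\hat y\simeq\id$.
\item The functor $\sh\circ\hat y_*\colon\Shv(X,C)\to\Shv(X,D)$ is $\Shv(X)\otimes\hat y$. Its stalks are still $\hat y(\cF_x)$, because sheafification does not change stalks and $\hat y$ commutes with $\colim_{U\ni x}$.
\item Postcomposition $\colim_*$ preserves sheaves, since $\colim$ preserves limits, and it agrees with $\Shv(X)\otimes\colim$. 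Hence
\[
\colim_*\circ\sh\circ\hat y_*\simeq\Shv(X)\otimes(\colim\circ\hat y)\simeq\id.
\]
\item A stalkwise equivalence $\cF\to\cG$ therefore maps to a stalkwise equivalence in $\Shv(X,D)$. Your compactly generated step shows this is an equivalence, and applying $\colim_*$ finishes the proof.
\end{enumerate}

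\textbf{Smaller point: paracompactness.} Locally compact Hausdorff spaces need not be paracompact; examples are the long line and $[0,\omega_1)$. So Lurie's Theorem 7.2.3.6 in \emph{Higher Topos Theory} does not apply to $X$ directly. Argue locally instead:
\begin{enumerate}
\item Every point has a relatively compact open neighbourhood $U$.
\item $\overline U$ is compact Hausdorff of covering dimension $\le n$, since closed subspaces do not raise covering dimension.
\item Hence $\Shv(\overline U)$ is hypercomplete, and so is its open subtopos $\Shv(U)$.
\item A stalkwise equivalence restricts to an equivalence on each such $U$, and equivalences of sheaves are detected locally, so it is an equivalence on $X$.
\end{enumerate}
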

	
	\begin{prop} \label{prop:cov} If $X$ is a paracompact $G$-space of covering dimension $\le n$, then $X/G$ has covering dimension $\le |G|(n+2)-2$. 
	\end{prop}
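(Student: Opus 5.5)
The plan is to pull an open cover of $X/G$ back along the quotient map $q\colon X\to X/G$, refine it upstairs using the dimension hypothesis on $X$, and then push the refinement back down by forming $G$-saturations. The overlap bound will come from counting orbit representatives.

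Fix an open cover $\mathcal V=\{V_\alpha\}$ of $X/G$. Its preimage $\{q^{-1}(V_\alpha)\}$ is an open cover of $X$ by $G$-invariant sets. Since $X$ has covering dimension $\le n$, there is an open refinement $\mathcal W=\{W_\beta\}_{\beta\in B}$ of order $\le n+1$: every intersection of $n+2$ distinct members is empty, so each point of $X$ lies in at most $n+1$ of the $W_\beta$. Because $q$ is an open map (as $G$ is finite), the images $q(W_\beta)$ are open in $X/G$. They cover $X/G$, and each one lies in some $V_\alpha$, since $W_\beta\subset q^{-1}(V_\alpha)$ gives $q(W_\beta)\subset V_\alpha$. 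So $\{q(W_\beta)\}$ is an open refinement of $\mathcal V$.

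The order of this refinement is bounded by counting. Suppose $\bar x\in q(W_{\beta_1})\cap\dots\cap q(W_{\beta_m})$ with the $\beta_i$ distinct, and let $x\in q^{-1}(\bar x)$. Then the orbit $G\cdot x$ meets each $W_{\beta_i}$. The orbit has at most $|G|$ points, and each point lies in at most $n+1$ members of $\mathcal W$. Hence $m\le |G|(n+1)$. It follows that any $|G|(n+1)+1$ members of the refined cover have empty intersection, so the order is at most $|G|(n+1)$. Since $|G|(n+1)+1\le |G|(n+2)-1$ (equivalently $|G|\ge 2$; when $G$ is trivial the statement reduces to the hypothesis), every intersection of $|G|(n+2)-2+2=|G|(n+2)$ members is empty. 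This gives the claimed bound, and in fact a sharper one.

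The main obstacle is a point-set subtlety about which definition of covering dimension is in force and whether it passes to the quotient. The paper's definition uses arbitrary open covers and refinements of bounded order, and the argument above uses exactly that, so paracompactness of $X$ is not needed for this counting. If one instead insists on locally finite refinements, the images $q(W_\beta)$ remain locally finite: a neighbourhood $q(U)$ of $\bar x$, with $U$ a $G$-invariant neighbourhood of the orbit meeting only finitely many $W_\beta$, meets only finitely many $q(W_\beta)$. The finite orbit makes this possible, and paracompactness of $X$ supplies locally finite refinements. I would therefore check that step carefully, and otherwise expect the proof to be short.
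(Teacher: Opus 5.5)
Your proof is correct and follows the paper's route. You pull the cover back along $q$, refine upstairs, push the refinement forward along the open map $q$, and bound overlaps by a pigeonhole count over at most $|G|$ translates; the paper writes $q^{-1}\bigl(\bigcap_{f} q(W_f)\bigr)=\bigcup_{\phi\colon F\to G}\bigcap_{f}\phi(f)W_f$ and finds a fibre of $\phi$ of size $\ge n+2$, which is your orbit count in different form. Your sharper bound $|G|(n+1)-1$ is also right, and the case split on $|G|\ge 2$ is unnecessary, since $|G|(n+1)+1\le |G|(n+2)$ holds for every $|G|\ge 1$.
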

	\begin{proof} Write $q \colon X \to X/G$ for the canonical quotient map and let $\{U_i\}_{i \in I}$ be an open covering of $X/G$.
		By hypothesis, there exists an open refinement $\{W_j\}_{j \in J}$ of $\{q^{-1}(U_i)\}_{i \in I}$, with refinement function $\alpha \colon J \to I$, and such that for every $F \subset J$ with $|F| \ge n+2$ we have $\bigcap_{f \in F} W_f = \emptyset$. 
		
		Since $q \colon X \to X/G$ is open and surjective, the collection $\{q(W_j)\}_{j \in J}$ is an open cover of $X/G$, and furthermore it is a refinement of $\{U_i\}_{i \in I}$ with refinement function $\alpha$. 
		Consider now a subset $F \subset J$ such that $|F| \ge |G|(n+2)-2+2 = |G|(n+2)$. To conclude the proof we ought to prove that $\bigcap_{f \in F} q(W_f) = \emptyset$. Once again by the surjectivity of $q$, we may prove that
		\[
		\emptyset = q^{-1}\Big(\bigcap_{f \in F} q(W_f)\Big) = \bigcap_{f \in F} q^{-1}q(W_f) = \bigcap_{f \in F} \bigcup_{g \in G} gW_f = \bigcup_{\phi \colon F \to G} \bigcap_{f \in F} \phi(f) W_f.
		\]
		Hence the proof reduces to showing that for each function $\phi \colon F \to G$ with $F \subset J$ and $|F| \ge |G|(n+2)$, we have that $\bigcap_{f \in F}\phi(f) W_f = \emptyset$. For each $g \in G$, this intersection is contained in $g\Big(\bigcap_{f \in \phi^{-1}(g)}W_f\Big)$. It thus suffices for $\phi$ to have a fiber of cardinality greater or equal than $n+2$, which follows from the pigeonhole principle. 
	\end{proof}
	
	Recall that if $J$ is a (right) $G$-set, we have a (left) $G$-compact Hausdorff space $[0,1]^J$ with action $(g \cdot \phi)(x) = \phi(x \cdot g)$. 
	
	\begin{coro} \label{coro:gcubes-hyp}
		If $S$ is a finite right $G$-set, then the orbit space of $[0,1]^S$
		has finite covering dimension. 
	\end{coro}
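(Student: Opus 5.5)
The corollary follows from Proposition \ref{prop:cov} once we know that $[0,1]^S$ is a paracompact space of finite covering dimension. I will check these two hypotheses in turn and then apply the proposition.

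First, since $S$ is finite, $[0,1]^S$ is homeomorphic to the cube $[0,1]^{|S|}$ in Euclidean space. This is a compact metrizable space, and hence it is paracompact. The $G$-action $(g\cdot\phi)(x)=\phi(x\cdot g)$ permutes the coordinates. Each such permutation is a homeomorphism of the cube, so $[0,1]^S$ is a compact Hausdorff $G$-space, as already noted before the statement.

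Second, we need the classical fact that $[0,1]^m$ has covering dimension $\le m$; in fact it equals $m$, by Lebesgue's covering theorem. A short direct argument for the upper bound is available. Given an open cover, the Lebesgue number lemma gives $\varepsilon>0$ such that every set of diameter $<\varepsilon$ lies in some member of the cover. Take a fine cubical subdivision of the cube into small closed cubes, and thicken them in the standard brick-layer (staggered) fashion. This yields a refinement by open sets of small diameter in which any point meets at most $m+1$ members. Equivalently, every intersection of $m+2$ members is empty, which is exactly the condition in the definition of covering dimension $\le m$. Alternatively, one may simply cite the standard dimension theory result $\dim [0,1]^m = m$.

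Applying Proposition \ref{prop:cov} with $n=|S|$ then shows that $[0,1]^S/G$ has covering dimension $\le |G|(|S|+2)-2$. In particular this dimension is finite, which proves the corollary. The only step with real content is the classical bound on the dimension of the cube. I would cite it rather than reprove it, so I do not expect any genuine obstacle.
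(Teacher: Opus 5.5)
Your proposal is correct and follows the paper's proof, which likewise just combines the finite covering dimension of the cube $[0,1]^S \cong [0,1]^{|S|}$ (the paper writes ``$[0,1]$'', but the finite cube is what it uses) with Proposition~\ref{prop:cov}. Your explicit check that $[0,1]^S$ is paracompact, via compact metrizability, supplies the one hypothesis of that proposition the paper leaves implicit.
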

	\begin{proof}Immediate from the fact that $[0,1]$ has finite covering dimension and Proposition \ref{prop:cov}.
	\end{proof}
	
	We can always equivariantly embed a compact Hausdorff $G$-space into $[0,1]^J$ for some possibly infinite $G$-set $J$.
	
	\begin{lem} \label{lem:g-urysohn} If $X$ is a compact Hausdorff (left) $G$-space, then there exists a right $G$-set $J$ and an equivariant embedding $e \colon X \to [0,1]^J$. 
	\end{lem}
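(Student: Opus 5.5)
The plan is to equivariantize the classical Urysohn embedding $X \hookrightarrow [0,1]^{C(X,[0,1])}$. Let $J$ be the set of all continuous functions $\phi \colon X \to [0,1]$. It carries a right $G$-action by precomposition, $(\phi \cdot g)(x) = \phi(g x)$. This is a right action because $((\phi\cdot g)\cdot h)(x) = (\phi\cdot g)(hx) = \phi(ghx) = (\phi \cdot gh)(x)$.

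Define $e \colon X \to [0,1]^J$ by $e(x)(\phi) = \phi(x)$.

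\emph{Continuity.} Each coordinate of $e$ is the continuous map $\phi$, so $e$ is continuous for the product topology.

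\emph{Equivariance.} With the action on $[0,1]^J$ recalled before Corollary \ref{coro:gcubes-hyp}, we compute
\[
(g\cdot e(x))(\phi) = e(x)(\phi \cdot g) = (\phi\cdot g)(x) = \phi(gx) = e(gx)(\phi).
\]
Hence $g \cdot e(x) = e(gx)$, as required.

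\emph{Injectivity.} A compact Hausdorff space is normal, hence Tychonoff. So for $x \ne y$, Urysohn's lemma gives $\phi \in J$ with $\phi(x)=0$ and $\phi(y)=1$, and then $e(x)\ne e(y)$.

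\emph{Embedding.} The map $e$ is a continuous injection from a compact space into the Hausdorff space $[0,1]^J$, the latter being Hausdorff as a product of Hausdorff spaces. It is therefore a closed map, and hence a homeomorphism onto its image.

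There is no real obstacle here. The only point needing care is matching conventions: the action on $J$ must be the right action given by precomposition, so that the formula $(g\cdot\phi)(x)=\phi(x\cdot g)$ on $[0,1]^J$ defines a left action compatible with $e$, which the computation above confirms.

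If a \emph{small} or finite $J$ is wanted later, for instance to apply Corollary \ref{coro:gcubes-hyp}, one can instead take $J$ to be a $G$-stable separating subfamily. However, the lemma as stated allows $J$ to be arbitrary.
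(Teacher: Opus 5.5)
Your proof is correct and follows essentially the same route as the paper: take $J = C(X,[0,1])$ with the right action by precomposition, define $e(x)(\phi)=\phi(x)$, and use Urysohn's lemma together with the fact that a continuous injection from a compact space into a Hausdorff space is an embedding. You also write out the continuity and equivariance checks, which the paper leaves as ``by construction.''
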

	\begin{proof} Set $J = C(X,[0,1])$ equipped with its usual right action, namely $(f\cdot g)(x) = f(g \cdot x)$. The product $[0,1]^J$ carries a canonical left $G$-space structure via $(g \cdot \phi)(f) = \phi(f \cdot g)$.
		By Urysohn's lemma, we know that $e(x)(f) \defeq f(x)$ is an embedding, and it is equivariant by construction.
	\end{proof}
	
	\begin{rmk}
	Given a $G$-set $J$, the cube $[0,1]^{J}$ is a cofiltered limit of finite cubes $[0,1]^S$ with $S \subset J$.	
	\end{rmk}
	
	\subsection{Cofiltered limits}
	
	Recall that a partially ordered set $I$ is \emph{cofiltered}
	if every finite subset has a lower bound. 
	
	\begin{lem} \label{lem:lims-mod-G}
	The orbits functor $(-)/G \colon \GCH \to \CHaus$ preserves cofiltered limits.
	\end{lem}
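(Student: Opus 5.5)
Let $X = \lim_i X_i$ be a cofiltered limit in $\GCH$ with projections $p_i \colon X \to X_i$. The induced maps $X/G \to X_i/G$ assemble into a comparison map
\[
\Phi \colon X/G \longrightarrow \lim_i X_i/G
\]
in $\CHaus$. Here the limit may be computed in $\Topp$, since $\CHaus$ is closed under limits in $\Topp$. The plan is to show that $\Phi$ is a bijection. Since $X/G$ is compact and $\lim_i X_i/G$ is Hausdorff, a continuous bijection is then automatically a homeomorphism. Continuity of $\Phi$ holds because each composite $X \to X_i \to X_i/G$ is continuous and $G$-invariant, and $X/G$ carries the quotient topology. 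It also helps to recall that $X$ is the closed subspace of $\prod_i X_i$ consisting of compatible families, with the diagonal $G$-action.

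\emph{Surjectivity.} Let $([x_i])_i$ be a compatible family of orbits, with $[x_i] \in X_i/G$. For each $i$, consider the set
\[
F_i = \{ x \in X : p_i(x) \in G\cdot x_i\} = p_i^{-1}(G \cdot x_i).
\]
This set is closed, because $G\cdot x_i$ is finite. It is also nonempty. To see this, use that for a cofiltered limit of nonempty compact Hausdorff spaces, the map $X \to X_i$ has image equal to $\bigcap_{j \to i} \operatorname{im}(X_j \to X_i)$. The orbit $G\cdot x_i$ meets $\operatorname{im}(X_j\to X_i)$ for every $j \to i$, by compatibility of the orbits. Since $G\cdot x_i$ is finite and the images decrease along the cofiltered system, some single point of $G\cdot x_i$ lies in all of these images. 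Hence $G \cdot x_i$ meets the image of $p_i$.

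The family $(F_i)$ is cofiltered: if $j \to i$, then $F_j \subset F_i$, since the orbit $G\cdot x_j$ maps into $G\cdot x_i$. So the $F_i$ have the finite intersection property, and compactness of $X$ gives a point $x \in \bigcap_i F_i$. Then $[x] \mapsto ([x_i])_i$, as required.

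\emph{Injectivity.} Suppose $x, y \in X$ satisfy $p_i(y) \in G\cdot p_i(x)$ for all $i$. Set
\[
A_i = \{g \in G : g\,p_i(x) = p_i(y)\}.
\]
Each $A_i$ is a nonempty subset of the finite set $G$, and $A_j \subset A_i$ whenever $j\to i$. A cofiltered family of nonempty finite sets of this kind has nonempty intersection: choose an $i_0$ minimizing $|A_{i}|$; then $A_{i_0} \subset A_j$ for every $j$, by passing to a common refinement. Any $g$ in this intersection satisfies $g p_i(x) = p_i(y)$ for all $i$, hence $gx = y$, since $X$ embeds in $\prod_i X_i$.

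The main obstacle is the surjectivity step: the nonemptiness of $F_i$, or equivalently the lifting of a compatible family of orbits to an actual point. I would handle it cleanly by the finite-intersection/compactness argument above, applied to the closed sets $p_i^{-1}(G\cdot x_i)$. Finiteness of $G$ is used essentially in two places: to make the orbits finite, hence closed, and to run the finite-set argument for injectivity.
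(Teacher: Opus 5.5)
Your proof is correct and follows the paper's route. Both arguments reduce to bijectivity of a continuous map from a compact space to a Hausdorff space, and both get injectivity from a finiteness argument over $G$; the paper phrases this contrapositively, choosing a common lower bound of the finitely many indices $i_g$. Surjectivity in both comes from nonemptiness of the fibre $\bigcap_i p_i^{-1}(G\cdot x_i)$, but the paper identifies this fibre directly with $\lim_i G\cdot x_i$, a cofiltered limit of nonempty finite sets. That makes your detour through the image-stabilization lemma and the finite intersection property unnecessary, though correct.
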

	\begin{proof} Consider a cofiltered diagram $(X_i)_{i \in I}$ with transition maps 
		$(\alpha_{ij})_{i \le j}$ and put $X = \lim_{I \ni i} X$.
		The map $(\lim_{I \ni i}X_i)/G \to \lim_{I \ni i}(X_i/G)$ has compact Hausdorff domain and codomain, and hence
		it suffices to verify that it is a bijection.
		Write $q_i \colon X_i \to X_i/G$ and $q \colon X \to X/G$ for the canonical quotient maps and 
		$\pi_i \colon X \to X_i$, $\rho_i \colon \prod_{i \in I} (X_i/G) \to X_i/G$ 
		for the projections. 
		
		We argue for injectivity and surjectivity separately. 
		Since $\lim_{I \ni i}(X_i/G)$ embeds into $\prod_{i \in I} (X_i/G)$, to prove injectivity it suffices
		to see that the map $X/G \to \prod_{i \in I} (X_i/G)$ is injective. In other words, 
		given $x,y \in X$ such that $gx \ne y$ for all $g \in G$, we must prove that there exists $i \in I$ 
		such that $g\pi_i(x) \ne \pi_i(y)$ for all $g \in G$. For a given $g \in G$, the fact that $gx \ne y$ implies that 
		there exists $i_g \in I$ such that $g\pi_{i_g}(x) = \pi_{i_g}(gx) \ne \pi_{i_g}(y)$. 
		Using that $I$ is cofiltered and $G$ is finite, we may choose $i \in I$ such that 
		$i \le i_g$ for all $g \in G$ and hence $g\pi_i(x) \ne \pi_i(y)$ for all $g \in G$.
		
		Now we prove surjectivity. It suffices to check that the composition $X 
		\to X/G \to \lim_{I \ni i} (X_i/G)$ has non-empty fibers. This follows from the fact that the fibers are 
		of the form $\lim_{I \ni i} G \cdot x_i$ and cofiltered diagrams of finite non-empty sets are non-empty.
	\end{proof}
	
	\begin{lem} \label{lem:compcofi}
	Let $X = \lim_{I \ni i} X_i$ be a cofiltered diagram of Hausdorff $G$-spaces. Write $p_i \colon X \to X_i$ and $\alpha_{ij} \colon X_i \to X_j$ for the projection and transition maps respectively. If $K \subset X$ is a compact subspace, then $K = \lim_{I\ni i} p_i(K)$.
	\end{lem}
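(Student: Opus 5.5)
We will show that $K$ and $\lim_{I\ni i} p_i(K)$ coincide as subsets of $X$; both carry the subspace topology from $X$, so equality of sets is enough. First we make sense of the right-hand side. Each $p_i(K)\subset X_i$ is compact, and $\alpha_{ij}(p_i(K)) = p_j(K)$ because $\alpha_{ij}\circ p_i = p_j$. Hence $(p_i(K))_{i\in I}$ is a cofiltered subdiagram of $(X_i)$. Its limit is the subspace
\[
\{x \in X : p_i(x) \in p_i(K) \text{ for all } i \in I\} = \bigcap_{i\in I} p_i^{-1}(p_i(K)),
\]
since limits of subspaces are computed as subspaces of the limit. The inclusion $K \subset \lim_i p_i(K)$ is immediate.

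For the reverse inclusion, take $x \in X$ with $x\notin K$; we find some $i$ with $p_i(x)\notin p_i(K)$. Suppose instead that $p_i(x) \in p_i(K)$ for every $i$, and set $K_i \defeq K \cap p_i^{-1}(p_i(x))$. Each $K_i$ is nonempty by assumption. Each $K_i$ is closed in $K$, because $X_i$ is Hausdorff, so points are closed and $p_i$ is continuous. The family is cofiltered decreasing: if $k \le i$ then $p_i = \alpha_{ki}\circ p_k$, so $K_k\subset K_i$. A cofiltered family of nonempty closed subsets of the compact space $K$ has the finite intersection property, so $\bigcap_i K_i$ contains some point $y\in K$. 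Then $p_i(y) = p_i(x)$ for all $i$, and since points of $X=\lim_i X_i$ are determined by their images under all projections, $y = x$. This gives $x\in K$, a contradiction.

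The only real content is the compactness step, where Hausdorffness of the $X_i$ (closed points) and cofilteredness of $I$ (finite intersection property) are used; everything else is formal bookkeeping. The $G$-action plays no role in the argument. If $K$ is moreover $G$-invariant, then so are the $p_i(K)$, so the identification holds in $G$-spaces as well.
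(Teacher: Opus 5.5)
Your proof is correct and is essentially the paper's argument: your closed sets $K_i = K \cap p_i^{-1}(p_i(x))$ are exactly the paper's sets $\varphi^{-1}(A_i)$, and you obtain a point in their intersection by the same finite-intersection-property step, using a lower bound in $I$. The only cosmetic difference is that you identify $\lim_i p_i(K)$ directly as a subspace of $X$, so equality of subsets suffices. The paper instead uses that a continuous bijection from a compact space to a Hausdorff space is a homeomorphism.
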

	\begin{proof} It suffices to see that the canonical map $\varphi \colon K \to \lim_{I \ni i} p_i(K)$ is bijective, since has compact domain and Hausdorff codomain. Injectivity follows from the fact that the projection maps $p_i \colon X \to X_i$ are jointly monomorphic. For surjectivity, let $k = (k_i) \in \lim_{I \ni i} p_i(K)$ and let us see that $\varphi^{-1}(k)$ is non-empty. For each $j \in I$, consider the subspace $A_j = \{z \in \lim_{I \ni i}p_i(K) : z_i = k_i\}$. Since $\{k\} = \cap_{j \in J} A_j$ and $K$ is compact, it suffices to see that $\varphi^{-1}(A_{j_1} \cap \cdots \cap A_{j_s})$ is non-empty for each finite subset $F = \{j_1,\dots, j_s\} \subset J$. Indeed, if we take a lower bound $j_0$ of $F$, since $k_{j_0} \in p_{j_0}(K)$ there exists $w \in K$ such that $p_{j_0}(w) = k_{j_0}$. Now
	\[
	\varphi(w)_{j_l} = p_{j_l}(w) = \alpha_{j_0,j_l}(p_{j_0}(w)) = 
	\alpha_{j_0,j_l}(k_{j_0}) = k_{j_l}
	\]
	for each $l \in \{1,\dots, s\}$ and thus $w \in \varphi^{-1}(A_{j_1} \cap \cdots \cap A_{j_s})$. This concludes the proof. 
	\end{proof}
	
	\begin{lem}\label{lem:nonemp-open}
	Let $X = \lim_{I \ni i} X_i$ be a cofiltered limit of compact Hausdorff $G$-spaces with surjective projection maps $p_i \colon X \to X_i$ and let $Z$ be a $G$-orbit. If $f \colon X \to Z$ is an equivariant map, then there exists $i \in I$ and an equivariant map $f_i \colon X_i \to Z$ such that $f = f_i p_i$.	
	\end{lem}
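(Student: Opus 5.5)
Since $Z$ is a finite discrete space, $f$ is locally constant. I will show that it factors through some finite stage by standard compactness arguments in the cofiltered limit. Write $Z = \{z_1,\dots,z_m\}$ and set $C_k = f^{-1}(z_k)$. These are clopen, hence compact, pairwise disjoint, and cover $X$.

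First I find an index $i$ such that the sets $p_i(C_k)$ are pairwise disjoint. For $k \ne l$ and $j \in I$, set $B^{kl}_j = \{(x,y) \in C_k \times C_l : p_j(x) = p_j(y)\}$. These are closed subsets of the compact space $C_k \times C_l$, and $j \le j'$ implies $B^{kl}_j \subset B^{kl}_{j'}$, so the family is cofiltered. Because the projections $p_j$ are jointly injective, $\bigcap_j B^{kl}_j$ is empty. Compactness then gives $B^{kl}_{j}=\emptyset$ for some $j$. Since there are finitely many pairs $(k,l)$ and $I$ is cofiltered, I can choose a single $i$ below all of these indices, so that $p_i(C_k)\cap p_i(C_l)=\emptyset$ for all $k\neq l$. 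Each $p_i(C_k)$ is compact, hence closed in $X_i$, and surjectivity of $p_i$ gives $X_i=\bigcup_k p_i(C_k)$. So the $p_i(C_k)$ form a finite partition of $X_i$ into closed sets, which are therefore clopen.

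Next I define $f_i \colon X_i \to Z$ by sending $p_i(C_k)$ to $z_k$. It is continuous because the pieces are clopen, and $f_i p_i = f$ holds by construction. For equivariance, note that $p_i$ is equivariant, so $g\cdot p_i(C_k) = p_i(gC_k) = p_i(C_{k'})$, where $g z_k = z_{k'}$ (this uses $gC_k = f^{-1}(gz_k)$). Hence $f_i(g y) = g f_i(y)$ for every $y\in X_i$.

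The main point is the disjointness step. It uses compactness of $C_k\times C_l$ together with joint injectivity of the projections; the latter holds because $X$ embeds in $\prod_i X_i$. Surjectivity of $p_i$ is needed only so that $f_i$ is defined on all of $X_i$. No stabiliser arguments are required.
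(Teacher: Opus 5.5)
Your proof is correct, but it takes a different route from the paper's.

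\textbf{The paper's route.} The paper works with preimages rather than images. It uses that the limit topology on $X$ has a basis of sets $p_i^{-1}(U)$. Each fibre $F_z = f^{-1}(z)$ is compact and open, so it is a finite union of such basic sets. These can be moved to a common index, giving $F_z = p_i^{-1}(U_z)$ with $U_z \subset X_i$ open. Surjectivity of $p_i$ then forces the $U_z$ to partition $X_i$. Surjectivity also makes any factorization through $p_i$ automatically equivariant, since $f_i(g\,p_i(x)) = f_i(p_i(gx)) = g f(x) = g f_i(p_i(x))$.

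\textbf{Your route.} You push the fibres forward and separate their images by a compactness argument on $C_k \times C_l$. Clopenness then comes from the images being compact, hence closed, with finitely many of them covering $X_i$.

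\textbf{Comparison.} Your argument never uses the explicit description of the limit topology. It needs only compactness of $X$, Hausdorffness of the $X_j$, and joint injectivity of the projections. It also makes explicit a step the paper leaves terse: that a compact open subset of $X$ descends to a single stage. The paper's argument is shorter once that standard fact is granted. Your hand verification of equivariance is fine, though it also follows at once from surjectivity of $p_i$, as above.
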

	\begin{proof} Since the projections $p_i$ are surjective, any factorization of $f$ through some map $p_i$ will automatically be also $G$-equivariant.
		
	Put $F_z = f^{-1}(z)$ for each $z \in Z$. Since $Z$ is discrete, then $F_z$ is a clopen subset of $X$, in particular it is compact. Since $X$ is a cofiltered limit, it has a basis $\mathcal B$ of open sets of the form $p_i^{-1}(U_i)$ for $U_i \subset X_i$, and $\mathcal B$ is closed under finite intersections. Hence for each $z \in X$ there exist $i(z) \in I$ and an open subset $U_z \subset X_{i(z)}$ such that $F_z = p^{-1}_{i(z)}(U_z)$. Taking a lower bound $i$ of $\{i(z) : z \in Z\}$ and replacing $U_z$ by $p^{-1}_{i,i(z)}(U_z)$, we may assume that $i(z)$ is constant. Since $p_i$ is surjective and the collection $(F_z)_{z \in Z} = (p_i^{-1}(U_z))_{z \in Z}$ is disjoint, it follows that $X_i = \bigsqcup_{z \in Z} U_z$. It follows that the constant maps $U_z \to \{z\} \hookrightarrow Z$ assemble into the desired function $f_i \colon X_i \to Z$.
	\end{proof}
	
	\section{Descent conditions}
	
	In this section we define various descent conditions one can impose on a functor $F \colon \GLCH^\op \to C$, and derive some structural consequences of these definitions that will be used throughout the article. 
	
	\begin{defn} A functor $F \colon \GLCH^\op \to C$ is said to satisfy:
	\begin{enumerate}
	\item \emph{open descent} if $F(X) \cong \lim_{I \ni i} F(U_i)$ for each $X \in \GLCH$ and cover $\{U_i\}_{i \in I}$ by $G$-invariant opens of $X$;
	\item \emph{cofiltered compact codescent} if $F$ maps cofiltered limits
	of compact Hausdorff $G$-spaces to colimits;
	\item \emph{closed descent} if $F(\emptyset) = \ast$ and 
	for each $X \in \GLCH$ and 
	$G$-invariant closed subspaces $K,L \subset X$, the square
	\[\begin{tikzcd}[ampersand replacement=\&]
		{F(K \cup L)} \&\& {F(L)} \\
		\\
		{F(K)} \&\& {F(K \cap L)}
		\arrow[from=1-1, to=1-3]
		\arrow[from=1-1, to=3-1]
		\arrow[from=1-3, to=3-3]
		\arrow[from=3-1, to=3-3]
	\end{tikzcd}
	\]
	is a pullback.
	\end{enumerate}
	We will decorate the functor categories $\Fun(\GLCH^\op, C)$ with the subscripts $o$, $cc$ and $cl$ respectively to indicate that we are considering the full subcategory generated by functors satisfying the corresponding descent properties. We shall also consider closed and cofiltered compact codescent for functors $F \colon \GCH^\op \to C$.
	\end{defn}

	By definition, a functor $F \colon \GLCH^\op \to C$ with values in a presentable category $C$ satisfies open descent if and only if it is a sheaf for the Grothendieck topology generated by equivariant open inclusions, which is equivalent to $F_X \defeq F|_{\cO_G(X)^\op}$ being a sheaf for all $X \in \GLCH$. Similarly, if $F \colon \GCH^\op \to C$ satisfies closed descent and cofiltered compact codescent, then the same argument as in
	\cite{som}*{proof of Theorem 3.6.11} shows that $F_X^k \defeq F|_{\cK_G(X)^\op}$ is a $k$-sheaf.
	
	~
	As it turns out, in presence of open and cofiltered compact codescent, closed descent follows formally.
	\begin{lem} \label{lem:o+cc=>cl}
	Let $C$ be a presentable category such that filtered colimits are left exact. If $F \colon \GLCH^\op \to C$ is a functor satisfying open descent
	and cofiltered compact codescent, then for all $X \in \GLCH$ the $k$-sheaf associated to $F_X$ agrees with $F_X^k$. In particular $F$
	satisfies closed descent.
	\end{lem}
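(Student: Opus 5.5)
We need to show: for every $G$-invariant compact $K \subset X$, the canonical map
\[
\colim_{K \subset U} F(U) \longrightarrow F(K)
\]
is an equivalence, where $U$ ranges over $\cO_G(X)$. The left side is $\psi(F_X)(K)$ from Theorem \ref{thm:shv=kshv}, which applies to $X/G$ via Remark \ref{rmk:OG-OX/G}, since $F_X$ is a sheaf by open descent. Once this is established, $F^k_X \simeq \psi(F_X)$ is a $k$-sheaf. Closed descent then follows: for closed invariant $K, L \subset X$ we may first reduce to compact ones, and the $k$-sheaf square condition gives the pullback. More precisely, apply the argument to the compact spaces $K \cup L$ themselves. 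For general closed subsets we take $X' = K\cup L$ as the ambient space and use open descent on $X'$ together with the compact case. Alternatively, we may work with $X'$ replaced by its one-point compactification-free local argument. I would mostly focus on compact $K$.

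\textbf{Key step: compact neighbourhoods.} Write the colimit over open $U \supset K$ as a colimit over invariant compact neighbourhoods $L$ of $K$, i.e.\ $K \Subset L$. This is cofinal because $X$ is locally compact and $G$ is finite: take $\overline{V}$ for a relatively compact $V$ and saturate by $G$. For such $L \subset U \subset L'$ the maps interleave, so
\[
\colim_{K\subset U} F(U) \simeq \colim_{K \Subset L} F(L).
\]
Now $K = \bigcap_{K\Subset L} L$ is a cofiltered intersection, so $K = \lim_{K \Subset L} L$ as compact Hausdorff $G$-spaces. This uses that intersections of compact subsets of a Hausdorff space are limits in $\GCH$, which is elementary. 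Cofiltered compact codescent then gives $\colim_L F(L) \simeq F(K)$, and it remains to check that the composite is the canonical map. That check is naturality of restriction.

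\textbf{Expected obstacle.} The main subtlety is the cofinality/interleaving step. We must make sure the colimit over opens uses the restriction $F(U) \to F(L)$ of $F$ as a functor on all of $\GLCH$. The statement's "$k$-sheaf associated to $F_X$" is exactly $\psi(F_X)$, so the identification is a natural map rather than a coincidence. Left exactness of filtered colimits is used only to invoke Theorem \ref{thm:shv=kshv}.

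For the "in particular" we also need closed descent for non-compact closed $K, L$. I would deduce it by covering $X$ by relatively compact invariant opens $V$. By open descent applied to $K\cup L$, $K$, $L$ and $K \cap L$ with the induced covers $V \cap (-)$, and since limits commute with pullbacks, the square becomes a limit of squares for $\overline V \cap(-)$-type data. To avoid boundary issues, use a cover by interiors of compact invariant sets $C$, and restrict further: the square for $K\cap C$, $L \cap C$ is a pullback by the $k$-sheaf property, and passing to interiors via open descent on each closed subspace gives the claim.
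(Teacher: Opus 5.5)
For the main claim, your argument is the paper's. You identify $\colim_{K\subset U}F(U)$ with $\colim_{K\Subset L}F(L)$ by interleaving invariant open neighbourhoods with invariant compact neighbourhoods of $K$. You then apply cofiltered compact codescent to $K=\bigcap_{K\Subset L}L=\lim L$. This gives the closed descent square when $K$ and $L$ are compact; the paper leaves the passage to arbitrary closed subsets implicit in its ``in particular''.

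Your write-up breaks down at the extension to non-compact closed $K,L$. Open descent on the induced covers expresses the four corners as limits of $F(V\cap(-))$, not of $F(\overline V\cap(-))$. The squares for $V\cap K$, $V\cap L$ are again squares of non-compact closed subsets (of $V$), so nothing has been reduced. ``Passing to interiors via open descent'' does not supply the comparison $\lim_V F(V\cap Y)\simeq\lim_C F(C\cap Y)$ that you actually need. That comparison is true, since relatively compact invariant opens and compact invariant subsets of $X$ are both cofinal in their union ordered by inclusion. But it is the real content of the step and has to be argued.

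A cleaner route is as follows.
\begin{enumerate}
\item Apply your main claim with each $Y\in\{K\cup L,K,L,K\cap L\}$ as the ambient space. Since $\phi\psi\simeq\id$ in Theorem~\ref{thm:shv=kshv}, this gives $F(Y)\simeq\lim_{C\in\cK_G(Y)^{\op}}F(C)$.
\item The map $C\mapsto C\cap K$ is right adjoint to the inclusion $\cK_G(K)\subset\cK_G(K\cup L)$. Hence you may reindex $F(K)\simeq\lim_{C\in\cK_G(K\cup L)^{\op}}F(C\cap K)$, and likewise for $L$ and $K\cap L$.
\item Each $C\in\cK_G(K\cup L)$ satisfies $C=(C\cap K)\cup(C\cap L)$. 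So the $k$-sheaf condition makes the square for $C\cap K$, $C\cap L$ a pullback, and a limit of pullback squares is a pullback.
\item Finally, $F(\emptyset)\simeq\ast$ is open descent for the empty cover.
\end{enumerate}
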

	\begin{proof} The $k$-sheaf associated to $F_X$ is given by left Kan extending to $\cKO_G(X)^\op$ and restricting to $\cK_G(X)^\op$. By definition there is a canonical comparison map to $F_X^k$,
	which on objects is given by the formula
	\[
	\colim_{K \subset U} F(U) \to F(K).
	\]
	It remains to observe that the map above is an equivalence. Indeed, by
	cofiltered compact codescent we know that the map $\colim_{K\Subset L} F(L) \to F(K)$ is an equivalence and by a cofinality argument we obtain
	\[
	 \colim_{K \subset U} F(U) \cong \colim_{L \in \cKO_G(X)_{K / }} F(L) \, \cong \colim_{K \Subset L} F(L). \qedhere
	\]
	\end{proof}
	
	\begin{rmk} \label{rmk:stalks} 
	In light of Lemma \ref{lem:o+cc=>cl}, we see that if $C$ is a presentable category such that filtered colimits are left exact and $F \colon \GLCH^\op \to C$ is a functor satisfying open descent, cofiltered compact codescent and closed descent, then the stalk of $F_X$ at $Z \in \Orb$ is given by $F(Z)$.
	\end{rmk}
		
	In a similar spirit to Lemma \ref{lem:o+cc=>cl}, we record the following technical lemma which will be of importance later on.
	
	\begin{lem}\label{lem:ko} 
	Let $C$ be a presentable category and $X$ a locally compact Hausdorff $G$-space. Let $F, F' \colon \mathcal{KO}_G(X)^{\op} \to C$ be two functors. If $\eta \colon F \to F'$ is a natural transformation such that
	$\eta_K$ is an equivalence for all $K \in \cK_G(X)$, then the induced map
	$F|_{\cO_G(X)^\op} \to F'|_{\cO_G(X)^\op}$ is an equivalence upon sheafification.
	\end{lem}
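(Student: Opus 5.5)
The idea is to pass through $k$-sheaves, using Theorem \ref{thm:shv=kshv} together with Remark \ref{rmk:OG-OX/G}, which identifies $\cO_G(X)$, $\cK_G(X)$ and $\cKO_G(X)$ with the corresponding posets of the locally compact Hausdorff space $X/G$. The precise claim to prove is that $(F|_{\cO_G(X)^\op})^\sh \to (F'|_{\cO_G(X)^\op})^\sh$ is an equivalence.

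The first step is to show that sheafification of a presheaf $P$ on $\cO_G(X)$ depends only on the values
\[
\psi(P)(K) = \colim_{K \subset U} P(U),
\]
the left Kan extension of $P$ to $\cKO_G(X)^\op$ restricted to compacts. Concretely, take $\mathcal S \in \Shv(X/G, C)$. A map $P \to \mathcal S$ of presheaves is the same as a map of left Kan extensions $\Lan P \to \Lan \mathcal S$ on $\cKO_G(X)^\op$, restricted back to opens. Via $\phi \psi \mathcal S \simeq \mathcal S$, one expects this to match maps $\psi P \to \psi \mathcal S$ of functors on $\cK_G(X)^\op$. Hence $P^\sh$ is $\phi$ applied to the $k$-sheafification of $\psi P$, and $P \mapsto P^\sh$ factors through $P \mapsto \psi P$. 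If this formal argument is awkward to run directly, I would instead cite Lurie's proof of Theorem 7.3.4.9, which establishes exactly this compatibility: the $k$-sheafification of $\psi P$ corresponds to $P^\sh$.

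The second step compares $\psi$ on $F$ and $F'$. I claim that for every compact $K$ we have
\[
\psi(F|_{\cO_G(X)})(K) = \colim_{K \subset U} F(U) \simeq \colim_{K \Subset L} F(L) \simeq F(K).
\]
This uses the cofinality of $\{U \supset K\}$ inside $\cKO_G(X)_{K/}$, as in the proof of Lemma \ref{lem:o+cc=>cl}. The last equivalence holds because $K$ is an initial object of $\cKO_G(X)_{K/}$: the colimit of a diagram indexed by $\cKO_G(X)^\op$ over the overcategory of $K$ is just $F(K)$. So the restriction of $\Lan(F|_{\cO})$ to compacts identifies with a functor equivalent to $F|_{\cK}$. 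More precisely, one should not expect $\Lan(F|_{\cO})(K) = F(K)$ in general. The point is rather that the Kan extension uses only open sets containing $K$. For a compact $K$, those are cofinal among open neighbourhoods and also cofinal among the $L \in \cKO_G(X)$ with $K \Subset L$ strictly, excluding $K$ itself. This neighbourhood system is the same for $F$ and $F'$.

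This leads to the cleaner route I would actually follow. Define $P = F|_{\cO}$ and $P' = F'|_{\cO}$. Their values $\psi P(K) = \colim_{K\Subset L, L\neq K} F(L)$ may differ from $F(K)$, so $\eta_K$ does not directly control $\psi(\eta)$. The key observation is instead that $\psi(\eta)$ is computed using $\eta_L$ for compacts $L$ in the neighbourhood system. Since every open $U \supset K$ contains an invariant compact neighbourhood $L$ of $K$ (local compactness of $X/G$), the colimits over opens and over compact neighbourhoods are mutually cofinal: $\colim_{K\subset U}F(U) \simeq \colim_{K \Subset L,\, L \in \cK_G(X)} F(L)$. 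On the right-hand side $\eta$ is an equivalence termwise. Therefore $\psi(\eta)$ is an equivalence, and by the first step $\eta^\sh$ is an equivalence.

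I expect the main obstacle to be this interleaving cofinality argument. The subposet of $\cKO_G(X)_{K/}$ consisting of open sets and the one consisting of compacts $L$ with $K \Subset L$, $L \neq K$, are both cofinal in the punctured neighbourhood category. One has to check this carefully as a statement about $\infty$-categorical cofinality of posets, using Quillen's Theorem A: the relevant comma posets are directed, hence weakly contractible. The first step should be routine given Theorem \ref{thm:shv=kshv}.
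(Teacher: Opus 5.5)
Your argument is correct in outline, but it works on the colimit side, whereas the paper works on the limit side. The paper stays on open sets throughout. It uses Efimov's reflector onto continuous presheaves, $P\mapsto\widehat P$ with $\widehat P(U)=\lim_{V\ll U}P(V)$, where the limit is over opens with compact closure inside $U$. Sheafification factors through this reflector because sheaves satisfy condition (iii) of Definition \ref{defn:shvx}. A single finality argument in $\{L\in\cKO(X):L\ll U\}$ then gives $\widehat{F|_{\cO}}(U)\simeq\lim_{K\subset U}F(K)$, which visibly depends only on $F|_{\cK}$. You instead show two things: sheafification factors through $\psi$, and $\psi(F|_{\cO})(K)$ is a colimit of $F$ over compact neighbourhoods of $K$. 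The two routes rest on the same mechanism. The same finality argument shows $\phi\psi P\simeq\widehat P$ for every presheaf $P$, so your factorization through $\psi$ is the paper's factorization through $\widehat{(-)}$, seen from the compact side. The paper's route is shorter: one cofinality and no adjunction bookkeeping. Yours gives slightly finer information, namely that $\psi(\eta)$ is already an equivalence on every compact.

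Two points need adjusting.

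First, Theorem \ref{thm:shv=kshv} assumes filtered colimits in $C$ are left exact. So does your fallback claim that $P^{\sh}$ is $\phi$ of the $k$-sheafification of $\psi P$. The lemma, however, is stated for arbitrary presentable $C$, and is used that way in Lemma \ref{lem:profin}, Lemma \ref{lem:pullback-EX} and Theorem \ref{thm:breprofin}. Your direct argument avoids the hypothesis if you prove its one input, $\mathcal S\simeq\phi\psi\mathcal S$ for a sheaf $\mathcal S$, by hand rather than by citation. By the same finality, $\lim_{K\subset U}\psi\mathcal S(K)\simeq\lim_{V\ll U}\mathcal S(V)$. The right side is $\mathcal S(U)$, because $U$ is the directed union of the $V\ll U$. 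Now write $j\colon\cO(X/G)^{\op}\to\cKO(X/G)^{\op}$ and $k\colon\cK(X/G)^{\op}\to\cKO(X/G)^{\op}$. Then $\mathrm{Map}(P,\mathcal S)\simeq\mathrm{Map}(\Lan_jP,\Ran_k\psi\mathcal S)\simeq\mathrm{Map}(\psi P,\psi\mathcal S)$, naturally in $P$. That is all you need, and no left exactness enters.

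Second, the indexing set in your second step should be the compact $L$ with $K\subset\mathrm{int}(L)$. Taking all $L$ with $K\Subset L$ includes $K$ itself as the smallest element, which collapses the colimit to $F(K)$; this is the claim you retracted. Excluding $K$ breaks cofinality when $K$ is open, for example an orbit in a discrete $G$-space. With the correct set, both the opens containing $K$ and the compact neighbourhoods of $K$ are cofinal in the neighbourhood poset. Each relevant comma poset is nonempty by local compactness and closed under binary intersections. The rest of your argument then goes through.
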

	\begin{proof} 
	By the identifications of Remark \ref{rmk:OG-OX/G}, we may prove
	the statement for $G=1$. We consider the following partial order on $\cKO(X)$: write $A \ll B$ if either $A = B$ or $\overline{A}$ is compact and $\overline{A} \Subset B$.
	Write $\Psh^{\mathrm{cont}}(X,C)$ for the
	subcategory of $\Psh(X,C)$ spanned by presheaves $H$ satisfying 
	$H(U) \cong \lim_{V\ll U} H(V)$. As noted in \cite{efiloc}*{Section 6.2}, there is a canonical reflector $(-)^{\mathrm{cont}}$ to this inclusion, which factorizes the sheafification functor (we point out that in loc. cit. it is assumed that $C$ is dualizable but this is not needed for the result in question). In particular, there is a natural map $F|_{\cO(X)^\op} \to F|_{\cO(X)^\op}^{\mathrm{cont}}$ which is an equivalence upon sheafification. Hence it suffices to see that for each $U \in \cO(X)$ the map $\lim_{V\ll U} F(V) \to \lim_{V\ll U} F'(V)$ induced by $\eta$ is an equivalence. By a finality argument, there are equivalences
	\[
	\lim_{V\ll U} F(V) \cong \lim_{L \in \cKO(X), L \ll U} F(L)
	\cong \lim_{K \subset U} F(K)
	\]
	and likewise for $F'$. Since $\eta$ is an equivalence on compact subspaces, the conclusion follows.
	\end{proof}
	
	Next we turn to comparing functors satisfying descent defined on $\GCH$ and on $\GLCH$.
	
	\begin{thm} \label{thm:res-orb-cons}
	Assume that $C$ is a compactly assembled category. Then 
	the restriction functor $\Fun^{o,cc}(\GLCH^\op,C) \to \Fun(\Orb^\op,C)$ is conservative.
	\end{thm}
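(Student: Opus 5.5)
Since both categories are stable when $C$ is stable, and more generally by testing on cofibres/fibres, I will prove conservativity directly: given a map $\eta\colon F\to F'$ in $\Fun^{o,cc}(\GLCH^\op,C)$ which is an equivalence on every orbit $Z\in\Orb$, I show $\eta_X$ is an equivalence for every $X\in\GLCH$. The plan proceeds in three reductions: from $\GLCH$ to $\GCH$, from $\GCH$ to finite cubes $[0,1]^S$, and from cubes to stalks via hypercompleteness.

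\emph{Reduction to compact spaces.} For $X\in\GLCH$, open descent says $F_X$ and $F'_X$ are sheaves on $\cO_G(X)\cong\cO(X/G)$. By Lemma~\ref{lem:o+cc=>cl}, their associated $k$-sheaves agree with $F^k_X,F'^k_X$, i.e. with the values of $F,F'$ on $G$-invariant compacts. By Theorem~\ref{thm:shv=kshv} (equivalently Lemma~\ref{lem:ko} applied to the restrictions to $\cKO_G(X)$), it therefore suffices to show $\eta_K$ is an equivalence for every $K\in\GCH$. Note that filtered colimits in a compactly assembled $C$ are left exact, so these results apply.

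\emph{Reduction to finite cubes.} For $K\in\GCH$, Lemma~\ref{lem:g-urysohn} gives an equivariant closed embedding $K\subset[0,1]^J$. Since $K$ is compact, it is a cofiltered intersection of $G$-invariant compact neighbourhoods. More usefully, Lemma~\ref{lem:compcofi} writes $K=\lim_S p_S(K)$ over finite $G$-stable $S\subset J$, with $p_S(K)\subset[0,1]^S$ a $G$-invariant compact. By cofiltered compact codescent, $\eta_K$ is then a colimit of the maps $\eta_{p_S(K)}$. So it suffices to treat $G$-invariant compact subspaces $L$ of finite cubes $[0,1]^S$. Applying the first reduction inside $X=[0,1]^S$, it is enough to show that the map of sheaves $F_X\to F'_X$ on $X/G$ is an equivalence, since values on compacts are then recovered by the $k$-sheaf formula of Lemma~\ref{lem:o+cc=>cl}.

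\emph{Stalks.} By Corollary~\ref{coro:gcubes-hyp}, $X/G$ has finite covering dimension, and it is compact Hausdorff. Theorem~\ref{thm:dualstalks} therefore reduces the claim to checking stalks at each $x\in X/G$, corresponding to an orbit $G\cdot x\cong G/G_x$. By Remark~\ref{rmk:stalks} (closed descent holds by Lemma~\ref{lem:o+cc=>cl}), the stalk of $F_X$ at $x$ is $\colim_{G\cdot x\subset U}F(U)\simeq F(G\cdot x)$, naturally in $F$, so the map on stalks is $\eta_{G\cdot x}$, an equivalence by hypothesis.

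The main obstacle is justifying this stalk identification carefully. The stalk of a sheaf on $X/G$ at $x$ is the colimit over invariant open neighbourhoods of the orbit. Identifying this colimit with $F(G\cdot x)$ uses cofinality between open neighbourhoods and compact neighbourhoods, as in the proof of Lemma~\ref{lem:o+cc=>cl}, together with cofiltered compact codescent for $G\cdot x=\bigcap K$ over invariant compact neighbourhoods $K$. These steps need to be checked to be natural in $F$, so that the stalk map really is $\eta_{G\cdot x}$.
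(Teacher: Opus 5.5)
Your proof is correct and follows the paper's argument essentially step for step: you reduce to $\GCH$ via Lemma \ref{lem:o+cc=>cl}, then to finite cubes $[0,1]^S$ via Lemmas \ref{lem:g-urysohn} and \ref{lem:compcofi}, and conclude by checking stalks using Corollary \ref{coro:gcubes-hyp}, Theorem \ref{thm:dualstalks} and Remark \ref{rmk:stalks}. The point you flag as the main obstacle is already settled by Lemma \ref{lem:o+cc=>cl} applied to the compact orbit $K=G\cdot x$, and the identification is automatically natural in $F$ because the comparison map is induced by restriction maps.
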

	\begin{proof} 
	By Lemma \ref{lem:o+cc=>cl} every functor in $\Fun^{o,cc}(\GLCH^\op,C)$ automatically also satisfies closed descent. 
	Let $\mu \colon F \to F'$ be a natural transformation between functors satisfying open descent, cofiltered compact codescent, and closed descent, such that $\mu_Z$ is an equivalence for all $Z \in \Orb$. To see that $\mu_X$ is an equivalence for all $X \in \GLCH$, we may equivalently see that the induced sheaf maps $F_X \to F_X'$ are equivalences. 
		
	By Lemma \ref{lem:o+cc=>cl}, this reduces to showing that $\mu_X$ is an equivalence for all $X \in \GCH$. Furthermore, using Lemmas \ref{lem:g-urysohn} and \ref{lem:compcofi}, we can always
	write a compact Hausdorff $G$-space as a cofiltered limit of compact $G$-invariant subspaces of spaces of the form $[0,1]^S$. Hence it suffices to prove the statement for spaces of the form $Y = [0,1]^S$ with $S$ a finite $G$-set. Using
	Corollary \ref{coro:gcubes-hyp} and Theorem \ref{thm:dualstalks}, we may see that $F_{Y} \to F'_Y$ is an equivalence on stalks. Finally, Remark \ref{rmk:stalks} tells us that the maps on stalks are given by $\mu_Z$ with $Z \in \Orb$, which are equivalences by hypothesis.
	\end{proof}
	
	\begin{coro}\label{cor_extension} 
	If $C$ is a presentable category such that filtered colimits are left exact, then 
	right Kan extension along the inclusion $i \colon \GCH^\op \hookrightarrow \GLCH^\op$ corestricts to an equivalence
		\[
		\Ran(i) \colon \Fun^{cc,cl}(\GCH^\op,C) \to \Fun^{o,cc}(\GLCH^\op,C)=\Fun^{o,cc,cl}(\GLCH^\op,C).
		\]
	\end{coro}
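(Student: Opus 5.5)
The plan is to show that restriction $i^* \colon \Fun^{o,cc}(\GLCH^\op,C) \to \Fun^{cc,cl}(\GCH^\op,C)$ and $\Ran(i)$ are mutually inverse. The equality $\Fun^{o,cc} = \Fun^{o,cc,cl}$ is Lemma \ref{lem:o+cc=>cl}. Since $i$ is fully faithful, $i^*\Ran(i) \simeq \id$. It therefore suffices to prove two things: that $\Ran(i)F$ satisfies open descent and cofiltered compact codescent for every $F$ satisfying (cc) and (cl) on $\GCH$, and that the unit $H \to \Ran(i)\,i^*H$ is an equivalence for every $H \in \Fun^{o,cc}(\GLCH^\op,C)$.

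First I compute $\Ran(i)F$. For $X \in \GLCH$ we have $\Ran(i)F(X) = \lim_{X \to K} F(K)$ over equivariant maps to compact Hausdorff $G$-spaces. I claim this limit can be computed over the subposet $\cK_G(X)$: a map $X \to K$ factors through an invariant compact subspace only if its image is compact, which is generally false, so instead I want the formula
\[
\Ran(i)F(X) \simeq \lim_{K \in \cK_G(X)} F(K) = \phi(F^k_X)(X).
\]
The cleanest route avoids a direct finality check. Consider the functor $R \colon X \mapsto \lim_{K \in \cK_G(X)} F(K)$. It is functorial in $X$, since for $f \colon X \to Y$ we have $K \mapsto f(K)$ together with the maps $F(f(K)) \to F(K)$. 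It agrees with $F$ on compact $X$, because $X$ is terminal in $\cK_G(X)$. Now $F^k_X$ is a $k$-sheaf by the remark preceding Lemma \ref{lem:o+cc=>cl}, using (cc) and (cl). By Theorem \ref{thm:shv=kshv} applied to $X/G$ via Remark \ref{rmk:OG-OX/G}, the restriction $R|_{\cO_G(X)}$ is the sheaf $\phi(F^k_X)$, so $R$ satisfies open descent. Cofiltered compact codescent holds for $R$ because $R$ agrees with $F$ on $\GCH$.

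Next I check that $R$ is the right Kan extension, i.e. that $\Fun(\GLCH^\op,C)(H,R) \simeq \Fun(\GCH^\op,C)(i^*H,F)$ at least for $H$ satisfying open descent. A map $i^*H \to F$ induces $H(X) \to \lim_{K\subset X} H(K) \to \lim_K F(K) = R(X)$. Conversely, for a general $H$ I write $H(X) \to \lim_{(X\to L)} H(L)$ and note that every map $X \to L$ with $L$ compact is mapped to by the restrictions along $K \hookrightarrow X$, which gives a comparison $\Ran(i)F \to R$. I show this comparison is an equivalence by identifying both sides with $\lim_{K} F(K)$. For this, the fact I need is that the functor $\cK_G(X) \to (\GCH)_{X/}^{\op}$-type index category, $K \mapsto (K \hookrightarrow X)$, is initial in the appropriate sense. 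This fails literally, since maps go $K \to X \to L$ and not from $X$, so I will instead argue via the adjunction. $R$ receives a map from any $H$ through $i^*H \to F$. The characterization of $R$ as the unique open-descent extension that is a $k$-sheaf on compact subspaces then gives the universal property among functors with open descent. To get the full universal property, I precompose with the sheafification of $H$, which is harmless because $R$ is a sheaf.

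For essential surjectivity, take $H \in \Fun^{o,cc}$. By Lemma \ref{lem:o+cc=>cl}, $H_X$ is the sheaf associated to the $k$-sheaf $H^k_X$. Theorem \ref{thm:shv=kshv} then gives $H(X) \simeq \lim_{K\subset X} H(K) = R(i^*H)(X)$, naturally in $X$, so the unit is an equivalence.

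The main obstacle is the identification $\Ran(i)F \simeq R$, i.e. the finality or universal-property step. If the adjunction argument proves slippery, I would fall back on a pointwise finality argument. For each $X$, the comma category of compact $G$-spaces under $X$ receives, via the one-point compactification $X^+$ (a compact Hausdorff $G$-space), a cofinal system whose limit of $F$-values reduces by (cl) to $\lim_K F(K)$. Concretely, $F(X^+)$ is related to $F(K)$ by excision along $K \subset X^+$.
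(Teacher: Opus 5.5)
\textbf{Gap: the pointwise formula for $\Ran(i)F$.} The step you flag as the main obstacle is never established: identifying $\Ran(i)F$ with $R(X)=\lim_{K\in\cK_G(X)}F(K)$. The cause is a variance error. For $i\colon \GCH^\op\hookrightarrow\GLCH^\op$, the pointwise formula is a limit over $(\GCH^\op)_{X/}$. Its objects are maps $X\to B$ in $\GLCH^\op$, i.e.\ equivariant maps $B\to X$ from compact $G$-spaces \emph{into} $X$:
\[
\Ran(i)F(X)\simeq\lim_{(B\to X)}F(B).
\]
It is not a limit over maps $X\to K$ out of $X$.

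None of your substitutes closes this.
\begin{enumerate}
\item The ``unique open-descent extension'' characterization of $R$ is asserted, not proved.
\item Extending the universal property from sheaves to arbitrary $H$ via $H\to H^{\sh}$ needs $\mathrm{Map}(i^*H^{\sh},F)\simeq\mathrm{Map}(i^*H,F)$ for all $H$. That is equivalent to $\Ran(i)F$ being a sheaf, which is what you are trying to prove.
\item The fallback via $X^+$ again uses compact spaces under $X$. For those, the initial object would be the equivariant Stone--\v{C}ech compactification, not $X^+$, and the resulting value would not be $\lim_K F(K)$.
\end{enumerate}
Without the Kan-extension description you have also not constructed $R$ as a coherent functor on $\GLCH^\op$; you only have its values and some restriction maps. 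Both your open-descent argument and your unit argument run through $R$, so both currently rest on this missing step.

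\textbf{The fix.} With the correct variance, the missing step is a short cofinality check, and this is the core of the paper's proof. The inclusion $\cK_G(X)\to(\GCH)_{/X}$, $K\mapsto(K\hookrightarrow X)$, is cofinal. Indeed, for an equivariant $f\colon B\to X$ with $B$ compact, the category of factorizations of $f$ through some $L\in\cK_G(X)$ is the poset $\{L\in\cK_G(X): f(B)\subseteq L\}$. This poset has least element $f(B)$, which is compact and invariant because $f$ is continuous and equivariant, so it is contractible. Hence $\Ran(i)F(X)\simeq\lim_{K\in\cK_G(X)}F(K)$ via the canonical restriction, and $R$ can simply be defined as $\Ran(i)F$. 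The paper runs the same check for every $K\in\cKO_G(Y)$ and then invokes Theorem~\ref{thm:shv=kshv}.

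Once this is in place, the rest of your argument works. Your treatment of the unit is a genuinely different route from the paper's. You get $H(X)\simeq\lim_{K\in\cK_G(X)}H(K)$ directly from Lemma~\ref{lem:o+cc=>cl} and Theorem~\ref{thm:shv=kshv}. The paper instead deduces that the unit is an equivalence from conservativity of restriction to orbits (Theorem~\ref{thm:res-orb-cons}), which is proved only for compactly assembled $C$. Your route needs only the stated hypothesis that filtered colimits in $C$ are left exact.
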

	\begin{proof} Since $i$ is fully faithful, we know that right Kan extending along $i$ is again a fully faithful functor, and since the inclusion $\Orb^\op \hookrightarrow \GLCH^\op$ factors through $i$, by Theorem \ref{thm:res-orb-cons} we also know that restriction along $i$ is a conservative functor when restricted to $\Fun^{o,cc,cl}(\GCH^\op,C)$.
	Therefore, once we show that $\Ran(i)$ restricts and corestricts appropriately, it will be a formal consequence of the above that it must be an equivalence.
		
	Let $F \in \Fun^{cc,cl}(\GCH^\op,C)$ and write $\widehat F = \Ran(i)(F)$. It suffices to see that for all such $Y$ the restriction of $\widehat F$ to $\cO_G(Y)^{\op}$ is a sheaf.		
	Since $i$ is fully faithful, we have the following
	commuting diagram:
	\[
		\begin{tikzcd}[ampersand replacement=\&]
			{\mathcal O(Y/G)^{\op}} \& {\mathcal O_G(Y)^{\op}} \\
			{{\mathcal {KO}(Y/G)}^{\op}} \& {{\mathcal {KO}_G(Y)}^{\op}} \& {\GLCH^{\op}} \\
			{\mathcal K(Y/G)^{\op}} \& {\mathcal K_G(Y)^{\op}} \& {\GCH^{\op}} \&\& \Sp
			\arrow["\sim", from=1-1, to=1-2]
			\arrow[hook', from=1-1, to=2-1]
			\arrow[hook', from=1-2, to=2-2]
			\arrow["\sim", from=2-1, to=2-2]
			\arrow["\jmath", hook, from=2-2, to=2-3]
			\arrow["{{\widehat F}}", bend left = 30, from=2-3, to=3-5]
			\arrow[hook, from=3-1, to=2-1]
			\arrow["\sim", from=3-1, to=3-2]
			\arrow[hook, from=3-2, to=2-2]
			\arrow["{{\jmath'}}"', hook, from=3-2, to=3-3]
			\arrow["i", hook, from=3-3, to=2-3]
			\arrow["F", from=3-3, to=3-5]
		\end{tikzcd}
		\]
	Using \cite{htt}*{Theorem 7.3.4.9}, to prove that $\widehat{F}|_{\cO_G(Y)^{\op}}$ is a sheaf it suffices to see that $\widehat F \jmath$ is a right Kan extension of $F \jmath'$. This amounts to proving 
	that for each $K \in \mathcal{KO}_G(Y)$, the inclusion $\mathcal {K}_G(Y)_{/K} \to (\GCH)_{/K}$ is cofinal, which is in turn equivalent to proving that, for equivariant map $f \colon B \to K$ with $B \in \GCH$, the category $(\mathcal{K}_G(Y)_{/K})_{f/} \defeq {\mathcal K}_G(Y)_{/K} \times_{(\GCH)_{/K}} ((\GCH)_{/K})_{f/}$ is weakly contractible. Its objects are equivariant maps $g \colon B \to L$ that factor $f$ through some $G$-invariant compact subspace $L \subset K$, and a (unique) morphism $g \to g'$ exists if and only if the codomain of $g$ is contained in that of $g'$.
	Hence $(\mathcal{K}_G(Y)_{/K})_{f/}$ can be identified with the poset $\{L \in \mathcal K_G(Y) : f(B) \subseteq L \subseteq K\}$, with the order given by inclusion. The latter is contractible since it has a minimum, namely $f(B)$.
	\end{proof}
	
	Before moving on, we record the following result due to Hoyois which says that functors satisfying cofiltered compact codescent 
	are homotopy invariant.
	
	\begin{prop}[Homotopy invariance] \label{prop:htpyinv}
	Let $C$ be a compactly assembled category. 
	\begin{enumerate}[(i)]
		\item If $F \colon \GCH^{\op} \to C$ satisfies cofiltered compact codescent, then it is $G$-homotopy invariant.
		\item If $F \colon \GLCH^{\op} \to C$ satisfies open descent and 
		cofiltered compact codescent, then it is $G$-homotopy invariant.
	\end{enumerate}
	\end{prop}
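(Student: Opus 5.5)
We follow Hoyois's argument from the non-equivariant setting. The goal for (i) is to show that for every compact Hausdorff $G$-space $X$ the projection $p \colon X \times [0,1] \to X$, with $G$ acting trivially on $[0,1]$, induces an equivalence $F(X) \to F(X \times [0,1])$. Granting this, a $G$-homotopy $H \colon X \times [0,1] \to Y$ gives $H^*$, and $F(i_0) = F(i_1)$ are both inverse to $F(p)$. So $G$-homotopic maps induce equal maps, and $G$-homotopy equivalences induce equivalences.

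To prove that $F(p)$ is an equivalence, we present $[0,1]$ as a cofiltered limit of simplicial complexes through which the identity factors up to small homotopy. The cleanest version uses the Cantor-type presentation: $[0,1]$ is the quotient of the Cantor set $\{0,1\}^{\N}$. Concretely, we write $[0,1] \simeq \lim_n T_n$, where $T_n$ is the finite graph obtained by subdividing $[0,1]$ into $2^n$ intervals, and the transition maps $T_{n+1} \to T_n$ are piecewise linear maps that collapse alternate subintervals. The point of this presentation is that each transition map is homotopic to a map factoring through a point, in a way compatible with the limit.

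Hoyois's trick is, however, more formal and avoids explicit homotopies. We first note that $F$ sends the inclusion of a point and the collapse to mutually inverse maps on compact \emph{contractible-by-limits} spaces. The key lemma I would aim for is the following. Let $s \colon [0,1] \to [0,1]$ be the map $t \mapsto \min(2t,1)$, and consider the cofiltered limit
\[
\lim\bigl(\cdots \xto{s} [0,1] \xto{s} [0,1]\bigr).
\]
Together with an analogous contraction-type limit, this identifies $X \times [0,1]$ as a cofiltered limit whose transition maps factor through $X \times \{\text{pt}\}$ on a cofinal piece. Applying codescent then shows that $F(X \times [0,1])$ is a colimit of a diagram whose transition maps factor through $F(X)$. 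The identity of the colimit therefore factors through $F(X)$, which makes $F(p)$ a retraction and a section at once.

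Equivariance costs nothing throughout, because $G$ acts only on $X$ and all these limits are taken in $\GCH$. The compact assembly of $C$ is used to make sure that filtered colimits interact well with the retract argument, i.e.\ that a colimit of a sequence whose transition maps factor through a fixed object recovers that object.

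For (ii), let $F$ satisfy open descent and cofiltered compact codescent. By Corollary \ref{cor_extension}, $F$ is the right Kan extension of its restriction to $\GCH$, which satisfies cofiltered compact codescent. Thus for $X \in \GLCH$ we have $F(X) = \lim_{K} F(K)$ over $G$-invariant compact $K \subset X$, and similarly $F(X \times [0,1]) = \lim_K F(K \times [0,1])$, since the compact sets $K \times [0,1]$ are cofinal among compact subsets of $X \times [0,1]$. Part (i) gives an equivalence termwise, so $F(p)$ is an equivalence, and homotopy invariance follows as before.

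The main obstacle I expect is making the limit presentation in (i) precise: I need a cofiltered system of compact spaces with limit $[0,1]$ in which the transition maps are nullhomotopic in a way that $F$ can detect without already knowing homotopy invariance. My first attempt would be Hoyois's explicit choice of a system of intervals indexed by $\N$ with transition maps that are constant on one half. If that does not work, I would fall back to showing that $F$ is invariant under the collapse $X \times [0,1] \to X$ by reducing, via Theorem \ref{thm:res-orb-cons}-type stalk arguments, to orbits $Z$, where $F(Z \times [0,1])$ can be compared using $k$-sheaf descent on $[0,1]$ and the fact that stalks are $F(Z)$ (Remark \ref{rmk:stalks}).
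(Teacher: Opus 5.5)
Your deduction of homotopy invariance from $F(X)\xto{\sim}F(X\times[0,1])$ is fine. So is your part (ii), \emph{given} (i): writing $F(X)\simeq\lim_{K\in\cK_G(X)}F(K)$ via Corollary~\ref{cor_extension} and using that the sets $K\times[0,1]$ are cofinal is a valid alternative to the paper's route. (The paper instead applies Theorem~\ref{thm:res-orb-cons} to $F\to F(-\times[0,1])$ and uses (i) on orbits.)

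The gap is part (i), which carries all the content, and for which you have no working argument. Your limit presentations of $[0,1]$ cannot do the job. The remark that ``each transition map is homotopic to a map factoring through a point'' is circular, since you do not yet know that $F$ sees homotopies. The claim in your key lemma is false: already for $X=\ast$, a tower of compact spaces in which cofinally many transition maps factor through a point has a one-point limit, so $X\times[0,1]$ admits no such presentation. The tower $\cdots\xto{s}[0,1]\xto{s}[0,1]$ with $s(t)=\min(2t,1)$ has limit homeomorphic to $[0,\infty]\cong[0,1]$, with surjective transition maps. Codescent therefore only rewrites $F([0,1])$ as $\colim(F([0,1])\xto{s^\ast}F([0,1])\xto{s^\ast}\cdots)$ and gives no factorization of an identity through $F(X)$. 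Your fallback does not apply to (i) either. $k$-sheaf descent and Theorem~\ref{thm:res-orb-cons}-type stalk arguments need closed or open descent, whereas (i) assumes only cofiltered compact codescent on $\GCH$.

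The missing idea is a local-constancy argument, and this is also where compact assembly genuinely enters. It is not needed for your retract step, since splitting an idempotent by a sequential colimit works in any category. The fully faithful left adjoint $C\hookrightarrow\Ind(C^{\omega_1})$ of $\colim$ gives a jointly conservative family of filtered-colimit-preserving functors $C\to\An$. Composing with these and taking homotopy groups reduces to $C=\Set$. Replacing $F$ by $F(X\times -)$ on $\CHaus$ then reduces to showing $F(i_0)=F(i_1)\colon F([0,1])\to F(\ast)$, where $i_s\colon\ast\to[0,1]$ selects $s$.

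For $\xi\in F([0,1])$ put $\theta_\xi(s)=F(i_s)(\xi)$. Codescent for $\{s\}=\bigcap_{s\in[a,b]}[a,b]$ gives $\colim_{[a,b]\ni s}F([a,b])\cong F(\{s\})$. Now $\xi$ and the pullback $\xi'$ of $\theta_\xi(s)$ along $[0,1]\to\ast$ have the same image in $F(\{s\})$. Because this is a filtered colimit of sets, they already agree in some $F([a,b])$ with $[a,b]$ a neighbourhood of $s$, so $\theta_\xi=\theta_{\xi'}$ is constant near $s$. Thus $\theta_\xi$ is locally constant, hence constant on the connected space $[0,1]$. This gives $F(i_0)=F(i_1)$ and therefore (i).
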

	\begin{proof} 
	We first prove (i). Write $I = [0,1]$ for the unit interval with trivial action. 
	Since $C$ is compactly assembled, by \cite{som}*{Theorem 2.2.15 (3) and Lemma 2.3.15} there is a fully faithful functor 
	from $C$ into $\Ind(C^{\omega_1})$, namely, the left adjoint of the colimit-realisation functor. The latter category is compactly generated and so in particular there is a jointly conservative family of functors $(F_\alpha)_{\alpha \in \Lambda} \colon C \to \An$ that preserve filtered colimits. Considering the composition of $F$ with each functor $F_\alpha$ and taking homotopy groups, we may without loss of generality assume $C = \Set$.
	
	Fixing $X \in \GCH$ and considering $F' = F(X \times -) \colon \CHaus^{\op} \to \Set$, we
	further reduce the statement to proving that a functor on compactly Hausdorff spaces satisfying compact cofiltered codescent maps the projection $[0,1] \to \ast$ to an equivalence or equivalently maps the functions $i_0 \colon \ast \hookrightarrow I$ and $i_1 \colon\ast \hookrightarrow I$ selecting $0$ and $1$ respectively to the same map. 
	
	For each $s \in I$, write $i_s \colon \ast \hookrightarrow I$ for the map selecting $s$ and consider the function
	\[
	\theta \colon F(I) \to \hom_{\Set}(I,F(\ast)), \qquad \theta_\xi(s) = F(i_s)(\xi).
	\]
	Fix $\xi \in F(I)$. We shall now see that the function $\theta_\xi \colon I \to F(\ast)$ is locally constant and hence constant. For a given $s \in I$ we consider the value $\theta_\xi(s) \in F(\ast)$ and the `constant' object $\xi' \in F(I)$ obtain by pullback along $I \to \ast$. The corresponding map $\theta_{\xi'}$ is the constant function with value $\theta_\xi(s) \in F(\ast)$. We want to argue that $\theta_\xi$ locally around $s$ agrees with $\theta_{\xi'}$.  
	By cofiltered compact codescent, 
	the equality $\cap_{[a,b] \ni s} [a,b] = \{s\}$ induces 
	a bijection
	\[
	\phi_s \colon \colim_{s \in [a,b] \subset I} F([a,b]) \xto{\sim} F(\{s\}).
	\]
	Since $\xi$ and $\xi'$ agree after pullback to $F(\{s\})$, hence in the colimit, they have to agree in a finite stage already, so in some $F([a,b])$. Thus the resulting functions $\theta_\xi$ and $\theta_{\xi'}$ also agree there. This finishes the proof of (i).
	
	To conclude we prove (ii). Note that the projections $X \times I \to X$ for each $X \in \GLCH$ assemble into a natural map $\mu_X \colon F(X) \to F(X \times I)$ between $F$ and $F(- \times I)$, both of which satisfy open descent and compact cofiltered codescent. By Theorem \ref{thm:res-orb-cons}, it suffices to prove that $\mu_Z$ is an equivalence for each $Z \in \Orb$. The latter now follows from applying (i) to $F|_{\GCH^\op}$.
	\end{proof}
	
	\begin{coro}\label{coro:cl+}
	Let $C$ be a compactly assembled category. If $F \colon \GCH^{\op} \to C$ satisfies cofiltered compact codescent and closed descent, then for each map $p \colon X \to Y$ and closed subspace $L \subset Y$, the functor $F$ sends the square
	\[
	\begin{tikzcd}[ampersand replacement=\&]
		{p^{-1}(L)} \& X \\
		L \& Y
		\arrow[from=1-1, to=1-2]
		\arrow["{p|}"', from=1-1, to=2-1]
		\arrow["p", from=1-2, to=2-2]
		\arrow[hook, from=2-1, to=2-2]
	\end{tikzcd}
	\]
	to a pullback square.
	\end{coro}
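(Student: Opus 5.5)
Reduce to the case of a closed embedding via the mapping cylinder/graph trick: factor $p$ as $X \hookrightarrow X \times Y \to Y$ by the graph $x \mapsto (x,p(x))$, a closed $G$-equivariant embedding since $Y$ is Hausdorff. The preimage of $L$ under the projection is $X \times L$, and under the graph embedding it is $p^{-1}(L)$. By pasting of pullback squares, it then suffices to treat two cases separately: (a) the projection $\mathrm{pr}\colon X\times Y \to Y$ with $L\subset Y$, and (b) a closed embedding $X \hookrightarrow W$ with closed $M \subset W$ (here $W = X\times Y$, $M = X\times L$).

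\emph{Case (b).} If $X \subset W$ is closed and $M \subset W$ closed, then $X \cap M = p^{-1}(L)$, and the square is exactly a closed descent square, except that closed descent concerns $K \cup L$ rather than an arbitrary ambient space. I would use that $F^k_W = F|_{\cK_G(W)^\op}$ is a $k$-sheaf (from closed descent plus cofiltered compact codescent, as noted before Lemma \ref{lem:o+cc=>cl}), hence corresponds under Theorem \ref{thm:shv=kshv} (applied to $W/G$ via Remark \ref{rmk:OG-OX/G}) to a sheaf $\cF$. I would then express $F(X)$ as the sections of $i^*\cF$ on the closed $X$ and $F(X\cap M)$ accordingly, and reduce to the standard fact that for closed $A,B$ the square $\cF(A\cup B)\to \cF(A), \cF(B)\to \cF(A\cap B)$ is cartesian. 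Concretely, the square in question is obtained from the closed descent square for $X, M \subset X \cup M$ composed with nothing further once we note all four corners are compact subspaces of $W$, so it literally is the closed descent square for $K = X$, $L = M$ inside $W$, modulo replacing the top-left corner $W$ by $X\cup M$. To pass from $X\cup M$ to $W$ one notes that the square with top row $F(W)\to F(X)$ can be compared with $F(X\cup M)\to F(X)$; the given square has corners $p^{-1}(L), X, L, Y$, so in our factorization the relevant case is rather $X \hookrightarrow X\times Y$ over $X \times L \hookrightarrow X\times Y$, whose preimage is $X\cap (X\times L)$, and the ambient contains $X\cup (X\times L)$ which need not be everything. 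I therefore expect to instead use the $k$-sheaf description directly: for a $k$-sheaf, restriction to a closed subset $X$ is a $k$-sheaf on $X$, and the square becomes a statement about the sheaf $i_X^*\cF$ restricted along $X\cap M \subset X$ vs.\ $M \subset W$, which holds by proper base change for the closed embedding $X \hookrightarrow W$ (pullback of $i_M^*$-sections commutes with $i_X^*$ since $(i_X)_* = (i_X)_!$).

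\emph{Case (a)}, the main obstacle. For the projection $X\times Y\to Y$, I would fix $X$ and view $F'(-) = F(X\times -)$ as a functor on $\GCH$ (with diagonal action) satisfying cofiltered compact codescent and closed descent. Then both $F'$ and $F$ give $k$-sheaves on $Y/G$, and the claim is that $F'(Y)\to F'(L)$ is obtained from global sections of a sheaf on $Y$ by restriction to closed $L$, which is tautological: the square with corners $F(X\times Y), F(X\times L), F(Y), F(L)$ is cartesian iff the induced map on fibres $\fib(F(Y)\to F(L)) \to \fib(F'(Y)\to F'(L))$ is an equivalence. Both fibres are compactly supported sections on the open complement $U = Y\setminus L$; I would show this map is an equivalence by writing it as a map of sheaves on $U/G$ and checking on stalks, reducing (after embedding into cubes via Lemma \ref{lem:g-urysohn}, Lemma \ref{lem:compcofi} and cofiltered codescent, as in Theorem \ref{thm:res-orb-cons}) to finite-dimensional situations where Theorem \ref{thm:dualstalks} applies; stalks at points away from $L$ of both fibre-sheaves vanish, giving the result. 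Carefully setting up these fibres as sheaves is where I expect the difficulty.
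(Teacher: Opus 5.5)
\textbf{Gap: the statement as written is false, and so are both of your reduced cases.} With $L=\emptyset$ the claim says that $F(p)\colon F(Y)\to F(X)$ is an equivalence for every $p$. Taking $p\colon\emptyset\to Y$ then forces $F\simeq\ast$. Concretely, for trivial $G$ and $F=\Gamma(-;\mathbb{Z})$, the data $Y=[0,1]$, $L=\{0\}$, $X=\{0,1\}$ would give $\mathbb{Z}\simeq\mathbb{Z}^2\times_{\mathbb{Z}}\mathbb{Z}$. The true version, which is also what Theorem \ref{thm:pdp} uses (the collapse $X^\infty\to U^\infty$ with $L=\{\infty\}$), assumes that $p$ restricts to a homeomorphism $X\setminus p^{-1}(L)\to Y\setminus L$. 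Your proof never uses such a hypothesis, and the two cases fail as follows.

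\emph{Case (a).} With $L=\emptyset$ it asserts $F(X\times Y)\simeq F(Y)$ for arbitrary $X$. The fibre sheaves are of the form $j_!j^*$, so they vanish on $L$ and have the full stalks off $L$; your stalk claim is reversed. After pushing forward along the proper projection $X\times(Y\setminus L)\to Y\setminus L$, the stalk map at an orbit $Gy$ is $F(Gy)\to F(X\times Gy)$, which is not an equivalence.

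\emph{Case (b).} With $M=\emptyset$ it asserts $F(W)\simeq F(X)$ for every closed $X\subset W$. You correctly notice that $X\cup M\neq W$, but proper base change does not fix this. For stable $C$ the square is cartesian iff restriction from compactly supported sections over $(W\setminus M)/G$ to those over $(X\setminus M)/G$ is an equivalence, and there is no reason for that.

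The factorization $X\hookrightarrow X\times Y\to Y$ cannot be saved even under the correct hypothesis. For $p=\id_{S^1}$ and $L=\emptyset$, neither sub-square is cartesian, although the composite trivially is.

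\textbf{The missing idea (the paper's route): replace the non-contractible factor $X$ by a $G$-contractible cube.} Choose $e\colon X\hookrightarrow[0,1]^S$ (Lemma \ref{lem:g-urysohn}) and factor $p$ as $(p,e)\colon X\hookrightarrow T:=Y\times[0,1]^S$ followed by the projection. Proposition \ref{prop:htpyinv} gives $F(Y)\simeq F(T)$ and $F(L)\simeq F(L\times[0,1]^S)$, so the projection step costs nothing. What remains is the square of closed subsets $p^{-1}(L)=X\cap(L\times[0,1]^S)$, $X$, $L\times[0,1]^S$ of $T$.

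Closed descent alone only gives this square with $X\cup(L\times[0,1]^S)$ in the corner instead of $T$. The paper's ``follows directly from closed descent'' is too quick at this point, and this is where the homeomorphism hypothesis enters. Put $U=Y\setminus L$ and let $\sigma$ be the inverse of $X\setminus p^{-1}(L)\to U$. Then $X\cap(U\times[0,1]^S)$ is the graph of $\phi=e\circ\sigma$.

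To finish, assume $C$ stable, which is the case needed in Theorem \ref{thm:pdp}. Pass to the sheaf on $T/G$ corresponding to the $k$-sheaf $F|_{\cK_G(T)^{\op}}$ (Theorem \ref{thm:shv=kshv}). The two relevant fibres are compactly supported sections over $(U\times[0,1]^S)/G$ and over the graph. Push both forward along the proper projection to $U/G$. At a compact invariant $K\subset U$ the comparison becomes $F(K\times[0,1]^S)\to F(K)$, restriction along the section $(\id,\phi|_K)$ of the projection. This is an equivalence by homotopy invariance, so the square is cartesian.
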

	\begin{proof} Consider, using Lemma \ref{lem:g-urysohn}, an embedding $e \colon X \to [0,1]^S$ for some $G$-set $S$. By Proposition \ref{prop:htpyinv} and the fact that $[0,1]^S$ is $G$-equivariantly contractible, it suffices to show that $F$ sends
	\[
	\begin{tikzcd}[ampersand replacement=\&]
		{p^{-1}(L)} \& X \arrow[hook]{d}{(p,e)}\\
		L \times [0,1]^S \& Y \times [0,1]^S
		\arrow[hook, from=1-1, to=1-2]
		\arrow[hook, from=1-1, to=2-1]
		\arrow[hook, from=2-1, to=2-2]
	\end{tikzcd}
	\]
	to a pullback square, which follows directly from closed descent.
	\end{proof}
	
	Now we turn to functors satisfying descent on $\GLCH^{pdp}$, the category whose objects are locally compact Hausdorff $G$-spaces and whose maps $X \to Y$ are spans
	\begin{equation}\label{eq:span}
	\begin{tikzcd}
	& U \arrow[hook]{ld}[above]{i\, }\arrow{rd}{p} & \\
	X \arrow{rr}{(i,p)}& & Y 
	\end{tikzcd}
	\end{equation}
	where $U \hookrightarrow X$ is an open $G$-equivariant inclusion and $U \xto{p} X$ is a proper $G$-equivariant map. Composition is given by pullback of spans.

	\begin{defn} Let $C$ be a stable presentable category. We say that a functor $F \colon (\GLCH^{pdp})^{\op} \to C$ 
	satisfies:
	\begin{enumerate}
		\item \emph{cofiltered compact codescent} if its restriction to $\GCH^\op$ satisfies cofiltered compact codescent;
		\item \emph{closed descent} if its restriction to $\GCH^\op$ satisfies closed descent;
		
		\item \emph{open-closed excision} if for each $X \in \GLCH$ and 
		invariant open $U \subset X$, the sequence
		\[
		X \setminus U \xto{(1,j)} X \xto{(i,1)} U
		\]
		in $\GLCH^{pdp}$
		induced by the inclusions $i \colon U \hookrightarrow X$ and $j \colon X \setminus U \hookrightarrow X$ is sent to a fibre sequence by $F$.
		\item \emph{open codescent} if the restriction to $\mathcal{O}(X)^\op$ for each $X$ satisfies codescent. 
	\end{enumerate}
	\end{defn}
	
	\begin{lem} \label{lem:oc=cl}
	Let $C$ be a stable presentable category. If a functor $F \colon (\GLCH^{pdp})^{\op} \to C$ satisfies
	open-closed excision, then it satisfies closed descent.
	\end{lem}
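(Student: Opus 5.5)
Closed descent concerns only the restriction to $\GCH^\op$, so fix a compact Hausdorff $G$-space $X$ and $G$-invariant closed subspaces $K,L \subset X$; without loss of generality $X = K \cup L$. First, $F(\emptyset)\simeq 0$: apply open-closed excision to $X=\emptyset$, $U=\emptyset$. The sequence is then $\emptyset \to \emptyset \to \emptyset$ with all maps identities, so $F(\emptyset)\to F(\emptyset)\to F(\emptyset)$ is a fibre sequence whose maps are identities. Hence $F(\emptyset)\simeq 0 = \ast$ by stability.

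For the square, use the open subset $U = X \setminus K = L \setminus (K\cap L)$. This set is open in $X$, and it is also open in $L$, with closed complement $K \cap L$ in $L$. Apply open-closed excision twice:
\[
F(U) \to F(X) \to F(K), \qquad F(U) \to F(L) \to F(K\cap L).
\]
Here the maps $F(U)\to F(X)$ and $F(U)\to F(L)$ are induced by the spans $X \hookleftarrow U \xrightarrow{=} U$ and $L \hookleftarrow U \xrightarrow{=} U$ in $\GLCH^{pdp}$, i.e. the pushforwards along open inclusions. The second maps are the restrictions along the closed inclusions. The closed inclusion $L \hookrightarrow X$ is proper, so it gives a morphism $(1,\mathrm{incl})\colon L \to X$ in $\GLCH^{pdp}$, and similarly $K\cap L \to K$. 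These induce a map from the first fibre sequence to the second.

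To make this a map of fibre sequences, I check by pullback of spans that the relevant squares commute in $\GLCH^{pdp}$. The composite $L \to X \to U$ is the span $L \hookleftarrow U\cap L = U \xrightarrow{=} U$, which equals $L \to U$. The composite $K\cap L \to K \to X$ equals $K\cap L \to L \to X$ because both are closed inclusions. Since $F$ is a functor on $(\GLCH^{pdp})^\op$, these identities give a commuting diagram of fibre sequences in $C$. Its left vertical map is the identity of $F(U)$.

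In a stable category, a map of fibre sequences that is an equivalence on fibres makes the right-hand square a pullback (equivalently, a pushout). So the square with vertices $F(X)$, $F(K)$, $F(L)$ and $F(K\cap L)$, and maps induced by the closed inclusions, is a pullback. This is exactly closed descent for the restriction to $\GCH^\op$.

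The main obstacle is bookkeeping rather than mathematics. One must make the identification of the two fibres coherent: the diagram of fibre sequences has to be a genuine diagram, not merely commute objectwise. This holds because the fibre sequences come from composable sequences in $\GLCH^{pdp}$ whose composites are the zero span through $\emptyset$, which is sent to $0$, and the comparison is functorial. One must also check that the restriction to $\GCH^\op$ used in closed descent agrees with $F$ on the closed-inclusion spans $(1,j)$.
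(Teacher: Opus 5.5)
Your proof is correct and is essentially the paper's argument. The paper compares the two excision sequences with common open piece $K\setminus L$, taken inside $K$ and inside $K\cup L$; you use $L\setminus K$, which is the same argument with $K$ and $L$ swapped. In both versions the identity on the open term forces the square of closed pieces to be a pullback, and the only cosmetic difference is that the paper gets $F(\emptyset)\simeq 0$ from excision applied to $\emptyset \to \ast \xto{\id} \ast$ rather than to $X=U=\emptyset$; both work.
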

	\begin{proof} 
	Given $X \in \GLCH$ and closed subspaces $K,L \subset X$, we may consider the following diagram in $\GLCH^{pdp}$:
	\[\begin{tikzcd}[ampersand replacement=\&]
		{K \cap L } \& K \& {K \setminus L} \\
		L \& {K \cup L} \& {K \setminus L}
		\arrow[from=1-1, to=1-2]
		\arrow[from=1-1, to=2-1]
		\arrow[from=1-2, to=1-3]
		\arrow[from=1-2, to=2-2]
		\arrow[equals, from=1-3, to=2-3]
		\arrow[from=2-1, to=2-2]
		\arrow[from=2-2, to=2-3]
	\end{tikzcd}\]
	Since $F$ maps both rows to fibre sequences, and the rightmost map is sent to an isomorphism, it follows that $F$ maps the leftmost square to a pullback square. Applying open-closed excision to $\emptyset \to \ast \xto{id} \ast$ we also see that $F(\emptyset) \simeq 0$. 
	\end{proof}
	
	\begin{lem} \label{lem:codesc}
	Let $C$ be a stable presentable category. If a functor $F \colon (\GLCH^{pdp})^{\op} \to C$ satisfies
	open-closed excision and cofiltered compact codescent, then it satisfies open codescent.
	\end{lem}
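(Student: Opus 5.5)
Fix $X \in \GLCH$. In $\GLCH^{pdp}$ an open inclusion $V \subset U$ gives a span $U \hookleftarrow V \xrightarrow{\id} V$, so $F$ restricts to a covariant functor $V \mapsto F(V)$ on $\cO_G(X)$; this is the direction that matters for codescent. We must show three things: $F(\emptyset)\simeq 0$, Mayer--Vietoris pushouts for $U,V$ open, and $\colim_i F(U_i) \simeq F(U)$ for filtered unions $U=\bigcup_i U_i$. The first was shown in Lemma \ref{lem:oc=cl}.

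For Mayer--Vietoris, apply open-closed excision to the open subsets $U$ and $U\cap V$ of $U\cup V$ and of $V$. This gives fibre sequences
\[
F(U) \to F(U\cup V) \to F((U\cup V)\setminus U), \qquad F(U\cap V) \to F(V) \to F(V\setminus U),
\]
where each first map is covariant along the open inclusion and each second map is restriction to the closed complement. We have $(U\cup V)\setminus U = V\setminus U$, and the two closed-restriction spans are compatible because span composition is given by pullback. So the map of fibre sequences is the identity on cofibres, and since $C$ is stable the left square is a pushout.

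The filtered colimit is the main step. Write $Y = U$ and $Y_i = U_i$, with closed complements $Z_i = U\setminus U_i$. Excision gives $F(U_i) \simeq \fib(F(U)\to F(Z_i))$. Since filtered colimits commute with fibres in stable $C$, it suffices to show $\colim_i F(Z_i) \simeq 0$. The $Z_i$ form a cofiltered system of closed subsets with empty intersection, but they need not be compact, so compact codescent does not apply directly.

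To handle this I would reduce to compact pieces via the one-point compactification $U^+$. Apply excision to $U\subset U^+$ and to each $U_i \subset U^+$, with closed complements $\{\infty\}$ and $Z_i^+ = Z_i\cup\{\infty\}$. Comparing the two sequences gives $F(U_i) \simeq \fib(F(U^+)\to F(Z_i^+))$ and $F(U) \simeq \fib(F(U^+)\to F(\{\infty\}))$. The sets $Z_i^+$ are compact $G$-invariant and form a cofiltered system with $\bigcap_i Z_i^+ = \{\infty\}$. Cofiltered compact codescent for $F$ restricted to $\GCH^\op$ then gives $\colim_i F(Z_i^+) \simeq F(\{\infty\})$, and taking fibres yields $\colim_i F(U_i)\simeq F(U)$.

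The remaining work is to check that these identifications are natural in $i$, i.e. that the excision squares for $U_i \subset U \subset U^+$ fit together functorially in $\GLCH^{pdp}$. This is routine span bookkeeping. Finally, combining the empty set, binary pushouts and filtered colimits gives codescent for arbitrary covers by the usual argument.
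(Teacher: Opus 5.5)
Your proposal is correct and follows essentially the paper's route: the empty set and Mayer--Vietoris come from excision with the roles of open and closed interchanged, and filtered unions are handled by compactifying, writing $F(U_i)$ and $F(U)$ as fibres of restriction to the closed complements, and applying cofiltered compact codescent to the decreasing compact complements. The only difference is cosmetic: you compactify $U$ itself, so the limiting complement is $\{\infty\}$, whereas the paper compactifies the ambient $X$.
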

	\begin{proof}
	We need to show finite open codescent for pushouts and filtered colimits. Finite open codescent for pushouts and for the empty set works exactly as in the last lemma with the roles of open and closed interchanged.  Let $U = \colim_{i \in I} U_i$ a filtered colimit of open subsets of $X$. Replacing $X$ with its compactification we may assume that $X$ is compact. We denote the complement of $U$ in $X$ by $Z$ and similarly the complements of the $U_i$ by $Z_i$.  Then the map $\colim_{i \in I} F(U_i) \to F(U)$ is an equivalence by open-closed excision precisely if the map $\colim_{i \in I} F(Z_i) \to F(Z)$ is an equivalence. The latter follows by cofiltered compact codescent, since $Z = \bigcap_{i \in I} Z_i = \lim_{i \in I} Z_i$. 
	\end{proof}
	There is an equivalence of categories
	\[
	\kappa \colon \GLCH^{pdp} \xto{\sim} (\GCH)_\ast
	\]
	which is given by the one-point compactification on objects, and sends a span \eqref{eq:span} to the map $X^\infty \to Y^\infty$ which agrees with $p$ on $U$ and is constantly $\infty_Y$ on $\{\infty_X\} \cup (X \setminus U)$. In particular, the inclusion $\iota \colon \GCH \hookrightarrow \GLCH^{pdp}$ corresponds to the functor $- \cup \{+\} \colon \GCH\to (\GCH)_\ast$ that freely adjoins a basepoint, and thus it admits a left adjoint $\ell \colon \GLCH^{pdp} \to \GCH$ which is given by $\kappa$ followed by the forgetful functor
	$(\GCH)_\ast \to \GCH$. In view of Lemma \ref{lem:oc=cl},
	for any stable presentable category $D$ the restriction along $\iota$ yields a well-defined functor 
	\[
	\iota^\ast \colon \Fun^{oc,cc}((\GLCH^{pdp})^\op, D) \to \Fun^{cc,cl}(\GCH^\op, D).
	\]
	To conclude this section, we show that this functor is an equivalence whenever $D$ is dualizable.
	
	\begin{thm} \label{thm:pdp}
	If $D$ is a dualizable category, then
	restriction along the the inclusion $\iota \colon \GCH \hookrightarrow \GLCH^{pdp}$ induces an equivalence 
	\[
	\iota^\ast \colon \Fun^{oc,cc}((\GLCH^{pdp})^\op, D) \xto{\sim} \Fun^{cc,cl}(\GCH^\op, D)
	\]
	\end{thm}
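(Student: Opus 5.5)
We construct an explicit inverse to $\iota^\ast$ by the ``reduced'' value at the point at infinity. Given $F \in \Fun^{cc,cl}(\GCH^\op,D)$, we define $\Phi(F)\colon (\GLCH^{pdp})^\op \to D$ as
\[
\Phi(F)(X) \defeq \fib\bigl(F(X^\infty) \to F(\{\infty_X\})\bigr) = \fib\bigl(F(\ell X) \to F(\ast)\bigr).
\]
Equivalently, $\Phi(F)$ is $F\circ \ell$ reduced along the natural basepoint, i.e. $F\circ\ell^{\op}$ composed with the fibre of the counit-induced map to $F(\ast)$. Functoriality in spans is automatic, because $\kappa$ is a functor to pointed compact $G$-spaces and the fibre over the basepoint is natural in pointed maps. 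For $X$ compact we have $X^\infty = X \sqcup \ast$, and closed descent (a disjoint union of closed pieces with $F(\emptyset)=0$) gives $F(X\sqcup\ast)\simeq F(X)\oplus F(\ast)$. Hence $\iota^\ast\Phi(F)\simeq F$ naturally. In the other direction, for $F'$ on the span category, open-closed excision applied to $X \subset X^\infty$ with closed complement $\{\infty\}$ identifies $F'(X)$ with $\fib(F'(X^\infty)\to F'(\ast))$. Here the maps involved are the spans $X^\infty \leftarrow X \xto{=} X$ and $\ast \to X^\infty$. This shows $\Phi\iota^\ast F'\simeq F'$, so it remains only to check that $\Phi$ lands in the right subcategory and that these identifications are natural.

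\textbf{Checking the axioms for $\Phi(F)$.} Cofiltered compact codescent holds because for compact $X$ the value is $F(X)$, and the isomorphism above is natural in $X\in\GCH$. For open-closed excision, take $U\subset X$ open with complement $Z$. Then $Z^\infty \subset X^\infty$ is closed, and $X^\infty/Z^\infty \cong U^\infty$. We need the sequence
\[
\fib(F(U^\infty)\to F(\ast)) \to \fib(F(X^\infty)\to F(\ast)) \to \fib(F(Z^\infty)\to F(\ast))
\]
to be a fibre sequence. The map $p\colon X^\infty \to U^\infty$ collapsing $Z^\infty$ is a map of compact $G$-spaces with $p^{-1}(\infty)=Z^\infty$. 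Corollary \ref{coro:cl+} therefore says that $F$ sends the square
\[
\begin{tikzcd}[ampersand replacement=\&]
Z^\infty \& X^\infty \\
\ast \& U^\infty
\arrow[from=1-1, to=1-2]
\arrow[from=1-1, to=2-1]
\arrow["p", from=1-2, to=2-2]
\arrow[hook, from=2-1, to=2-2]
\end{tikzcd}
\]
to a pullback square. Since $D$ is stable, the fibres of the two horizontal maps agree, which after reducing gives exactly the desired fibre sequence. We also verify that the maps appearing in that fibre sequence are the ones induced by the spans $(1,j)$ and $(i,1)$; this is a direct check using the description of $\kappa$ on spans.

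\textbf{Main obstacle.} The step I expect to require the most care is making the functoriality and naturality rigorous at the $\infty$-categorical level, rather than only on objects. To handle this, I would note that $\Phi(F) = \fib(F\circ\ell \Rightarrow F(\ast))$, where the transformation is induced by the basepoint sections $\ast \to \ell(X)$. This is natural, since $\kappa$ lands in pointed spaces: it is $\ell$ followed by the forgetful functor, together with the unit $\ast\to$ forgetful. Similarly, the unit and counit of $\ell \dashv \iota$ give natural maps $\iota^\ast\Phi(F) \to F$ (from $F(X)\to F(X\sqcup\ast)$ via the retraction) and $F'\to \Phi\iota^\ast F'$. With these maps written down naturally, the objectwise verifications above show that both are equivalences.

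Dualizability of $D$ is used only through Corollary \ref{coro:cl+}, via Proposition \ref{prop:htpyinv}, which requires $D$ compactly assembled. Lemma \ref{lem:oc=cl} already guarantees that $\iota^\ast$ is well defined.
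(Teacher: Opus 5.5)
Your proposal is correct and is essentially the paper's proof: the same inverse $X\mapsto\fib(F(X^\infty)\to F(\infty))$, closed descent for $\iota^\ast\Phi\simeq\id$, open-closed excision for $X\subset X^\infty$ for $\Phi\iota^\ast\simeq\id$, and Corollary~\ref{coro:cl+} applied to $X^\infty\to U^\infty$ and $\infty\hookrightarrow U^\infty$ for excision. Deducing cofiltered compact codescent from $\iota^\ast\Phi(F)\simeq F$, rather than directly from the formula, is a harmless variation. One small slip: the comparison $\iota^\ast\Phi(F)\to F$ is the composite $\fib(F(X\sqcup\ast)\to F(\ast))\to F(X\sqcup\ast)\to F(X)$ given by restriction along the inclusion $X\hookrightarrow X\sqcup\ast$ (there is no natural retraction $X\sqcup\ast\to X$), and the adjunction whose unit and counit produce your two comparison maps is $\iota\dashv\ell$, not $\ell\dashv\iota$.
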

	\begin{proof} We shall construct an inverse for $\iota^\ast$. The embedding 
	\[
	\GLCH^{pdp} \xto{\kappa} (\GCH)_\ast \subset \Fun(\Delta^1, \GCH) 
	\cong \Fun((\Delta^1)^\op, \GCH)
	\]
	yields a functor $\hat\kappa \colon \GLCH^{plp} \times (\Delta^1)^\op \to \GCH$. Precomposition by $(\hat \kappa)^\op$ followed by the exponential law and taking fibres allows us to define a functor
	\begin{align*}
	\wp \colon \Fun^{cc,cl}(\GCH^\op, D) 
	\to \Fun((\GLCH^{pdp})^\op, D),
	\end{align*}
	whose formula on objects explicitly reads
	\[
	\wp(E)(X) = \fib(E(X^\infty) \to E(\infty)).
	\]
	Next we shall see that $\wp$ corestricts appropriately, that is, that if $E \colon \GCH^\op \to D$ satisfies cofiltered compact codescent and closed descent, then $\wp(E)$ satisfies cofiltered compact codescent and open-closed excision.
	
	The first condition is immediate from the formula above, the fact that one-point compactification restricted to $\CHaus$ corresponds to $- \cup \{+\}$, and that fibres in a stable category commute with all limits and colimits. We thus turn to open-closed excision. Let $X \in \GLCH^{plp}$, let $U \subset X$ be an invariant open subspace and put $F = X \setminus U$. Applying Lemma \ref{coro:cl+} to $E$, the map $X^\infty \to U^\infty$ and $\infty \hookrightarrow U^\infty$, we obtain that 
	\[
	\begin{tikzcd}[ampersand replacement=\&]
		{E(U^\infty)} \& {E(X^\infty)} \\
		{E(\infty)} \& {E(F^\infty)}
		\arrow[from=1-1, to=1-2]
		\arrow[from=1-1, to=2-1]
		\arrow[from=1-2, to=2-2]
		\arrow[from=2-1, to=2-2]
	\end{tikzcd}
	\]
	is a pullback square, from which it follows that $\wp(E)(U) \to E(X^\infty) \to E(F^\infty)$ is a fibre sequence. Now consider
	the following diagram whose rows are fibre sequences:
	\[\begin{tikzcd}[ampersand replacement=\&]
		{\wp(E)(X)} \& {E(X^\infty)} \& {E(\infty)} \\
		{\wp(E)(F)} \& {E(F^\infty)} \& {E(\infty)}
		\arrow[from=1-1, to=1-2]
		\arrow[from=1-1, to=2-1]
		\arrow[from=1-2, to=1-3]
		\arrow[from=1-2, to=2-2]
		\arrow[equals, from=1-3, to=2-3]
		\arrow[from=2-1, to=2-2]
		\arrow[from=2-2, to=2-3]
	\end{tikzcd}
	\]
	Taking fibres vertically we obtain an equivalence $\fib(\wp(E)(X) \to\wp(E)(F)) \cong \wp(E)(U)$, and a straightforward diagram chase shows that the inverse is induced by the map $\wp(E)(U) \to \wp(E)(X)$. This concludes the proof that $\wp(E)$ satisfies open-closed excision.
	From now on we shall abuse notation and write $\wp$ for its corestriction to $\Fun^{oc,cc}((\GLCH^{pdp})^\op, D)$. 
	
	Now we concentrate on proving that $\wp$ and 
	$\iota^\ast$ are mutual inverses. Given $X \in \GCH^\op$, 
	the inclusion $X \subset X^\infty = X \sqcup \infty$ provides a map
	\[
	i^\ast(\wp(E))(X) = \fib(E(X \sqcup \infty) \to E(\infty)) \to E(X \sqcup \infty) \to E(X)
	\]
	both natural in $E \in \Fun^{cc,cl}(\GCH^\op, D)$ and $X$. This map
	fits as the top-row composition in the following diagram
	\[\begin{tikzcd}[ampersand replacement=\&]
		{(\iota^*\wp)(E)(X)} \& {E(X \sqcup \infty)} \& {E(X)} \\
		0 \& {E(\infty)} \& 0
		\arrow[from=1-1, to=1-2]
		\arrow[from=1-1, to=2-1]
		\arrow[from=1-2, to=1-3]
		\arrow[from=1-2, to=2-2]
		\arrow[from=1-3, to=2-3]
		\arrow[from=2-1, to=2-2]
		\arrow[from=2-2, to=2-3]
	\end{tikzcd}\]
	Since the left hand square is a pullback by definition, and the rightmost square is a pullback because $E$ satisfies closed descent, it follows that the exterior square is a pullback and thus the map $\iota^\ast(\wp(E))(X) \to E(X)$ must be an equivalence.
	
	Finally we consider $\wp(\iota^\ast(F))$ for a given $F \in \Fun^{oc,cc}((\GLCH^{pdp})^\op, D)$. Note that for each $Y \in \GLCH^{plp}$ the composition $\infty \hookrightarrow Y^\infty \to Y$
	yields the zero map, represented by the span $\infty \leftarrow \emptyset \to Y$. Consequently, 
	the inclusion $Y \subset Y^\infty$ defines a map
	\[
	F(Y) \to \wp(\iota^\ast(F))(Y) = \fib(F(Y^\infty) \to F(\infty)) 
	\] 
	natural in both $F$ and $Y$, which is an equivalence by open-closed excision.
	\end{proof}
	
	\section{Definition of Bredon sheaf cohomology}
	
	Let $C$ be a presentable category. Consider the category $\GTop$  of topological spaces with a $G$-action as a Grothendieck site equipped with the topology generated by jointly surjective equivariant open inclusions, and $\Orb$ equipped with the indiscrete Grothendieck topology. 
	The inclusion $t \colon \Orb \hookrightarrow \GTop$ is a functor of sites and thus defines a morphism at the level of sheaves, which has a left adjoint given by left Kan extending and sheafifying:
	\begin{equation}\label{def:jstar}
		t^\ast = \sh \circ \Lan(i) \colon \Psh(\Orb,C)  \longleftrightarrow \Shv(\GTop,C) \colon t_\ast =: (-)|_{\Orb}.
	\end{equation}
	Note that it might be slightly counterintuitive to denote this functor by $t^\ast$ since it is the same direction as $t$, but this is the convention for morphisms of sites, which already are in some sense in the opposite direction to geometric morphisms.
	
	We also note that the site $\GTop$ is large, so that a priori one might run into size issues here. However, since $
\Orb$ is small and the sheafification only involves open subsets of a given $X \in \GTop$, which is a small category, this is not an issue (we will see this concretely from the formula after the next definition). 
	
	\begin{defn}\label{def_bredon}
	Given a functor $E \in \Psh(\Orb,C)$ and a $G$-space $X$, we define \emph{Bredon cohomology with coefficients in $E$} as $\bre(X,E) \defeq t^\ast(E)(X)$.
	\end{defn}

	Throughout this section we fix $E \in \Psh(\Orb)$. For a given $X \in \GTop$ we put $\preuE_X$ for the presheaf given by the restriction of $\Lan(t)(E)$ to $\cO_G(X)^{\op} \simeq \cO(X/G)^{\op}$, and $\uE_X$ for its sheafification, which agrees with the restriction of $\bre(-,E)$. Concretely $\preuE_X \in 
\Shv(X/G, C)$ is given by 
	\[
U \longmapsto \colim_{\,q^{-1}(U)\to Z} E(Z),
\]
where the colimit ranges over all $G$--equivariant maps from
$q^{-1}(U)$ to orbits $Z \in \Orb$.

       \begin{rmk}
       In what follows, we will primarily consider $G$-spaces $X$ that are locally compact Hausdorff, and the reader may safely restrict attention to this case. Nevertheless, the definition applies to arbitrary $G$-spaces, and we will need to allow non–locally compact spaces when comparing with singular Bredon cohomology in Section~\ref{subsec:bredoncw}.
        \end{rmk}
	
	\begin{ex}\label{ex:bredon-trivial}
	If $G$ acts trivially on $X$, then $\bre(X,E)$ is given by sheaf cohomology of $X$ with value in $E(G/G)$:
	\[
	\bre(X,E) = \Gamma(X, E(G/G))
	\]
	This follows since in this case  for every non-empty open $U$ the category of all maps $U \to Z$ is trivial, i.e. equivalent to a singleton given by $Z = G/G$ and the unique map $U \to Z$ , since $U$ cannot map to
	an orbit with non-full isotropy. 
    \end{ex}

	The counit of the adjunction \eqref{def:jstar} provides us with a natural map
	\begin{equation}\label{bredon-counit}
		\bre(-,\mathcal H|_{\Orb}) \to \mathcal H,
	\end{equation}
	for any sheaf $\mathcal{H} \in \Shv(\GLCH,C)$
	whereas the unit of the adjunction yields a map
	\begin{equation}\label{bredon-unit}
	E \to \bre(-,E)|_{\Orb}.
	\end{equation}
	
	One could ask how much information is lost by restricting $\mathcal H$ and then considering the associated Bredon homology. 
	Before addressing this question, we note that the value at orbits is not modified.
	
	\begin{lem} \label{lem:shorb}
	For each $\cF\in \Psh(\GTop,C)$, the map $\cF\to \cF^{\sh}$ is an equivalence for all $Z \in \Orb$, where $ \cF^{\sh}$ denotes the sheafification.
	\end{lem}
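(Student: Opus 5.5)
The plan is to reduce the claim to a statement about the poset of $G$-invariant opens of an orbit. The value $\cF^{\sh}(Z)$ depends only on the restriction of $\cF$ to $\cO_G(Z)^{\op}$. Sheafification in $\Shv(\GTop,C)$ is computed locally on each space, so $(\cF^{\sh})|_{\cO_G(Z)^\op}$ is the sheafification of $\cF|_{\cO_G(Z)^\op}$. This holds because the covering sieves of an object $U$ of $\GTop$ only involve invariant open subspaces of $U$, and the $\GTop$-site restricted to the slice over $Z$ is generated by invariant open inclusions. Concretely, the plus-construction or iterated Čech formula for $\cF^{\sh}(Z)$ only evaluates $\cF$ on $G$-invariant opens of $Z$, so I will work entirely on $\cO_G(Z) \simeq \cO(Z/G)$, using Remark \ref{rmk:OG-OX/G}.

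Next I will observe that $Z = G/H$ is a transitive discrete $G$-set, so its only $G$-invariant open subsets are $\emptyset$ and $Z$. Hence $\cO_G(Z)$ is the two-element poset $\{\emptyset \subset Z\}$, equivalently $\cO(\ast)$ with $Z/G = \ast$. A presheaf on it is a map $\cF(Z) \to \cF(\emptyset)$. Its sheafification is determined by the sheaf conditions: the value on $\emptyset$ is forced to be $\ast$, and the value on $Z$ is unchanged. The covering sieves of $Z$ are the sieves containing $Z$ itself, because any jointly surjective family of invariant opens must include $Z$. So every covering sieve of $Z$ is maximal, and the Čech/plus construction gives $\cF^{+}(Z) = \lim_{\text{maximal sieve}} \cF = \cF(Z)$.

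The step I expect to need care is justifying the first reduction, namely that $\Shv(\GTop,C)$-sheafification evaluated at $Z$ agrees with the sheafification on the small site $\cO_G(Z)$. This also settles size issues. For this I will use the explicit description, analogous to the formula for $\preuE_X$ in the paper: the topology on $\GTop$ is generated by open covers, so sheafification is the transfinite plus-construction $\cF^{+}(U) = \colim_{R} \lim_{V \in R} \cF(V)$ over covering sieves $R$ of $U$. I will then note that at $Z$, the maximal sieve is initial (indeed terminal under refinement) among covering sieves, since every covering sieve is maximal. The colimit is therefore trivial and each plus step is the identity at $Z$. Passing to the transfinite colimit gives $\cF(Z) \simeq \cF^{\sh}(Z)$, compatibly with the natural map.
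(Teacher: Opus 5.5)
Your argument is correct, but it takes a different route from the paper. The paper reduces to $\An$-valued presheaves and quotes a general result (\cite{pstr}, Appendix A): restriction along the morphism of sites $t\colon \Orb\to\GTop$ commutes with sheafification. It then tensors with $C$, and it never says which property of $t$ makes this work. You make that property explicit and compute by hand. A $G$-invariant subset of a transitive $G$-set is either empty or everything, so every covering sieve of $Z$ in $\GTop$ is maximal. Equivalently, $t$ is cocontinuous when $\Orb$ carries the topology in which only maximal sieves cover. Hence sheafification cannot change the value at $Z$. This is more transparent and works directly with $C$-coefficients. The price is that you must supply yourself the comparison between big-site and small-site sheafification that the paper outsources.

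Two points need tightening.

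\emph{The reduction step.} The phrase ``covering sieves of $U$ only involve invariant opens of $U$'' is imprecise, since a sieve contains arbitrary maps into $U$. The correct input is that the inclusion $\cO_G(X)\hookrightarrow\GTop$ is continuous and cocontinuous: coverings of an invariant open $U$ in $\GTop$ are generated by invariant open covers of $U$. This is what implies that restriction to $\cO_G(X)^\op$ commutes with sheafification.

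\emph{The final paragraph.} There you run the transfinite plus construction on the large site $\GTop$. This bypasses your own reduction and reintroduces the size issue you wanted to avoid. Moreover, the standard results identifying the iterated plus construction with sheafification are formulated for small sites and coefficients such as $\An$. Once you have reduced to $\cO_G(Z)=\{\emptyset\subset Z\}$, no plus construction is needed. The sheaves there are exactly the presheaves with $\cF(\emptyset)\simeq\ast$. The adjunction then shows directly, for any $C$, that the reflection of $\cF(Z)\to\cF(\emptyset)$ is $\cF(Z)\to\ast$, with unit the identity at $Z$.
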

	\begin{proof} We may reduce to prove the statement for $\An$-valued sheaves, from which the desired conclusion is obtained by tensoring by $C$. Now we may apply \cite{pstr}*{Proposition 4 of Appendix A} to see that restriction to orbits commutes with sheafification, concluding the proof.
	\end{proof}
	
	\begin{lem} \label{lem:bredon-counit} The map \eqref{bredon-counit} is an equivalence for all $Z \in \Orb$.
	\end{lem}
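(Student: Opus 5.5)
We need to show that for $\mathcal H \in \Shv(\GTop,C)$ and $Z \in \Orb$, the counit $\bre(Z,\mathcal H|_{\Orb}) \to \mathcal H(Z)$ is an equivalence. The plan is to factor the counit through the unsheafified left Kan extension and use Lemma \ref{lem:shorb}.

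By definition $t^\ast = \sh \circ \Lan(t)$. Since $t$ is fully faithful, the counit $\Lan(t)(\mathcal H|_{\Orb}) \to \mathcal H$, evaluated at an orbit $Z$, is an equivalence. Concretely, $\Lan(t)(\mathcal H|_{\Orb})(Z) = \colim_{Z \to Z'} \mathcal H(Z')$, where the colimit is indexed by $(\Orb)_{Z/}$ (taken opposite appropriately). This indexing category has the initial object $\id_Z$, so the colimit is just $\mathcal H(Z)$.

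The counit of $t^\ast \dashv t_\ast$ at $\mathcal H$ factors as
\[
t^\ast(\mathcal H|_{\Orb}) = \Lan(t)(\mathcal H|_{\Orb})^{\sh} \to \mathcal H^{\sh} \simeq \mathcal H,
\]
namely the sheafification of the presheaf-level counit $\Lan(t)(\mathcal H|_{\Orb}) \to \mathcal H$, followed by the inverse of the equivalence $\mathcal H \to \mathcal H^{\sh}$, which holds because $\mathcal H$ is already a sheaf. Evaluating at $Z$ gives a commutative square
\[
\begin{tikzcd}
\Lan(t)(\mathcal H|_{\Orb})(Z) \arrow{r} \arrow{d} & \mathcal H(Z) \arrow{d}\\
\Lan(t)(\mathcal H|_{\Orb})^{\sh}(Z) \arrow{r} & \mathcal H^{\sh}(Z).
\end{tikzcd}
\]
The top map is an equivalence by the first step. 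Both vertical maps are equivalences by Lemma \ref{lem:shorb}, since sheafification does not change values at orbits. By two-out-of-three the bottom map is an equivalence, and this bottom map is exactly the map \eqref{bredon-counit} at $Z$.

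The main obstacle is identifying the adjunction counit with the sheafified presheaf counit, i.e.\ checking the factorization above. This follows from composing the adjunctions $\Lan(t) \dashv t^{\ast}_{\mathrm{psh}}$ and $\sh \dashv \mathrm{incl}$: the counit of a composite adjunction is the composite of the counits, with the inner counit whiskered by the outer functors. The other point needing care is the pointwise formula for $\Lan(t)$ at $Z$, which uses only that the orbit inclusion is fully faithful.
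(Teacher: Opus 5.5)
Your proof is correct and follows essentially the paper's argument: the paper likewise writes the counit as $\sh(\Lan(t)(\mathcal H|_{\Orb})) \to \sh(\mathcal H) \simeq \mathcal H$, notes that the presheaf-level counit is an equivalence on orbits, and concludes with Lemma \ref{lem:shorb}. The only cosmetic difference is that the paper gets the orbit-level equivalence from full faithfulness of $\Lan(t)$ and the triangle identities, whereas you read it off the pointwise formula; there, strictly speaking, the colimit collapses because $\id_Z$ is terminal in the indexing category $((\Orb)_{Z/})^{\op}$, not merely initial in $(\Orb)_{Z/}$.
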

	\begin{proof} 
	Since $t$ is fully faithful, so is $\Lan(t) \colon \Psh(\Orb,C) \to \Psh(\GTop,C)$, and hence the corresponding unit map is an equivalence. It follows from that and from the triangle identities that $\Lan(t)(\mathcal H|_{\Orb})|_{\Orb} \to \mathcal H|_{\Orb}$ is an equivalence. 
	Since the map \eqref{bredon-counit} is given by the composition $\sh(\Lan(t)(\mathcal H|_{\Orb})) \to \sh(\mathcal H) \xto{\sim} \mathcal H$, it suffices to prove that a map $\cF \to \cF'$ of presheaves on $\GTop$ which is an equivalence on orbits remains so upon sheafification, which is an immediate consequence of Lemma \ref{lem:shorb}. 
	\end{proof}
	
	\begin{lem} \label{lem:bredon-unit}
	The map \eqref{bredon-unit} is an equivalence.
	\end{lem}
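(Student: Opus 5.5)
The map \eqref{bredon-unit} evaluated at $Z \in \Orb$ is the composite
\[
E(Z) \to \Lan(t)(E)(Z) \to \sh(\Lan(t)(E))(Z) = \bre(Z,E).
\]
I will show that each of the two arrows is an equivalence.

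For the first arrow, note that $t \colon \Orb \hookrightarrow \GTop$ is fully faithful. Hence the unit $E \to \Lan(t)(E)|_{\Orb}$ of the adjunction $\Lan(t) \dashv t^\ast$ on presheaf categories is an equivalence, by the standard fact that left Kan extension along a fully faithful functor restricts back to the original functor (\cite{htt}*{Proposition 4.3.2.17}).

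Concretely, $\Lan(t)(E)(Z) = \colim_{(Z \to Z')} E(Z')$, with the colimit indexed by the category of equivariant maps out of $Z$ to orbits. Because $t$ is fully faithful, this category has an initial object, namely $\id_Z$. The colimit therefore evaluates to $E(Z)$, and the comparison map is exactly the identity of $E(Z)$.

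For the second arrow, apply Lemma \ref{lem:shorb} to the presheaf $\cF = \Lan(t)(E) \in \Psh(\GTop,C)$. That lemma says the sheafification map $\cF \to \cF^{\sh}$ is an equivalence on every orbit $Z$.

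It remains to identify the composite with the unit \eqref{bredon-unit}. The unit of $t^\ast = \sh \circ \Lan(t) \dashv t_\ast$ is the composite of the unit of $\Lan(t) \dashv \mathrm{res}$ with the restriction to $\Orb$ of the unit of $\sh \dashv \mathrm{incl}$. This is the standard formula for the unit of a composite adjunction. Since both factors are equivalences at every $Z$, the map $E \to \bre(-,E)|_{\Orb}$ is an equivalence.

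The only step that needs any care is the second one, and it is entirely delegated to Lemma \ref{lem:shorb}. That lemma works because an orbit $Z$ is discrete, so every covering of $Z$ by invariant opens can be refined by the cover of $Z$ by itself. The main obstacle is therefore just the bookkeeping of identifying the unit of the composite adjunction.
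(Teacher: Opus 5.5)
Your proof is correct and matches the paper's own argument. The paper also factors the unit as $E \to \Lan(t)(E)|_{\Orb} \to \Lan(t)(E)^{\sh}|_{\Orb}$, using full faithfulness of $\Lan(t)$ (inherited from that of $t$) for the first map and Lemma \ref{lem:shorb} for the second.
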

	\begin{proof} The unit map is given by the composition
	\[
	E \to \Lan(t)(E)|_{\Orb} \to \Lan(t)(E)^\sh|_{\Orb}.
	\]
	The first map is an equivalence by the fully faithfulness of $\Lan(t)$ and the second one by Lemma \ref{lem:shorb}.	
	\end{proof}
	
	We now want to prove a generalization of the previous statement, that computes Bredon sheaf cohomology for orbits. 
	
	 \begin{prop}\label{lem:ind-res}
    Let $H \subseteq G$ be a subgroup and let $X$ be an $H$-space. Then we can consider the induced $G$-space
    $G \times_H X$ and get
    \[
    \bre(G \times_H X,E) \simeq \Gamma^H_{\mathrm{Br}}(X, E|_H)
    \]
    where $E|_H$ is the restriction of $E$ along the induction functor $\mathrm{Orb}_H \to \Orb$. 
    \end{prop}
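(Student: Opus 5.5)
The plan is to compare the two sheaves on the level of open subsets. $G$-invariant open subsets of $G\times_H X$ correspond bijectively to $H$-invariant open subsets of $X$ via $V \mapsto G\times_H V$, with inverse $U \mapsto U\cap X$, where $X \subset G\times_H X$ via $x\mapsto [1,x]$. This is compatible with the identification $(G\times_H X)/G \cong X/H$. Under this identification, the Grothendieck topologies restricted to invariant opens agree: a family of $H$-invariant opens covers $V$ if and only if the induced family covers $G\times_H V$. By the description of $\bre(-,E)$ as global sections of the sheafification of $\preuE$ on the orbit space, it therefore suffices to construct a natural equivalence of presheaves on $\cO(X/H)$
\[
\colim_{G\times_H V\to Z} E(Z) \;\simeq\; \colim_{V \to W} E|_H(W),
\]
where the left colimit runs over $G$-maps to $G$-orbits and the right one over $H$-maps to $H$-orbits. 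Sheafifying on the same space $X/H$ and taking global sections then gives the claim.

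The key step is to compare the indexing categories. Induction $G\times_H(-)$ is left adjoint to restriction from $G$-spaces to $H$-spaces. Hence for each $G$-orbit $Z$, the $G$-maps $G\times_H V\to Z$ correspond to $H$-maps $V\to \mathrm{res}\,Z$. The restricted set $\mathrm{res}\,Z$ is a finite $H$-set, namely a disjoint union of $H$-orbits. Since $E$, and hence the left Kan extension, can be extended to finite $G$-sets by sending disjoint unions to products, I would phrase both sides via the comma categories $(V\downarrow \Orbit_H)$ and $(G\times_H V \downarrow \Orb)$.

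I then claim that the functor
\[
(V\downarrow \Orbit_H)\to (G\times_H V\downarrow \Orb), \qquad (V\to W)\mapsto (G\times_H V\to G\times_H W),
\]
is cofinal. Given $f\colon G\times_H V\to Z$, its adjoint $V\to \mathrm{res}\,Z$ need not land in a single $H$-orbit when $V$ is disconnected. To handle this, I would first replace both presheaves by their restrictions to a basis. Since sheafification only depends on a basis, I may use the basis of invariant opens $V$ that admit a map to a single $H$-orbit, which exists by Theorem \ref{thm:abels} applied to trivially proper neighbourhoods. On such $V$, the comma category $(f\downarrow \cdot)$ has an initial object: the $H$-orbit $W\subseteq \mathrm{res}\,Z$ containing the image of $V$, together with the counit $G\times_H W\to Z$. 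This gives the cofinality. Using $E|_H(W)=E(G\times_H W)$ identifies the two colimits naturally in $V$.

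I expect the main obstacle to be exactly this cofinality step when $V$ meets several "components" mapping into different $H$-orbits of $\mathrm{res}\,Z$. There, the comma category is not obviously weakly contractible, and passing to a basis is what avoids the problem. An alternative I would try first is to avoid all of this more formally. One shows that $X\mapsto \bre(G\times_H X, E)$ defines a sheaf on $H$-spaces, using that induction preserves open covers. This sheaf agrees with $E|_H$ on $H$-orbits by Lemma \ref{lem:bredon-unit}, since $G\times_H(H/K)=G/K$. The universal property of $t^\ast$ then gives a comparison map $\Gamma^H_{\mathrm{Br}}(-,E|_H)\to \bre(G\times_H -,E)$. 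Checking that this map is an equivalence again reduces to the presheaf-level computation above on a suitable basis.
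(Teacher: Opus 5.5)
Your argument breaks at the cofinality claim on your chosen basis. Every invariant open $V$ admits an $H$-map to the single orbit $H/H$, so your ``basis'' is all of $\cO(X/H)$, and restricting to it changes nothing. What your terminal-object argument actually needs is stronger. For \emph{every} $G$-map $f\colon G\times_H V\to Z$, the adjoint $f^\flat\colon V\to \mathrm{res}\,Z$ must land in a single $H$-orbit, i.e.\ $V/H$ must be connected. Trivially proper neighbourhoods need not satisfy this. (Also, Theorem~\ref{thm:abels} needs $X$ Tychonoff, which the statement does not assume.)

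This is not just a gap in the write-up: the two presheaves really differ on disconnected opens. Take $H=e\neq G$, $X=V=\{a,b\}$ discrete and $E=\hom_{\Orb}(-,G/e)$. The left side is $\Lan(t)(E)(G\times V)=\hom_G(G\sqcup G,G/e)$, which has $|G|^2$ elements. On the other side $(V\downarrow\Orbit_e)$ is a point, so the right side is $E(G/e)$, which has $|G|$ elements. Correspondingly, for $f(g,a)=g$ and $f(g,b)=gg_0$ with $g_0\neq 1$, the comma category is empty. Spaces such as the Cantor set have no basis of nonempty opens with connected quotient. So no choice of basis rescues a comparison indexed by orbits: the two sides agree only after sheafification, and that needs a separate argument. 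Your alternative route builds the comparison map correctly, but it defers the verification to the same flawed computation.

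The paper never compares the presheaves directly. It writes $\Psh(\Orb,C)\simeq\Fun^\times(\mathrm{Fin}_G^\op,C)$ and uses two restriction functors along induction:
$\Fun^\times(\mathrm{Fin}_G^\op,C)\to\Fun^\times(\mathrm{Fin}_H^\op,C)$ and $\Shv(\mathrm{Top}_G,C)\to\Shv(\mathrm{Top}_H,C)$. Each is left adjoint to restriction along the forgetful functor, because $G\times_H(-)\dashv\mathrm{res}$ on the indexing categories. Hence the square of left adjoints $t^*$ commutes if and only if the square of right adjoints does. The latter is just the evident identity $t\circ\mathrm{res}=\mathrm{res}\circ t$. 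Evaluating $t^*(E|_H)\simeq t^*(E)(G\times_H-)$ at $X$ then gives the claim.

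You already had the key ingredient when you mentioned extending $E$ to finite $G$-sets, but you did not use it. Suppose you index your colimits by finite $G$-sets and finite $H$-sets instead of orbits. Then for every $f\colon G\times_H V\to S$, the category of factorizations has a terminal object: $f^\flat\colon V\to\mathrm{res}\,S$ together with the counit $G\times_H\mathrm{res}\,S\to S$. So the comparison holds on all opens and no basis is needed. You would still have to show that the $\mathrm{Fin}$-indexed Kan extension sheafifies to $t^*E$. That is an adjunction argument of exactly the same kind, resting on the fact that sheaves restrict to product-preserving presheaves on $\mathrm{Fin}_G$.
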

    \begin{proof}
    We first observe that 
    $\Psh(\Orb,C) = \Fun^{\times}(\mathrm{Fin}_G^\op, C)$ where $\mathrm{Fin}_G$ is the category of finite $G$-sets and $\Fun^{\times}$ denotes finite product preserving functors. Now we have an adjunction 
    \[
    \Fun^{\times}(\mathrm{Fin}_G^\op, C) \leftrightarrow \Fun^{\times}(\mathrm{Fin}_H^\op, C)
    \]
    where the left adjoint is restriction along the functor 
    \[
    \mathrm{Fin}_H \to \mathrm{Fin}_G \qquad S \mapsto G \times_H S
    \]    
    and the right adjoint is restriction along the forgetful functor $\mathrm{Fin}_G \to \mathrm{Fin}_H$. This follows directly from the fact that these two functors on the indexing categories are adjoint to one another and both preserve coproducts. We have a similar adjunction on the left of $G$ and $H$-spaces:
     \[
    \Shv(\mathrm{Top}_G, C) \leftrightarrow \Shv(\mathrm{Top}_H, C) \ .
    \]
    Again induced by an adjunction on the level of indexing categories. 
    Now the assertion is that the diagram of left adjoint functors
    \[
\begin{tikzcd}
    \Fun^{\times}(\mathrm{Fin}_G^\op, C) \arrow[r, "t^*"] \arrow[d, "(-)|_H"'] & \Shv(\mathrm{Top}_G, C) \arrow[d, "h"] \\
    \Fun^{\times}(\mathrm{Fin}_H^\op, C)  \arrow[r, "t^*"'] &  \Shv(\mathrm{Top}_H, C)
\end{tikzcd}
\]
commutes. This is equivalent to the commutation of the right adjoints, which is obvious since it comes down to the commutativity of the diagram
    \[
\begin{tikzcd}
    \mathrm{Fin}_G \arrow[r, "t"] \arrow[d] & \mathrm{Top}_G \arrow[d] \\
    \mathrm{Fin}_H  \arrow[r, "t"'] &  \mathrm{Top}_H
\end{tikzcd}
\]
 on the level of indexing categories.
\end{proof}

	\begin{rmk}\label{rem_genuine}
	We can in fact refine Bredon sheaf cohomology $\bre(X, E)$ to an object $\breg(X,E) \in \Psh(\Orb)$ whose value at $G/G $ is given by
	$\bre(X, E)$ and whose value on $G/H$ is given by $\bre(X \times G/H, E)$. In other words:  a (naively) genuine $G$-spectrum whose genuine $G$-fixed points are $\bre(X, E)$ and whose underlying spectrum is $\bre(X \times G, E) = \Gamma(X, E(G/e))$, i.e. sheaf cohomology of $X$ with value in the underlying spectrum $E(G/e)$.
	\end{rmk}	
	
	\section{Properties of Bredon sheaf cohomology}
	
	Below we shall describe the stalks of the sheaves $\uE_X$,
	prove that Bredon sheaf cohomology satisfies cofiltered compact codescent
	and derive several structural consequences. To this end, 
	we crucially rely on the following lemma.
	
	\begin{lem}\label{lem:technical} 
	Let $C$ be a presentable category and $E \in \Psh(\Orb,C)$.
	\begin{enumerate}[(i)]
		\item If $X = \lim_{I \ni i} X_i$ is a cofiltered limit 
		in $\GLCH$ with projection maps $p_i \colon X \to X_i$,
		then for all compact subspaces $K \subset X$ there is an equivalence
		\begin{equation}\label{eq:profineq}
			\colim_{I^\op \ni i} \colim_{p_i(K) \subset V} \colim_{V \to Z} E(Z) \xto{\sim} \colim_{K \to Z'} E(Z')
		\end{equation}
		where the colimits are ranging over the poset $I$, equivariant open subspaces of $X_i$ containing a given $p_i(K)$, $((\Orb)_{V/})^\op$ and $((\Orb)_{K/})^\op$ respectively.
		\item if $X$ is a Tychonoff $G$-space, then for all orbits $Z \subset X$
		there is an equivalence
		\begin{equation}\label{eq:stalkco}
			\colim_{U\supset Z} \colim_{U \to Z'} E(Z') \xto{\sim} E(Z)
		\end{equation}
		where the colimits range over equivariant open subspaces of $X$ containing $Z$ and $((\Orb)_{U/})^\op$.
	\end{enumerate}
	\end{lem}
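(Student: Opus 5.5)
The plan is to write each side of \eqref{eq:profineq} as a colimit over a single indexing category, and then show that the comparison functor between these indexing categories is cofinal. Part (ii) will follow the same pattern, with Abels' theorem supplying the required germs.

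For (i), let $J$ be the category whose objects are triples $(i,V,g)$. Here $i\in I$, $V\subset X_i$ is a $G$-invariant open containing $p_i(K)$, and $g\colon V\to Z$ is an equivariant map to an orbit. A morphism $(i,V,g)\to(j,W,h)$ exists when $j\le i$ and $p_{ij}(V)\subset W$, and consists of a map of orbits $\phi$ with $\phi\circ h\circ p_{ij}|_V=g$, up to the evident direction conventions. The left side of \eqref{eq:profineq} is then the colimit of $E$ over $J^{\op}$, obtained by combining the three iterated colimits via a Grothendieck construction. Restriction along $p_i|_K$ gives a functor $\Phi\colon J\to(\Orb)_{K/}$, and the comparison map is induced by $\Phi$. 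It therefore suffices to show that $\Phi^{\op}$ is cofinal. By Quillen's Theorem A, this means showing that for each $f\colon K\to Z'$ the comma category $J_f$ is weakly contractible. Its objects are pairs $((i,V,g),\,\phi\colon Z\to Z')$ with $\phi\, g\, p_i|_K=f$.

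I would prove that $J_f$ is nonempty and cofiltered, which gives weak contractibility.
\begin{itemize}
\item \emph{Nonemptiness.} By Lemma~\ref{lem:compcofi} we have $K=\lim_i p_i(K)$, a cofiltered limit of compact Hausdorff $G$-spaces with surjective projections. Lemma~\ref{lem:nonemp-open} then factors $f$ as $f_i\circ p_i$ with $f_i\colon p_i(K)\to Z'$. Lemma~\ref{lem:nonemp-cofi}, applied inside the locally compact $X_i$, extends $f_i$ to an invariant open $V\supset p_i(K)$. Taking $\phi=\mathrm{id}$ gives an object of $J_f$.
\item \emph{Cofilteredness.} Given two objects $(i,V,g,\phi)$ and $(i',V',g',\phi')$, we may pass to a common $j\le i,i'$ and replace each object by $(j,\,p_{ji}^{-1}(V)\cap p_{ji'}^{-1}(V'),\dots)$. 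The composites $\phi g$ and $\phi' g'$ are then two maps to $Z'$ that agree on $K$. By the clopen-fibre argument of Lemma~\ref{lem:nonemp-open}, they already agree on $p_k(K)$ for some $k\le j$. By the compactness version of Lemma~\ref{lem:orbi-loceq}, that is, the $T_{ij}$-argument of Lemma~\ref{lem:nonemp-cofi}, they agree on an invariant open neighbourhood of $p_k(K)$. The resulting object $(k,V'',\phi g|,\mathrm{id}_{Z'})$ maps to both given objects.
\item \emph{Equalizing parallel arrows.} Parallel morphisms are equalized by the same two steps: shrink in $I$ and shrink the open.
\end{itemize}

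For (ii), write $Z=G\cdot x$. By Theorem~\ref{thm:abels} there is a trivially proper neighbourhood $r\colon U_0\to Z$, which is a retraction of $Z\subset U_0$. I would show that the objects $(U,\,r|_U)$ with $U\subset U_0$ form a cofinal subcategory of the combined index category of pairs $(U,\,g\colon U\to Z')$. Given any $g\colon U\to Z'$, the maps $g$ and $(g|_Z)\circ r$ agree on $Z$. Lemma~\ref{lem:orbi-loceq} then provides an invariant open $U''\subset U\cap U_0$ on which $g=(g|_Z)\circ r$, so every object receives a map from one of the form $(U'',r|_{U''})$. The same lemma shows that any two such factorizations become equal after shrinking $U''$. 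The colimit therefore reduces to the colimit over $U$ of $E(Z)$ along identity maps, which is $E(Z)$. One can alternatively view this as (i) applied with $K=Z$ and the trivial cofiltered system, together with the fact that $(\Orb)_{Z/}$ has the initial object $\mathrm{id}_Z$.

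The main obstacle I expect is the cofinality step in (i), specifically the morphisms in $J_f$. These involve orbit maps $\phi$, which need not be injective, and the $p_i$ need not be surjective onto $X_i$. Rigorously equalizing parallel arrows therefore requires the argument to run over $p_i(K)$ rather than over $X_i$. It also requires care that equality on a compact set propagates to an open neighbourhood. The route to this is the discreteness of orbits, used exactly as in Lemmas~\ref{lem:nonemp-open} and~\ref{lem:nonemp-cofi}.
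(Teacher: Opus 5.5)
Part (i) has a genuine gap: the comma category $J_f$ is not cofiltered in general, and the cone you build does not map to the given objects.

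Your $J_f$, with objects $((i,V,g),\phi)$, $\phi\colon Z\to Z'$ and $\phi g p_i|_K=f$, is the correct comma category for Quillen's Theorem A applied to $\Phi^{\op}$. A morphism $\bigl((k,V'',\phi g p_{ki}|),\mathrm{id}_{Z'}\bigr)\to\bigl((i,V,g),\phi\bigr)$ in $J_f$ is an orbit map $\alpha\colon Z'\to Z$ satisfying two conditions: $\alpha\phi g p_{ki}=g p_{ki}$ on $V''$, and $\phi\alpha=\mathrm{id}_{Z'}$. Any equivariant map from a nonempty space to an orbit is surjective, so $g p_{ki}|_{V''}$ surjects onto $Z$. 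Hence $\alpha\phi=\mathrm{id}_Z$, which forces $\phi$ to be an isomorphism. The map $\phi$ only looks like a valid morphism under the orbit direction you wrote in the definition of $J$. With that direction, however, $\Phi$ is not a functor to $(\Orb)_{K/}$ and $E$ is not a diagram on $J^{\op}$.

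Here is a concrete failure. Take $G=\mathbb{Z}/2$, $I$ a point, $X=K=G\sqcup G$ and $f\colon K\to G/G$. Then $V=K$ is forced and $f$ is terminal in $(\Orb)_{K/}$, so $J_f\simeq(\Orb)_{K/}$. Let $g_1=(\mathrm{id},\mathrm{id})$ and $g_2=(\mathrm{id},\tau)\colon K\to G/e$, where $\tau$ is the nontrivial automorphism. These have no common lower bound, since a lower bound would force $g_2=\beta g_1$ for some $\beta\in\mathrm{Aut}_G(G/e)$. So this $J_f$ is weakly contractible (it has a terminal object) but not cofiltered. In general your construction only proves that $J_f$ is connected, via the zigzag $((i,V,g),\phi)\leftarrow((k,V'',gp_{ki}|),\phi)\to((k,V'',\phi g p_{ki}|),\mathrm{id}_{Z'})$.

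The missing step is to first reduce to objects whose structure map is the identity. Let $J_f^0\subset J_f$ be the full subcategory on objects $((i,V,g\colon V\to Z'),\mathrm{id}_{Z'})$. It is reflective: the assignment $((i,V,g),\phi)\mapsto((i,V,\phi g),\mathrm{id}_{Z'})$, with unit given by $\phi$, is left adjoint to the inclusion. Hence the inclusion is a weak homotopy equivalence.

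$J_f^0$ is a poset, and on it your argument works and even simplifies. Two maps agreeing after precomposition with the surjection $p_j|_K$ already agree on $p_j(K)$. Their agreement set is open because $Z'$ is discrete. So neither a further step in $I$ nor a compactness argument is needed. This $J_f^0$ is exactly the fibre $((A_K)_{Z'})_{/f}$ that the paper proves cofiltered. The paper reaches it differently: it shows $F$ is a Dwyer--Kan localization, using that $\pi$ and $\pi'$ are cocartesian fibrations over $\Orb$ and that $F$ preserves cocartesian edges, which reduces everything to fibres over a fixed orbit. Parallel arrows are never an issue: for $K\neq\emptyset$, both $J$ and $(\Orb)_{K/}$ are preorders by the same surjectivity.

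Your part (ii) is correct as it stands. There the structure map is forced to equal $g|_Z$, so the comma categories are downward-directed posets of opens. This direct use of the Abels retraction is a valid alternative to the paper's reduction of (ii) to (i).
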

	\begin{proof} We first prove (i). Write $K_i = p_i(K)$ for all $i \in I$.
	We wish to study the map \eqref{eq:profineq} by means of a cofinality argument, for which we introduce a (1-)category $A_K$ as follows. Its objects are given by tuples $(i,U,f)$ where $i \in I$, $K_i \subset U \subset X_i$ is a $G$-invariant open and $f \colon U \to Z$ an equivariant map to an orbit. The set of morphisms $(i,U,f) \to (j,V,g)$ can be non-empty only if $i < j$ and $U \subset p_{ij}^{-1}(V)$, in which case it is given by equivariant maps $\alpha \colon Z \to Z'$ that make 
	the square
	\[
	\begin{tikzcd}
		U \arrow{d}[left]{p_{ij}} \arrow{r}{f} & Z \arrow{d}{\alpha} \\
		V \arrow{r}{g} & Z' 
	\end{tikzcd}
	\]
	commute. 
	
	There is a canonical comparison functor
	\[
	F \colon A_K \to (\Orb)_{K /}, \qquad (i,U,f) \mapsto (K \xto{p_i} K_i \hookrightarrow U \xto{f} Z).
	\]
	If we write $\pi \colon (\Orb)_{K /} \to \Orb$ for the canonical projection and $\pi' = \pi \circ F$, then the triangle
	\begin{equation} \label{triang:overorb}
		\begin{tikzcd}
			A_K \arrow{rr}{F} \arrow{dr}[below]{\pi'\quad} & &  \arrow{dl}{\pi} (\Orb)_{K/} \\
			& {\Orb}
		\end{tikzcd}
	\end{equation}	
	commutes by definition and we can 
	describe \eqref{eq:profineq} as the map induced by precomposition by $F^\op$:
	\[
	\colim_{A_K^\op} E \circ \pi^\op \circ F^\op \to \colim_{(\Orb)_{K /}^\op} E \circ \pi^\op.
	\]	
	It thus suffices to show that $F^\op$ is colimit-final, or equivalently, that $F$ is limit-final. Moreover, we shall 
	see that $F$ is a Dwyer-Kan localization.
	
	In light of \eqref{triang:overorb} and \cite{kerodon}*{Proposition 6.3.4.2 02LW}, we shall check that both $\pi$ and $\pi'$ are cocartesian fibrations, and that $F$ preserves cocartesian edges; this will reduce the proof to showing that for each $Z \in \Orb$ the induced functor on fibers
	\begin{equation}\label{eq:F-fiberwise}
		F_Z \colon (A_K)_{Z} \to ((\Orb)_{K/})_Z
	\end{equation}
	is a Dwyer-Kan localization. 
	
	It is straightforward to verify that all morphisms in $(\Orb)_{K/}$ are $\pi$-cocartesian and that $\pi$ is a cocartesian fibration; in particular $F$ preserves
	cocartesian edges. To prove that $\pi'$ is a cocartesian fibration we observe that for each $(i,U,f \colon U \to Z) \in A_K$ we may lift any $\alpha \colon Z \to Z'$ in $\Orb$ to a map $\alpha \colon (i,U,f) \to (i,U,\alpha f)$, which is $\pi'$-cocartesian. 
	
	We now turn to showing \eqref{eq:F-fiberwise} for a given $Z \in \Orb$. Since the fiber $((\Orb)_{K/})_Z$ is a discrete anima indexed by equivariant maps $f \colon K \to Z$, it suffices 
	to see that each fiber $((A_K)_Z)_{/f}$ is weakly contractible.
	We shall see that  $((A_K)_Z)_{/f}$  is a cofiltered category, which in particular implies that it is contractible.
	
	Let us first spell out what the objects and morphisms of $((A_K)_Z)_{/f}$ are.
	An object in $((A_K)_Z)_{/f}$ is given by an object $(i,U,g \colon U \to Z) \in A_K$ fitting in the following triangle
	\[
	\begin{tikzcd}[ampersand replacement=\&]
		U \& Z \\
		K
		\arrow["g", from=1-1, to=1-2]
		\arrow["{p_i|}", from=2-1, to=1-1]
		\arrow["f"', from=2-1, to=1-2]
	\end{tikzcd}
	\]
	There is at most one arrow $(i,U,g) \to (j,V,h)$ between two objects, which exists whenever $h p_{ij}| = g$. In particular, in showing that $((A_K)_Z)_{/f}$ is cofiltered we need not consider parallel pairs of arrows.
	
	We first show that $((A_K)_Z)_{/f}$ is non-empty. By Lemma \ref{lem:compcofi} we know that $K = \lim_{I \ni i} K_i$, and applying Lemma \ref{lem:nonemp-open} to this limit we obtain
	that there exists some $i \in I$ for which $f$ factors through $p_i| \colon K \to K_i$ and an equivariant map $g' \colon K_i \to Z$. Now Lemma \ref{lem:nonemp-cofi} applied to $X_i \supset K_i$ and $g'$ guarantees that there exists an equivariant open $U \supset K_i$ and an extension $g \colon U \to Z$ of $g'$. By construction $(i,U,g)$ defines an object in $((A_K)_Z)_{/f}$. 
	
	At last, given two objects $(i,g \colon U \to Z)$, $(j,V,g'\colon V \to Z)$, consider the following solid arrow diagram
	over $f \colon K \to Z$:
	\[
	\begin{tikzcd}
		&& {K_i} & U \\
		K & {K_s} & W &&& Z \\
		&& {K_j} & V
		\arrow[hook, from=1-3, to=1-4]
		\arrow["g", from=1-4, to=2-6]
		\arrow["{p_i}", bend left=10, from=2-1, to=1-3]
		\arrow["{p_s}", dashed, from=2-1, to=2-2]
		\arrow["{p_j}"', bend right =10, from=2-1, to=3-3]
		\arrow[dashed, hook, from=2-2, to=2-3]
		\arrow["{p_{si}}|", dashed, from=2-3, to=1-4]
		\arrow[dashed, "g''", bend left = 10, from=2-3, to=2-6]
		\arrow["{p_{sj}}|"', dashed, from=2-3, to=3-4]
		\arrow[hook, from=3-3, to=3-4]
		\arrow["g'"', from=3-4, to=2-6]
	\end{tikzcd}
	\]
	We construct the dotted arrows as follows. Take $s \in I$ below $i$ and $j$ and put $W' = p_{si}^{-1}(U) \cap p_{sj}^{-1}(V)$. 
	Since $g p_{si}| \colon W' \to Z$ and $g p_{sj}| \colon W \to Z$ are continuous and $Z$ is discrete, the equivariant subspace $W = \{w \in W' : g p_{si}(w) = g' p_{sj}(w)\} \subset W'$ contains $K_s$ and is open in $W'$; in particular it is open in $X_s$. 
	By construction the restrictions of $g p_{si}|$ and $g' p_{sj}$ to $W'$ agree and define an equivariant map $g'' \colon W \to Z$. 
	It is now immediate from its definition that the tuple $(s,W, g'' \colon W \to Z)$ is an object of $((A_K)_Z)_{/f}$ that maps to both $(i,U,g)$ and $(j,V,g')$. This concludes the proof of (i).
	
	Now we turn to (ii). Note that in the proof of (i) we 
	have only needed the fact that the spaces $X_i$ are locally compact to use Lemma \ref{lem:nonemp-cofi}, namely to show that the domain of 
	a map $K_i \to Z'$ can be extended to an open subspace $K_i \subset U \subset X_i$. When $X_i$ is Tychonoff and $K_i$ is an orbit, we may derive the same extension property using Theorem \ref{thm:abels} in place of Lemma \ref{lem:nonemp-cofi}. Consequently (i) also holds for Tychonoff $G$-spaces whenever $K = Z \in \Orb$.
	Applying this fact to a constant cofiltered diagram with value a fixed Tychonoff space $X$, for all orbits $Z \subset X$ we 
	obtain an equivalence
	\[
	\colim_{Z \subset U} \colim_{U \to Z'} E(Z') \xto{\sim} \colim_{Z \to Z''} E(Z'').
	\]
	At last, observe that $((\Orb)_{Z/})^{\op}$ has a final object, namely $\id_Z$, and so the colimit above agrees with $E(Z)$. 
	\end{proof}
	
	\begin{prop} \label{prop:stalk-rep}
	Let $X$ be a Tychonoff $G$-space and $Z \subset X$ an orbit.
	For all $E \in \Psh(\Orb,C)$, the stalk of $\uE_X$ at $Z$ is given by $E(Z)$. 
	\end{prop}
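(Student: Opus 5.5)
The plan is to compute the stalk of the presheaf $\preuE_X$ at the point $q(Z) \in X/G$, and then use that sheafification does not change stalks. Recall that for a presheaf $\mathcal F$ on a space $Y$ with values in a presentable category $C$, the stalk at $y$ is $i_y^\ast \mathcal F = \colim_{y \in V} \mathcal F(V)$, where the colimit runs over open neighbourhoods of $y$. This holds because $i_y^\ast$ of a presheaf is computed by this colimit, sheafification on a point is trivial, and $i_y^\ast$ commutes with sheafification, since $\mathcal F \to \mathcal F^{\sh}$ is an equivalence after applying $i_y^\ast$. So the stalk of $\uE_X$ at $x = q(Z)$ agrees with the stalk of $\preuE_X$ at $x$.

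Next I would unwind this colimit using Remark \ref{rmk:OG-OX/G}. Open neighbourhoods $V \ni x$ in $X/G$ correspond bijectively and order-preservingly to $G$-invariant open subspaces $U = q^{-1}(V) \supset Z$ of $X$. This uses that $q^{-1}(x) = Z$ is exactly the orbit, so $x \in V$ if and only if $Z \subset q^{-1}(V)$. By the explicit formula for $\preuE_X$, the stalk therefore becomes
\[
\colim_{U \supset Z} \colim_{U \to Z'} E(Z'),
\]
where $U$ ranges over $G$-invariant open subspaces of $X$ containing $Z$.

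Lemma \ref{lem:technical}(ii) identifies precisely this colimit with $E(Z)$, via the map induced by restricting equivariant maps $U \to Z'$ to $Z$. That gives the claim. The only point needing care, and the step I expect to be the main one, is the first: the stalk formula and its compatibility with sheafification for $C$-valued presheaves. I would justify this by noting that $i_x^\ast$ is a left adjoint, so it is compatible with the left adjoint $(-)^{\sh}$. The composite $\Psh(X/G,C) \to C$ given by left Kan extension along $\cO(X/G) \to \ast$ restricted to neighbourhoods of $x$ then factors through sheafification. Concretely, $\Gamma$ on the point of $(i_x)_\ast$ of an object $c$ is already a sheaf, and the adjunctions compose.

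Finally, I would check that the equivalence is canonical, namely that it is the map induced by the cocone $E(Z') \to E(Z)$ given by restriction along $Z \subset U \to Z'$. Lemma \ref{lem:technical}(ii) provides exactly this map.
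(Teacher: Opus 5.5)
Your proposal is correct and follows the paper's proof: you identify the stalk of $\uE_X$ with that of $\preuE_X$, rewrite it as $\colim_{U \supset Z}\colim_{U \to Z'} E(Z')$ over $G$-invariant open neighbourhoods of $Z$, and conclude by Lemma \ref{lem:technical}(ii). Your adjunction argument, that the presheaf stalk is left adjoint to the skyscraper presheaf and the skyscraper presheaf is already a sheaf, spells out the step the paper leaves implicit.
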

	\begin{proof} We may equivalently compute the stalks of $\preuE_X$, 
	which are given by 
	\[
	(\preuE_X)_Z = \colim_{U\supset Z} \colim_{U \to Z'}E(Z').
	\]
	From here the result is immediate using Lemma \ref{lem:technical} (ii).
	\end{proof}
	
	Next we prove that Bredon cohomology satisfies cofiltered compact codescent.
	
	\begin{lem} \label{lem:profin}
	Let $C$ be a presentable category.
	If $X = \lim_{I \ni i} X_i$ is a cofiltered limit 
	in $\CHaus_G$ with projection maps $p_i \colon X \to X_i$,
	then for every coefficient system $E: \Orb^\op \to C$ the canonical map 
	\[
	\colim_{I^\op \ni i} p_i^*\uE_{X_i} \to\uE_X
	\]
	is an equivalence. 
	\end{lem}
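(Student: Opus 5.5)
The plan is to reduce the claim to a comparison of values on $G$-invariant compact subspaces, and then to invoke Lemma \ref{lem:ko}. Lemma \ref{lem:technical}(i) will supply the comparison on compacts.

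By Remark \ref{rmk:OG-OX/G} we work on $\cKO_G(X)$ and identify sheaves on $X/G$ with sheaves on $\cO_G(X)$. On $\cKO_G(X)^\op$ we define two functors:
\[
F(L) = \colim_{I^\op \ni i}\ \colim_{p_i(L) \subset V}\ \preuE_{X_i}(V), \qquad F'(L) = \colim_{L \subset U}\ \preuE_X(U),
\]
where $V$ ranges over $G$-invariant opens of $X_i$ and $U$ over $G$-invariant opens of $X$. There is a natural transformation $\eta \colon F \to F'$. It is induced by the maps $\preuE_{X_i}(V) \to \preuE_X(p_i^{-1}(V))$, which precompose an equivariant map $V \to Z$ with $p_i$, noting that $p_i^{-1}(V) \supset L$.

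The first step is to identify the two sheafifications on opens.
\begin{enumerate}[(a)]
\item For $L$ open, the inner colimit in $F$ is exactly the presheaf pullback of $\preuE_{X_i}$ along $X/G \to X_i/G$. Sheafification commutes with presheaf pullback, in the sense that $p_i^*$ on sheaves is sheafified presheaf pullback, and we use that $p_i^*\preuE_{X_i} \to p_i^*\uE_{X_i}$ becomes an equivalence after sheafifying. Since $(-)^\sh$ is a left adjoint and commutes with colimits, $(F|_{\cO_G(X)^\op})^\sh \simeq \colim_i p_i^*\uE_{X_i}$, the colimit being taken in sheaves.
\item For $L$ open, $F'(L) = \preuE_X(L)$, so $(F'|_{\cO_G(X)^\op})^\sh = \uE_X$.
\item Under these identifications, $\eta^\sh$ is the canonical map in the statement.
\end{enumerate}
The bookkeeping in this step, namely that the unsheafified pullbacks and their colimit assemble correctly and naturally, is the main point to check. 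It is formal but requires care.

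By Lemma \ref{lem:ko}, it then suffices to show that $\eta_K$ is an equivalence for every $K \in \cK_G(X)$. Here we have
\[
F(K) = \colim_i \colim_{p_i(K) \subset V} \colim_{V \to Z} E(Z).
\]
By Lemma \ref{lem:technical}(i), this maps by an equivalence to $\colim_{K \to Z'} E(Z')$. Applying the same lemma to the constant cofiltered diagram with value $X$ gives
\[
F'(K) = \colim_{K \subset U} \colim_{U \to Z} E(Z) \xto{\sim} \colim_{K \to Z'} E(Z').
\]
These two comparison maps are both given by restricting an equivariant map to $K$, so they are compatible with $\eta_K$. By two-out-of-three, $\eta_K$ is an equivalence.

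The expected obstacle is not this compact comparison, which Lemma \ref{lem:technical} was designed for. It is rather the identification in step (a) of the colimit of the sheaves $p_i^*\uE_{X_i}$ with the sheafification of the single presheaf $F|_{\cO_G(X)^\op}$, for an arbitrary presentable $C$. This is handled by working with presheaves throughout and sheafifying only once at the end, which is exactly what Lemma \ref{lem:ko} permits.
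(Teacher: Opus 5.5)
Your proposal is correct and takes essentially the same route as the paper: extend the presheaves to $\cKO_G(X)^\op$, use Lemma \ref{lem:ko} to reduce to evaluation on $G$-invariant compacts, and conclude with Lemma \ref{lem:technical}(i). The only cosmetic difference is how $\preuE_X$ is extended: the paper does it directly by $T \mapsto \colim_{T \to Z'} E(Z')$, whereas you take the left Kan extension from opens, identify it on compacts by a second application of Lemma \ref{lem:technical}(i) to the constant diagram (as the paper itself does in Lemma \ref{lem:pullback-EX}), and then finish by two-out-of-three.
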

	\begin{proof} The map above is induced upon sheafification from the map of 
	presheaves $\colim_{I^\op \ni i} p_i^\ast \preuE_{X_i} \to \preuE_X$. Furthermore, the presheaves  and $p_i^\ast \preuE_{X_i}$ for each $i \in I$ can be viewed as a restriction of functors out of $\mathcal{KO}(X)^{\op}$, given on objects by 
	\begin{equation}\label{map:preut}
	p_i^\ast\preutE_{X_i} (S)  = \colim_{I^\op \ni i} \colim_{p_i(S) \subset V} \colim_{V \to Z} E(Z), \qquad \preutE_X(T) = \colim_{T \to Z'} E(Z')
	\end{equation}
	where the colimits are ranging over the poset $I$, equivariant open subspaces of $X_i$ containing a given $p_i(S)$, $((\Orb)_{S/})^\op$ and $((\Orb)_{T/})^\op$ respectively. 
	Similarly, the map $\colim_{I^\op \ni i} p_i^\ast \preuE_{X_i} \to \preuE_X$ comes from the restriction of a natural transformation $\colim_{I^\op \ni i} p_i^\ast \preutE_{X_i} \to \preutE_X$. 
	In light of Lemma \ref{lem:ko}, we can thus equivalently prove that for each compact $K \subset X$ the map
	\begin{equation}
	\colim_{I^\op \ni i} \colim_{p_i(K) \subset V} \colim_{V \to Z} E(Z) \to \colim_{K \to Z'} E(Z'),
	\end{equation}
	obtained by evaluation at $K$, is an equivalence. This is precisely the content of Lemma \ref{lem:technical} (i).
	\end{proof}
	
	\begin{rmk}
	When $C=\An$, we may also prove Lemma \ref{lem:profin} by arguing that the comparison map $\colim_{I^\op \ni i} p_i^*\uE_{X_i} \to\uE_X$
	is compatible with taking colimits in the variable $E$ and hence we may assume that $E = \hom_{\Orb}(-,Z)$ is a representable and 
	thus $\uE_X$ agrees with the restriction of $\hom_{\GTop}(-,Z)$
	to $\cO_G(X)^\op$. With that reduction in place all the sheaves involved take values in $\Set$ and thus it is sufficient to check that the comparison map is a stalkwise isomorphism. The result now follows from Proposition \ref{prop:stalk-rep}. In fact, this proof doesn't require the spaces to be compact, so that one obtains a more general statement.
	\end{rmk}
	
	\begin{thm} \label{thm:breprofin}
	Let $C$ be a presentable category.
	For every $E \in \Psh(\Orb, C)$, Bredon homology with coefficients in $E$ satisfies cofiltered compact codescent.
	\end{thm}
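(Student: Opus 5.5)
Let $X = \lim_{I \ni i} X_i$ be a cofiltered limit in $\GCH$ with projections $p_i \colon X \to X_i$. The goal is to show that $\colim_{I^\op} \bre(X_i,E) \to \bre(X,E)$ is an equivalence. The plan is to move everything to the orbit spaces, reduce to the case of surjective projections, and then invoke Lemma \ref{lem:profin} together with a continuity property of global sections.

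By Remark \ref{rmk:OG-OX/G} and the definition of $\uE_X$, we have $\bre(X,E) = \Gamma(X/G,\uE_X)$, and similarly for each $X_i$. By Lemma \ref{lem:lims-mod-G}, $X/G \cong \lim_i X_i/G$ is again a cofiltered limit of compact Hausdorff spaces, with projections $\bar p_i \colon X/G \to X_i/G$.

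First we reduce to surjective projections. Replace each $X_i$ by the compact invariant subspace $p_i(X)$; by Lemma \ref{lem:compcofi} we still have $X = \lim_i p_i(X)$. We must then check that $\colim_i \bre(X_i,E) \simeq \colim_i \bre(p_i(X),E)$. For this, use the equivariant form of the cofiltered intersection property: for each $i$, $p_i(X) = \bigcap_{j \le i} p_{ji}(X_j)$. If this reduction turns out to be awkward, it can be skipped entirely, since Lemma \ref{lem:profin} as stated does not require surjectivity.

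Second, we use continuity of global sections. On a cofiltered limit $Y = \lim_i Y_i$ of compact Hausdorff spaces, for any compatible system of sheaves $\cF_i \in \Shv(Y_i,C)$ the natural map
\[
\colim_i \Gamma(Y_i,\cF_i) \to \Gamma\bigl(Y, \colim_i \bar p_i^\ast \cF_i\bigr)
\]
is an equivalence. This is the standard fact that $\Gamma = \Gamma_c$ on compact Hausdorff spaces commutes with filtered colimits and with pullback along cofiltered limits. We plan to prove it by passing to $k$-sheaves via Theorem \ref{thm:shv=kshv}: for compact $Y$ we have $\Gamma(Y,\cF) = \psi(\cF)(Y)$, and $\psi(\bar p_i^\ast \cF_i)(K) = \colim_{\bar p_i(K) \subset V}\cF_i(V)$. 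A cofinality argument of the kind in Lemma \ref{lem:technical}(i) should then identify $\colim_i \psi(\cF_i)(Y_i)$ with $\psi(\colim_i \bar p_i^\ast\cF_i)(Y)$. This requires that open neighbourhoods of $\bar p_i(Y)$ pulled back along the projections are cofinal among opens containing $Y$, which is where compactness enters.

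Finally, apply Lemma \ref{lem:profin}, which gives $\colim_i \bar p_i^\ast \uE_{X_i} \simeq \uE_X$. Combined with the previous step this yields
\[
\colim_i \bre(X_i,E) = \colim_i \Gamma(X_i/G,\uE_{X_i}) \simeq \Gamma(X/G,\uE_X) = \bre(X,E).
\]
We also check that this identification is the canonical restriction map, which is a matter of unwinding the units $\uE_{X_i} \to \bar p_{i\ast}\bar p_i^\ast \uE_{X_i}$.

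The main obstacle is the continuity step for general presentable $C$, since the paper's statement imposes no exactness hypotheses on $C$. The $k$-sheaf description $\psi(\cF)(Y)=\colim_{Y\subset U}\cF(U)$ gives global sections on a compact space directly as a colimit, so no interchange of limits and colimits should be needed. However, $\psi(\colim_i \bar p_i^\ast\cF_i)$ involves a sheafified colimit. To control it, we plan to show that the presheaf colimit is already a $k$-sheaf when evaluated on compacts, using Lemma \ref{lem:ko} as in the proof of Lemma \ref{lem:profin}. If this is not possible in full generality, we would fall back to assuming that filtered colimits in $C$ are left exact, as in Theorem \ref{thm:shv=kshv}.
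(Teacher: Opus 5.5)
Your overall route is the paper's. You work on $X/G=\lim_i X_i/G$ (Lemma \ref{lem:lims-mod-G}, which the paper uses tacitly), use Lemma \ref{lem:profin} to get $\colim_i \bar p_i^*\uE_{X_i}\simeq \uE_X$, and conclude by continuity of global sections along cofiltered limits of compact Hausdorff spaces. The surjectivity reduction is indeed unnecessary. The paper does not reprove the continuity statement. It uses that $\Gamma$ on a compact space commutes with colimits, the formula $(\bar p_i)_*\bar p_i^*\cF\simeq\colim_{j\le i}(\bar p_{ji})_*\bar p_{ji}^*\cF$ from \cite{som}*{Lemma 2.5.10}, and cofinality of the diagonal $I^\op\to\{(i,j): j\le i\}$. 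If you simply cite this standard fact, as you call it, your proof is complete and agrees with the paper's.

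The self-contained $k$-sheaf proof you sketch, however, does not work as written. The formula $\psi(\bar p_i^*\cF_i)(K)=\colim_{\bar p_i(K)\subset V}\cF_i(V)$ is false for a fixed $i$. Its right side is the value of the naive presheaf pullback, not of the sheaf $\bar p_i^*\cF_i$. For instance, if $Y_i=\ast$ and $A=\cF_i(\ast)$, then at $K=Y$ the left side is $\Gamma(Y,\underline A)$ while the right side is $A$. The identification only holds after applying $\colim_i$, and that is essentially the continuity statement itself, so your cofinality step is circular.

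Lemma \ref{lem:ko} does not repair this. It only compares sheafifications of presheaves that agree on compacts; it does not show that $\mathcal G(K)\defeq\colim_i\colim_{\bar p_i(K)\subset V}\cF_i(V)$ is a $k$-sheaf. You would have to check the axioms by hand.
\begin{itemize}
\item Axiom (iii) is a routine cofinality argument.
\item Mayer--Vietoris needs two inputs. First, $\bar p_i(K\cap L)$ can be replaced by $\bar p_i(K)\cap\bar p_i(L)$ in the colimit; this holds by compactness, since $\lim_i(\bar p_i(K)\cap\bar p_i(L))=K\cap L$ (cf.\ Lemma \ref{lem:compcofi}). Second, filtered colimits in $C$ must be left exact.
\end{itemize}
Once $\mathcal G$ is a $k$-sheaf, compare the $\cKO$-presheaf given by the same formula with the right Kan extension of $\mathcal G$ from compacts. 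Lemma \ref{lem:ko} then gives $\colim_i\bar p_i^*\cF_i\simeq\phi(\mathcal G)$. Hence $\Gamma(Y,\colim_i\bar p_i^*\cF_i)\simeq\mathcal G(Y)\simeq\colim_i\Gamma(Y_i,\cF_i)$, the last step again by compactness.

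Your worry about the hypotheses on $C$ is justified. The paper's step ``$t_*=t_!$ preserves colimits'' is likewise only evident under extra assumptions on $C$, for instance stability.
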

	\begin{proof} Fix a compact Hausdorff space and a cofiltered limit $X = \lim_{I \ni i} X_i$ with projections $p_i \colon X \to X_i$ and transition maps $p_{ij} \colon X_i \to X_j$. Write $t \colon X \to \ast $ and $t_i \colon X_i \to \ast$ for the unique functions to the point.
		By Lemma \ref{lem:profin}, there is an equivalence
		\[
		\colim_{I^\op \ni i} p_i^* \uE_{X_i} \to \uE_X.
		\]
		Since $X$ is compact, we know that $t_* = t_!$ preserves colimits and thus
		\[
		t_* \uE_X \cong \colim_{I^\op \ni i} t_* p_i^* \uE_{X_i}
		\cong \colim_{I^\op \ni i} (t_i)_* (p_i)_* p_i^* \uE_{X_i}  
		\]
		By \cite{som}*{Lemma 2.5.10}, for all $i_0 \in I$ and $\cF\in \Shv(X_{i_0})$ we get that
		\[
		(p_{i_0})_* p_{i_0}^* \cF = \colim_{j \in I_{\le i_0}^\op} (p_{j,i_0})_* p_{j,i_0}^* \cF  
		\]
		Therefore, we obtain an equivalence
		\[
		t_* \uE_X \cong \colim_{I^\op \ni i} t_* p_i^* \uE_{X_i}
		\cong \colim_{I^\op \ni i} \colim_{j \in I_{\le i}^\op} (t_i)_*  (p_{j,i})_* p_{j,i}^* \uE_{X_i}  
		\]
		By cofinality of the diagonal map $I^\op \to \{(i,j) \in I^\op \times I^\op : j \le i\}$, this simplifies to
		\[
		t_* \uE_X \cong \colim_{I^\op \ni i} (t_i)_*\uE_{X_i}
		\]
		Since $t_*$ and $(t_i)_*$ compute global sections, this finally says that
		we have an equivalence
		\[
		\colim_{I^\op \ni i} \bre(X_i,E) \to \bre(X,E)
		\]
		as desired.
	\end{proof}
	
	We now have the following immediate corollary from cofiltered compact codescent: 
	\begin{coro} \label{coro:breclosed} 
	Let $C$ be a presentable category such that filtered colimits are left exact. For every $E \in \Psh(\Orb,C)$, Bredon cohomology with coefficients in $E$ satisfies closed descent for locally compact Hausdorff spaces. If $C$ is compactly assembled, then it is also $G$-homotopy invariant.
	\end{coro}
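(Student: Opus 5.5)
The plan is to obtain both assertions formally from the results already established, using Theorem \ref{thm:breprofin} as the main input.

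By Definition \ref{def_bredon}, $\bre(-,E) = t^\ast(E)$ is a sheaf on $\GTop$. Restricted to $\GLCH^\op$, it therefore satisfies open descent: for each $X$, its restriction to $\cO_G(X)^\op$ is $\uE_X$, which is a sheaf. By Theorem \ref{thm:breprofin}, it also satisfies cofiltered compact codescent. So $\bre(-,E)|_{\GLCH^\op}$ lies in $\Fun^{o,cc}(\GLCH^\op,C)$.

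Closed descent then follows from Lemma \ref{lem:o+cc=>cl}, whose only requirement is that $C$ be presentable with left exact filtered colimits. The argument runs as follows.
\begin{enumerate}[(i)]
\item For every $X\in\GLCH$, the $k$-sheaf associated to $\uE_X$ is $F^k_X = \bre(-,E)|_{\cK_G(X)^\op}$.
\item By the pullback axiom for $k$-sheaves, this gives the closed descent square for any $G$-invariant compact $K,L \subset X$.
\item For general closed invariant $K,L$ in a locally compact $X$, write each of $K\cup L$, $K$, $L$ and $K\cap L$ as a locally compact space in its own right, and apply the lemma with $X$ replaced by $K \cup L$. The lemma, applied in that space, already packages the pullback condition for closed invariant subsets.
\end{enumerate}
The condition $\bre(\emptyset,E)\simeq\ast$ holds because any sheaf takes the value $\ast$ on the empty cover.

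For $G$-homotopy invariance, assume $C$ is compactly assembled. Compactly assembled categories are presentable, and filtered colimits in them are left exact: they are retracts in $\prl$ of compactly generated categories, where filtered colimits commute with finite limits. So the previous step applies, and Proposition \ref{prop:htpyinv}(ii) gives $G$-homotopy invariance for functors with open descent and cofiltered compact codescent. This covers locally compact Hausdorff $G$-spaces; applied to $X\times[0,1]\to X$, it yields invariance under $G$-homotopy equivalences.

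The main point to check is that Lemma \ref{lem:o+cc=>cl} really yields closed descent for arbitrary closed, not necessarily compact, invariant subspaces of a locally compact space. The lemma as stated only produces the $k$-sheaf property for compact subsets. If the paper's notion of closed descent requires noncompact closed subsets, I would pass through Theorem \ref{thm:shv=kshv}. Concretely:
\begin{enumerate}[(i)]
\item For closed $K\subset X$, write $\bre(K,E) = \Gamma(K/G,\uE_K)$.
\item Express this as a limit of values on compact invariant subsets of $K$, which are compact subsets of $X$.
\item Check that $\uE_K$ agrees with the restriction of $\uE_X$ to $K/G$; this follows from the $k$-sheaf description.
\item The square for $K\cup L$, $K$, $L$, $K\cap L$ is then a limit of pullback squares of compact pieces, hence is a pullback.
\end{enumerate}
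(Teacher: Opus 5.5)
Your proposal is correct and follows the paper's proof. Open descent holds by construction and Theorem~\ref{thm:breprofin} supplies cofiltered compact codescent. Lemma~\ref{lem:o+cc=>cl} and Proposition~\ref{prop:htpyinv}(ii) then give closed descent and $G$-homotopy invariance. Your limit-over-compact-subsets argument just unpacks the ``in particular'' of Lemma~\ref{lem:o+cc=>cl} for noncompact closed subspaces.

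One minor correction concerns left exactness of filtered colimits in $C$. This is already a standing hypothesis of the corollary, so you need not derive it. Your retract-in-$\prl$ justification also does not work as stated, since left adjoints need not preserve finite limits. The correct reason is that for compactly assembled $C$ the functor $\colim\colon \Ind(C)\to C$ has a left adjoint and hence preserves finite limits.
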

	\begin{proof} Since Bredon homology satisfies cofiltered compact codescent
	by Theorem \ref{thm:breprofin}, we are in position to apply Lemma \ref{lem:o+cc=>cl}. Homotopy invariance then follows by Proposition \ref{prop:htpyinv}.
	\end{proof}
	
	In fact, we even see that Bredon cohomology satisfies the strong form of closed descent, in which only one of the maps is a closed immersion and the other one is arbitrary, see Corollary \ref{coro:cl+}.
	
	\begin{lem} \label{lem:pullback-EX}
	Let $C$ be a presentable category, let $E \in \Psh(\Orb, C)$ and $X \in \GLCH$. 
		\begin{enumerate}[(i)]
			\item For any inclusion $j \colon Y \to X$ in $\GLCH$ where $Y$ carries the subspace topology
			and associated map $j_G \colon Y/G \to X/G$, there is an equivalence $j_G^* \uE_X \xto{\sim} \uE_Y$. 
			\item For each $G$-invariant open $U \subset X$ and induced maps $j \colon U \to X$, $i \colon (X\setminus U) \to X$,
			there is a cofibre sequence $i_G^! \uE_U \to \uE_X \to (j_G)_* \uE_{X \setminus U}$. 
		\end{enumerate} 
	\end{lem}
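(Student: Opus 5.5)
The plan for (i) is to argue at the level of presheaves and then sheafify, using Lemma \ref{lem:ko} to reduce to invariant compact subspaces, where Lemma \ref{lem:technical} (i) does the work. For (ii) I will combine (i) with the standard recollement. I read the sequence in (ii) as $(j_G)_!\uE_U \to \uE_X \to (i_G)_*\uE_{X\setminus U}$, which is what the maps $j$ and $i$ allow, and I assume $C$ is stable in (ii) so that "cofibre sequence" makes sense.

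For (i), first note that $j_G^*$ is the sheafification of the presheaf pullback, and sheafification commutes with pullback. So $j_G^*\uE_X$ is the sheafification of
\[
W \mapsto \colim_{W \subset V} \preuE_X(V) = \colim_{W\subset V}\colim_{V\to Z} E(Z),
\]
where $V$ ranges over invariant opens of $X$ containing the invariant open $W \subset Y$. Restricting equivariant maps $V \to Z$ to $W \subset V\cap Y$ gives a natural comparison map to $\preuE_Y(W) = \colim_{W\to Z}E(Z)$. This is the map inducing $j_G^*\uE_X \to \uE_Y$.

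Both presheaves, and the comparison map, extend to functors on $\cKO_G(Y)^\op$ by the same formulas: $S\mapsto \colim_{S\subset V}\colim_{V\to Z}E(Z)$ and $S \mapsto \colim_{S\to Z'}E(Z')$. By Lemma \ref{lem:ko}, it suffices to check that for each invariant compact $K\subset Y$ the map
\[
\colim_{K\subset V\subset X}\colim_{V\to Z}E(Z)\to \colim_{K\to Z'}E(Z')
\]
is an equivalence. Since $K$ is also compact in $X$, this is exactly Lemma \ref{lem:technical} (i) applied to the constant cofiltered diagram with value $X$.

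The main obstacle I expect is bookkeeping rather than substance. The compact $K$ is reached in two ways, through opens of $X$ and through the definition of $\preuE_Y$, and I must check that the $\cKO$-extension of the pulled-back presheaf restricted to opens of $Y$ really is the presheaf pullback. This follows from a cofinality argument: invariant opens of $Y$ containing $K$ are traces of invariant opens of $X$. Only local compactness of $X$ enters, via Lemma \ref{lem:nonemp-cofi} inside Lemma \ref{lem:technical}.

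For (ii), for any $\cF\in\Shv(X/G,C)$ with $C$ stable, the open–closed recollement on the locally compact Hausdorff space $X/G$ (cf. Proposition \ref{prop:Shv-cofibre}) provides a natural cofibre sequence
\[
(j_G)_!j_G^*\cF\to\cF\to(i_G)_*i_G^*\cF.
\]
Here $j_G\colon U/G\to X/G$ is the open inclusion and $i_G\colon (X\setminus U)/G\to X/G$ is the closed complement, using that $q$ identifies $U/G$ and $(X\setminus U)/G$ with the corresponding open and closed subsets of $X/G$. This sequence can be checked stalkwise on the two strata, or by the fact that $j_G^*$ and $i_G^*$ are jointly conservative and applying them gives split sequences. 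Applying it to $\cF=\uE_X$ and substituting $j_G^*\uE_X\simeq\uE_U$ and $i_G^*\uE_X\simeq \uE_{X\setminus U}$ from (i) yields the claimed sequence.
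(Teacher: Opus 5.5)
Your proposal is correct and follows the paper's proof: Lemma \ref{lem:technical}~(i) applied to a constant diagram gives the equivalence on invariant compacts, Lemma \ref{lem:ko} upgrades this to (i), and the open--closed recollement (Proposition \ref{prop:Shv-cofibre}) combined with (i) gives (ii). Your reading $(j_G)_!\uE_U \to \uE_X \to (i_G)_*\uE_{X\setminus U}$ is indeed the intended, typo-corrected form of (ii). Of your two justifications for the recollement sequence, rely on the joint conservativity of $j_G^*$ and $i_G^*$ rather than on stalks, since stalks need not detect equivalences without hypercompleteness.
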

	\begin{proof}
		The equivalence \eqref{eq:profineq} applied to a constant $\N^{\op}$-indexed limit says in particular that for any $X \in \GLCH$ and any $G$-invariant compact $K \subset X$ we have an equivalence
		\begin{equation} \label{eq:colim-res}
			\colim_{K \subset U} \colim_{U \to Z} E(Z) \cong \colim_{n \ge 1} \colim_{K \subset U} \colim_{U \to Z} E(Z) \cong \colim_{K \to Z} E(Z).
		\end{equation}
		Now (i) follows from Lemma \ref{lem:ko}, and (ii) is a direct consequence of (i) by Proposition \ref{prop:Shv-cofibre}.
	\end{proof}
	
\begin{rmk}\label{rem_more}
	The previous statement is expected to remain valid beyond the setting of locally compact Hausdorff spaces, more precisely for Tychonoff spaces. Indeed, one should be able to establish the case $C = \An$ by reducing $E$ to representable objects and then working with stalks using Proposition \ref{prop:stalk-rep}. The general case would then follow by tensoring with an arbitrary $C$. As this extension is not required for our purposes, we do not pursue it here.
\end{rmk}
	
	\subsection{Agreement with Bredon cohomology}
	\label{subsec:bredoncw}
       We now compare Bredon sheaf cohomology of a $G$-space $X$ to classical singular Bredon cohomology. We follow the proof of Petersen in the non-equivariant case \cite{petersen}.

       Recall that singular Bredon cohomology
       for the target category $\Sp$ is defined by the mapping spectrum
         \[
         C_{\mathrm{Br}}^*(X,E) = \mathrm{hom}_{\Fun(\Orb^{\op},\Sp)} \bigl(\Sigma^\infty_+ \Sing(X^\bullet),\, E\bigr)
         \]
           where $\Sigma^\infty_+ \Sing(X^\bullet)$ denotes the functor $\Orb^{\op} \longrightarrow \Sp$ which sends $G/H$ to the suspension spectrum $\Sigma^\infty_+ \Sing(X^H)$ of the strict $H$--fixed points $X^H$. This mapping spectrum is also equivalent to the end
           \begin{equation}\label{Bredon_end}
            C_{\mathrm{Br}}^*(X,E)  = \int_{G/H \in \Orb} E(G/H)^{\Sing(X^H)} \ .
           \end{equation}
         If $E$ takes values in a presentable category $C$, then the last formula still makes sense and produces an object of $C$. That is the definition of singular Bredon cohomology with values in $C$. The category of \emph{$G$-anima} is the functor category
         
        \[
         \An^G := \Psh(\Orb) = \mathrm{Fun}(\Orb^\op, \An) \ .
        \]
        Every $G$-space $X$ clearly yields a $G$-anima as 
        $
        G/H \mapsto \Sing(X^H) .
        $ It is a result of Elmendorff \cite{elmendorf} that every $G$-anima arises this way from $G$-CW complexes.
        Clearly, the above definition \eqref{Bredon_end} of singular Bredon cohomology only uses the underlying $G$-anima of a $G$-space $X$ and therefore makes sense for arbitrary $G$-anima in place of $G/H \mapsto \Sing(X^H)$. 
        Defined in this way,  for fixed $E$, Bredon cohomology of a $G$-anima $X$ defines a functor 
         \[
         C_{\mathrm{Br}}^*(-, E): \quad (\An^G)^\op \to C
         \]
         which preserves limits. Conversely every limit preserving functor of this sort is determined by its restriction along Yoneda and thus induced by a functor $E: \Orb^\op \to C$. 
         
\begin{lem}\label{lem_desc} Let $C$ be a presentable category and $E \in \Psh(\Orb, C)$. Then the assignment 
\[
\GTop^\op \to C,  \qquad X \mapsto C_{\mathrm{Br}}^*(X,E) 
\]
is a hypersheaf with respect to the topology of $G$-invariant open covers. 
\end{lem}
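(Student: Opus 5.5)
Since $C_{\mathrm{Br}}^*(-,E)$ preserves limits as a functor $(\An^G)^\op \to C$, it suffices to show that the functor
\[
\GTop \to \An^G, \qquad X \mapsto \bigl(G/H \mapsto \Sing(X^H)\bigr)
\]
sends $G$-invariant open hypercovers to colimit diagrams. More precisely, for a $G$-invariant open hypercover $U_\bullet \to X$ (each level a disjoint union of $G$-invariant opens, so still an object of $\GTop$), we need $\colim_{\Delta^\op} \Sing(U_\bullet^H) \to \Sing(X^H)$ to be an equivalence in $\An$ for every $H \le G$. Colimits in $\An^G = \Fun(\Orb^\op,\An)$ are computed pointwise, so this is enough. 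Applying $C_{\mathrm{Br}}^*(-,E)$, which turns colimits into limits, then gives $C_{\mathrm{Br}}^*(X,E) \simeq \lim_\Delta C_{\mathrm{Br}}^*(U_\bullet,E)$, which is the hypersheaf condition.

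The key step is to pass to fixed points. If $U \subset X$ is a $G$-invariant open, then $U^H = U \cap X^H$ is open in $X^H$ with the subspace topology. Taking $H$-fixed points commutes with disjoint unions and with intersections (fibre products over $X$). Hence $U_\bullet^H \to X^H$ is an open hypercover of the topological space $X^H$ in the ordinary, non-equivariant sense; surjectivity on each level passes to fixed points because the covers are jointly surjective on the underlying points. We are therefore reduced to the classical fact that $Y \mapsto \Sing(Y)$ sends open hypercovers of an arbitrary topological space $Y$ to colimit diagrams in $\An$, following Petersen \cite{petersen}, or Dugger--Isaksen for hypercovers.

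For that classical fact, the plan is to argue in two steps. First treat Čech covers via the small simplices theorem: the subcomplex of $\Sing(Y)$ spanned by simplices that are small with respect to the cover is weakly equivalent to $\Sing(Y)$, and it is the colimit of the Čech nerve. Then pass from Čech covers to general hypercovers by the standard induction on the skeleta of the hypercover. Alternatively, one can note that $Y \mapsto \Sing(Y)$, viewed as a cosheaf of anima on opens of $Y$, is a hypercosheaf: its stalks can be analysed through Čech descent together with the fact that $\An$-valued cosheaves built from $\Sing$ satisfy hyperdescent. The main obstacle I expect is this last reduction from Čech descent to full hyperdescent for arbitrary, not necessarily paracompact, spaces. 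I would simply cite the Dugger--Isaksen theorem (weak equivalence $\mathrm{hocolim}\, \Sing(U_\bullet) \simeq \Sing(Y)$ for any open hypercover) rather than reprove it. The rest of the argument, namely the commutation of fixed points with opens and intersections and the pointwise nature of colimits in $\An^G$, is routine.
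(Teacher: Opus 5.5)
Your proposal is correct and follows essentially the same route as the paper. In both arguments one reduces to showing that each $X \mapsto \Sing(X^H)$ is a hypercosheaf. One then notes that a $G$-invariant open hypercover of $X$ restricts to an ordinary open hypercover of $X^H$ with the subspace topology, and cites Dugger--Isaksen \cite{di}*{Theorem 1.3}. The only difference is cosmetic: you make the first reduction via limit-preservation of $C_{\mathrm{Br}}^*(-,E)$ on $(\An^G)^{\op}$ and the pointwise nature of colimits in $\An^G$, whereas the paper uses the end formula; the two are equivalent.
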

\begin{proof}
Given the definition of Bredon sheaf cohomology as an end, it suffices to show that for each subgroup $H \subseteq G$ the functor $X \mapsto \Sing(X^H)$ is a hypercosheaf of anima on $\GTop$. This is turn follows from the fact that every $G$-invariant hypercover $U_\bullet$ of $X$ induces a hypercover on each fixed point set $U_\bullet^H \to X^H$ since we are simply restricting to the subspace topology on $X^H \subseteq X$. Thus the claim follows from the assertion that $\Sing$ as a functor $\Topp \to \An$ is a hypercosheaf, which in turn is proven in \cite{di}*{Theorem 1.3}. 
\end{proof}

\begin{prop}\label{prop_map}
Let $C$ be a presentable category and $E \in \Psh(\Orb, C)$. Then
there is a unique map 
\begin{equation}\label{map_bre}
\bre(X,E)\;\to\; C_{\mathrm{Br}}^*(X,E),
\end{equation}
natural in $X$, which is the identity when restricted to $\Orb$.
\end{prop}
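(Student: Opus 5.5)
The map will come from the universal property of $t^\ast$ in the adjunction \eqref{def:jstar}. By Lemma \ref{lem_desc}, the functor $\mathcal H \defeq C_{\mathrm{Br}}^*(-,E)\colon \GTop^\op \to C$ is a (hyper)sheaf for the topology of $G$-invariant open covers, so it is an object of $\Shv(\GTop,C)$. Since $t^\ast \dashv t_\ast = (-)|_{\Orb}$, natural transformations $\bre(-,E) = t^\ast E \to \mathcal H$ correspond bijectively, up to contractible choice, to maps $E \to \mathcal H|_{\Orb}$ in $\Psh(\Orb,C)$. Existence and uniqueness of \eqref{map_bre} therefore reduce to identifying $\mathcal H|_{\Orb}$ with $E$ canonically, and taking the map adjoint to that identification.

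To compute $\mathcal H|_{\Orb}$, fix an orbit $G/K$. The fixed points $(G/K)^H$ form the discrete set $\hom_{\Orb}(G/H,G/K)$. Hence the $G$-anima $G/H \mapsto \Sing((G/K)^H)$ is the representable presheaf $y(G/K)$. Using the description of $C^*_{\mathrm{Br}}$ as an end \eqref{Bredon_end}, equivalently as the limit-preserving extension of $E$ along Yoneda, the Yoneda lemma gives $C^*_{\mathrm{Br}}(G/K,E) \simeq E(G/K)$. This equivalence is natural in $G/K \in \Orb$, because the identification $(G/K)^\bullet \simeq y(G/K)$ is natural in maps of orbits. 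So $\mathcal H|_{\Orb}\simeq E$, and we let \eqref{map_bre} be the map adjoint to the inverse of this equivalence.

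Next we check that this map is the identity on orbits. Its restriction to $\Orb$ is the composite
\[
E \to \bre(-,E)|_{\Orb} \to \mathcal H|_{\Orb} \simeq E .
\]
Here the first arrow is the unit \eqref{bredon-unit}, which is an equivalence by Lemma \ref{lem:bredon-unit}. By the triangle identities the composite is the chosen equivalence $E\simeq \mathcal H|_{\Orb}$, read through the unit identification. This is the sense in which the map is the identity.

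For uniqueness, take any natural transformation $\bre(-,E)\to \mathcal H$ whose restriction to $\Orb$ is the identity, meaning it corresponds to the identification above after composing with the unit. Under the adjunction it is determined by its adjoint $E \to \mathcal H|_{\Orb}$. This adjoint is the composite of the unit with the restriction, so it is the prescribed map, and the transformation is unique up to contractible choice. The one point needing care is the naturality of $C^*_{\mathrm{Br}}(G/K,E)\simeq E(G/K)$ in $\Orb$, which I would handle by noting that $X\mapsto (G/H\mapsto \Sing(X^H))$ restricted to $\Orb$ is the Yoneda embedding. The sheaf property itself is already supplied by Lemma \ref{lem_desc}.
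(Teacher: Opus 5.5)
Your proposal is correct and follows the same route as the paper. Since $C^*_{\mathrm{Br}}(-,E)$ is a sheaf by Lemma~\ref{lem_desc}, the adjunction $t^\ast \dashv t_\ast$ determines the map, uniquely up to contractible choice, by a transformation on orbits. You also spell out the Yoneda identification $C^*_{\mathrm{Br}}(G/K,E)\simeq E(G/K)$, natural in $\Orb$, which the paper leaves implicit.
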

\begin{proof}
Since the left hand side is $t^*(E)$ using the terminology of Definition \ref{def_bredon}, we get that since the target is a sheaf by the previous lemma \ref{lem_desc}, such a map is by adjunction determined by a transformation on orbits.
\end{proof}

Now we want to determine when the map is an equivalence. This will require an assumption similar to the assumption in the non-equivariant case, see e.g. \cite{ha}*{A.4} or \cite{petersen}. 

\begin{defn}
Let $C$ be a presentable category and $E \in \Psh(\Orb, C)$.
We say that a $G$-space $X$  is cohomologically contractible with respect to $E$  if for every orbit $Z \subseteq X$ the restriction map
\[
\colim_{U \supseteq Z } C_{\mathrm{Br}}^*(U,E) \to C_{\mathrm{Br}}^*(Z,E) = E(Z)
\]
is an equivalence, where the colimit ranges over all $G$-invariant open neighborhoods $U$ of the orbit $Z$. 
\end{defn}

\begin{ex}
Assume that $X$ is equivariantly sublocally contractible, that is for each orbit $Z \in X/G$ and each $G$-invariant open neighborhood $U \supseteq Z$ there exists a smaller equivariant neighborhood $V$ with $Z \subseteq V \subseteq U$ such that the inclusion $V \subseteq U$ is $G$-homotopic under $Z$ to a map that factors through the inclusion $Z \to U$. Then $X$ is clearly cohomologically contractible with respect to $E$ for every $E: \Orb^\op \to D$.

A further instance of that is if $X$ is equivariantly contractible in the sense that we can choose $V$ above, such that $Z \to V$ is already a $G$-homotopy equivalence. For example, this is the case if $X$ is a $G$-CW complex. 
\end{ex}

\begin{prop}\label{agreement}
Let $C$ be a compactly assembled category and $E \in \Psh(\Orb, C)$. Assume that $X$ is Tychonoff, cohomologically contractible with respect to $E$ and such that $X/G$ is hypercomplete. Then the map \eqref{map_bre} 
\[
\bre(X,E)\;\xto{\simeq}\; C_{\mathrm{Br}}^*(X,E).
\]
is an equivalence
\end{prop}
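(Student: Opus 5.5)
Both sides restrict to sheaves on $X/G$, and we compare them stalkwise. Write $\mathcal{B}_X$ for the presheaf on $X/G$ given by $U \mapsto C_{\mathrm{Br}}^*(q^{-1}(U),E)$. By Lemma \ref{lem_desc} it is a hypersheaf on $\cO_G(X)^\op \simeq \cO(X/G)^\op$, so in particular a sheaf. The natural map \eqref{map_bre}, restricted to $G$-invariant opens of $X$, gives a map of sheaves $\uE_X \to \mathcal{B}_X$ on $X/G$. Evaluating it on global sections recovers \eqref{map_bre} for $X$. It therefore suffices to show that $\uE_X \to \mathcal{B}_X$ is an equivalence of sheaves.

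Since $X/G$ is hypercomplete and $\mathcal{B}_X$ is a hypersheaf, and $\uE_X$ is a sheaf on a hypercomplete topos (hence hypercomplete), it suffices to check the map on stalks. Here one uses that $C$ is compactly assembled, as in Theorem \ref{thm:dualstalks}. Concretely, we reduce to anima-valued statements via jointly conservative filtered-colimit-preserving functors $C \to \An$, as in the proof of Proposition \ref{prop:htpyinv}. Hypercompleteness then gives that stalks are jointly conservative for hypersheaves.

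Now compute the stalks at the point of $X/G$ corresponding to an orbit $Z \subseteq X$.
\begin{itemize}
\item By Proposition \ref{prop:stalk-rep}, which uses that $X$ is Tychonoff, the stalk of $\uE_X$ at $Z$ is $E(Z)$.
\item The stalk of $\mathcal{B}_X$ at $Z$ is $\colim_{U \supseteq Z} C_{\mathrm{Br}}^*(U,E)$. By the cohomological contractibility hypothesis, this is identified with $E(Z)$ via restriction to $Z$.
\end{itemize}
It remains to check that the induced stalk map $E(Z) \to E(Z)$ is the identity. By naturality of \eqref{map_bre} in open inclusions and in the inclusion $Z \hookrightarrow U$, the composite
\[
(\uE_X)_Z \to (\mathcal{B}_X)_Z \to C_{\mathrm{Br}}^*(Z,E)
\]
agrees with
\[
(\uE_X)_Z \to \bre(Z,E) \to C_{\mathrm{Br}}^*(Z,E).
\]
The last map is the identity on orbits by Proposition \ref{prop_map}. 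The map $(\uE_X)_Z \to \bre(Z,E)=E(Z)$ is the equivalence of Lemma \ref{lem:technical}(ii), since restricting $\colim_{U\to Z'}E(Z')$ to $Z$ is exactly the map \eqref{eq:stalkco}.

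The main obstacle is the stalkwise detection step. We need hypercompleteness of $X/G$ together with $C$ compactly assembled to guarantee that stalks detect equivalences between a sheaf and a hypersheaf. The plan is to follow the argument of \cite{som}*{Lemma 3.6.13}: write $C$ as a retract of a compactly generated category through $\Ind(C^{\omega_1})$. Then check stalks after applying compact-object corepresentables, which commute with filtered colimits and hence with stalks. This reduces to $\An$-valued hypersheaves on a hypercomplete topos, where stalk detection holds because $X/G$ has enough points for hypercomplete sheaves.
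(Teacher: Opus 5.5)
Your proposal is correct and is essentially the paper's proof: regard \eqref{map_bre} as a map of sheaves on $X/G$, identify both stalks at an orbit $Z$ with $E(Z)$ (via Proposition \ref{prop:stalk-rep} and cohomological contractibility, compatibly by the naturality triangle), and conclude by stalkwise detection from hypercompleteness of $X/G$ and compact assembly of $C$. The paper only asserts this last step, and of your two sketches for it you should rely on the retract argument carried out on sheaf categories (postcompose with $C \to \Ind(C^{\omega_1})$ and sheafify, which commutes with stalks and is retracted by the colimit functor), not on composing with the filtered-colimit-preserving functors $C \to \An$ of Proposition \ref{prop:htpyinv}, since those need not preserve limits and so need not send sheaves to sheaves.
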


\begin{proof}
The map $\bre(X,E) \to C_{\mathrm{Br}}^*(X,E)$ can be considered as a map of sheaves on $X/G$. By naturality for every orbit $Z$ the triangle
\[
\begin{tikzcd}
	 (\underline{E}_X)_Z = \colim_{U \supseteq Z} \bre(U,E) \arrow{rr} \arrow{rd} & & \arrow{ld} \colim_{U \supseteq Z} C_{\mathrm{Br}}^*(U,E) \\
	 & E(Z) &		
\end{tikzcd}
\]
commutes. The left map is an equivalence by Proposition \ref{prop:stalk-rep} and the right hand map by the assumption that $X$ is  cohomologically contractible with respect to $E$. Thus the map of sheaves is an equivalence on stalks which by hypercompleteness of $X/G$ and the fact that $C$ is compactly assembled implies that it is an equivalence.
\end{proof}
	
\begin{coro}\label{corCW}
For every $G$-CW complex $X$, the map $\eqref{map_bre}$ is an equivalence.
\end{coro}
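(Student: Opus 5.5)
The plan is to deduce Corollary \ref{corCW} from Proposition \ref{agreement}. We must check three hypotheses for a $G$-CW complex $X$: that $X$ is Tychonoff, that $X$ is cohomologically contractible with respect to every $E$, and that $X/G$ is hypercomplete. Here $C$ is a compactly assembled category; for a general presentable $C$ one would first reduce via the fully faithful embedding into $\Ind(C^{\omega_1})$, as in the proof of Proposition \ref{prop:htpyinv}, or simply restrict to compactly assembled $C$ as in the proposition.

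The first two hypotheses are point-set facts about CW complexes.
\begin{enumerate}[(1)]
\item A CW complex is normal and Hausdorff, hence Tychonoff. The underlying space of a $G$-CW complex is an ordinary CW complex, since $G$ is finite and each cell $G/H\times D^n$ is a finite union of ordinary cells.
\item Cohomological contractibility comes from the Example preceding Proposition \ref{agreement}. Every orbit $Z=G\cdot x$ in a $G$-CW complex has arbitrarily small $G$-invariant open neighbourhoods $V$ that $G$-deformation retract onto $Z$. To build them, take the standard open-star neighbourhoods of $x$, constructed cell by cell by radial thickening inside the $H$-CW structure of a slice with $H=G_x$. Then equivariantize via Lemma \ref{lem:abels}, forming $V=G\times_{G_x}S$. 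Since $Z\to V$ is a $G$-homotopy equivalence, singular Bredon cohomology gives $C^*_{\mathrm{Br}}(V,E)\simeq E(Z)$, because $C^*_{\mathrm{Br}}$ depends only on the $G$-anima $G/K\mapsto \Sing(V^K)$. Cofinality then gives $\colim_{U\supseteq Z}C^*_{\mathrm{Br}}(U,E)\simeq E(Z)$.
\end{enumerate}

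The main obstacle is the third hypothesis. The quotient $X/G$ is a CW complex, but it may be infinite-dimensional, so Theorem \ref{thm:dualstalks} does not apply directly. My first attempt is to avoid hypercompleteness of all of $X/G$.
\begin{enumerate}[(1)]
\item Both $\bre(-,E)$ and $C^*_{\mathrm{Br}}(-,E)$ should send the skeletal colimit $X=\colim_n X^{(n)}$ to a limit. For $C^*_{\mathrm{Br}}$ this holds because $\Sing(X^H)=\colim_n\Sing((X^{(n)})^H)$, since compact sets lie in finite skeleta.
\item For $\bre$, the same property would follow from open descent applied to the open cover by thickened skeleta $N_n\simeq_G X^{(n)}$. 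These $N_n$ form an increasing union, so condition (iii) of Definition \ref{defn:shvx} applies. One then uses $G$-homotopy invariance (Corollary \ref{coro:breclosed}), which is available only in the locally compact case.
\end{enumerate}
Local compactness is the weak point: infinite CW complexes are not locally compact. So the reduction must go through finite-dimensional pieces, each a finite-dimensional locally finite or skeletal piece with $X^{(n)}/G$ of finite covering dimension, hence hypercomplete by Theorem \ref{thm:dualstalks}. Proposition \ref{agreement} applies to each such piece, and one passes to the limit using the sheaf axiom for $\bre$ together with Lemma \ref{lem_desc} for $C^*_{\mathrm{Br}}$.

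As an alternative, one can invoke the standard fact that sheaves on a CW complex form a hypercomplete $\infty$-topos. Each point has a neighbourhood basis of contractible opens, and such spaces are locally of homotopy dimension $\le 0$ in the sense of \cite{htt}*{Section 7.2}. The orbit space $X/G$, again a CW complex, inherits this property, which gives hypercompleteness directly.
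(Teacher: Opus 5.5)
Your arguments for the Tychonoff property and for cohomological contractibility are fine, and they are what the paper uses. The gap is the remaining hypothesis of Proposition~\ref{agreement}: hypercompleteness of $X/G$ when $X$ is infinite-dimensional. Neither of your arguments establishes it.

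\textbf{The ``alternative'' is false.} A neighbourhood basis of contractible opens does not bound homotopy dimension, which is governed by covering dimension. Write $h_U$ for the subterminal sheaf of an open $U$. Already on $\R$, the sheaf $h_{(-\infty,1)}\sqcup h_{(0,\infty)}$ is $0$-connective but has no global section, because $\R$ is connected. More decisively, the Hilbert cube has a basis of convex, hence contractible, opens, yet its $\infty$-topos of sheaves is not hypercomplete \cite{htt}*{Section 6.5.4}.

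\textbf{The skeletal reduction does not close either}, for three reasons.
\begin{enumerate}[(i)]
\item The skeleta $X^{(n)}$ are closed, so the sheaf axiom for $\bre(-,E)$ does not express $\bre(X,E)$ as $\lim_n\bre(X^{(n)},E)$.
\item The open thickenings $N_n$ are in general infinite-dimensional, so applying Proposition~\ref{agreement} to them runs into the same hypercompleteness problem.
\item Comparing $\bre(N_n,E)$ with $\bre(X^{(n)},E)$, or the restriction of $\uE_X$ to $X^{(n)}/G$ with $\uE_{X^{(n)}}$, needs homotopy invariance or Lemma~\ref{lem:pullback-EX}. Both are only proved for locally compact spaces, as you yourself observe.
\end{enumerate}

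\textbf{The missing idea} is to run the skeletal decomposition at the level of $\infty$-topoi rather than of cohomology.
\begin{enumerate}[(i)]
\item The orbit space $X/G$ is a CW complex with skeleta $X^{(n)}/G$. By \cite{htt}*{Proposition 7.1.5.8}, $\Shv(X/G)$ is the colimit in $\mathrm{RTop}$ of the $\Shv(X^{(n)}/G)$.
\item Each $\Shv(X^{(n)}/G)$ is hypercomplete, since $X^{(n)}/G$ is paracompact of finite covering dimension \cite{htt}*{Corollary 7.2.1.12 and Theorem 7.2.3.6}. Theorem~\ref{thm:dualstalks} as stated does not apply here, because finite-dimensional CW complexes need not be locally compact.
\item Hypercomplete $\infty$-topoi are closed under colimits in $\mathrm{RTop}$. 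Pullback preserves $\infty$-connective morphisms, so any geometric morphism from a hypercomplete $\infty$-topos into $\mathcal{X}$ factors through the hypercompletion $\mathcal{X}^\wedge$. Hence the inclusion of hypercomplete $\infty$-topoi into $\mathrm{RTop}$ has the right adjoint $\mathcal{X}\mapsto\mathcal{X}^\wedge$.
\end{enumerate}
Therefore $X/G$ is hypercomplete and Proposition~\ref{agreement} applies to $X$ directly. This is exactly the paper's argument, attributed to Hoyois, and it needs neither local compactness nor homotopy invariance.
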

\begin{proof}
Clearly $G$-CW complexes are equivariantly locally contractible and Tychonoff. So it remains to check that $X/G$ is hypercomplete. But $X/G$ is a CW complex and every CW complex is hypercomplete. This is a well-known result, but we have not found a reference except the MathOverflow post by Marc Hoyois \cite{hoyois}, thus we record the argument of Hoyois here: any colimit of hypercomplete $\infty$-topoi is hypercomplete since hypercompletion is a localization of $\mathrm{RTop}$. For every CW complex $Y$, the topos of sheaves is the colimit of the topoi associated with the skeleta $Y_n$ by \cite{htt}*{Proposition 7.1.5.8} and those are hypercomplete by the fact that they are finite dimensional. 
\end{proof}

\begin{rmk}
We caution the reader that, in general, $G$-CW complexes are not locally compact; this property holds only in the locally finite case. Consequently, the previous statement (and its proof) necessarily requires working in the broader setting of $G$-topological spaces and not just locally compact $G$-spaces.

For locally finite $G$-CW complexes, one can also obtain a version of the preceding result by combining $G$-homotopy invariance with descent using a version of Elmendorf's theorem for locally finite $G$-CW complexes. However, formulating and proving such a version requires additional technical care, so we do not pursue it here.
\end{rmk}

	\section{Uniqueness of Bredon sheaf cohomology}
	
	We are now in position to identify the various categories of functors satisfying descent that we have considered 
	in terms of Bredon sheaf cohomology.
	
	\begin{thm} \label{thm:breeq}
	Let $C$ be a compactly assembled category. There is an equivalence of categories
		\[
		(-)|_{\Orb^{\op}} \colon \Fun^{o,cc}(\GLCH^{\op},C) \to \Fun(\Orb^\op,C)
		\]
		whose inverse is given by $E \mapsto \bre(-,E)$.
	\end{thm}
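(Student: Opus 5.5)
The plan is to show that the functor $E \mapsto \bre(-,E)$ lands in $\Fun^{o,cc}(\GLCH^{\op},C)$, that it is a section of restriction to orbits, and then to use conservativity from Theorem \ref{thm:res-orb-cons} to conclude that it is also a retraction.

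\textbf{Well-definedness.} First we check that $\bre(-,E)$ lies in $\Fun^{o,cc}(\GLCH^{\op},C)$.
Open descent holds by construction: $\bre(-,E) = t^\ast(E)$ is a sheaf on $\GTop$ for the topology of invariant open covers, and its restriction to $\GLCH$ is still a sheaf.
Cofiltered compact codescent is Theorem \ref{thm:breprofin}, which applies because a compactly assembled category is presentable.

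\textbf{Restriction after Bredon cohomology.} The composite $E \mapsto \bre(-,E)|_{\Orb^{\op}}$ is identified with the identity by the unit map \eqref{bredon-unit}. This map is natural in $E$ and is an equivalence by Lemma \ref{lem:bredon-unit}.

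\textbf{Bredon cohomology after restriction.} Given $F \in \Fun^{o,cc}(\GLCH^{\op},C)$, we need a natural map relating $\bre(-,F|_{\Orb})$ and $F$.

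We first extend $F$ to a sheaf on all of $\GTop$, for instance by right Kan extension along $\GLCH \subset \GTop$, and call it $\mathcal H$. The point to verify is that $\mathcal H$ is a sheaf for invariant open covers. For $X \in \GLCH$ this should follow because open subsets of a locally compact space are again locally compact, so the restriction of $\mathcal H$ to $\cO_G(X)^{\op}$ is just $F_X$.

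Since the extension does not change values on $\GLCH$, and in particular on orbits, the counit \eqref{bredon-counit} then gives a natural map
\[
\bre(-,F|_{\Orb}) \to \mathcal H|_{\GLCH} = F .
\]

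A cleaner alternative avoids $\GTop$ altogether. The site $\GLCH$ with invariant open covers admits $\Orb$ as a full subcategory, and the same argument as for \eqref{def:jstar} yields an adjunction between $\Psh(\Orb,C)$ and sheaves on $\GLCH$, whose left adjoint agrees with $\bre$ restricted to $\GLCH$. This agreement holds because the sheafification formula only involves open subsets of a given $X$. I would take this second route.

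With the counit in hand, both source and target lie in $\Fun^{o,cc}$. By Lemma \ref{lem:bredon-counit} (or its $\GLCH$ analogue), the counit is an equivalence on orbits. Theorem \ref{thm:res-orb-cons}, which requires $C$ compactly assembled, then shows it is an equivalence everywhere. Hence the two functors are mutually inverse equivalences.

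\textbf{Main obstacle.} The delicate step is setting up the adjunction or counit on $\GLCH$ instead of on the large site $\GTop$, together with checking that Lemma \ref{lem:shorb} still applies there. The argument via \cite{pstr} only uses that orbits have only trivial covers: any invariant open cover of a discrete orbit $Z$ contains $Z$ itself. So the same argument should go through.
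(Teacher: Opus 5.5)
Your proposal is correct and follows the paper's proof. Like the paper, you restrict the adjunction $t^\ast \dashv t_\ast$ to $\Fun^{o,cc}(\GLCH^{\op},C)$ using Theorem \ref{thm:breprofin}, take the unit from Lemma \ref{lem:bredon-unit}, and take the counit from Lemma \ref{lem:bredon-counit} combined with the conservativity of Theorem \ref{thm:res-orb-cons}. Setting up the adjunction directly on $\GLCH$ makes explicit a point the paper passes over. For your first route, the sheaf condition must be checked at all objects of $\GTop$, not only those in $\GLCH$; it does hold, because $\GLCH \subset \GTop$ is cocontinuous (open subspaces of locally compact spaces are locally compact).
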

	\begin{proof}
	In light of Theorem \ref{thm:breprofin} and Corollary \ref{coro:breclosed}, the adjunction given in 
	\eqref{def:jstar} restricts to an adjunction 
	$\bre\colon \Fun(\Orb^\op,C) \longleftrightarrow \Fun^{o,cc}(\GLCH^{\op},C) \colon (-)|_{\Orb^{\op}}$.
	By Theorem \ref{thm:res-orb-cons} and Lemmas \ref{lem:bredon-counit} and \ref{lem:bredon-unit}, both the unit and counit of the adjunction are equivalences.
	\end{proof}
	
	In other words: Bredon sheaf cohomology is the unique functor $\GLCH^{\op} \to C$ satisfying open descent and cofiltered compact codescent. This is a very strong uniqueness result for cohomology theories and generalizes the non-equivariant case due to Clausen in unpublished work.

	\subsection{Compactly supported Bredon cohomology} \label{subsec:csupp}
		
		Recall that for a coefficient system $E \colon \Orb^\op \to D$ and a locally compact Hausdorff space with $G$-action $X \in \GLCH$ we have defined the sheaf $\uE_X \in \Shv(X/G, D)$, see the text after Definition \ref{def_bredon}. 
		
		\begin{defn}
		For a given dualizable category $D$ and $X \in \GLCH$, we define its compactly supported Bredon cohomology with coefficients in a functor $E \colon \Orb^\op \to D$ as 
		\[
		\brec(X,E) = \Gamma_c(X/G,\uE_X).
		\]
		\end{defn}

		\begin{prop}\label{prop:brec}
		This cohomology has the following properties: 

		\begin{enumerate}
		\item
		For every open inclusion $j: U \to X$ with closed complement $i: Z \to C$ we get an induced fibre sequence
		\[
		\brec(U,E) \to \brec(X,E) \to \brec(Z,E)
		\]
		\item
	        We have that 
		$
		\brec(X,E) = \fib(\bre(X^\infty,E) \to \bre(\infty,E))
		$
                 where $X \to X^\infty$ is the one point compactification. 
                 \item $\brec(X,E)$ refines to a functor on  $\GLCH^{pdp}$ and agrees with the value at $X$ of the extension of $\bre(-,E)$ to $\GLCH^{pdp}$
                (Theorem \ref{thm:pdp}).
                 \item $\brec(X,E)$ satisfies closed descent and open codescent.  
		\end{enumerate}
		\end{prop}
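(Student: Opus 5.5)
The plan is to work on the orbit space $X/G$ with the sheaf $\uE_X$, so that everything reduces to the six-functor calculus for ordinary sheaves on locally compact Hausdorff spaces. The main inputs are Lemma \ref{lem:pullback-EX} (restriction of $\uE_X$ to subspaces) and the standard localization sequence for $\Gamma_c$.

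\textbf{Step (1).} Write $j_G \colon U/G \to X/G$ for the open inclusion and $i_G \colon Z/G \to X/G$ for the closed complement. For any sheaf $\cF$ on $X/G$ there is a fibre sequence
\[
(j_G)_! j_G^\ast \cF \to \cF \to (i_G)_\ast i_G^\ast \cF .
\]
This is the recollement underlying Proposition \ref{prop:Shv-cofibre}. Apply it to $\cF = \uE_X$ and then apply $\Gamma_c$. We use that $\Gamma_c \circ (j_G)_! = \Gamma_c$ and that $\Gamma_c \circ (i_G)_\ast = \Gamma_c \circ (i_G)_!$, since $i_G$ is proper. By Lemma \ref{lem:pullback-EX}(i) we have $j_G^\ast \uE_X \simeq \uE_U$ and $i_G^\ast\uE_X\simeq \uE_Z$, which gives the claimed fibre sequence.

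\textbf{Step (2).} Apply (1) to the open inclusion $X \subset X^\infty$ with closed complement $\{\infty\}$. Since $X^\infty$ and the point are compact, $\Gamma_c = \Gamma$ on both, and so $\brec = \bre$ there. This identifies $\brec(X,E)$ with $\fib(\bre(X^\infty,E)\to \bre(\infty,E))$. It remains to check that the map in (1) is the restriction map; this holds because the restriction map is induced by $\uE_{X^\infty}\to (i_G)_\ast i_G^\ast \uE_{X^\infty}$.

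\textbf{Step (3).} We have $\bre(-,E)|_{\GCH^\op}\in \Fun^{cc,cl}(\GCH^\op,D)$ by Theorem \ref{thm:breprofin} and Corollary \ref{coro:breclosed}. Theorem \ref{thm:pdp} then extends it uniquely to a functor on $\GLCH^{pdp}$ satisfying open-closed excision and cofiltered compact codescent. The inverse $\wp$ constructed in that proof is given by exactly the formula in (2). So (2) identifies $\brec(X,E)$ with $\wp(\bre)(X)$ naturally in $X$, and this provides the refinement to spans. The one point requiring care, and the expected main obstacle, is naturality in spans: the identification of (2) must be compatible with proper pushforward and open restriction on the sheaf side. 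I would handle this by defining the functoriality on $\brec$ via transport along (2) rather than checking it directly.

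\textbf{Step (4).} Closed descent and open codescent then follow from Lemmas \ref{lem:oc=cl} and \ref{lem:codesc}, applied to the functor obtained in (3).
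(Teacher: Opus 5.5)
Your proposal is correct and follows the paper's proof. Part (1) comes from the recollement combined with Lemma \ref{lem:pullback-EX}, which is exactly how part (ii) of that lemma is derived. Part (2) is the case $X \subset X^\infty$, part (3) is obtained by matching with the extension of Theorem \ref{thm:pdp}, and part (4) follows from Lemmas \ref{lem:oc=cl} and \ref{lem:codesc}.

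The only difference is in (3): you compare directly with the explicit formula $\wp(F)(X)=\fib(F(X^\infty)\to F(\infty))$ instead of invoking Theorem \ref{thm:uniqueness}. This avoids the paper's forward, somewhat circular, reference. Like the paper, you establish only the objectwise agreement and obtain the functoriality on spans by transport.
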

		\begin{proof}
		The first assertion follows from Lemma \ref{lem:pullback-EX} (ii). The second is a special case for $U = X \subseteq X^\infty$. The third then follows from Theorem \ref{thm:pdp} and  Theorem \ref{thm:uniqueness} below and the fourth from general properties of the extension, see Lemma \ref{lem:oc=cl} and 
	Lemma \ref{lem:codesc}.	\end{proof}	
	
	Again we have a strong uniqueness result for compactly Bredon supported sheaf cohomology.
	
		\begin{thm} \label{thm:uniqueness}
	For any dualizable category $D$ restriction induces equivalences 
	\[
	\Fun^{oe,cc}((\GLCH^{pdp})^{\op},D)
	\xto{\simeq} \Fun^{cc,cl}(\GCH^{\op},D) \xto{\simeq} \Fun(\Orb^\op,D). \]
	with inverse given by compactly supported Bredon sheaf cohomology. 
	\end{thm}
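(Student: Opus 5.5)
The first equivalence in the statement is exactly Theorem \ref{thm:pdp}, where $oe$ is read as the open-closed excision condition $oc$. So the plan is to establish the second equivalence, restriction $\Fun^{cc,cl}(\GCH^\op,D)\to\Fun(\Orb^\op,D)$, and then identify the resulting inverse with $\brec$.

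For the second equivalence I would pass through Corollary \ref{cor_extension}. A dualizable $D$ is compactly assembled, so filtered colimits in $D$ are left exact, and right Kan extension gives an equivalence
\[
\Fun^{cc,cl}(\GCH^\op,D)\simeq\Fun^{o,cc}(\GLCH^\op,D).
\]
Since $\Orb\subset\GCH$, restriction to orbits factors through this equivalence. Theorem \ref{thm:breeq} then says that restriction $\Fun^{o,cc}(\GLCH^\op,D)\to\Fun(\Orb^\op,D)$ is an equivalence with inverse $\bre(-,E)$. Composing the two, restriction $\Fun^{cc,cl}(\GCH^\op,D)\to\Fun(\Orb^\op,D)$ is an equivalence whose inverse is $E\mapsto\bre(-,E)|_{\GCH}$.

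To identify the inverse of the full composite, I take $E$ and send it to $\bre(-,E)|_{\GCH}$. This functor satisfies cofiltered compact codescent by Theorem \ref{thm:breprofin} and closed descent by Corollary \ref{coro:breclosed}. Applying the inverse $\wp$ from the proof of Theorem \ref{thm:pdp} gives
\[
X\longmapsto \fib\bigl(\bre(X^\infty,E)\to\bre(\infty,E)\bigr).
\]
It remains to check that this equals $\Gamma_c(X/G,\uE_X)=\brec(X,E)$, naturally in $X$; this is Proposition \ref{prop:brec}(2). Since that proposition's item (3) cites the present theorem, I will prove (2) independently to avoid circularity, as follows.

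Let $q$ denote the point $\infty$ and $j\colon X/G\to X^\infty/G=(X/G)^\infty$ the open inclusion. By Lemma \ref{lem:pullback-EX}(i), $j^*\uE_{X^\infty}\simeq\uE_X$. Proposition \ref{prop:Shv-cofibre} gives a fibre sequence $j_!j^*\uE_{X^\infty}\to\uE_{X^\infty}\to i_*i^*\uE_{X^\infty}$, where $i$ is the inclusion of $\{q\}$. Apply $\Gamma=\Gamma_c$ on the compact space $X^\infty/G$. The first term gives $\Gamma_c(X/G,\uE_X)$ and the middle gives $\bre(X^\infty,E)$. For the last term, Lemma \ref{lem:pullback-EX}(i) applied to the point $\infty$ identifies it with $\uE_{\{\infty\}}$, so it gives $\bre(\infty,E)=E(\ast)$. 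Naturality along proper maps and open inclusions (the morphisms of $\GLCH^{pdp}$) follows because all maps involved are the canonical base-change maps.

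The main obstacle is this last naturality check. Proposition \ref{prop:brec}(2) only gives an objectwise equivalence, whereas the theorem needs an equivalence of functors on $\GLCH^{pdp}$. I would handle it by using uniqueness itself: $\wp(\bre(-,E)|_{\GCH})$ is by construction a functor in the source category. So it suffices to produce any natural comparison map to a $\GLCH^{pdp}$-functor structure on $\Gamma_c(X/G,\uE_X)$, and the objectwise check above then makes that map an equivalence.
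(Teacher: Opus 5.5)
Your proposal is correct and follows the paper's proof, which likewise obtains the two equivalences from Theorem \ref{thm:pdp}, Corollary \ref{cor_extension} and Theorem \ref{thm:breeq}, and identifies the inverse with $\brec$ through Proposition \ref{prop:brec}(2). That item is proved in the paper from Lemma \ref{lem:pullback-EX} exactly as you re-derive it, and only item (3) uses the present theorem, so there was no circularity to avoid; the paper also does not build an independent comparison map, but simply gives $\brec$ its functoriality on $\GLCH^{pdp}$ by transport along this objectwise equivalence.
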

	\begin{proof}
	Follows immediately from Corollary \ref{cor_extension}, Theorem \ref{thm:pdp}, and Theorem \ref{thm:breeq}
	\end{proof}
	
	\section{Constructibility of Bredon sheaf cohomology} 
	
	A \emph{stratified space} over a poset $P$ is a continuous function $\pi \colon Y \to P$, where $P$ carries the Alexandroff topology. For each $p \in P$ write $Y_p = \pi^{-1}(p)$ for its fiber and $j_p \colon Y_p \hookrightarrow Y$ for the canonical inclusion. A sheaf $\cF\in \Shv(Y,C)$ with values on a presentable category $C$ is said to be \emph{constructible} if each restriction $\cF|_{Y_p} \defeq j_p^\ast \cF$ is locally constant. Write $\Shv_{c,P}(Y,C)$ for the full subcategory of $\Shv(Y,C)$ given by constructible sheaves with respect to the stratification $P$.	
	
	By Lemma \ref{lem:orbitproj}, if $X$ is a $G$-space then both $X$ and $X/G$ are canonically stratified over the poset $P_G$ of conjugation classes of subgroups of $G$.
	We shall only consider this stratification, called the stratification by \emph{orbit types}, and therefore we will omit $P_G$ from the notation and simply write $\Shv_{c}(X,C)$ and $\Shv_{c}(X/G,C)$ for constructible sheaves.  
	
	For a fixed $G$-space $X$ and any presentable category $C$, we have a functor 
	\begin{equation}\label{def:ufun}
		\underline{(-)}_X \colon \Psh(\Orb,C) \xto{\bre} \Shv(\GTop,C)
		\xto{\res} \Shv(X/G,C), \qquad E \mapsto \uE_X.
	\end{equation}
	We will concentrate in the case of $\An$-valued sheaves; one recovers the general definition of \eqref{def:ufun} upon tensoring by $C$.
	
	\begin{thm} \label{thm:EX-constr}
	For all $X \in \GLCH$ the associated sheaf $\uE_X$ is constructible.
	\end{thm}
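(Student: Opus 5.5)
We need to show that for each conjugacy class $(H)$, the restriction $j_{(H)}^\ast \uE_X$ to the stratum $(X/G)_{(H)} = X_{(H)}/G$ is locally constant. The problem is local on $X/G$. We work with $\An$-valued sheaves, as the paper indicates, and tensor with $C$ afterwards.

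First, fix a point $x \in X$ with stabilizer $H = G_x$ and choose, by Theorem \ref{thm:abels}, a trivially proper neighbourhood $(U,f)$ of $x$. By Lemma \ref{lem:abels} we have $U \cong G \times_H S$ with $S = f^{-1}(\overline 1)$ an $H$-invariant subspace. Since $U$ is open and invariant, $\uE_X$ restricted to $U/G$ is $\uE_U$ (this is immediate from the definition, since sheafification is local). Also $U/G \cong S/H$. Next, by Proposition \ref{lem:ind-res}, or more precisely its sheaf-level version given by the same commuting square of left adjoints applied to opens of $S/H$, $\uE_U$ corresponds to $\underline{(E|_H)}_S$ on $S/H$. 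The $(H)$-stratum of $U$ corresponds to the $H$-fixed stratum of $S$. By Lemma \ref{lem:stabi-abels}(iii), this is $S^H \cap X_{(G_x)} = S^H$, a closed subspace on which $H$ acts trivially, embedded in $S/H$.

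The problem is thus reduced to the following claim: for an $H$-space $S$, the restriction of $\underline{E'}_S$ to $S^H \subset S/H$ is constant near $x$, with value $E'(H/H)$. For this we use Lemma \ref{lem:pullback-EX}(i). It requires local compactness, which we can arrange by shrinking $U$ to an invariant open with compact closure. Write $j \colon S^H \hookrightarrow S$; the lemma identifies $j_H^\ast \underline{E'}_S \simeq \underline{E'}_{S^H}$. Since $H$ acts trivially on $S^H$, Example \ref{ex:bredon-trivial} gives that $\underline{E'}_{S^H}$ is the constant sheaf with value $E'(H/H)$. Transporting back, $j_{(H)}^\ast \uE_X$ is constant on the neighbourhood $U_{(H)}/G$ of the image of $x$, with value $E(G/H)$. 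This is consistent with the stalk computation of Proposition \ref{prop:stalk-rep}.

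The main obstacle is the compatibility of the induction equivalence of Proposition \ref{lem:ind-res} with restriction to open subsets. Proposition \ref{lem:ind-res} is stated for global sections of $G \times_H S$, but we need the sheaves on $U/G \cong S/H$ to agree. We would verify this by applying the proposition to every invariant open $V \subset U$: such $V$ is of the form $G \times_H (V \cap S)$, and the identification is natural in $V$. We also need to check that Example \ref{ex:bredon-trivial} holds at the sheaf level, meaning that the presheaf is already constant on nonempty opens. We would do this directly from the colimit formula for $\preuE$, noting that on a trivial $H$-space every equivariant map to an orbit factors through $H/H$.
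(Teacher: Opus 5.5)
Your argument is correct and is essentially the paper's proof. Both use Abels' slice theorem, Lemma~\ref{lem:stabi-abels} to identify the stratum inside the slice with the $H$-fixed points, Lemma~\ref{lem:pullback-EX}(i), Proposition~\ref{lem:ind-res} at the sheaf level, and Example~\ref{ex:bredon-trivial}; the only difference is the order of the steps: the paper restricts to $X_{(H)}$ first and then writes $U\cap X_{(H)}\cong G/H\times Y$, whereas you induce down to $S$ first and then restrict to $S^H$. One small remark: you do not need to shrink $U$ to get local compactness, since $S=f^{-1}(\overline 1)$ is already open in $U$ (the orbit $G/G_x$ is discrete), hence open in $X$.
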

	\begin{proof} Fix a conjugacy class $(H)$ of $G$. In light of Lemma \ref{lem:pullback-EX} it suffices to show that $\uE_{X_{(H)}}$ is locally constant; equivalently, for each orbit $Z \subset X_{(H)}$ we ought to see that there exists a $G$-invariant open $U \supset Z$ such that $\uE_{U\cap X_{(H)}}$ is constant.
		
	Given an orbit $Z \subset X_{(H)}$, by Theorem \ref{thm:abels}, we know that there exists a $G$-invariant open set $U \supset Z$ of $X$ such that $U$ is homeomorphic to $G \times_H V$, that is, it is induced from an $H$-invariant open subspace $V \subset U$. Furthermore, by Lemma \ref{lem:stabi-abels} we know that $V \cap X_{(H)} \subset U^H$, which in particular says that $U \cap X_{(H)}$ is homeomorphic to $G/H \times Y$ for some space $Y$ with trivial $G$-action. Consequently, we may without loss of generality assume that $X = G/H \times Y$ and prove that $\uE_{G/H \times Y}$ is the constant sheaf, which follows from Proposition \ref{lem:ind-res} and Example \ref{ex:bredon-trivial}.
	\end{proof}
	
	\begin{rmk}
	Theorem \ref{thm:EX-constr} should remain valid for Tychonoff spaces. The only place where locally compact Hausdorff is needed is in using Proposition \ref{lem:ind-res} which is also expected to be true more generally, see Remark \ref{rem_more}.
	\end{rmk}
	
	In light of Theorem \ref{thm:EX-constr}, we may corestrict \eqref{def:ufun} to a functor 
	\begin{equation}\label{def:uctbl}
	\underline{(-)}_X \colon \Psh(\Orb,\An) \to \Ctbl(X/G,\An).
	\end{equation}
	Since \eqref{def:ufun} preserves colimits since it is a composition of left adjoints and thus a left adjoint itself, it follows that \eqref{def:uctbl} is also cocontinuous.
	
	\subsection{Exit paths} \label{subsec:exit}
	
	Under suitable hypotheses, the category of constructible sheaves is a certain presheaf category:

	\begin{thm}[\cite{ha}*{Theorem A.9.3}] \label{thm:exitpath}
	 Let $P$ be a poset satisfying the ascending chain condition and $Y$ a paracompact space of locally singular shape. If $\pi \colon Y \to P$ is a conical stratification, then there exists an $\infty$-category $\Exit_P(Y)$ such that 
		\[
		\phi \colon \Shv_{c,P}(Y,\An) \xto{\sim} \Fun(\Exit_P(Y),\An).
		\]
		which restricts to an equivalence $\Shv_{c,P}(Y,\Set) \cong \Fun(\ho(\Exit_P(Y)),\Set)$.
	\end{thm}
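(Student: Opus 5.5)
Since this is \cite{ha}*{Theorem A.9.3}, the plan is to follow Lurie's construction. First I will define $\Exit_P(Y)$ as the simplicial set $\Sing^A(Y)$ whose $n$-simplices are continuous maps $\sigma\colon |\Delta^n| \to Y$ together with a chain $p_0 \le \dots \le p_n$ in $P$. These must satisfy $\pi(\sigma(t_0,\dots,t_n)) = p_i$ whenever $t_i$ is the last nonzero coordinate. In other words, the path may only exit a stratum into a larger one, never return.

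The first step is to show that $\Sing^A(Y)$ is an $\infty$-category, i.e.\ that it admits fillers for inner horns. I will reduce this to the conical case: locally $Y$ looks like $\R^k \times C(Z)$, with $C(Z)$ the open cone on a space $Z$ stratified over $P_{>p}$. On such a chart, horn fillers are built by radially retracting toward the cone point, which preserves the exit condition. The ascending chain condition on $P$ lets me induct on the depth of strata when constructing these retractions.

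The second step constructs the comparison functor. A constructible sheaf $\cF$ gives a functor on $\Sing^A(Y)$: a point $y$ goes to the stalk $\cF_y$, and an exit path $y \rightsquigarrow y'$ goes to the specialization map $\cF_y \to \cF_{y'}$. This map comes from a small conical neighbourhood of $y$ whose sections compute $\cF_y$ and restrict to the stalk at $y'$. I intend to realise this canonically using the presheaf $U \mapsto \Fun(\Sing^A(U),\An)$. The resulting map $\phi$ is obtained as the transformation from $\Shv_{c,P}(-,\An)$ to this presheaf.

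The third step, and the main obstacle, is proving that $\phi$ is an equivalence. I will check that both sides are sheaves of $\infty$-categories on $Y$. For constructible sheaves this is clear. For $U \mapsto \Fun(\Sing^A(U),\An)$ it requires a Seifert--van Kampen statement: $\Sing^A$ turns open covers into colimits of $\infty$-categories. This is where paracompactness and locally singular shape enter, since they let hypercovers be replaced by nice covers, as in the unstratified statement that locally constant sheaves are $\Fun(\Sing Y,\An)$.

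Granting descent, it remains to check equivalence on a basis of conical charts $\R^k \times C(Z)$. There $\Sing^A$ has an initial object, the cone point, up to the link $\Sing^A(Z)$. Constructible sheaves on the cone are likewise determined by the stalk at the cone point together with a constructible sheaf on $Z$ and a map between them. I then conclude by induction on the ascending-chain length of $P$.

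Finally, the $\Set$-valued statement follows by restricting both sides to $0$-truncated objects. $0$-truncated functors $\Exit_P(Y)\to\An$ factor through $\ho(\Exit_P(Y))$, which gives the equivalence $\Shv_{c,P}(Y,\Set) \cong \Fun(\ho(\Exit_P(Y)),\Set)$.
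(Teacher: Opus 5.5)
Your outline has the right architecture. It is the shape of Lurie's argument, which is all the paper relies on (the paper gives no proof of its own). But the step that carries the content is asserted rather than proved, and the induction as you set it up does not run.

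In the cone step you describe a constructible sheaf on $\R^k\times C(Z)$ by three pieces of data: its stalk $M$ at the cone point, a constructible sheaf $G$ on the link $Z$, and a map $M\to\Gamma(Z,G)$. You then apply the theorem to $Z$ over $P_{>p}$. Both halves have problems.

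\emph{The sheaf-side description.} This is a homotopy-invariance statement that you never address. You need that constructible sheaves on $\R^k\times Z\times(0,1)$ are pulled back from $Z$. You also need that the gluing functor $i^\ast j_\ast$ along the closed piece $\R^k\times\{\ast\}$ sends such a sheaf to the constant sheaf with value $\Gamma(Z,G)$. Only then does the recollement description match functors on the left cone of $\Sing^P(Z)$, and you must still check that $\phi$ intertwines the two descriptions.

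\emph{Applying the theorem to $Z$.} This requires $Z$ to satisfy the hypotheses, and nothing provides that. The copy $Z\cong\{0\}\times Z\times\{1/2\}$ is closed only in the open subset $\R^k\times Z\times(0,1)$ of $Y$, and open subsets of paracompact spaces need not be paracompact. Nor is $Z$ known to be conically stratified or locally of singular shape.

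\emph{The induction itself.} The ascending chain condition does not bound chain lengths, so there is no ``induction on chain length''. What you actually have is well-founded induction: assume the result over $P_{>p}$ and deduce it over $P_{\ge p}$.

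The way around this is to keep all sheaf theory on open subsets of $Y$. Induct over the open sets $Y_{\ge p}$, and apply the inductive hypothesis to opens lying over $P_{>p}$, such as $(-\varepsilon,\varepsilon)^k\times Z\times(0,\varepsilon)$. Then show that $i^\ast j_\ast G$ is constant with value $\lim_{\Sing^P(\R^k\times Z\times(0,1))}\phi(G)$, using that hypothesis together with the (easy) invariance of $\Sing^P$ under stratified homotopy equivalences. Separately you must prove that $\Sing^P(\R^k\times C(Z))$ is the left cone on $\Sing^P(\R^k\times Z\times(0,1))$, i.e.\ that spaces of exit paths out of the cone point are contractible.

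Your account of where paracompactness and locally singular shape enter does not correspond to an actual step. The exit-path Seifert--van Kampen theorem is about small simplices and open covers, and no replacement of hypercovers occurs there. Moreover, in Lurie's definition the conical charts are $Z\times C(L)$ with $Z$ an \emph{arbitrary} space, not $\R^k$. Then no point of the closed piece is initial. One needs the classification of locally constant sheaves on $Z$ by $\Fun(\Sing Z,\An)$ (for $P=\ast$ the theorem is exactly this statement), plus control of sections near $Z\times\{\ast\}$. This is where hypotheses of that kind are needed. With $\R^k$-charts you would be proving a weaker theorem.

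Two smaller points:
\begin{enumerate}
\item The radial contraction is stratum-preserving only for positive time, so it does not by itself produce inner-horn fillers; that is the real content of \cite{ha}*{Theorem A.6.4}.
\item Stalks and transport maps define $\phi$ only on the homotopy category, so you need a coherent construction. One option is the inverse $\ell\mapsto\bigl(U\mapsto\lim_{\Sing^P(U)}\ell\bigr)$, which is a sheaf by Seifert--van Kampen.
\end{enumerate}
The final passage to $0$-truncated objects is fine.
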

	
	We refrain from expanding on the hypotheses of the theorem above; they apply to the orbit space of a smooth $G$-manifold with respect to the stratification of orbit-types (\cite{mayeda}*{Remark 4.1.7} and \cite{ayalaetal}*{Example 3.5.15}) and this is the only situation we will consider. 
	
	We will also only need an explicit understanding of $\exit_P(Y) \defeq \ho(\Exit_P(Y))$ and the equivalence above in the $\Set$-valued case, which we now recall. We refer to \cite{ha}*{Sections A.6 and A.9} for more details. 
	
	\begin{defn} \label{def:exitpath}
	Let $\pi \colon Y \to P$ be a stratified topological space. An \emph{exit path} in $Y$ is a path $\gamma \colon [0,1] \to Y$ such that $\pi(\gamma(0)) \le \pi(\gamma(t)) = \pi(\gamma(1))$ for all $t \in (0,1]$. A \emph{multiple-exit path} is a finite concatenation of exit paths. 
	A homotopy between multiple-exit paths $\gamma_0$ and $\gamma_1$ is a homotopy of paths $h \colon \gamma_0 \simeq \gamma_1$ such that $h_s$ is a multiple-exit path
	for all $s \in [0,1]$. The \emph{exit path $1$-category} $\exit_P(Y)$ has as objects
	the points of \(Y\) and homotopy classes of multiple-exit paths as morphisms. Composition is induced by usual concatenation of paths.
	\end{defn}
	
	\begin{rmk} 
	In \cite{mayeda}*{Definitions 2.1.4, 2.1.5 and 2.1.6} 
	multiple-exit paths are referred to as exit paths. What we call exit paths
	in Definition \ref{def:exitpath} are the $1$-simplices of the simplicial set $\Sing^P(Y)$
    introduced in \cite{ha}*{Definition A.6.2}, which under the hypotheses of Theorem \ref{thm:exitpath} becomes a model for $\Exit_P(Y)$.
    \end{rmk}
    
    \begin{rmk}
    We shall exploit the fact that any morphism in $\exit_P(Y)$ 
    is a finite composition of morphisms represented by exit paths. 
    The converse is not true in full generality: a composition 
    of two exit paths need not be homotopic to an exit path in a way that is 
    compatible with the stratification. This becomes true under
    the hypotheses of Theorem \ref{thm:exitpath}, which is a consequence of the 
    more general fact that $\Exit_P(Y)$ is an 
    $\infty$-category (\cite{ha}*{Theorem A.6.4}).
    \end{rmk}
	 
	Let $Y$ be a stratified space over a poset $P$ that lies in the hypotheses of Theorem \ref{thm:exitpath} and $\cF \in \Shv_{c,P}(Y,\Set)$. We now explain how the functor $\phi$ of Theorem \ref{thm:exitpath} associates to $\cF$ a functor $\phi(\cF) \colon \exit_P(Y) \to \Set$. On objects, it maps $y \in Y$ to the stalks
	of $\cF$ at $y$, that is $\phi(\cF)(y) \defeq \cF_y = \colim_{U\ni y} \cF(U)$. If $\gamma \colon [0,1] 
	\to Y$ is an exit path from $y$ to $y'$, then $\phi(\cF)([\gamma])$ is the \emph{transport map} $\cF_y \to \cF_{y'}$ along $\gamma$, which we proceed to describe.
	
	Given $\eta \in \cF_y = \colim_{U \in y}\cF(U)$, we may consider an open set $U \ni y$ such that there exists $\widehat{\eta} \in \cF(U)$ representing $\eta$.
	By continuity of $\gamma$, there exists $t > 0$ such that $\gamma([0,t]) \subset U$, 
	and thus there exists a map $\cF(U) \to \cF_{\gamma(t)}$. Since $\gamma$ is an exit path, it follows that $\gamma|_{[t,1]}^\ast \cF$ is a locally constant on $[t,1]$ and hence constant. In particular
	there is a zig-zag of bijections
	\[
	\cF_{\gamma(t)} \xleftarrow{\sim} (\gamma^\ast\cF)([t,1]) \xrightarrow{\sim} \cF_{\gamma(1)}
	\]
	The image of $\eta$ under the transport map is the image of $\widehat{\eta}$
	under the composition
	\[
	\cF(U) \to \cF_{\gamma(t)} \xto{\sim} \cF([t,1]) \xto{\sim} \cF_{\gamma(1)} = \cF_{y'}.
	\]
	One checks that this is independent of the choices of $\widehat{\eta}$ and $t$ and thus yields a well defined map.
	
	\subsection{Mayeda's functor}
	In \cite{mayeda}, Mayeda proves that for any smooth $G$-manifold $M$ equipped with its stratification of orbit types the functor $\exit(M) \to \exit(M/G)$ is a right fibration: for every exit path $\gamma \colon I \to M/G$ and point $y$ in the orbit $\gamma(1)$, we have an exit path $\widetilde{\gamma}_y \colon I \to M$ lifting $\gamma$ and satisfying $\widetilde{\gamma}_y(1) = y$. 
	Using this, they consider a functor
	\begin{equation}\label{def:mayeda}
		m \colon \exit(M/G) \to \Orb^\op
	\end{equation}
	mapping a point in $M/G$ to the orbit it represents in $\Orb$, and an exit path $\gamma$ from $Z$ to $Z'$ to the function
	\[
	m(\gamma) \colon Z' \to Z, \quad m(\gamma)(y) \defeq \widetilde{\gamma}_y(0).
	\]
	Fix a smooth $G$-manifold $M$. In light of Theorem \ref{thm:exitpath}, we have a functor
	\begin{equation}\label{def:uexit}
		\phi \circ \underline{(-)}_M \colon \Psh(\Orb,\An) \to \Ctbl(M/G,\An) \xto{\sim} \Fun(\Exit(M/G),\An).
	\end{equation}
	
	To conclude the section, we prove that for any presheaf $E \colon \Orb^\op \to \An$ the constructible sheaf $\underline{E}_M$ is classified by the composition 
	\[
	\Exit(M/G) \xto{m} \Orb^\op \xto{E} \An.
	\] 
	In other words:	
	\begin{thm}\label{thm:mayeda}
	The functor \eqref{def:uexit} is given by precomposition the functor \eqref{def:mayeda} considered in \cite{mayeda}.
	\end{thm}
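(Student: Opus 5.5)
Both functors $\Psh(\Orb,\An) \to \Fun(\Exit(M/G),\An)$ preserve colimits: the composite $\phi \circ \underline{(-)}_M$ does because $\underline{(-)}_M$ is a left adjoint (as noted after \eqref{def:uctbl}) and $\phi$ is an equivalence, and precomposition with $m$ preserves all colimits. Hence it suffices to construct a natural equivalence $\phi(\underline{E}_M) \simeq E \circ m$ on representables $E = \hom_{\Orb}(-,Z)$ and check naturality in $Z$. For representable $E$, the presheaf $\preuE_M$ is the restriction of the set-valued functor $U \mapsto \hom_{\GTop}(q^{-1}(U),Z)$, which is already a sheaf. So $\uE_M$ is the $\Set$-valued sheaf of equivariant maps to $Z$.

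We then only need the $1$-categorical statement of Theorem \ref{thm:exitpath}: two functors $\exit(M/G) \to \Set$ agree. Both the sheaf and $E\circ m$ are $0$-truncated, and $\phi$ restricts to the $\Set$-valued equivalence, so a $0$-truncated functor out of $\Exit(M/G)$ is determined by its restriction to $\exit(M/G)$. On objects, Proposition \ref{prop:stalk-rep} identifies the stalk of $\uE_M$ at an orbit $Z_0$ with $E(Z_0)=\hom_G(Z_0,Z)$. Concretely, a germ of an equivariant map $f\colon U\to Z$ near $Z_0$ goes to its restriction $f|_{Z_0}$. This is also $E(m(Z_0))$.

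On morphisms, every morphism in $\exit(M/G)$ is a composite of classes of single exit paths, so it suffices to compare on an exit path $\gamma$ from $Z_0$ to $Z_1$. Take a germ $f\colon U \to Z$ near $Z_0$ and choose $t>0$ with $\gamma([0,t])\subset U/G$. Transport sends $f|_{Z_0}$ to the element of the stalk at $Z_1$ obtained by continuing $f|_{\gamma(t)}$ along the constant sheaf $\gamma|_{[t,1]}^*\uE_M$. We must show this equals $E(m(\gamma))(f|_{Z_0})$, namely the map $y \mapsto f(\widetilde\gamma_y(0))$.

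For $y\in Z_1$, the lift $\widetilde\gamma_y$ restricted to $[0,t]$ lies in $U$. Since $Z$ is discrete and $f$ is continuous, $f\circ\widetilde\gamma_y$ is constant on $[0,t]$, so $f(\widetilde\gamma_y(t)) = f(\widetilde\gamma_y(0))$. On $[t,1]$ the path stays in a single stratum. There, a section of the constant sheaf is a locally constant choice of equivariant map on the fibres $\gamma(s)$, and these fibres are identified along $s$ via the lifts $\widetilde\gamma_y(s)$. A locally constant section is therefore a map constant along each lifted path, so its value at $y\in Z_1$ is $f(\widetilde\gamma_y(t))$.

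This last identification is the main obstacle. We must make precise that the constant sheaf $\gamma|_{[t,1]}^*\uE_M$ has sections given by maps constant along the lifted paths. Over a single stratum, locally $M \cong G/H\times Y$ as in the proof of Theorem \ref{thm:EX-constr}, and the lift is $s\mapsto (gH,\gamma(s))$, which is constant in the $G/H$-coordinate. With this, transport is the identity in these coordinates, and the identification follows. Finally, naturality in $Z$ holds because every step is postcomposition with a map $Z \to Z'$.
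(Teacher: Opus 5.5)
Your proposal is correct and takes essentially the same route as the paper: reduce by cocontinuity to representables $\hom_{\Orb}(-,Z)$, whose sheaf is the $\Set$-valued sheaf of equivariant maps to $Z$; identify stalks by restriction via Proposition~\ref{prop:stalk-rep}; and compute transport along a single exit path by restricting a germ to $\gamma(t)$, using that maps into a discrete orbit are constant along the lifts $\widetilde\gamma_y$. The differences are minor. The paper uses Yoneda to reduce to transporting $\id_O$, and it represents the continuation over $[t,1]$ by one equivariant map $g\colon V\to O$ with $q(V)\supseteq\gamma([t,1])$. You instead treat an arbitrary germ and justify the continuation through the product charts $G/H\times Y$ of the stratum on a finite subdivision of $[t,1]$, which you should state explicitly; because this works with the pullback sheaf on $[t,1]$, it also covers the case where $\gamma|_{[t,1]}$ winds around a loop in the stratum with nontrivial monodromy, where sections over the image $\gamma([t,1])$ need not biject with the stalks.
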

	\begin{proof} 
		
	Write $q \colon M \to M/G$ for the quotient map and $\hom_G \defeq \hom_{\GLCH}$.
	Since $\phi \circ \underline{(-)}_X$ is cocontinuous, it suffices to study its restriction 
	to $\Orb$ along the Yoneda embedding $y \colon \Orb\to\Psh(\Orb)$ and show that in coincides with $m^\ast \circ y$. The latter corresponds to the bifunctor 
	\[
	\Orb \times \exit(M/G) \xto{1 \times m} \Orb \times \Orb^\op \xto{\hom_{\Orb}} \Set \subset \An.
	\]
	Now we study the corresponding identification of $\phi \circ \underline{(-)}_X$. Since representables are preserved by left Kan extension and sheafification commutes with the restriction $\Psh(\LCHaus) \to \Psh(X/G)$, a representable $y(Z)$ is mapped to the sheafification of
	\[
	\rho_Z(U) = \hom_{G}(q^{-1}(U),Z),
	\] 
	which is already a sheaf. Hence $\phi \circ \underline{(-)}_X \circ y$ corresponds to the bifunctor
	\[
	b \colon \Orb \times \exit(M/G) \to \Set \subset \An
	\]
	which maps $(Z,O) \xto{(g,\gamma)} (Z',O)$ to
	\[
	\phi(\rho_Z)(O) \xto{\phi(g_*)_O} \phi(\rho_{Z'})(O)
	\xto{\phi(\rho_Z)(\gamma)} \phi(\rho_{Z'})(O').
	\] 
	By Proposition \ref{prop:stalk-rep}, the stalk of $\rho_Z$ at an orbit 
	$O$ is 
	\[
	\phi(\rho_Z)(O)= (\rho_Z)_O = \hom_{G}(O,Z).
	\] 
	To conclude the proof, we want to show that $\phi(\rho_Z)(\gamma)$
	agrees with $m(\gamma)^\ast$.
	Notice that by varying the orbit $Z$,
	the maps $\phi(\rho_Z)(\gamma)$ define in fact a natural transformation
	$\hom(O,-) \Rightarrow \hom(O',-)$, and so by the Yoneda lemma 
	it has be identified with precomposition of a function; namely, the image of $\id_O \in (\rho_O)_O$ under the transport map $(\rho_O)_O \to (\rho_O)_{O'}$ along the exit path $\gamma$. 
	
	We now explain how the transport map acts on $\id_O$, following the description given in Section \ref{subsec:exit}. Put $O_t \defeq \gamma(t)$. First we lift $\id_Z \in (\rho_O)_O$ to an element $f \in \rho_O(U)=\hom_{G}(U,O)$ for some invariant open subset \(U\) of \(X\), such that $q(U)$
	contains a path $\gamma \vert_{[0,t]}$ for some $t > 0$. Note that there indeed exists such a \(t\) by the continuity of \(\gamma\). There is thus a map $\rho_O(U) \to (\rho_O)_{O_t}$ given by restriction along $O_t \hookrightarrow U$. Since $\gamma$ is an exit path, we know that $\gamma|_{[t,1]}^\ast \rho_O$ is (locally) constant. Hence we have a zig-zag of restriction-induced bijections
	\[
	\hom(O_t,O) \xleftarrow{\sim} \colim_{\gamma([t,1]) \subset q(V)}\hom_{G}(V,O) \xrightarrow{\sim} \hom_G(O_1,O) = \hom_G(O',O).
	\]
	In particular, this says that $f|_{O_t} \colon O_t \to O$ can be extended to map $g \colon V \to O$ where $V$ is an invariant open such that $q(V)$ contains $\gamma([t,1])$, and the image of $\id_Z$ along the transport map is the restriction of $g$ to $O'$:
	\[\begin{tikzcd}[ampersand replacement=\&]
	{O'} \& V \& \\
	{O_t} \& U \& O \\
	\& O
	\arrow[hook, from=1-1, to=1-2]
	\arrow["{(\phi(\rho_O)(\gamma))(\id_O)}",bend left  = 50, from=1-1, to=2-3]
	\arrow["g", from=1-2, to=2-3]
	\arrow[hook, from=2-1, to=1-2]
	\arrow[hook, from=2-1, to=2-2]
	\arrow["f", from=2-2, to=2-3]
	\arrow[hook, from=3-2, to=2-2]
	\arrow[equals, from=3-2, to=2-3]
	\end{tikzcd}\]
	Hence we must see that the restriction of $g$ to $O'$ agrees wth $m(\gamma)$. Since maps 
	between orbits that agree on a point are equal, we only need to check that for a fixed, arbitrary $x' \in O$ we have $m(\gamma)(x') = g(x')$. Recall that the point $m(\gamma)(x')$ is defined by taking a lift $\widetilde{\gamma}^{x'} \colon I \to M$ of $\gamma$ satisfying $\widetilde{\gamma}^{x'}(1) = x'$ and setting $m(\gamma)(x') = \widetilde{\gamma}^{x'}(0)$.
	
	Since $q(U) \supset \gamma([0,t])$ and $q(V) \supset \gamma([t,1])$, and both $U$ and $V$ are $G$-invariant opens, it follows that
	$\widetilde{\gamma}^{x'}([0,t]) \subset U$ and $\widetilde{\gamma}^{x'}([t,1]) \subset V$.
	We may thus consider the restrictions $\gamma' = \widetilde{\gamma}^{x'}|_{[0,t]} \colon [0,t] \to U$ and $\gamma'' = \widetilde{\gamma}^{x'}|_{[t,1]} \colon [t,1] \to V$, which, since $O$ is discrete and $I$ is connected, imply that $f\gamma'$ and $g\gamma''$ are constant. Using the latter, the fact that $g$ and $f$ agree on $O_t$, and that $f|_O = \id_O$, we finally get
	\[
	g(x) = g(\gamma'(1)) = g(\gamma'(t)) = f(\gamma''(t)) = f(\gamma''(0)) = \gamma''(0)
	= \widetilde{\gamma}^{x'}(0) = m(\gamma)(x'),
	\]
	concluding the proof.
	\end{proof}
	
	\begin{coro}
	For every $G$-manifold $M$, Bredon cohomology  is given by the limit:
	\[
	\lim_{\Exit(M/G)} E(m(-))
	\]
	\end{coro}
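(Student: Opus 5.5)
The plan is to combine the definition of Bredon sheaf cohomology as global sections on the orbit space with the exit-path classification of $\uE_M$ from Theorem \ref{thm:mayeda}. By construction, $\bre(M,E) = \uE_M(M/G) = \Gamma(M/G,\uE_M)$, since $\bre(-,E)$ restricted to $\cO_G(M)^\op \simeq \cO(M/G)^\op$ is the sheaf $\uE_M$. By Theorem \ref{thm:EX-constr}, $\uE_M$ lies in $\Ctbl(M/G,\An)$. (For a general presentable target $C$, I would argue by tensoring with $C$, as the paper does for \eqref{def:ufun}.)

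Global sections of any sheaf $\cF$ on $Y=M/G$ are maps from the terminal sheaf: $\Gamma(Y,\cF) = \hom_{\Shv(Y)}(\underline{\ast},\cF)$. The constant sheaf $\underline{\ast}$ is constructible, and $\Ctbl(Y,\An)$ is a full subcategory. So this mapping space can be computed in $\Ctbl(Y,\An)$. Now apply the equivalence $\phi$ of Theorem \ref{thm:exitpath}, whose hypotheses hold for $M/G$ with the orbit-type stratification by \cite{mayeda} and \cite{ayalaetal}. Under $\phi$, the terminal object $\underline{\ast}$ goes to the terminal functor $\ast \colon \Exit(M/G)\to\An$, since an equivalence preserves terminal objects. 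Theorem \ref{thm:mayeda} identifies $\phi(\uE_M)$ with $E\circ m$. Hence
\[
\bre(M,E) \simeq \hom_{\Fun(\Exit(M/G),\An)}(\ast, E\circ m) \simeq \lim_{\Exit(M/G)} E(m(-)).
\]

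For spectrum- or $C$-valued $E$, I would run the same argument levelwise. One option is to write $\hom_{\Sp}(S^{-n},-)$ and use that limits and global sections commute with these functors. Alternatively, use $\Shv_{c}(Y,C) \simeq \Shv_{c}(Y,\An)\otimes C \simeq \Fun(\Exit,C)$, which needs some finiteness of the exit-path category, or else the levelwise reduction.

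The main obstacle I expect is this last reduction from $\An$-valued to general coefficients. In particular, I need the identification of constructible $C$-valued sheaves with $\Fun(\Exit(M/G),C)$ to be compatible with global sections. The levelwise reduction through mapping spectra avoids most of this: $\Gamma$ and $\lim$ are both limits, so they commute with $\Omega^{\infty-n}$, and then only the $\An$-valued case is needed.
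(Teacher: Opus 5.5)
Your argument with $\An$-valued coefficients is correct and is essentially the paper's. The paper notes that $\Gamma$, restricted to constructible sheaves, is right adjoint to the constant-sheaf functor. That functor corresponds to the constant-diagram functor $C\to\Fun(\Exit(M/G),C)$, so $\Gamma$ corresponds to $\lim_{\Exit(M/G)}$. Your formula $\Gamma=\hom(\underline{\ast},-)$ is this adjunction evaluated at the point. It has the small advantage that ``terminal goes to terminal'' is automatic, whereas the paper needs the exit-path equivalence to match constant sheaves with constant functors. The paper does not reduce to anima. It runs the adjunction argument directly with $C$-coefficients and takes the $C$-valued forms of Theorems \ref{thm:exitpath} and \ref{thm:mayeda} (obtained by tensoring with $C$) for granted.

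Your levelwise reduction has one step that is not formal. You claim that after commuting $\Omega^{\infty-n}$ past $\Gamma$ and $\lim$, ``only the $\An$-valued case is needed''. In fact you must also identify $\Omega^{\infty-n}\circ\uE_M$ with $\underline{(\Omega^{\infty-n}E)}_M$. This does not follow from $\Gamma$ and $\lim$ being limits, because $\uE_M$ is built by a left Kan extension followed by sheafification, and $\Omega^{\infty-n}$ does not commute with these in general.

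The step is easy to supply. Since $\Omega^{\infty-n}$ preserves limits and filtered colimits, $\Omega^{\infty-n}\bre(-,E)$ still has open descent and cofiltered compact codescent, and it restricts to $\Omega^{\infty-n}E$ on orbits. Theorem \ref{thm:breeq} with $C=\An$ then identifies it with $\bre(-,\Omega^{\infty-n}E)$. Alternatively, compare stalks via Proposition \ref{prop:stalk-rep} and Theorem \ref{thm:dualstalks}, using that $M/G$ is finite-dimensional by Proposition \ref{prop:cov}.

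You also need these equivalences to be natural in $E$. Only then do they respect the structure maps $\Omega^{\infty-n}E\simeq\Omega\,\Omega^{\infty-n-1}E$ and assemble into an equivalence of spectra, rather than separate equivalences of each $\Omega^{\infty-n}$. This holds because Theorem \ref{thm:mayeda} identifies functors on $\Psh(\Orb,\An)$. Such probes exist only when $C$ is detected by functors to $\An$ preserving limits and filtered colimits, i.e.\ functors corepresented by compact objects. So this route covers $\Sp$ and compactly generated $C$, not general $C$.

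Finally, your finiteness worry about the alternative route is unnecessary. We have $\Fun(K,\An)\otimes C\simeq\Fun(K,C)$ for every small $K$. Nor does $\lim$ itself need to be tensored, since $\lim_C$ is recovered as the right adjoint of the constant functor. That is exactly the paper's argument.
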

	\begin{proof}
	Constructible sheaves are equivalent to $\Fun(\Exit(M/G), C)$. The composite functor 
\[ \Fun(\Exit(M/G), C) \to \Shv(M/G, C) \xto{\Gamma} C \]
	is right adjoint to the constant functor $C \to \Fun(\Exit(M/G), C)$, which is the constant sheaf functor which happens to land in constructible sheaves. Thus this composite is given by the limit over $\Exit(M/G)$.
	\end{proof}
	The previous result appears to be new, although it is implicitly contained in the work of Henriques \cite{henriques}. It concerns classical Bredon cohomology, since in the present setting the two notions agree, i.e.
$
\bre(M,E) = C_{\mathrm{Br}}^*(M,E),
$
see Proposition  \ref{agreement}. Concretely, the result shows that Bredon cohomology can be reconstructed from the exit-path $\infty$-category associated to the stratification of $M/G$. We expect that the statement extends more generally, for instance to $G$-CW complexes.
	\section{K-theory of equivariant sheaves and functions} \label{sec:appl}
	
	The goal of this section is to use the structural results of the previous sections to compute equivariant algebra and topological K-theory of categories of sheaves and $C^*$-algebras of continuous functions on a locally compact $G$-space. 
	
	\subsection{Localizing invariants of equivariant sheaves}
	
	In this section we compute localising 
	invariants associated to the category of equivariant sheaves
	on a locally compact Hausdorff $G$-space.  First we give a
	brief recollection of the relevant definitions.
	
	\begin{defn} \label{defn:locinv}
	A \emph{localising invariant}
	with values on a stable category $D$ is a functor
	$F \colon \Catdual \to D$ that maps $0$ to $0$ and Verdier sequences to cofibre sequences. It is \emph{finitary} if it preserves filtered colimits.
	\end{defn}
	
	The prime example of a finitary localising invariant is 
	the nonconnective $K$-theory functor
	\[
	K \colon \Catdual \to \Sp,
	\] 
	see \cites{bgt,efiloc}. Another prominent example is topological Hochschild homology (see e.g. \cite{som}*{Proposition 3.5.11}).
	By replacing $\Catdual$ by $\Catdual^G$ in Definition \ref{defn:locinv}, one arrives at the notion localising $G$-invariant, which recovers the above when $G=1$. 
	There is a universal localising invariant whose target is the category of \emph{noncommutative $G$-motives}
	\[
	M_G \colon \Catdual^G \to \NcMot_G.
	\]
	We refer to \cites{bgt, efiloc, efirig, rsw} for further details.

	Our computation relies on functoriality of sheaves on locally compact Hausdorff spaces and partially defined proper maps:
		
	\begin{prop} 
		\label{thm:shvxfunc}
		Let $C$ be a dualizable category. There is a functor
		\[
		\Shv(-,C) \colon (\LCHaus^{pdp})^\op \to \Catdual 
		\]
		which specializes to
		\[
		\Shv^\ast(-,C) \colon \LCHp^{\op} \to \Catdual, \qquad X \mapsto \Shv(X,D), \qquad f \mapsto f^\ast. 
		\]
		and
		\[
		\Shv_!(-,C) \colon \LCHopen \to \Catdual, \qquad X \mapsto \Shv(X,D), \qquad f \mapsto f_!. 
		\] 
		Moreover, the functor $\Shv$ maps open-closed sequences
		to Verdier sequences and cofiltered limits in $\CHaus$ to colimits in $\Catdual$.
		\qed
	\end{prop}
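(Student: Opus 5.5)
I would build the functor out of the six-functor formalism for sheaves on locally compact Hausdorff spaces, in three steps.

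\emph{Construction.} A span $X \xleftarrow{i} U \xrightarrow{p} Y$ with $i$ an open embedding and $p$ proper should be sent to the composite
\[
\Shv(Y,C) \xrightarrow{p^\ast} \Shv(U,C) \xrightarrow{i_!} \Shv(X,C).
\]
This direction is contravariant in $(\LCHaus^{pdp})$, as required. The coherent functoriality on spans I would take from the six-functor formalism $(\LCHaus, \Shv(-,C))$ of \cite{volpe} (cf. \cite{Scholze6}). Proper base change $p^\ast i_! \simeq i'_! p'^\ast$ (with $p^\ast$ and $p^!$-type identifications for proper maps) is exactly what makes composition by pullback of spans well defined. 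Concretely, I would restrict the formalism's span-functor to the wide subcategory of spans whose backward leg is proper and whose forward leg is an open embedding, and then dualize.

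\emph{Landing in $\Catdual$.} Each $\Shv(X,C)$ is dualizable by Proposition \ref{prop:shvx-dual}. It remains to see that the functors are strongly continuous, i.e. that their right adjoints preserve colimits. For $i_!$ with $i$ an open embedding, the right adjoint is $i^! = i^\ast$, which is itself a left adjoint. For $p^\ast$ with $p$ proper, the right adjoint is $p_\ast = p_!$, which preserves colimits (this is proper base change / compactness of fibres). The two specializations $\Shv^\ast$ and $\Shv_!$ in the statement are then just the restrictions to spans with trivial open leg and trivial proper leg, respectively.

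\emph{Excision and codescent.} For an open-closed decomposition, the image of the sequence is $\Shv(U)\xrightarrow{j_!}\Shv(X)\xrightarrow{i^\ast}\Shv(X\setminus U)$, which is a Verdier sequence by Proposition \ref{prop:Shv-cofibre}. For a cofiltered limit $X=\lim X_i$ in $\CHaus$, I would argue that $\colim_i \Shv(X_i,C)\to\Shv(X,C)$ is an equivalence in $\Catdual$. Colimits in $\Catdual$ are computed as limits along the right adjoints $(p_{ij})_\ast$, equivalently via compact-morphism/$\omega_1$-presented data. Using \cite{som}*{Lemma 2.5.10}, which gives $p_{i\ast}p_i^\ast \simeq \colim_j (p_{ji})_\ast p_{ji}^\ast$, the unit of the comparison is an equivalence. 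So the functor is fully faithful, and it is essentially surjective because the images of the $p_i^\ast$ generate: open sets $p_i^{-1}(V)$ form a basis, so the sheaves $j_!$ of constant sheaves on them generate.

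I expect the main obstacle to be this last cofiltered-limit statement, specifically identifying colimits in $\Catdual$ correctly. Here I would rely on Efimov's/Clausen's treatment (\cite{efiloc}, \cite{som}*{Section 2.12}) rather than reprove it. Of secondary difficulty is making the span functoriality fully coherent, which I would simply cite from the six-functor literature.
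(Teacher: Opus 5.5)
Your proposal is correct and follows essentially the same route as the paper. The paper's proof simply assembles the statement from three sources: \cite{volpe}*{Remark 6.17} for the coherent span functoriality, \cite{som}*{Proposition 3.6.4, Corollary 3.6.5, and Proposition 3.6.7} for dualizability, strong continuity and the cofiltered-limit statement, and Proposition \ref{prop:Shv-cofibre} for the Verdier sequences. Your sketch of the cofiltered-limit step, via \cite{som}*{Lemma 2.5.10} together with generation by the images of the $p_i^\ast$, is a faithful unpacking of what the paper cites.
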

	\begin{proof} 	
	This is a conjunction of Proposition \ref{prop:Shv-cofibre}, \cite{volpe}*{Remark 6.17}, and \cite{som}*{Proposition 3.6.4, Corollary 3.6.5, and Proposition 3.6.7}.
	\end{proof}
	
	\begin{defn}
	We consider the functor
	\[
	\Shv(-, C) \colon (\GLCH^{pdp})^\op \to 
	\Catdual^G 
	\]
	induced by taking $G$-objects for both sides of the functor in Proposition \ref{thm:shvxfunc}.
	and
	\[
	\Shv_G(-,C)  \colon (\GLCH^{pdp})^\op \to 
	\Catdual^G \xto{\colim_{BG}} \Catdual.
	\]	
	\end{defn}
	
	\begin{rmk}\label{rmk:shvg=shvg} Note that 
	\begin{align*}
	\Shv_G(X,C) & = \colim^{\Catdual}_{\Delta^{op} \ni n} \Shv(G^n \times X,C)  \\
	&  = \lim^{\Cat_\infty}_{\Delta \ni n} \Shv(G^n \times X,C) = \Shv(X,C)^{hG}
	\end{align*}
	is equivalent to the usual definition of $G$-equivariant sheaves on $X$ with values in $C$. We write $C^{B}$ for the restriction of $\Shv_G(-,C)$ to $\Orb$. We have an equivalence $\Shv_G(G/H,C) \cong C^{BH}$ where the functoriality of the right hand side is described by the functor
	\[
	(-)_{hG}: \Orb \to \An \qquad G/H \mapsto BH \ .
	\]
	Note that maps of orbits $G/H \to G/H'$ induce covering maps $BH \to BH'$ (in particular injective on $\pi_1$). That is why the restriction functor
	\[
	C^{BH'} \to C^{BH}
	\]
	is strongly continuous. This of course also follows from the identification with equivariant sheaves and the properness of orbit maps.	In the sense of $G$-category theory, the functor $C^B$ is the Borel $G$-category associated with the category $C$. 
	\end{rmk}
	
	\begin{thm} \label{thm:shvmot}
	Let $G$ be a finite group and $D$ a dualizable category. For all finitary localising invariants $F \colon \Catdual \to D$, we have
	an equivalence
	\[
	F(\Shv_G(X,C)) \cong \brec(X,F(C^B))
	\]
	for all $X \in \GLCH$. 
	Similarly, for all localising $G$-invariants $H \colon \Catdual^G \to D$ we have 
	\[
	H({\Shv}(X)) \cong \brec(X,H({C^B})).
	\]
	\end{thm}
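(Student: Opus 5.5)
The plan is to apply the uniqueness theorem for compactly supported Bredon cohomology, Theorem \ref{thm:uniqueness}, to the functor
\[
\Phi \defeq F \circ \Shv_G(-,C) \colon (\GLCH^{pdp})^\op \to D .
\]
Its restriction to orbits is $G/H \mapsto F(\Shv_G(G/H,C)) \cong F(C^{BH})$, which is $F(C^B)$ by Remark \ref{rmk:shvg=shvg}. So once $\Phi$ lies in $\Fun^{oc,cc}((\GLCH^{pdp})^\op,D)$, Theorem \ref{thm:uniqueness} identifies $\Phi$ with $\brec(-,F(C^B))$, naturally in $X$. The $G$-invariant version with $H \colon \Catdual^G \to D$ is handled the same way, using $\Phi' = H \circ \Shv(-,C)$ in place of $\Phi$.

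It remains to verify the two axioms. For \emph{open-closed excision}, take an invariant open $U \subset X$ with complement $X \setminus U$. Proposition \ref{prop:Shv-cofibre} (functorially, via Proposition \ref{thm:shvxfunc}) gives a Verdier sequence $\Shv(U,C) \to \Shv(X,C) \to \Shv(X\setminus U,C)$ of $G$-objects in $\Catdual$. I need that $\colim_{BG}$ in $\Catdual$ carries this to a Verdier sequence. Colimits of cofibre sequences are cofibre sequences in $\Catdual$. Fully faithfulness of the first map can be checked on the limit model $\Shv(-,C)^{hG}$ of Remark \ref{rmk:shvg=shvg}, where it is a limit of fully faithful functors. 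Then $F$, being localising, turns the sequence into a fibre sequence, and $F(\emptyset)=F(0)=0$. For $H$ no $\colim_{BG}$ is needed, since $H$ is already a localising $G$-invariant.

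For \emph{cofiltered compact codescent}, let $X = \lim_i X_i$ in $\GCH$. Proposition \ref{thm:shvxfunc} gives $\colim_i \Shv(X_i,C) \simeq \Shv(X,C)$ in $\Catdual$, and this is compatible with the $G$-actions because the pullback functors are equivariant. Hence the same holds in $\Catdual^G$, and also after applying $\colim_{BG}$, since colimits commute with colimits. Since $F$ is finitary it preserves filtered colimits, so $\colim_i \Phi(X_i) \simeq \Phi(X)$.

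I expect the main obstacle to be the excision step. One must make sure that the diagram-level $G$-action on the Verdier sequence is functorial in the span category, which comes from taking $G$-objects in Proposition \ref{thm:shvxfunc}. One must also check that $\colim_{BG}$ in $\Catdual$ agrees with $\Shv(-,C)^{hG}$, so that fully faithfulness survives. For the latter I would argue as in Remark \ref{rmk:shvg=shvg}: colimits in $\Catdual$ along strongly continuous functors are computed as limits of right adjoints in $\Cat_\infty$, and right adjoints of fully faithful left adjoints are localizations, so the limit description gives fully faithfulness of $\Shv_G(U,C)\to\Shv_G(X,C)$. The cofibre property then comes from the colimit description.
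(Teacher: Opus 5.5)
Your proposal is correct and follows the paper's proof. You apply the compactly supported uniqueness theorem; you rightly cite Theorem \ref{thm:uniqueness}, which is the relevant one. You get cofiltered compact codescent from Proposition \ref{thm:shvxfunc} together with the commutation of colimits, and you reduce excision to the fully faithfulness of $\Shv_G(U,C)\to\Shv_G(X,C)$.

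One step needs tightening. The inference ``each $i^*$ is a localization, hence so is their limit'' does not hold on its own. What makes the limit model work is that the adjunction $i_!\dashv i^*$ is compatible with the $G$-actions. This is automatic here because $G$ acts by equivalences, and it is exactly the input the paper takes from \cite{hauglax}*{Theorem 4.6}. It gives $\colim_{BG}(i_!)\simeq(i_!)^{hG}$, left adjoint to $(i^*)^{hG}$, with unit the limit of the invertible units $\mathrm{id}\to i^*i_!$; that is what makes $(i_!)^{hG}$ fully faithful.
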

	\begin{proof}

	In view of Theorem \ref{thm_unique} and the fact that $F$ and $H$ preserve filtered colimits and map Verdier sequences to cofibre sequences, it suffices to show that $\Shv_G$ map cofiltered limits in $\CHaus_G$ to colimits, and
	open-closed sequences to Verdier sequences. 
	
	From Theorem \ref{thm:shvxfunc}, the fact that limits and colimits in functor categories are computed pointwise, and that $\colim_{BG}$ commutes with colimits, we are only left with showing that $\Shv_G$ maps open-closed sequences to Verdier sequences. As we have already observed, it already preserves cofibre sequences, so it would suffice to show that it maps (strongly continuous) fully faithful functors to (strongly continuous) fully faithful functors. The latter follows from the fact that in the present case fully faithfulness can be expressed in terms of counits, 
	and adjunctions between $G$-dualizable categories promote automatically to adjunctions in the $(\infty,2)$-category $\Catdual^G$; see e.g. \cite{hauglax}*{Theorem 4.6}.
	\end{proof}
	
	Finally we want to specialize Theorem  \ref{thm:shvmot}  to the case of the category of noncommutative ($G$-)motives, which is dualizable by 
	\cite{efirig}*{Theorem 3.1} so that we have an equivalence
	\[
	M_G(\Shv(X, C)) = \brec(X, M_G(C^B)) 
	\]
	One can make $\NcMot_G$ into a genuine $G$-category (i.e. a presheaf of categories on $\Orb$), which in particular implies that it is powered and tensored over $G$-anima (i.e. $\Psh(\Orb)$), see \cite{MaximeKaif} for details. The powering of a $G$-motive $M_G(C)$ with respect to a $G$-anima $X$ is then precisely the Bredon cohomology $C_{\mathrm{Br}}^*(X, M_G(C^B))$, so that we get using Corollary \ref{corCW}:
	
\begin{coro}\label{cor_genuine}
For $X$ a finite $G$-CW complex we have
\[
M_G(\Shv(X, C)) \simeq M_G(C)^{\uSing(X)} \ .
\]
where $\uSing$ is the underlying singular  $G$-anima $G/H \mapsto \Sing(X^H)$ of $X$ and the power is in the sense of genuine $G$-categories. 
\end{coro}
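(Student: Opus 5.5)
The plan is to combine Theorem \ref{thm:shvmot}, applied to the universal localising $G$-invariant $M_G$, with the comparison between Bredon sheaf cohomology and singular Bredon cohomology for $G$-CW complexes.

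First, the target. $\NcMot_G$ is dualizable by \cite{efirig}*{Theorem 3.1}, and $M_G$ is a localising $G$-invariant. The second half of Theorem \ref{thm:shvmot} with $H = M_G$ therefore gives
\[
M_G(\Shv(X,C)) \simeq \brec(X, M_G(C^B)).
\]
Since $X$ is a finite $G$-CW complex, it is compact Hausdorff. Hence $\Gamma_c = \Gamma$ on $X/G$, and $\brec(X,-) = \bre(X,-)$; this is also the case $X^\infty = X \sqcup \infty$ of Proposition \ref{prop:brec}(2).

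Next, pass to singular Bredon cohomology. The coefficient system $M_G(C^B)\colon \Orb^\op \to \NcMot_G$ takes values in a dualizable, hence compactly assembled and presentable, category. Corollary \ref{corCW} (via Proposition \ref{agreement}) then identifies
\[
\bre(X, M_G(C^B)) \simeq C^*_{\mathrm{Br}}(X, M_G(C^B)) = \int_{G/H\in\Orb} M_G(C^{BH})^{\Sing(X^H)}.
\]
The hypotheses are satisfied: $X$ is Tychonoff and equivariantly locally contractible, and $X/G$ is hypercomplete.

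The remaining step, which I expect to be the main obstacle, is to identify this end with the genuine power $M_G(C)^{\uSing(X)}$. I would use the genuine $G$-category structure on $\NcMot_G$ from \cite{MaximeKaif}. The power by a $G$-anima $Y$ is the limit-preserving functor $(\An^G)^\op \to \NcMot_G$ that is determined by its values on representables $G/H$. By the discussion before Lemma \ref{lem_desc}, every such functor is the singular Bredon cohomology with coefficients given by its restriction to $\Orb$.

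So it suffices to check that $M_G(C)^{G/H} \simeq M_G(C^{BH})$, naturally in $G/H$. The natural candidate comes from the adjunction between restriction and coinduction, which should send the motive of $C$ powered by $G/H$ to the motive of the coinduced $G$-category, $\Fun(G/H, C)$ with its $G$-action. Its Borel model as an object over $G/G$ is $C^{BH}$, as in Remark \ref{rmk:shvg=shvg}. Here I would rely on the conventions of \cite{MaximeKaif} that make $M_G$ compatible with coinduction along orbits. If those conventions instead give a coefficient system that differs from $M_G(C^B)$ only by this identification, naturality in $\Orb$ still yields the equivalence of ends.

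The conclusion then follows as
\[
M_G(\Shv(X,C)) \simeq C^*_{\mathrm{Br}}(X, M_G(C^B)) \simeq M_G(C)^{\uSing(X)}.
\]
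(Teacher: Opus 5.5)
Your proposal follows the paper's argument: apply Theorem \ref{thm:shvmot} to $M_G$ (using that $\NcMot_G$ is dualizable), use compactness of $X$ and Corollary \ref{corCW} to pass to singular Bredon cohomology, and then identify that with the genuine power via the $G$-category structure of \cite{MaximeKaif}. Your extra step reduces the last identification to the orbit values $M_G(C)^{G/H}$ via the characterization of limit-preserving functors $(\An^G)^\op \to \NcMot_G$; this only spells out what the paper asserts directly, namely that the powering is ``precisely the Bredon cohomology'', and it rests on the same citation.
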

As a result one gets similar maps, whenever one has a $G$-functor from the $G$-category $\NcMot_G$ to some other $G$-category that preserves $G$-limits.

	\subsection{Equivariant \(E\)-theory of functions}
In analogy with \(G\)-motives, we now describe the equivariant \(E\)-theory \cites{connes1990deformations,guentner2000equivariant} of continuous functions on a locally compact Hausdorff space.  For this, we essentially only require the following universal property:
	
\begin{thm}[\cite{bunke2024theory}]\label{thm:E}
	There is a functor 
	\[
	e^G \colon \GCsAlg \to \E^G
	\] into a dualisable category that is:
	\begin{enumerate}
	\item equivariantly homotopy invariant;
	\item stable with respect to \(\cK_G = K(L^2(G) \otimes \ell^2)\); see \cite{bunke2024theory}*{Remark 3.14} for a precise definition;
	\item excisive, that is, it sends
	a short exact sequence of \(G\)-\(C^*\)-algebras to a fibre sequence;
	\item filtered colimit-preserving.
	\end{enumerate}
Furthermore, the functor \(e^G\) is the initial functor into a cocomplete stable category with these properties. In other words, denoting by $\mathrm{Fun}^{1,2,3,4}(\GCsAlg, D)\) functors into a cocomplete, stable category, restriction along \(e^G\) induces an equivalence \[\mathrm{Fun}^{\mathrm{colim}}(\E^G, D) \to \mathrm{Fun}^{1,2,3,4}(\GCsAlg, D),\] where the left hand side denotes colimit-preserving functors. 
	\end{thm}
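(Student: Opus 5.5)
The plan is to build $\E^G$ in two stages. First I would construct a small stable category out of separable $G$-$C^*$-algebras. Then I would pass to $\Ind$-objects to reach all $G$-$C^*$-algebras. Dualizability then comes for free, since an $\Ind$-completion of a small stable category is compactly generated, hence compactly assembled and stable, hence dualizable by \cite{som}*{Theorem 2.9.2}.

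\textbf{Stage 1: the separable category.} Let $\GCsAlg_{\sep}$ be the full subcategory of separable objects. I would take the Dwyer--Kan localization
\[
L \colon \GCsAlg_{\sep} \to \E^G_{\sep} \defeq \GCsAlg_{\sep}[W^{-1}],
\]
where $W$ is the class of morphisms inverted by the equivariant $E$-theory of Guentner--Higson--Trout \cite{guentner2000equivariant}, defined via asymptotic morphisms after stabilizing by $\cK_G$. To obtain a stable target, I would first localize at $G$-homotopy equivalences and at the corner inclusions $A \to A \otimes \cK_G$. I would then show that the resulting category is pointed, that the suspension $C_0(\R) \otimes -$ is invertible, and that semisplit extensions give fibre sequences. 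Since the mapping-cone sequences become fibre sequences, this yields a stable category whose homotopy category is $\mathrm{E}^G$.

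\textbf{Universal property on separable objects.} Let $F \colon \GCsAlg_{\sep} \to D$ satisfy (1)--(3), with $D$ stable. I would show that $F$ inverts $W$. The route is the classical Higson-style argument. An asymptotic morphism $A \rightsquigarrow B$ is represented, up to homotopy, by a genuine $*$-homomorphism $A \to C_b([1,\infty),B)/C_0([1,\infty),B)$. By excision applied to
\[
0 \to C_0([1,\infty),B) \to C_b([1,\infty),B) \to \mathfrak{A}B \to 0,
\]
together with homotopy invariance killing the contractible ideal, this gives a map $F(A) \to F(B)$ after applying $F$. The Connes--Higson construction then shows that every $E$-theory morphism, and composition of such morphisms, is realized compatibly in $D$. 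I expect this step to be the main obstacle, since it must be carried out coherently at the $\infty$-categorical level and not merely on homotopy categories. To get coherence, I would follow \cite{bunke2024theory}: present $\E^G_{\sep}$ as a localization and verify the required properties on mapping spaces using the fact that $\Sigma^2$-stabilization identifies them with $E$-theory groups.

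\textbf{Stage 2: extension to all algebras.} Set $\E^G \defeq \Ind(\E^G_{\sep})$. I would define $e^G(A) \defeq \colim_{A' \subset A} L(A')$, the colimit running over the filtered poset of separable $G$-invariant subalgebras $A' \subset A$. Properties (1)--(3) for arbitrary algebras then reduce to the separable case, because homotopies, extensions and stabilizations are filtered unions of separable ones. Property (4) holds because $\Ind$-objects commute with filtered colimits of such systems.

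For the universal property, a colimit-preserving functor $\E^G \to D$ is the same thing as an exact functor $\E^G_{\sep} \to D$. By Stage 1, the latter is the same as a functor on $\GCsAlg_{\sep}$ satisfying (1)--(3). Condition (4) says precisely that such a functor is recovered from its separable restriction, which gives the claimed equivalence of functor categories.
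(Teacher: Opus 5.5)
There is a genuine gap in Stage 2. You take $\E^G\defeq\Ind(\E^G_{\sep})$ with the ordinary ($\omega$-)Ind-completion. Then $e^G(A)=L(A)$ is a \emph{compact} object for every separable $A$. But condition (4) covers countable filtered colimits of separable algebras, which are again separable and are not formal colimits in $\E^G_{\sep}$.

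Take $A=C_0(\N)=\colim_n\C^n$ with trivial action. If $e^G$ preserved this colimit, compactness would make $\id_{L(A)}$ factor through some $L(\C^n)$, so $L(A)$ would be a retract of $L(\C^n)$. Apply $K_0(G\ltimes-)$, which factors through $L$. This would make $\bigoplus_{\N}R(G)$ a retract of the finitely generated group $R(G)^n$, which is absurd.

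The universal property fails for the same reason. Colimit-preserving functors out of $\Ind(\E^G_{\sep})$ are \emph{all} exact functors on $\E^G_{\sep}$, i.e.\ (by your Stage 1) all functors on separable algebras satisfying (1)--(3). Condition (4), however, forces the separable restriction to also preserve countable filtered colimits, and (1)--(3) do not imply this. For instance, $B\mapsto\prod_{\N}\mathrm{K}^{\mathrm{top}}(G\ltimes B)$ satisfies (1)--(3) but does not preserve $\colim_n\C^n$. So it is not the restriction of anything in $\Fun^{1,2,3,4}(\GCsAlg,\Sp)$. Hence (4) is not ``precisely'' recovery from the separable restriction, and the equivalence you claim fails.

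What is missing is to work $\aleph_1$-accessibly throughout, as the paper does: $\GCsAlg\simeq\Ind_{\aleph_1}(\GCsAlg_{\sep})$ and $\E^G\defeq\Ind_{\aleph_1}(\E^G_{\sep})$. This needs a sharper separable statement from \cite{bunke2024theory} (Proposition 3.55 together with Theorems 3.58 and 3.59). That statement says $\E^G_{\sep}$ is countably cocomplete, and $e^G_{\sep}$ is initial among functors into countably cocomplete stable categories that satisfy (1)--(3) \emph{and} preserve countable filtered colimits. Colimit-preserving functors out of $\Ind_{\aleph_1}(\E^G_{\sep})$ are then exactly the countable-colimit-preserving functors out of $\E^G_{\sep}$, which matches (4). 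Moreover, $\Ind_{\aleph_1}(e^G_{\sep})$ preserves all filtered colimits.

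The price is that $\E^G$ is a priori only $\aleph_1$-compactly generated. So dualizability no longer follows ``for free'' from compact generation. It is the main theorem of \cite{bunke2024theory} (Theorem 1.1), which the paper invokes and which your argument does not provide.
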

	
	\begin{proof}
	By \cite{bunke2024theory}*{Proposition 3.55}, there is a functor \(e_{\sep}^G \colon \GCsAlg_{\sep} \to \E_{\sep}^G\) that is equivariantly homotopy invariant, \(\cK_G\)-stable, Schochet exact and countable filtered colimit-preserving, and is the initial functor with stable, countably cocomplete target categories with these properties. Combining with \cite{bunke2024theory}*{Theorem 3.58, 3.59}, we see that \(e_{\sep}^G\) is the initial functor into a countably cocomplete category that is equivariant homotopy invariant, \(\cK_G\)-stable, excisive and countable filtered colimit-preserving. Now by \cite{kerodon}*{Corollaries 9.3.5.27 06N9, 9.3.6.10 0694 and 9.3.6.11 0695}, \[e_G = \mathrm{Ind}_{\aleph_1}(e_{\sep}^G) \colon  \GCsAlg = \Ind_{\aleph_1}(\GCsAlg_{\sep}) \to \Ind_{\aleph_1}(\E_{\sep}^G) =: \E^G\] preserves filtered colimits, so that restriction along \(e_G\) induces an equivalence \[\mathrm{Fun}^{\colim}(\E^G, D) \to \mathrm{Fun}^{1,2,3,4}(\GCsAlg, D)\] for all \(D\) cocomplete and stable. The dualisability of \(\E^G\) is \cite{bunke2024theory}*{Theorem 1.1}. 
	\end{proof}
	
Continuing the analogy with sheaves, the analogue of Proposition \ref{thm:shvxfunc} is the following:

\begin{thm}
There is a functor 
\[
C_0(-) \colon (\LCHaus^{pdp})^\op \to C^\ast\mathrm{Alg}
\] that specialises to a contravariant functor \[(\LCHp)^\op \to C^\ast\mathrm{Alg}, \quad X \mapsto C_0(X), \quad f \colon X \to Y \mapsto f^* \colon C_0(Y) \to C_0(X)\] and a covariant functor \[\LCHopen \to \GCsAlg, \quad X \mapsto C_0(X)\] that takes an open embedding \(f \colon U \to X\) to \(C_0(U) \to C_0(X)\) by restricting along the collapse map \(X^+  \to U^+\) in \(\mathrm{CHaus}_*\).  
\end{thm}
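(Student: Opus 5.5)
The plan is to build the functor via the equivalence $\kappa \colon \LCHaus^{pdp} \xto{\sim} \CHaus_\ast$ used in Theorem \ref{thm:pdp} (for trivial $G$), and then compose with Gelfand duality. One-point compactification sends $X$ to $X^\infty$. A span $X \hookleftarrow U \xto{p} Y$ becomes the pointed map $X^\infty \to Y^\infty$ that agrees with $p$ on $U$ and collapses $\{\infty_X\} \cup (X\setminus U)$ to $\infty_Y$.

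\emph{Step 1: the algebra side.} The functor $(X,x_0) \mapsto C_0(X\setminus\{x_0\}) = \ker(\mathrm{ev}_{x_0} \colon C(X) \to \C)$ is a contravariant functor $\CHaus_\ast^\op \to C^\ast\mathrm{Alg}$. A pointed map $h$ pulls back functions vanishing at the basepoint to functions vanishing at the basepoint. Defining $C_0(-) := \ker(\mathrm{ev}_\infty) \circ C(-) \circ \kappa$ therefore gives the desired functor on $(\LCHaus^{pdp})^\op$. Functoriality is automatic, because $\kappa$ is a functor, i.e.\ composition of spans corresponds to composition of pointed maps. The main point to check is that $\kappa$ really is well defined on morphisms, namely that the described map $X^\infty \to Y^\infty$ is continuous. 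This needs properness of $p$: a compact $L \subset Y$ has compact preimage $p^{-1}(L) \subset U$, which is closed in $X$, so preimages of neighbourhoods of $\infty_Y$ are neighbourhoods of $\{\infty_X\}\cup(X\setminus U)$. One also needs that $U$ is open, so that the map is continuous at points of $U$.

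\emph{Step 2: identifying the specializations.}
\begin{itemize}
\item For a proper map $f \colon X \to Y$, viewed as the span $X = X \to Y$, the pointed map is $f^\infty$. Restricting along it gives $f^\ast \colon C_0(Y) \to C_0(X)$, which is the usual pullback; properness ensures it lands in $C_0(X)$.
\item For an open embedding $f \colon U \to X$, viewed as the span $X \hookleftarrow U = U$ read as a morphism $X \to U$ in $\LCHaus^{pdp}$, hence covariantly in $U$ after taking $\op$, the associated pointed map is exactly the collapse map $X^\infty \to U^\infty$. Pullback along it is extension by zero $C_0(U) \to C_0(X)$, as claimed.
\end{itemize}
In both cases the content is a direct unwinding of the formula for $\kappa$ on morphisms.

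The equivariant version follows by applying $G$-objects on both sides, as was done for $\Shv$, yielding a functor $(\GLCH^{pdp})^\op \to \GCsAlg$. The only genuine obstacle is the continuity and compatibility-with-composition check for $\kappa$, which I would handle by the compactness argument above. For composition, note that the pullback of spans $U \times_Y V$ is the open subset $p^{-1}(V) \subset U$, and both composites collapse precisely its complement.
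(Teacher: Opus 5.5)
Your proposal is correct and follows the paper's route. The paper also transports along the equivalence $\LCHaus^{pdp}\simeq \CHaus_\ast$ (cited from Bunke's lecture notes), restricts it to proper maps and open embeddings, and composes with the Gelfand-duality functor $X\mapsto\{f\colon X^+\to\C : f(\infty)=0\}$; you simply verify by hand the continuity of the collapse maps and their compatibility with composition of spans, which the paper takes from that reference.
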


\begin{proof}
We restrict the equivalence of categories \(\mathrm{LCHaus}^{pdp} \cong \mathrm{CHaus}_*\)  \cite{bunke2021lecture}*{Lemma 5.2} to \(\LCHp^\op\) and \(\LCHopen\). Finally by \cite{bunke2021lecture}*{Corollary 5.4}, the required functor \((\LCHaus^{pdp})^\op \to \GCsAlg\) is the functor \[C_0 : X \mapsto C_0(X) = \{f: X^+ \to \C \text{ continuous } : f(\infty) = 0\}\] implementing the Gelfand duality. 
\end{proof}

We now compose with the canonical functor \(e^G \colon \GCsAlg \to \E^G\) to get a functor 
\begin{equation}\label{eq-E}
	 e^G(C_0(-)) \colon (\GLCH^{pdp})^\op \xto{C_0(-)} \GCsAlg \xto{e^G} \E^G
\end{equation}
into equivariant \(E\)-theory.

	 \begin{prop}\label{prop:cts-sheaf}
	 The functor in \eqref{eq-E} satisfies cofiltered compact codescent and open-closed excision.
	 \end{prop}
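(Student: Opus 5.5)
We verify the two properties separately, using in each case only the corresponding exactness property of $C_0(-)$ together with the listed properties of $e^G$ from Theorem \ref{thm:E}.

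\textbf{Open-closed excision.} Let $X \in \GLCH$, let $U \subset X$ be a $G$-invariant open subspace, and put $F = X \setminus U$. The spans $X \setminus U \to X \to U$ in $\GLCH^{pdp}$ are sent by $C_0(-)$ to the maps
\[
C_0(U) \to C_0(X) \to C_0(F).
\]
The first is extension by zero, i.e. restriction along the collapse map $X^+ \to U^+$. The second is restriction of functions. This is the standard short exact sequence of $G$-$C^*$-algebras
\[
0 \to C_0(U) \to C_0(X) \to C_0(F) \to 0 .
\]
Exactness at $C_0(F)$ is Tietze extension on $X^+$, which is equivariant after averaging over the finite group $G$. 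Exactness at $C_0(X)$ holds because a function vanishing on $F$ and at $\infty$ is exactly an element of $C_0(U)$. By excisiveness (3) of $e^G$, this sequence goes to a fibre sequence in $\E^G$. One checks that the maps obtained agree with $e^G$ applied to the images of the spans; this is immediate from the description of the functor $C_0(-)$ on $\LCHaus^{pdp}$ via $\CHaus_\ast$.

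\textbf{Cofiltered compact codescent.} Let $X = \lim_{i \in I} X_i$ be a cofiltered limit in $\GCH$. Gelfand duality $C(-) \colon \CHaus^\op \to$ commutative unital $C^*$-algebras is an anti-equivalence, so it turns the limit into a colimit of commutative unital $C^*$-algebras. We then check that this colimit is also the colimit in $\GCsAlg$, i.e. that
\[
\colim_i C(X_i) \to C(X)
\]
is an isomorphism in $\GCsAlg$. The colimit in $\GCsAlg$ is the completion of the algebraic colimit of the images, and it maps injectively to $C(X)$ by construction. The image is a unital $*$-subalgebra containing the functions $f \circ p_i$. These separate points of $X$, since the projections $p_i$ are jointly injective. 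By Stone--Weierstrass the image is dense, and hence, being closed, it is everything. The $G$-actions are compatible throughout, so the identification holds equivariantly.

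Since the diagram is filtered, property (4) gives
\[
\colim_i e^G(C(X_i)) \simeq e^G(C(X)).
\]
For compact spaces $C_0 = C$, so this is exactly cofiltered compact codescent.

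The main point needing care is the claim that $\GCsAlg$ filtered colimits agree with this Gelfand-dual description when the transition maps are non-injective. Here the colimit is the completion of the algebraic colimit with respect to the induced seminorm (the limit of the norms), and injectivity into $C(X)$ must be checked against that seminorm. It holds because
\[
\|f\|_{C(p_i(X))} = \lim_j \|p_{ji}^* f\|,
\]
which is a compactness argument in the spirit of Lemma \ref{lem:compcofi}.
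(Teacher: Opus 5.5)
Your proof is correct and follows the same route as the paper. Open-closed excision comes from applying excisiveness of $e^G$ to the extension $0 \to C_0(U) \to C_0(X) \to C_0(X\setminus U) \to 0$, and cofiltered compact codescent comes from filtered-colimit preservation of $e^G$ together with $C(\lim_i X_i) \cong \colim_i C(X_i)$ in $\GCsAlg$. You supply details the paper leaves implicit (surjectivity via Tietze, and the compactness/Stone--Weierstrass check that the colimit is $C(X)$); note only that no averaging or equivariant lift is needed, since exactness of a sequence of $G$-$C^*$-algebras is a statement about the underlying algebras.
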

	 
	 \begin{proof}
		Let \(X\) be a locally compact Hausdorff \(G\)-space, \(U \subseteq X\) an open $G$-equivariant subspace, and \(Z = X \setminus U\). The map \(Z \to X \to U\) in \(\GLCH^{pdp}\) induces an extension of \(G\)-\(C^\ast\)-algebras \[C_0(U) \to C_0(X) \to C_0(Z),\] which gets mapped to a fibre-sequence in \(\E^G\) by excision. Cofiltered compact codescent is clear as the universal functor \(\GCsAlg \to \E^G\) preserves filtered colimits by Theorem \ref{thm:E}.   	 
	 \end{proof}
	 
	 As a consequence of Theorem \ref{thm:uniqueness} we have:
	 
	\begin{coro}
	For any \(X \in \GLCH\), we have an equivalence \[e^G(C_0(X)) \simeq \brec(X, e^G(C_0(-)) \vert_{\Orb})\] in \(\E^G\).
	\qedhere
	\end{coro}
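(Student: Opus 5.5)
The plan is to deduce the corollary directly from Theorem \ref{thm:uniqueness}, applied with $D = \E^G$, which is dualizable by Theorem \ref{thm:E}. Write $F \defeq e^G(C_0(-)) \colon (\GLCH^{pdp})^\op \to \E^G$ for the functor \eqref{eq-E}. The chain of equivalences in Theorem \ref{thm:uniqueness} says that any functor on $(\GLCH^{pdp})^\op$ satisfying open-closed excision and cofiltered compact codescent is recovered from its restriction to orbits by taking compactly supported Bredon cohomology. Concretely, one then expects a natural equivalence $F \simeq \brec(-, F|_{\Orb})$, and evaluating at $X$ gives the claim.

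The steps, in order, are as follows.

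First, by Proposition \ref{prop:cts-sheaf}, $F$ satisfies cofiltered compact codescent and open-closed excision. By Lemmas \ref{lem:oc=cl} and \ref{lem:codesc}, it therefore also satisfies closed descent and open codescent, so $F$ lies in the source category of Theorem \ref{thm:uniqueness}.

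Second, set $E \defeq F|_{\Orb}$, where orbits sit in $\GLCH^{pdp}$ via $\GCH \subset \GLCH^{pdp}$. Proposition \ref{prop:brec}(3) identifies $X \mapsto \brec(X,E)$ as an object of the same category, namely the extension of $\bre(-,E)$ provided by Theorem \ref{thm:pdp}. Its restriction to orbits is $E$: orbits are compact, so $\brec(Z,E) = \bre(Z,E) \simeq E(Z)$ by Lemma \ref{lem:bredon-unit}.

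Third, since restriction to orbits is an equivalence by Theorem \ref{thm:uniqueness}, and both $F$ and $\brec(-,E)$ restrict to $E$, they are equivalent. Evaluating this equivalence at $X$ gives $e^G(C_0(X)) \simeq \brec(X, e^G(C_0(-))|_{\Orb})$.

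The only step needing care is confirming that the hypotheses line up exactly. For cofiltered compact codescent, one must check that for a cofiltered limit $X = \lim X_i$ in $\GCH$, Gelfand duality gives $C(X) = \colim C(X_i)$ as a filtered colimit of $G$-$C^*$-algebras, which $e^G$ preserves. This holds because $\bigcup_i C(X_i)$ is dense in $C(X)$ by Stone--Weierstrass, and it is exactly the point Proposition \ref{prop:cts-sheaf} invokes. Granting that proposition, the corollary is formal.
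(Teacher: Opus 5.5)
Your proposal is correct and follows the paper's route: Proposition \ref{prop:cts-sheaf} places $e^G(C_0(-))$ in the source category of Theorem \ref{thm:uniqueness} with $D=\E^G$ (dualizable by Theorem \ref{thm:E}), and the corollary is then just the statement that the inverse of restriction to orbits is $\brec(-,E)$. Your remark on $C(\lim_i X_i)\simeq\colim_i C(X_i)$ makes explicit a step the paper leaves implicit in Proposition \ref{prop:cts-sheaf}, but Stone--Weierstrass density only gives surjectivity of the comparison map; injectivity also requires the map to be isometric, which follows by compactness from the fact that the image of $X$ in $X_i$ is the intersection of the images of the $X_j$ for $j\le i$.
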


Consider the crossed product functor \(G \ltimes - \colon \GCsAlg \to \CsAlg\), taking a \(G\)-\(C^\ast\)-algebra to its (maximal) crossed product. In what follows, let \(\E\) denote the equivariant \(E\)-theory functor of Theorem \ref{thm:E} for the trivial group. We first record the following:

\begin{lem}\label{lem:descent}
The crossed product functor descends to a colimit-preserving functor \(G \ltimes - \colon \E^G \to \E\). 
\end{lem}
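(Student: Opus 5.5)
The plan is to use the universal property of $e^G$ from Theorem \ref{thm:E}. I will show that the composite
\[
\GCsAlg \xto{G \ltimes -} \CsAlg \xto{e} \E
\]
is equivariantly homotopy invariant, $\cK_G$-stable, excisive and filtered colimit-preserving. Then Theorem \ref{thm:E} with $D = \E$ (which is stable and cocomplete) gives an essentially unique colimit-preserving functor $G \ltimes - \colon \E^G \to \E$ with $(G\ltimes -)\circ e^G \simeq e \circ (G \ltimes -)$. Here $e$ is the non-equivariant universal functor from Theorem \ref{thm:E} for the trivial group.

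The four properties I would check as follows.

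\emph{Homotopy invariance.} A $G$-homotopy is a $G$-equivariant map $A \to C([0,1],B)$, where $G$ acts trivially on $[0,1]$. Since $G$ is finite, $G \ltimes C([0,1],B) \cong C([0,1], G \ltimes B)$. So a $G$-homotopy goes to an ordinary homotopy, which $e$ inverts.

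\emph{Excision.} The maximal crossed product is exact: it sends short exact sequences of $G$-$C^*$-algebras to short exact sequences. This is standard, and for finite $G$ it is immediate because $G\ltimes A$ is $A\otimes \C[G]$ as a vector space with twisted multiplication. Excisiveness of $e$ then gives a fibre sequence.

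\emph{Filtered colimits.} The maximal crossed product is a left adjoint at the level of $C^*$-algebras (via covariant representations), so it commutes with filtered colimits. Alternatively, for finite $G$ one can note that the algebraic crossed product is already complete. Then use that $e$ preserves filtered colimits.

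\emph{$\cK_G$-stability.} I expect this to be the main step. The map $A \to A \otimes \cK_G$ must become an equivalence after applying $e(G\ltimes -)$. I would use $G \ltimes (A\otimes \cK_G) \cong (G\ltimes A)\otimes \cK(\ell^2)\otimes$ (twisted) $\cK(L^2 G)$. By the Green–Julg/Packer–Raeburn stabilisation trick, $G \ltimes (A \otimes \cK(L^2G)) \cong (G\ltimes A)\otimes \cK(L^2 G)$ for the conjugation action; since $\cK(L^2G)$ is finite dimensional, this is a matrix algebra over $G\ltimes A$. The unit map is then a corner embedding, which $e$ inverts by its $\cK$-stability (matrix stability). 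One must check that the stabilisation map in the precise sense of \cite{bunke2024theory}*{Remark 3.14} corresponds to such a corner embedding up to homotopy or unitary conjugation. I would do this by unwinding the definition there.

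Once the four properties hold, the universal property yields the descended functor, and it preserves colimits by construction.
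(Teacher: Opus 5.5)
Your proposal follows the same route as the paper. You check that $e\circ(G\ltimes -)$ is homotopy invariant, $\cK_G$-stable, excisive and filtered-colimit preserving, then invoke the universal property of Theorem~\ref{thm:E} with $D=\E$; where you sketch direct arguments, the paper simply cites \cite{bunkenoncomm2}, \cite{bunke2020non} and \cite{bunke2021stable}*{Lemma 4.15}.

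One correction: the maximal crossed product is not a left adjoint $\GCsAlg\to\CsAlg$, since it does not preserve free products, already for trivial actions. Justify commutation with filtered colimits by your finite-$G$ argument (or the cited lemma) instead of adjointness.
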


\begin{proof}
By \cite{bunkenoncomm2}*{Corollary 3.10, Lemma 3.32}, the crossed product functor is homotopy invariant and \(\cK_G\)-stable. By \cite{bunke2020non} it preserves extensions, and by \cite{bunke2021stable}*{Lemma 4.15} filtered colimits. The conclusion now follows from Theorem  \ref{thm:E}.
\end{proof}

Recall that \(E\) is presentably symmetric monoidal with respect to the maximal tensor product of \(C^*\)-algebras, with tensor unit given by the image of \(\C\). As a consequence, \(\mathrm{KU} \defeq \E(\C,\C)\) is a commutative ring spectrum, and \(\E\) has a \(\mathrm{KU}\)-linear structure. Denote by \(\mathrm{K}^{\mathrm{top}} \defeq \E(\C,-)  \colon \E \to \mathrm{Mod}_{\mathrm{KU}}\) the \emph{complex topological \(K\)-theory} functor. Post-composing the crossed product functor with the functor \eqref{eq-E}, we define \[\mathrm{K}^{\mathrm{top}}(G \ltimes C_0(-)) \colon (\GLCH^{pdp})^\op \xto{e_G(C_0(-))} \E^G \xto{G \ltimes -} E \xto{E(\C,-)}  \mathrm{Mod}_{\mathrm{KU}}.\] Let \(\mathrm{K}_G\) denote the restriction of \(\mathrm{K}^{\mathrm{top}}(G \ltimes C_0(-))\) to \(\Orb(G)^\op\). Note that when \(G\) is trivial, \(\mathrm{K}_G = \mathrm{KU}\).

\begin{coro}\label{coro:K-theory-crossed}
The functor \(\mathrm{K}^{\mathrm{top}}(G \ltimes C_0(-))\) satisfies cofiltered compact codescent and open-closed excision. Consequently, we have an equivalence \[\mathrm{K}^{\mathrm{top}}(G \ltimes C_0(X)) \simeq  \brec(X,\mathrm{K}_G)\] for any \(X \in \GLCH\). In particular, we have \(\mathrm{K}^{\mathrm{top}}(C_0(X)) \simeq \Gamma_c(X, \mathrm{KU})\).
\end{coro}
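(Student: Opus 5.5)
The plan is to show that the composite $\mathrm{K}^{\mathrm{top}}(G \ltimes C_0(-))$ is obtained from the functor \eqref{eq-E} by postcomposition with functors that preserve fibre sequences and filtered colimits. The descent properties then follow from Proposition \ref{prop:cts-sheaf}, and Theorem \ref{thm:uniqueness} identifies the functor with compactly supported Bredon sheaf cohomology.

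\textbf{Step 1: the two descent properties.} By Proposition \ref{prop:cts-sheaf}, $e^G(C_0(-))$ satisfies open-closed excision and cofiltered compact codescent. By Lemma \ref{lem:descent}, $G \ltimes - \colon \E^G \to \E$ is colimit-preserving. Since it is an exact functor between stable categories, it also preserves fibre sequences. For the last functor $\mathrm{K}^{\mathrm{top}} = \E(\C,-)$, exactness is automatic because it is a mapping spectrum functor. Preservation of filtered colimits needs more care. I would argue that the unit $e(\C)$ is compact in $\E$: in the $\aleph_1$-Ind construction of Theorem \ref{thm:E}, separable algebras give $\aleph_1$-compact objects. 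I would then appeal to the known fact that $\mathrm{K}^{\mathrm{top}}$ commutes with filtered colimits of $C^*$-algebras. More concretely, it suffices that it sends the colimits $\colim_i C(X_i) \to C(X)$ arising from cofiltered limits of compact $G$-spaces to colimits. Here $G \ltimes C(X) = \colim_i G \ltimes C(X_i)$ as $C^*$-algebras, and continuity of $K$-theory for $C^*$-inductive limits gives the claim. This is the step I expect to be the main obstacle. Filtered colimits in $\E$ need not be preserved by $\E(\C,-)$ in general, so the argument should be made at the level of $C^*$-algebras, using that $e$, $G\ltimes-$ and $K$-theory all commute with these particular inductive limits.

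\textbf{Step 2: identification.} With both axioms in hand, Theorem \ref{thm:uniqueness} applies with $D = \mathrm{Mod}_{\mathrm{KU}}$, which is dualizable since it is compactly generated and stable. It gives
\[
\mathrm{K}^{\mathrm{top}}(G \ltimes C_0(X)) \simeq \brec(X, \mathrm{K}^{\mathrm{top}}(G\ltimes C_0(-))|_{\Orb}) = \brec(X,\mathrm{K}_G).
\]
Naturality in $X$ comes from the equivalence of functor categories.

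\textbf{Step 3: the trivial group.} For $G = 1$ the orbit category is a point and $\mathrm{K}_G = \mathrm{K}^{\mathrm{top}}(\C) = \mathrm{KU}$. The sheaf $\uE_X$ is then the constant sheaf $\underline{\mathrm{KU}}$, either by Example \ref{ex:bredon-trivial} or directly from the definition. Hence $\brec(X,\mathrm{KU}) = \Gamma_c(X,\mathrm{KU})$, which is the final statement.
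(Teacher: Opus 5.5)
Your proposal is correct and follows the paper's proof. You postcompose $e^G(C_0(-))$ from Proposition \ref{prop:cts-sheaf} with the colimit-preserving functor $G\ltimes -$ of Lemma \ref{lem:descent} and with $\E(\C,-)$, apply Theorem \ref{thm:uniqueness} with $D=\mathrm{Mod}_{\mathrm{KU}}$, and specialize to $G=1$; the only difference is that where the paper simply asserts that $\E(\C,-)$ preserves filtered colimits, you check the needed colimits at the $C^*$-level via $\E(\C,e(B))\simeq K(B)$, $G\ltimes C(X)\cong\colim_i G\ltimes C(X_i)$ and continuity of $K$-theory, which is sound (your side remark that $\aleph_1$-compactness of $e(\C)$ gives compactness is not valid, since the index posets may be countable, but you do not need it).
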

\begin{proof}
We have already seen in Proposition \ref{prop:cts-sheaf} that the functor \(e^G(C_0(-))\) satisfies cofiltered compact codescent and open-closed excision. This property is preserved by post-composition with the crossed product functor as the latter preserves filtered colimits and fibre-sequences by Lemma \ref{lem:descent}. It is further preserved by composing with topological \(K\)-theory \(\mathrm{K}^{\mathrm{top}} = \E(\C,-)\), which is excisive and preserves filtered colimits. The conclusion now follows from Theorem \ref{thm:uniqueness}.
\end{proof}	

\begin{rmk}
We remark that the agreement between complexified topological \(K\)-theory and sheaf cohomology for (second countable) locally compact Hausdorff spaces was already known via the Chern character from complexified \(K\)-theory to local cyclic homology, and the agreement of the latter with compactly supported sheaf cohomology by \cite{puschnigg2003diffeotopy}*{Theorem 8.6}. To the best of our knowledge, the agreement of topological \(K\)-theory with compactly supported cohomology with coefficients in \(\mathrm{KU}\) observed in Corollary \ref{coro:K-theory-crossed} has never explicitly been spelled out. 
\end{rmk}

	\section{The equivariant shape}\label{sec:shape}

We note that the whole construction of Bredon sheaf cohomology hinged on the adjunction
\[
t^*: \Psh(\Orb) \leftrightarrows \Shv(\GTop): t_*
\]
This adjunction exists for (pre)sheaves with values in any category, in particular also with values in anima in which case $\Psh(\Orb) = \An^G$. This is the case we consider now.

\begin{prop}
$t_*$ is a geometric morphism of topoi , that is $t^*$ preserves finite limits. 
\end{prop}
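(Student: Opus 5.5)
We need to show that $t^* = \sh \circ \Lan(t)$ preserves finite limits. Sheafification on $\Psh(\GTop,\An)$ is left exact, since it is the sheafification of an $\infty$-topos of presheaves on a site, with the usual size caveat handled as in the discussion after \eqref{def:jstar}: everything is computed on the small sites $\cO_G(X)$. So the plan is to show that $\Lan(t) \colon \Psh(\Orb) \to \Psh(\GTop)$ is left exact. Limits in presheaf categories are computed pointwise. It therefore suffices to show that, for each $G$-space $X$, the functor
\[
E \longmapsto \Lan(t)(E)(X) = \colim_{(X \to Z) \in ((\Orb)_{X/})^{\op}} E(Z)
\]
preserves finite limits. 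The standard criterion is that left Kan extension along $t$ is left exact whenever each comma category $(\Orb)_{X/}$ is cofiltered, i.e. $((\Orb)_{X/})^\op$ is filtered. Filtered colimits in $\An$ commute with finite limits, and evaluation at $Z$ is left exact.

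The key step is cofilteredness of $(\Orb)_{X/}$, the category of equivariant maps from $X$ to transitive $G$-sets. This fails as stated: if $X$ is not "connected over orbits", for example $X = G/G \sqcup G/G$, there may be no map to any orbit. One still gets two maps $X \to G/G$ that need not be jointly refined, and products of orbits are not orbits. The fix is to pass to the equivalent description $\Psh(\Orb) = \Fun^\times(\mathrm{Fin}_G^\op,\An)$ used in Proposition \ref{lem:ind-res}, and to take left Kan extension along $\mathrm{Fin}_G \to \GTop$. This computes the same sheaf after sheafification, because a $G$-space mapping to a finite $G$-set decomposes into invariant open pieces mapping to orbits, and a coproduct of representables is sent to a product. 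In $\mathrm{Fin}_G$, the comma category $(\mathrm{Fin}_G)_{X/}$ is cofiltered. It is nonempty, since $X \to \ast = G/G$ exists. Two maps $X \to S$ and $X \to T$ factor through $X \to S \times T$, and $S \times T$ is again a finite $G$-set. Two parallel maps $S \rightrightarrows T$ under $X$ are equalized by their equalizer $S' \subseteq S$, which is a sub-$G$-set containing the image of $X$. So this colimit is filtered and hence left exact in $E$. One also needs the left Kan extension computed over $\mathrm{Fin}_G$ to agree with the restriction to product-preserving functors. Left exactness is checked on the presheaf level before sheafification, so it is enough that sheafification of this presheaf coincides with $t^*E$.

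I expect the main obstacle to be this comparison between left Kan extension along $\Orb$ and along $\mathrm{Fin}_G$. The two presheaves differ on non-connected inputs. I would argue that their sheafifications agree: on a $G$-invariant open $U$ that maps to an orbit, which by Theorem \ref{thm:abels} holds locally around each orbit for Tychonoff spaces and in general is a cofinal family of refinements, the $\mathrm{Fin}_G$-colimit reduces to the $\Orb$-colimit using product preservation of $E$. Both presheaves then have the same values on a basis of the topology. Left exactness of $t^*$ then follows as the composite of a left exact functor and sheafification.
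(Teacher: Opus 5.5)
Your reduction is the same as the paper's. You factor $t$ as $\Orb \xto{j} \mathrm{Fin}_G \xto{t'} \GTop$, use that $t'$ preserves finite limits so that $(\mathrm{Fin}_G)_{X/}$ is cofiltered and $\Lan(t')$ is left exact, and finish with left exactness of sheafification. The gap is the comparison step you flag as the main obstacle: the argument you give for it is false.

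\begin{itemize}
\item The hypothesis restricts nothing. Since $G/G$ is a point, every $G$-space maps to an orbit. Already for $U=G/G\sqcup G/G$ you get $\Lan(t)(E)(U)=E(G/G)$, but $\Lan(t')(\widetilde{E})(U)=\widetilde{E}(U)=E(G/G)\times E(G/G)$.
\item A corrected condition does not give a basis either. Take the opens all of whose maps to finite $G$-sets land in a single orbit. For $G$ trivial, $X$ the Cantor set and $E=\{0,1\}$, every nonempty open $V$ has $\Lan(t)(E)(V)=\{0,1\}$. On the other hand, $\Lan(t')(\widetilde{E})(V)=\colim_{V\to S}\{0,1\}^S$ is the infinite set of clopen subsets of $V$. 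So the two presheaves agree on no basis at all, although both sheafify to the constant sheaf.
\item Theorem~\ref{thm:abels} only covers Tychonoff spaces, whereas $\GTop$ contains all $G$-spaces.
\end{itemize}

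The comparison is purely formal and needs no pointwise computation. The paper expresses this by noting that $j$ induces an equivalence $\Psh(\Orb)\simeq\Shv(\mathrm{Fin}_G)$, so $t^*\simeq(t')^*\circ j^*$. Concretely, for any $H\in\Shv(\GTop)$ the restriction $H\circ t'$ is product preserving. This is because a finite $G$-set is the disjoint union of its orbits, and the sheaf condition for disjoint invariant open covers gives products. Hence, with $\widetilde{E}$ the product-preserving extension of $E$,
\[
\mathrm{Map}\bigl(\sh\Lan(t')(\widetilde{E}),H\bigr)\simeq\mathrm{Map}_{\Fun^{\times}(\mathrm{Fin}_G^{\op},\An)}\bigl(\widetilde{E},H\circ t'\bigr)\simeq\mathrm{Map}_{\Psh(\Orb)}\bigl(E,H\circ t\bigr)\simeq\mathrm{Map}\bigl(t^{*}E,H\bigr),
\]
naturally in $E$ and $H$. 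By Yoneda, $t^*E\simeq\sh\Lan(t')(\widetilde{E})$ naturally in $E$. With this replacing your basis argument, your cofilteredness argument completes the proof.
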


Note that for size issues $\Shv(\GTop)$ is not quite a topos (it is too large), but that is not the relevant point here, the second part of the statement makes sense as it stands. 

\begin{proof}
We factor the morphisms of sites $t$ as 
\[
\Orb \to \mathrm{Fin}_G \xto{t'} \GTop
\]
where $\mathrm{Fin}_G$ denotes the category of finite $G$-sets which is made into a site by considering the disjoint union Grothendieck topology, i.e. coverings are given by jointly disjoint, surjective injections. We clearly have that the first morphism $\Shv(\Orb) \to \Shv(\mathrm{Fin}_G)$ induces an equivalence. Therefore it suffices to check that $(t')^*$ preserves finite limits. This in turn reduces to verifying that finite limits of representable sheaves are preserved which follows since $t'$ preserves finite limits.
\end{proof}

\begin{coro} 
The functor $t^*$ has a pro left adjoint, that is there is a functor
\[
t_\natural: \Shv(\GTop) \to \mathrm{Pro}(\An^G)
\]
such that $\mathrm{Hom}(t_\natural(F), E) \simeq \mathrm{Hom}(F, t^*E)$ for $F \in \Shv(\LCHaus_G) $ and $E \in \An^G$. \qed
\end{coro}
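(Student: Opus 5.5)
The plan is to deduce the existence of $t_\natural$ from the preceding Proposition, that $t^*$ preserves finite limits, together with its preservation of colimits. This is the standard criterion for the existence of a pro-left adjoint: a functor between presentable categories that is accessible and preserves finite limits has a pro-left adjoint. Concretely, for $F \in \Shv(\GTop)$ I would define $t_\natural(F)$ as the pro-object corepresenting the functor
\[
\An^G \to \An, \qquad E \mapsto \mathrm{Hom}(F, t^*E).
\]
By \cite{htt}*{Section 5.3} (pro-objects as left exact accessible functors $\An^G\to\An$, cf. the discussion of shape in \cite{htt}*{Section 7.1.6}), it suffices to check that this functor is accessible and preserves finite limits.

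First, finite limits. The functor $\mathrm{Hom}(F,-)$ preserves all limits, and $t^*$ preserves finite limits by the previous Proposition, so the composite preserves finite limits.

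Second, accessibility. The functor $t^*$ is a left adjoint, so it preserves all colimits, in particular $\kappa$-filtered ones. The issue is $\mathrm{Hom}(F,-)$, since $F$ need not be compact. To handle this I reduce to a single space: a sheaf on the large site is a colimit of representables, and $\mathrm{Hom}(F,t^*E)$ is a limit of the values $t^*E(X)=\bre(X,E)$. Each $\bre(X,-)$ is computed as global sections of a sheaf on the small space $X/G$. Such a functor commutes with $\kappa$-filtered colimits for $\kappa$ large relative to $|\cO(X/G)|$, because $\kappa$-filtered colimits in anima commute with $\kappa$-small limits. This makes $E\mapsto \mathrm{Hom}(F,t^*E)$ accessible whenever $F$ is determined by a small amount of data. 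In the worst case I would restrict the statement to $F$ in the small subcategory generated by representables of spaces of bounded cardinality, which is the relevant case for $\underline{\Pi}_\infty(X)$.

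The main obstacle is this size/accessibility point, since $\Shv(\GTop)$ is not presentable. My first attempt would be to work on the small site $\cO_G(X)$ of a fixed $X$. There, $\Shv(X/G)$ is an honest $\infty$-topos and the restricted $t^*$ is a geometric morphism, so the existence of the pro-left adjoint is exactly Lurie's shape construction (\cite{htt}*{Section 7.1.6}). I would then assemble these adjoints naturally in $X$.

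Once the corepresenting pro-object exists, the adjunction equivalence $\mathrm{Hom}(t_\natural F,E)\simeq\mathrm{Hom}(F,t^*E)$ holds by construction. Functoriality in $F$ then follows from the Yoneda lemma.
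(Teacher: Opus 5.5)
Your proposal is correct and is essentially the argument the paper leaves implicit (the corollary carries only a \qed): $t^*$ is left exact by the preceding Proposition and, being a left adjoint, accessible, so $E\mapsto \mathrm{Hom}(F,t^*E)$ is a left exact accessible functor $\An^G\to\An$ and hence is corepresented by a pro-$G$-anima. Your care about accessibility goes beyond the paper's remark that size issues are ``not the relevant point'': for arbitrary large $F$ the corepresented functor need not even be small-valued, and restricting to small colimits of representables covers the only case used later, namely $\underline{\Pi}_\infty$.
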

	
\begin{defn}
The equivariant shape is the functor
\[
\underline{\Pi}_\infty: \GTop \to \mathrm{Pro}(\An^G) 
\]
obtained as the composition of the functor $t_\natural$ with the Yoneda embedding $\GTop \to \Shv(\GTop)$.
\end{defn}

Note that the equivariant shape $\underline{\Pi}_\infty X$ of the $G$-space $X$, which is a pro-$G$-anima, has the property that the underlying pro-anima is in fact the underlying pro-anima,  i.e. its restriction to the free $G$-orbit $G/e$, is the usual shape ${\Pi}_\infty X$ of $X$.  

\begin{rmk}
We note that the equivarant shape, as well as Bredon sheaf cohomology, of course make sense for arbitrary $G$-spaces, not just locally compact Hausdorff ones with the exact same definition. We just restrict to the latter one in this paper for the uniqueness statements. 

We can also think of the equivariant shape in terms of a relative shape of a topos, namely we can consider the slice topos $\Shv(\LCHaus_G)_{/X}$ or some small version of it. This comes with a geometric morphism to $\An^G$ and the shape $\underline{\Pi}_\infty(X)$ is the relative shape for this geometric morphism. 
\end{rmk}

Recall that for a $G$-anima $X$ and a functor $E: \Orb^\op \to D$ we have Bredon cohomology
$C_{\mathrm{Br}}^*(X, E) \in D$, see Section \ref{subsec:bredoncw}. This naturally extends by cofiltered limit extension to a functor
\[
\left(\mathrm{Pro}(\An^G)\right)^\op \to D \ .
\]
Concretely we have $C_{\mathrm{Br}}^*(``\underleftarrow{\mathrm{lim}}"X_i, E) := \underrightarrow{\colim} C_{\mathrm{Br}}^*(X_i, E)$ for a pro $G$-anima $``\underleftarrow{\mathrm{lim}}"X_i$.
\begin{prop}\label{prop_equiv}
The equivariant shape has the following properties:
\begin{enumerate}
\item For every $E: \Orb^\op \to D$ with $D$ compactly assembled there is a natural equivalence 
\[
\bre(X, E) = C_{\mathrm{Br}}^*(\underline{\Pi}_\infty X,E),
\]
that is Bredon sheaf cohomology of $X$ agrees with singular Bredon cohomology of the shape. 

\item The first property, if it holds for $D = \An$, uniquely characterises the equivariant shape. 

\item The functor $\underline{\Pi}_\infty$ satisfies open codescent, closed codescent and cofiltered compact descent.

\item There is a natural (in $X$) map $\uSing(X) \to \underline{\Pi}_\infty(X)$
for every $G$-space $X$, where  $\uSing(X)$ is the singular $G$-anima 
\[
G/H  \mapsto \Sing(X^H) \ .
\]
considered as constant pro object. It is an equivalence if $X$ is Tychonoff, sublocally contractible and such that $X/G$ is hypercomplete. 
\end{enumerate}
\end{prop}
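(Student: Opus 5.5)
For (1), I would unwind the pro-adjunction. For $X \in \GTop$ write $y(X)$ for its Yoneda image in $\Shv(\GTop)$. Write $\underline{\Pi}_\infty X = ``\lim" X_i$. For $E' \in \An^G$ the defining property of $t_\natural$ gives
\[
\colim_i \mathrm{Hom}_{\An^G}(X_i, E') \simeq \mathrm{Hom}(y(X), t^*E') = t^*(E')(X) = \bre(X,E'),
\]
so (1) holds for $D=\An$ and representable test objects. To pass to a general $E \colon \Orb^\op \to D$, I would write $C_{\mathrm{Br}}^*(Y,E)$ for a $G$-anima $Y$ as the end $\int_{G/H} E(G/H)^{Y(G/H)}$, i.e. the value at $Y$ of the unique limit-preserving extension of $E$ to $(\An^G)^\op$. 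Both sides of (1) are then functorial in $E$. I would compare them by tensoring the $\An$-valued statement with $D$: identify $\Shv(\GTop,D)$-valued $t^*$ with $t^*$ tensored with $D$, using that $D$ is compactly assembled so filtered colimits in $D$ are exact and commute with the finite limits in the end.

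A cleaner alternative reduction: since $D$ is compactly assembled, it embeds fully faithfully into a compactly generated category via a filtered-colimit-preserving functor (as in the proof of Proposition~\ref{prop:htpyinv}). By a Yoneda argument the problem then reduces to $D=\An$, where mapping out of a compact object is corepresented. The main obstacle is exactly this compatibility of $t^*$, i.e. sheafification, with the change of coefficients and with the pro-colimit. I expect to handle it via the fact that $t^*$ preserves finite limits (the preceding Proposition), so the filtered colimit over $i$ commutes with the finite limits computing the end when $X_i$ is a finite $G$-anima. For general $X_i$ I would reduce by writing each $X_i$ as a filtered colimit of finite ones and reindexing the pro-system.

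For (2): pro-objects are determined by the corepresented functors $\colim_i \mathrm{Hom}(X_i,-)$ on $\An^G$ (Yoneda for $\mathrm{Pro}$). The $D=\An$ case of (1) says this functor is $E' \mapsto \bre(X,E')$, which determines $\underline{\Pi}_\infty X$ naturally in $X$.

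For (3), I would combine (2) with Yoneda in $\mathrm{Pro}(\An^G)$: a diagram of pro-objects is a colimit/limit diagram iff mapping into each $E' \in \An^G$ yields a limit/colimit diagram of anima. By (1), mapping into $E'$ gives $\bre(-,E')$. This satisfies open descent by definition, closed descent by Corollary~\ref{coro:breclosed}, and cofiltered compact codescent by Theorem~\ref{thm:breprofin}. These translate to open codescent, closed codescent and cofiltered compact descent of $\underline{\Pi}_\infty$, on the locally compact Hausdorff spaces where these were proven.

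For (4), Lemma~\ref{lem_desc} says $C^*_{\mathrm{Br}}(-,E')$ is a sheaf on $\GTop$ agreeing with $E'$ on orbits, so adjunction yields the natural map $\bre(X,E') \to C^*_{\mathrm{Br}}(\uSing X, E')$ of Proposition~\ref{prop_map}. This is natural in $E'$. Via (1) with $D=\An$ and Yoneda for pro-objects, it corresponds to a map $\uSing(X) \to \underline{\Pi}_\infty X$. Under the stated hypotheses, Proposition~\ref{agreement} makes this an equivalence after mapping into every $E'$ (with $D=\An$), hence an equivalence of pro-objects.
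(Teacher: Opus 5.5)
Your arguments for (2), (3), (4) and for the case $D=\An$ of (1) are the paper's. You unwind the pro-adjunction, use that a pro-object is determined by its corepresented functor on $\An^G$, and transport the descent properties of $\bre(-,E')$. In (4) you get the map from Proposition \ref{prop_map} plus pro-Yoneda, and Proposition \ref{agreement} with $C=\An$ gives the equivalence. (In (3), your detection criterion only holds for cofiltered limits, not arbitrary ones, but that is all you use.)

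The genuine gap is the passage from $\An$ to an arbitrary compactly assembled $D$ in (1); none of your mechanisms works.
\begin{enumerate}
\item The ends $\int_{G/H} E(G/H)^{X_i(G/H)}$ are infinite limits in general, so left exactness of filtered colimits in $D$ is not enough.
\item You cannot reduce to finite $X_i$ by ``reindexing the pro-system''. Writing $X_i=\colim_j X_{ij}$ with $X_{ij}$ finite gives $\colim_i \lim_j \hat E(X_{ij})$, a filtered colimit of cofiltered limits. That is not a colimit over any pro-system of finite $G$-anima; a constant pro-object on an infinite $G$-anima is not pro-finite.
\item That $t^*$ preserves finite limits is a statement about $\An$-valued sheaves. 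It says nothing about $D$-valued sheafification.
\item The ``cleaner alternative'' fails as well. The embedding $D\hookrightarrow \Ind(D^{\omega_1})$ from Proposition \ref{prop:htpyinv}, and the resulting functors $D\to\An$, preserve filtered colimits but not limits. That sufficed there, where only codescent was at stake. Here both the end and the sheafification inside $t^*$ involve limits.
\end{enumerate}

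The missing ingredient is the following. Let $\hat E=C^*_{\mathrm{Br}}(-,E)$ be the limit-preserving extension of $E$ to $(\An^G)^{\op}$, and let $\widetilde E\colon \mathrm{Pro}(\An^G)^{\op}=\Ind((\An^G)^{\op})\to D$ be its filtered-colimit-preserving extension. For $D$ compactly assembled, $\widetilde E$ preserves \emph{all} limits. Indeed, it factors as $\Ind(\hat E)$ followed by $\colim\colon\Ind(D)\to D$:
\begin{enumerate}
\item $\Ind(\hat E)$ preserves limits, because finite limits in $\Ind$ are computed levelwise and products over products of the index categories;
\item $\colim\colon\Ind(D)\to D$ is a right adjoint precisely because $D$ is compactly assembled, so it preserves limits.
\end{enumerate}

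The paper combines this with (3). The assignment $X\mapsto \widetilde E(\underline{\Pi}_\infty X)$ then satisfies open descent and cofiltered compact codescent. It agrees with $E$ on orbits, since $\underline{\Pi}_\infty Z$ is the representable $Z$ by the $\An$-case. Theorem \ref{thm:breeq} then identifies it with $\bre(-,E)$ on $\GLCH$.

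Alternatively, the same fact makes your tensoring idea precise, modulo size issues. The functor $S\mapsto \widetilde E(t_\natural S)$ is limit-preserving on $\Shv(\GTop)^{\op}$, hence a $D$-valued sheaf. It is also the left Kan extension of $\hat E$ along $(t^*)^{\op}$, so it coincides with $t^*E$.
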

\begin{proof}
For (1), we first treat the case where the target is $\An$. In this situation, adjunction yields
\[
C_{\mathrm{Br}}^*(\underline{\Pi}_\infty X,E)
= \mathrm{Hom}_{\mathrm{Pro}(\An^G)}\bigl(t_\natural(\underline{X}),E\bigr)
= \mathrm{Hom}_{\Shv(\LCHaus_G)}\bigl(\underline{X}, t^*E\bigr).
\]
By the Yoneda lemma, the latter identifies simply with $\bre(X,E)$.

We now turn to the remaining statements and will return afterwards to complete the proof of (1) in full generality. For (2), note that we have an equivalence
\begin{align*}
\mathrm{Pro}(\An^G) &\xrightarrow{\ \simeq\ } \Fun^{\mathrm{Lex}}(\An^G,\An)^\op, \\
Z = ``\lim_i" Z_i &\longmapsto \mathrm{Hom}_{\mathrm{Pro}(\An^G)}(Z,-)
= \colim_i \mathrm{Hom}_{\An^G}(Z_i,-).
\end{align*}
In particular, a pro-anima is completely determined by its corepresented functor which shows (2). 
This equivalence further shows that cofiltered limits and arbitrary colimits of pro-objects correspond to pointwise filtered colimits and limits of the associated corepresented functors (here we use that filtered colimits in anima commute with finite limits). Since $\bre(-,E)$ satisfies cofiltered compact codescent as well as open and closed descent, it follows that $\underline{\Pi}_\infty(-)$ enjoys the properties asserted in (3). The final claim (4) then follows from Proposition \ref{prop_map} using the definition of anima valued singular Bredon cohomology being corepresented (see the beginning of Section \ref{subsec:bredoncw}) and Proposition ~\ref{agreement}.

Finally, using (3), we deduce that for any compactly assembled $D$ and any functor $E\colon \Orb^\op \to D$, the assignment
\[
X \longmapsto C_{\mathrm{Br}}^*(\underline{\Pi}_\infty X,E)
\]
satisfies open descent and cofiltered compact codescent. This follows since for $E : \Orb^\op \to D$ the functor
\[
C_{\mathrm{Br}}^*(-,E): \left(\mathrm{Pro}(\An^G)\right)^\op = \mathrm{Ind}\left((\An^G)^\op\right) \to D 
\]
preserves filtered colimits and arbitrary limits (as it is the ind-extension of a powering and the target is compactly assembled, so limits distribute over colimits). 
The uniqueness theorem (Theorem~\ref{thm:breeq}) then implies that $C_{\mathrm{Br}}^*(\underline{\Pi}_\infty - ,E)$ agrees with $\bre(-,E)$.
\end{proof}

As a result of the last assertion, we see that we can now unleash the full power of equivariant homotopy theory, since this reduces everything to known properties of Bredon cohomology. For example we see that if $E: \Orb^\op \to \Sp$ extends to a spectral Mackey functor (also known as a genuine $G$-spectrum) then also 
$\bre(X, E)$ admits a refinement to a genuine $G$-spectrum $\breg(X,E)$ extending the structure of a naive $G$-spectrum from Remark \ref{rem_genuine}. 
Moreover we get that for $G$-manifolds we have equivariant Poincar\'e-duality etc. 

We now have the following generalization of Corollary \ref{cor_genuine} using again the powering of the dualisable category of motives over $G$-anima (and hence also over pro-$G$-anima).
\begin{coro}\label{cor_genuine_2}
For $X$ a compact $G$-space we have
\[
M_G(\Shv(X, C)) \simeq M_G(C)^{\underline{\Pi}_\infty(X)}
\]
where the power is in the sense of genuine $G$-categories. For $X$ a not-necessarily compact $G$-space we have
\[
M_G(\Shv(X, C)) \simeq M_G(C)^{(\underline{\Pi}_\infty(X^+), \infty)}
\]
where the power is the power of pointed pro-$G$-anima, i.e. the fibre of $M_G(C)^{\underline{\Pi}_\infty(X^+)} \to M_G(C)^{\underline{\Pi}_\infty(\mathrm{pt})} = M_G(C)$.
\qed
\end{coro}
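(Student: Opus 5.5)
The plan is to apply Theorem \ref{thm:shvmot} with $F = M_G$, or more precisely the $G$-motive version $H = M_G \colon \Catdual^G \to \NcMot_G$. Since $\NcMot_G$ is dualizable by \cite{efirig}*{Theorem 3.1}, this gives
\[
M_G(\Shv(X,C)) \simeq \brec(X, M_G(C^B)).
\]
After that, the task is to rewrite the right-hand side in terms of the equivariant shape and the genuine powering.

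First, take $X$ compact. Then $\brec(X,E) = \bre(X,E)$, because $\Gamma_c = \Gamma$ on compact spaces. Proposition \ref{prop_equiv}(1) applies with $D = \NcMot_G$, which is compactly assembled since it is dualizable, and gives
\[
\bre(X, M_G(C^B)) \simeq C^*_{\mathrm{Br}}(\underline{\Pi}_\infty X, M_G(C^B)).
\]
Next I would identify $C^*_{\mathrm{Br}}(Y, M_G(C^B))$ with the genuine power $M_G(C)^Y$, first for $G$-anima $Y$ and then for pro-$G$-anima. The text before Corollary \ref{cor_genuine} already asserts this for $G$-anima: it is the end formula \eqref{Bredon_end} together with the identification of the orbit values $G/H \mapsto M_G(C^B)(G/H)$ as the restriction of the genuine $G$-object $M_G(C)$.

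For pro-objects, both sides are defined by sending cofiltered formal limits to filtered colimits. For the power over pro-$G$-anima this is the convention in force. The identification therefore extends, because it is natural in $Y$ and the powering over $\mathrm{Pro}(\An^G)$ is the ind-extension, exactly as in the last paragraph of the proof of Proposition \ref{prop_equiv}.

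For general locally compact $X$, use Proposition \ref{prop:brec}(2):
\[
\brec(X,E) \simeq \fib\bigl(\bre(X^\infty,E) \to \bre(\infty,E)\bigr).
\]
Both $X^\infty$ and the point $\infty$ are compact, so the compact case applies to each term, compatibly with the map induced by the inclusion $\infty \hookrightarrow X^\infty$. This yields
\[
\fib\bigl(M_G(C)^{\underline{\Pi}_\infty(X^+)} \to M_G(C)^{\underline{\Pi}_\infty(\mathrm{pt})}\bigr).
\]
Finally, Proposition \ref{prop_equiv}(4) gives $\underline{\Pi}_\infty(\mathrm{pt}) = \mathrm{pt}$, so the target is $M_G(C)$.

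The main obstacle is making the identification between Bredon cohomology with coefficients in $M_G(C^B)$ and the genuine powering precise and natural. Two points need care: that the restriction of the genuine $G$-category $\NcMot_G$ to orbits really is $G/H \mapsto M_G(C^{BH})$, where $C^{BH}$ is viewed via induction and restriction, and that this compatibility extends to pro-objects. I would try to handle this by invoking the Yoneda/end description of powers in \cite{MaximeKaif}, checking it on representables $G/H$ where both sides are $M_G(C^{BH})$, and then extending by limits and filtered colimits.
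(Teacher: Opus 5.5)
Your proposal is correct and follows the argument the paper intends. The paper gives no written proof beyond the \qed and the preceding remark about powering over pro-$G$-anima; its route, like yours, applies Theorem \ref{thm:shvmot} to $M_G$, passes to the shape via Proposition \ref{prop_equiv}(1), identifies Bredon cohomology with coefficients in $M_G(C^B)$ with the genuine powering (extended to pro-objects by ind-extension), and handles the non-compact case via the one-point compactification formula of Proposition \ref{prop:brec}(2). The compatibility you flag, that the orbit values of the genuine structure on $\NcMot_G$ are $M_G(C^{BH})$, is exactly the input the paper takes from \cite{MaximeKaif} without further argument.
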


Note that if $X^+$ is sufficiently nice, e.g. itself a $G$-CW complex then we can again write the last power using the singular $G$-anima $\uSing(X)$.

\end{document}